\documentclass{article}
\usepackage{hyperref}
\usepackage{amssymb}
\usepackage{amsthm}
\usepackage{amsmath}
\usepackage{mathtools}
\usepackage[normalem]{ulem}
\newtheorem{theorem}{Theorem}[section]
\newtheorem{conjecture}[theorem]{Conjecture}
\newtheorem{lemma}[theorem]{Lemma}
\newtheorem{corollary}[theorem]{Corollary}
\newtheorem{proposition}[theorem]{Proposition}
\newtheorem{definition}[theorem]{Definition}
\let\originalmiddle=\middle
\def\middle#1{\mathrel{}\originalmiddle#1\mathrel{}}
\def\ast#1{{^*#1}}
\DeclareMathOperator{\res}{res}
\DeclareMathOperator{\Frac}{Frac}
\DeclareMathOperator{\lcm}{lcm}

\title{The reals as a subset of an ultra­product of finite fields}
\author{Roee Sinai}
\date{May 2026}

\begin{document}

\maketitle


\setcounter{section}{-1}
\begin{abstract}
    In this paper we present new ways to construct external subsets of nonstandard models of arithmetic using mostly internal sets, and show that if an ultra­product of prime finite fields includes a copy of the algebraic real numbers then either this copy or its algebraic closure can be constructed in some of these ways. We also show that no copy of the field of real numbers inside such an ultra­product can ever be constructed in any of these ways, but there is either a hyper­real field or an algebraically closed field of cardinality larger or equal to the continuum that can be.
\end{abstract}
\section{Introduction}
It follows from \cite[Theorem 3.10]{Chatzidakis}, \cite[Theorem 6.5.3]{Hodges}, \cite[Corollary 4.3.13]{CK} and \cite[Proposition 7]{Garner} that there is a field of the form \(\tilde{\mathbb F}=\prod\mathbb F_p/\mathcal F\) where all \(p\) are prime numbers such that \(\mathbb R\cong\mathbb R'\subset\tilde{\mathbb F}\) but \(\tilde{\mathbb F}\) does not contain a square root of \(-1\). A more direct proof can be found in \cite{MSX}, where it is also shown that it is sufficient to use each \(\mathbb F_p\) only once. However, \(\mathbb R'\), or any other subfield of this ultra­product, cannot be internal in the nonstandard model of arithmetic \(\prod\mathbb N/\mathcal F\), because that would mean that \(\mathbb{R}'=\prod_{p}S_p/\mathcal{F}\), where \(S_p\subseteq\mathbb{F}_p\) is a field for a large set of the primes, but no field \(\mathbb{F}_p\) has a subset that is also a field. Additionally, all of these proofs do not only show that \(\mathbb R\) can be embedded in this ultra­product, but also that a hyper­real field can be embedded there, and this hyper­real field contains \(2^\mathfrak c\) different copies of the real numbers, with no copy having any apparent structure or preference over any other. One might ask if this is always the case or if there are other ultra­products of finite fields where there are fewer copies of the real numbers or a copy that can be constructed using mostly internal sets. We show that the former is true, for 3 different definitions of ``constructed using mostly internal sets".

Specifically, we show that if the real algebraic integers can be embedded in \(\tilde{\mathbb F}=\prod\mathbb F_p/\mathcal F\) there is a downwards-closed set \(C\) of \(\ast{\mathbb N}=\prod\mathbb N/\mathcal F\) and an internal function \(f:\tilde{\mathbb F}\to\ast{\mathbb N}\) such that \(f^{-1}[C]\) is either an \(\aleph_1\)-saturated real-closed field or an algebraically-closed field of cardinality at least \(\mathfrak c\), and we conclude in both cases that \(\mathbb R\) can be embedded in such a field. We also prove that this \(f^{-1}[C]\) is an intersection of countably many internal subsets of \(\tilde{\mathbb F}\). We also show that is another pair of function and downwards-closed set such that \(f^{-1}[C]\) is a real-closed field that or a algebraically-closed field, but for this pair \(f^{-1}[C]\) is the union of countably many internal subsets of \(\tilde{\mathbb F}\). For this pair as well we show that \(f^{-1}[C]\) contains a copy of \(\mathbb R\). We also show that no copy of \(\mathbb R\) inside \(\tilde{\mathbb F}\) can be of the form \(f^{-1}[C]\) for a downwards-closed set \(C\) and an internal function \(f\), nor can one be a union or an intersection of countably many internal subsets of \(\tilde{\mathbb F}\).

Following \cite[Section 4.4]{CK}, given a set \(X\), define the sets \(V_i(X)\) for \(i\in\omega+1\) like so: \(V_0(X)=X\), \(V_{n+1}(X)=V_n(X)\cup\mathcal P(V_n(X))\) for \(n\in\omega\) and \[V(X)=V_\omega(X)=\bigcup_{n\in\omega}V_n(X).\] If \(X\) is infinite, given a countably incomplete ultra­filter \(\mathcal F\) on a set \(I\) we construct a nonstandard model \((V(X),V(Y),*)\) for the ultra­product \(Y=\prod_IX/\mathcal F\) and \(*\) the elementary embedding of \((V(X),\in)\) to \((V(Y),\in)\) that sends each element of \(X\) to the corresponding constant function, each set \(A\subseteq X\) to \(\{[y_i]\in Y\mid \{i\in I\mid y_i\in A\}\in\mathcal F\}\) (and in particular \(X\) to \(Y\)) etc. An element of \(V(Y)\) is called ``internal" if it is an element of \(\ast{V_n(X)}\) for some \(n\), or equivalently an element of \(\ast B\) for some \(B\in V(X)\setminus X\). The set of all internal elements of \(V(Y)\) is denoted by \(\ast{V(X)}\). A set in \(V(Y)\) that is not internal is called ``external". In particular, we want to look at a nonstandard model \((V(\mathbb N),V(\ast{\mathbb N}),*)\) of arithmetic generated this way. 

We usually notate nonstandard natural numbers with a hat to differentiate them from the standard natural numbers, for example, \(\hat n,\hat m,\hat p\in\ast{\mathbb N}\). Also, for convenience, we identify each algebraic structure with its underlying set, since the operations will always be clear from the context.  Given a nonstandard prime \(\hat p\), we define the pseudo-finite field \(\mathbb F_{\hat p}=\{\hat n\in\ast{\mathbb N}\mid\hat n<\hat p\}\) with addition and multiplication defined modulo \(\hat p\), similarly to the definition of \(\mathbb F_p\) for standard primes \(p\). Note that since \(\mathbb F_{\hat p}\) can be defined using the constant \(\hat p\) in the language of first-order arithmetic, \(\mathbb F_{\hat p}\) is internal.

Each structure that has elements for \(0\) and \(1\) and addition has a unique embedding of \(\mathbb N\) into it. We identify \(\mathbb N\) with all of those embedding, so for example \(\mathbb N\subseteq\ast{\mathbb N}\). Similarly, every field of characteristic \(0\) has a unique copy of \(\mathbb Q\), and these copies are also identified together. It is also sometimes helpful to consider its algebraic closure, for instance \(\bar{\mathbb Q}\subseteq\bar{\mathbb F}_p\). However, many fields that have an embedding of \(\mathbb R\) into them do not have a unique one. When \(\mathbb R\) can be embedded into a field \(\mathbb F_{\hat p}\) its image is notated by \(\mathbb R'\) to mark the fact that it (and the embedding that produced it) might not be unique.


In this paper we present 3 ways to construct external sets by using mostly internal sets. The first ones use countable unions and intersections of internal sets:
\begin{definition}
    A set \(S\subseteq \ast{\mathbb N}\) is called a \emph{\(\sigma\)-set} if it is a union of countably many internal sets.
\end{definition}
\begin{definition}
    Similarly, A set \(S\subseteq\ast{\mathbb N}\) is called a \emph{\(\delta\)-set} if it is an intersection of countably many internal sets.
\end{definition}

The last one combines internal sets with very simple external sets. Namely:
\begin{definition}
    A set \(C\in \mathcal P(\ast{\mathbb N})\) is called a \emph{cut} if it is downwards-closed in the natural order on \(\ast{\mathbb N}\).
\end{definition}
\begin{definition}
    A set \(S\subseteq\ast{\mathbb N}\) is called \emph{almost internal} if it is of the form \(f^{-1}[C]\) for an internal function \(f\) and a cut \(C\).
\end{definition}
\begin{theorem}\label{R-no-sigma-delta}
    No copy of \(\mathbb R\) inside \(\mathbb F_{\hat p}\) can be a \(\sigma\)-set and neither can one be a \(\delta\)-set.
\end{theorem}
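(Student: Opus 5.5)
The plan is to show that either representation would force too much internal structure on \(\mathbb R'\), and then contradict the fact that \(\mathbb R'\) is an Archimedean ordered field that is not internal. Throughout I use that \(\ast{\mathbb N}\) is \(\aleph_1\)-saturated, since \(\mathcal F\) is countably incomplete. I also use the observation from the introduction that no internal subset of \(\mathbb F_{\hat p}\) is a subfield. In particular \(\mathbb R'\) is not internal. It is also not all of \(\mathbb F_{\hat p}\), because \(\mathbb F_{\hat p}\) is internal.

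\emph{Step 1: a saturation lemma turning the representation into a chain closed under the field operations.} In the \(\delta\)-case write \(\mathbb R'=\bigcap_n B_n\) with \(B_n\) internal. Replacing \(B_n\) by \(B_0\cap\dots\cap B_n\), the sequence may be assumed decreasing. I claim that for every \(n\) there is \(m\ge n\) with
\[
B_m-B_m\subseteq B_n,\qquad B_m\cdot B_m\subseteq B_n .
\]
Suppose not. Then for every \(m\) the internal set of pairs \((x,y)\in B_m\times B_m\) with \(x-y\notin B_n\) or \(xy\notin B_n\) is nonempty. These sets are decreasing in \(m\), so by \(\aleph_1\)-saturation some pair lies in all of them. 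That pair lies in \(\mathbb R'\times\mathbb R'\), yet its difference or product is not in \(B_n\supseteq\mathbb R'\), a contradiction.

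The \(\sigma\)-case is dual. Write \(\mathbb R'=\bigcup_n A_n\) with \(A_n\) internal and increasing. For each \(n\), the internal sets \(A_n-A_n\) and \(A_n\cdot A_n\) are contained in \(\mathbb R'=\bigcup_m A_m\). By saturation, an internal set covered by a countable increasing union of internal sets is contained in one of them, since otherwise the internal sets \(X\setminus A_m\) would have the finite intersection property. So \(A_n-A_n,\ A_n\cdot A_n\subseteq A_m\) for some \(m\).

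\emph{Step 2: overspill to get nonstandard integers into \(\mathbb R'\).} \(\mathbb R'\) contains the image of \(\mathbb N\), namely the residues \(0,1,2,\dots\).

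In the \(\delta\)-case, each \(B_n\) is internal and contains every standard \(k\). By overspill there is a nonstandard \(K_n<\hat p\) with \(\{0,1,\dots,K_n\}\subseteq B_n\). By saturation, namely the countable coinitiality of the nonstandard part being avoided, there is a single nonstandard \(K\le K_n\) for all \(n\). Then the interval \([0,K]\) lies in \(\bigcap_n B_n=\mathbb R'\).

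In the \(\sigma\)-case, \(\mathbb N\subseteq\bigcup_n A_n\). If some \(A_n\) contains infinitely many standard integers, overspill applied to the internal set \(A_n\) gives a nonstandard element of \(A_n\subseteq\mathbb R'\), and the closure from Step 1 gives a whole hyperfinite arithmetic block \(\{a,a+1,\dots\}\) of nonstandard length inside some \(A_m\). If no single \(A_n\) contains infinitely many standard integers, I would instead use the sets \(\{k:\ k\cdot 1\in A_n\}\) together with the closure of Step 1 to reach the same configuration.

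\emph{Step 3 (the main obstacle): contradicting the Archimedean property.} At this point \(\mathbb R'\) contains an internal interval \(I=[0,K]\), and \(\mathbb R'\cong\mathbb R\) is Archimedean. For a nonstandard \(j\in I\), let \(r_j\) be the real number it represents. Subtracting the standard integer \(\lfloor r_j\rfloor\) keeps us inside \(I\), so \(I\) contains elements representing reals in \([0,1)\). Because \(I\) is internal, the statement that \(I\) together with finitely many sums and products stays inside a fixed \(B_n\) is internal, and these sets are all contained in \(\mathbb R'\). The aim is to produce, by overspill on \(I\cdot I\) and \(I-I\), an internal subset of \(\mathbb R'\) closed under \(x\mapsto x+1\) on a hyperfinite range that wraps modulo \(\hat p\). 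Such a set would force some element of \(\mathbb R'\) to be both \(\ge n\) for every standard \(n\) and finite, which is impossible in \(\mathbb R\).

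Concretely, I would first try to show that the sets \(I^{(k)}\) generated from \(I\) in \(k\) standard steps grow, via a Cauchy--Davenport or sum--product estimate transferred to \(\mathbb F_{\hat p}\), until one of them has size comparable to \(\hat p\). A large internal subset of \(\mathbb F_{\hat p}\) contained in \(\mathbb R'\) would then contain, by the pigeonhole principle on residues modulo \(\hat p\), two elements whose difference is a nonstandard multiple of \(1\) lying in \(\mathbb R'\) with unbounded real value. That contradicts the Archimedean property.

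The delicate point is that \(K\) might be tiny compared with \(\hat p\), so polynomial growth in standardly many steps may not reach \(\hat p\). If this fails, the fallback is to use instead that \(\mathbb R'\) is real closed, so each element is \(\pm\) a square in \(\mathbb R'\). One would then transfer the internal quadratic character of \(\mathbb F_{\hat p}\) on the hyperfinite interval \(I\) to separate \(\mathbb R'\) from its complement, which contradicts the non-internality of \(\mathbb R'\).
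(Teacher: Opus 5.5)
Your Steps 1 and 2 are sound in the $\delta$-case: you do get an internal interval $[0,K]\subseteq\mathbb R'$ with $K$ nonstandard. But Step 3, which you yourself flag as the obstacle, is where the whole proof lives, and neither of your routes closes it.

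Being a nonstandard residue says nothing about the real value. \emph{Every} element of $\mathbb F_{\hat p}$ is the residue of a hypernatural below $\hat p$; for example $(\hat p+1)/2$ represents $\frac12$ and $\hat p-1$ represents $-1$. So the pigeonhole conclusion that some difference is ``a nonstandard multiple of $1$ with unbounded real value'' does not follow. The Archimedean property gives no contradiction until you can tell, internally, which residues are large in $\mathbb R'$.

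The growth route cannot be forced either. Nothing you have shown excludes $K^c<\hat p$ for every standard $c$. In that case everything generated from $[0,K]$ in standardly many sum--product steps stays among residues of integers of absolute value at most $K^c$, and never wraps around.

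The $\sigma$-case of Step 2 is also incomplete. The $A_n$ may meet $\mathbb N$ only in finite pieces (say $A_n\cap\mathbb N=\{0,\dots,n\}$), so no hyperfinite block need exist.

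The missing idea is an internal (or countably-internal) description of the \emph{order} of $\mathbb R'$. Your quadratic-character fallback points at the right object, but uses it for the wrong purpose.

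If $\sqrt{-1}\notin\mathbb F_{\hat p}$, real-closedness of $\mathbb R'$ gives, for $x,y\in\mathbb R'$, that $x\le y$ iff $y-x$ is a square in $\mathbb F_{\hat p}$. With this your $\delta$-case ends at once. Internal induction along $[0,K]$ shows every $j\in[1,K]$ is a nonzero square. Hence $K-n$ is a square for each standard $n$, so $K$ is infinite in $\mathbb R'$. The paper does not even need the interval: saturation directly gives $x\in\bigcap_nS_n$ with $x-n$ a square for all $n$.

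For the $\sigma$-case the paper uses the same order to show that every internal subset of $\mathbb R'$ is finite. An unbounded one contradicts saturation as above. A bounded infinite one has an accumulation point $q\in\mathbb R'$, and its image under the internal map $x\mapsto 1/(x-q)$ is unbounded. So $\mathbb R'$ would be a countable union of finite sets, hence countable.

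The theorem also covers the case $\sqrt{-1}\in\mathbb F_{\hat p}$, and there the quadratic character is useless, because $x$ and $-x$ have the same character. In that case the paper extracts the order from the representation itself:
\begin{itemize}
\item in the $\delta$-case, $G_i(x,y)\colon\exists z\in\bigcap_{j<i}S_j.\ x-y=z^2$;
\item in the $\sigma$-case, $G_i(x,y)\colon\neg\exists z\in S_i.\ y-x=z^2$.
\end{itemize}
These satisfy $x>y\iff\forall i\,G_i(x,y)$ for $x\ne y$ in $\mathbb R'$, which is enough to run the same saturation arguments.
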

\begin{theorem}\label{r-bad}
    No copy of \(\mathbb R\) inside \(\mathbb F_{\hat p}\) can be almost internal
\end{theorem}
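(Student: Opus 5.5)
The plan is to split on the cofinality behaviour of the cut, reduce two of the cases to Theorem~\ref{R-no-sigma-delta}, and treat the remaining "doubly uncountable" case by saturation and overspill. So suppose, for contradiction, that \(\mathbb R'\subseteq\mathbb F_{\hat p}\) is a copy of \(\mathbb R\) and that \(\mathbb R'=f^{-1}[C]\) for an internal \(f\) and a cut \(C\). Throughout I use that \(\ast{\mathbb N}\) is \(\aleph_1\)-saturated, since \(\mathcal F\) is countably incomplete.

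\textbf{Trivial cases.} If \(C\) has a largest element \(\hat m\), or \(C=\ast{\mathbb N}\), or \(C=\emptyset\), then \(C\) is internal. Hence \(\mathbb R'\) is internal, which is impossible by the remark in the introduction that no \(\mathbb F_p\) contains a subfield as a subset.

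\textbf{Countable cases.} Suppose \(C\) has countable cofinality, witnessed by \(c_0<c_1<\dots\). Then \(\mathbb R'=\bigcup_n f^{-1}[\{\hat x\le c_n\}]\) is a \(\sigma\)-set. Dually, suppose \(\ast{\mathbb N}\setminus C\) has countable coinitiality, witnessed by \(d_0>d_1>\dots\). Then \(\mathbb R'=\bigcap_n f^{-1}[\{\hat x<d_n\}]\) is a \(\delta\)-set. Both contradict Theorem~\ref{R-no-sigma-delta}.

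\textbf{Main case.} It remains to treat the case where \(C\) has uncountable cofinality and its complement has uncountable coinitiality. The key consequence is that every countable \(S\subseteq\mathbb R'\) is contained in an internal set \(A\subseteq\mathbb R'\). Indeed, \(f[S]\) is a countable subset of \(C\), so it is bounded by some \(c\in C\), and we may take \(A=f^{-1}[\{\hat x\le c\}]\). The plan is to apply this to the standard integers \(\mathbb N\subseteq\mathbb R'\) and then overspill. The function \(F(\hat k)=\max_{j\le\hat k}f(j)\) is internal, and the internal set \(\{\hat k: F(\hat k)\le c\}\) contains every standard \(k\). Hence it contains some nonstandard \(\hat k\), and so the whole hyperfinite interval \(\{0,1,\dots,\hat k\}\subseteq\mathbb F_{\hat p}\) lies inside \(\mathbb R'\). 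The same argument can be applied to any countable subfield, for example \(\mathbb Q\) or the real algebraic numbers, to place large internal pieces of \(\mathbb F_{\hat p}\) inside \(\mathbb R'\).

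\textbf{Expected obstacle.} The hard step is deriving a contradiction from an internal interval \(I=[0,\hat k]\subseteq\mathbb R'\) with \(\hat k\) infinite. My first attempt would exploit the order of \(\mathbb R'\cong\mathbb R\), whose positive cone is exactly the set of nonzero squares of \(\mathbb R'\), together with the external map \(r:I\to\mathbb R\), which satisfies \(r(j+1)=r(j)+1\). The Archimedean property forces \(r(\hat k-m)<0\) for some standard \(m\), so \(I\) contains an element \(x\) with \(-x\) a nonzero square in \(\mathbb R'\). I would then try to produce, by an internal counting argument on quadratic residues in the interval, a contradiction with the requirement that exactly one of \(\pm x\) be a square of an element of \(\mathbb R'\) for every \(x\) in \(I\). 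If that fails, my fallback is to use additive and multiplicative closure to push \(I\) up to all of \(\mathbb F_{\hat p}\). Closure gives \([0,n\hat k]\subseteq\mathbb R'\) for every standard \(n\), and products of elements of \(I\) and their inverses give further elements. Reaching all of \(\mathbb F_{\hat p}\) would contradict \(\mathbb R'\neq\mathbb F_{\hat p}\), since \(\mathbb F_{\hat p}\) is pseudo-finite and not real closed. I do not currently know which of these two routes goes through.
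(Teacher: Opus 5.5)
\textbf{There is a genuine gap: the main case is never closed.} Your trivial and countable-cofinality cases are fine, given Theorem~\ref{R-no-sigma-delta}. In the remaining case, your construction of an internal interval \(\{0,1,\dots,\hat k\}\subseteq\mathbb R'\) with \(\hat k\) nonstandard is also correct. But the proof stops exactly where the contradiction has to be produced, and neither route you propose supplies it.

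Your fallback, pushing the interval up to all of \(\mathbb F_{\hat p}\) by closure, cannot work as stated. Closure under field operations and under taking roots never uses that \(\mathbb R'\) is Archimedean. Already when \(\sqrt{-1}\notin\mathbb F_{\hat p}\), the paper's \(\mathbb R^\delta=f_{\bar{\mathbb Q}}^{-1}[C_{\hat p}]\) is a proper real-closed subfield of \(\mathbb F_{\hat p}\). It contains \(\{0,\dots,\hat k\}\) for every \(\hat k\in C_{\hat p}\setminus\mathbb N\), because \(f_{\bar{\mathbb Q}}(j)\le 1+\lceil\log_2 j\rceil\). Nothing in your overspill step stops \(\hat k\) from being that small, and any smaller nonstandard \(\hat k\) works equally well. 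Your first route, a counting argument on quadratic residues, is not yet an argument, and no counting is needed.

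\textbf{The missing idea.} Positivity in \(\mathbb R'\) is detected by an internal relation. So overspill, or \(\aleph_1\)-saturation, turns an internal subset of \(\mathbb R'\) containing \(\mathbb N\) into an element above every standard integer. This is the content of the paper's Lemma~\ref{finite-subset}. In your main case it can be done directly:
\begin{itemize}
\item Choose \(c\in C\) bounding \(f\) on the countable set \(\mathbb N\cup\{\sqrt n\mid n\in\mathbb N\}\subseteq\mathbb R'\), and put \(A=f^{-1}[[c+1]]\subseteq\mathbb R'\).
\item Let \(J=\{j<\hat p\mid\forall i\le j.\ \exists z\in A.\ i=z^2\}\). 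This set is internal and contains \(\mathbb N\), so it contains a nonstandard \(j_0\).
\item Then every \(j_0-m\) with \(m\in\mathbb N\) is the square of an element of \(\mathbb R'\). Hence \(j_0\ge m\) in \(\mathbb R'\) for all standard \(m\), contradicting \(\mathbb R'\cong\mathbb R\).
\end{itemize}
Your observation that \(r(\hat k-m)<0\) for some standard \(m\) is the Archimedean half of this argument. What you lacked was an internal definition of nonnegativity to overspill on.

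Restricting the witnesses \(z\) to \(A\) is essential when \(\sqrt{-1}\in\mathbb F_{\hat p}\). In that case every element of \(\mathbb R'\) is a square in \(\mathbb F_{\hat p}\), which is why the paper needs the extra lemma of Section~\ref{complex}.

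\textbf{How the paper argues.} Once you know that internal subsets of \(\mathbb R'\) are finite, your case split is unnecessary. For \(x\in\mathbb R'\), the internal sets \(f^{-1}[[f(x)]]\) and \(f^{-1}[\{f(x)\}]\) lie inside \(\mathbb R'\) because \(C\) is downward closed, so both are finite. Sending \(x\) to \(|f^{-1}[[f(x)]]|\) plus the position of \(x\) in a fixed enumeration of \(f^{-1}[\{f(x)\}]\) then gives an injection \(\mathbb R\to\mathbb N\), a contradiction.
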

\begin{theorem}\label{r-delta-good}
    If \(\bar{\mathbb Q}\cap\mathbb R\) can be embedded in \(\mathbb F_{\hat p}\),
    then if \(\mathbb F_{\hat p}\) contains the square root of \(-1\) there there is an almost internal \(\delta\)-sub-filed of \(\mathbb F_{\hat p}\) that is algebraically closed and of cardinality at least \(\mathfrak c\), and
    if \(\mathbb F_{\hat p}\) does not contain the square root of \(-1\)
    there is an almost internal \(\delta\)-sub­field of \(\mathbb F_{\hat p}\) that is real closed and \(\aleph_1\)-saturated. In particular, in both cases there are at least \(2^\mathfrak c\) ways to embed \(\mathbb R\) into such an almost internal field.
\end{theorem}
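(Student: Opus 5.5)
The plan is to make \(f\) an internal ``height'' function and \(C\) a cut of numbers that are small compared to \(\hat p\). Concretely, for \(x\in\mathbb F_{\hat p}\) let \(f(x)\) be the least \(\hat h\in\ast{\mathbb N}\) such that \(x\) is a root of a polynomial that is nonzero modulo \(\hat p\), has degree at most \(\hat h\), and has coefficients whose symmetric representatives in \((-\hat p/2,\hat p/2)\) have absolute value at most \(\hat h\). This is first-order definable from \(\hat p\), so \(f\) is internal.

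Let \(g(\hat h)=\hat h^{\hat h^3}\), or any internal increasing function dominating the height bounds for resultants. Put
\[C=\{\hat h\in\ast{\mathbb N}\mid g^{(n)}(\hat h)<\hat p\text{ for every standard }n\},\]
where \(g^{(n)}\) is the \(n\)-fold iterate. Then \(C\) is downwards closed. Moreover
\[K=f^{-1}[C]=\bigcap_{n\in\mathbb N}\{x\mid g^{(n)}(f(x))<\hat p\}\]
is a countable intersection of internal sets. So \(K\) is both almost internal and a \(\delta\)-set.

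\textbf{Step 1: \(K\) is a subfield, relatively algebraically closed in \(\mathbb F_{\hat p}\).} Take \(x,y\) with heights at most \(\hat h\) in \(C\). Then \(x+y\), \(xy\) and \(x^{-1}\) are roots of resultant polynomials. These have degree at most \(\hat h^2\) and coefficients bounded by \(g(\hat h)\); the bound is Hadamard/Mahler-type, applied internally by transfer. The integer resultant is nonzero, and its coefficients are below \(\hat p\), so it stays nonzero modulo \(\hat p\). Since \(g(\hat h)\in C\), the new elements lie in \(K\).

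Similarly, let \(y\in\mathbb F_{\hat p}\) be a root of a nonzero polynomial with coefficients in \(K\). Eliminating the coefficients by iterated resultants (a product of conjugates) gives an integer polynomial of controlled height vanishing at \(y\), so \(y\in K\). The polynomial has standard degree, but its coefficients have nonstandard height.

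\textbf{Step 2: size and saturation.} Each \(\hat h\in C\) with \(\hat h\) nonstandard gives elements of \(K\) outside \(\bar{\mathbb Q}\). For cardinality and \(\aleph_1\)-saturation, take a countable type over \(K\) that is finitely satisfiable in \(K\). Countably many conditions of the form ``\(g^{(n)}(f(x))<\hat p\)'' are added to the countably many internal conditions of the type. Then \(\aleph_1\)-saturation of the ultraproduct \(\ast{\mathbb N}\) (the ultrafilter is countably incomplete) realizes the whole family inside \(K\). Applied to cuts of the image of \(\bar{\mathbb Q}\cap\mathbb R\), this also yields at least \(\mathfrak c\) elements, since there are \(\mathfrak c\) distinct cuts. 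The statement ``\(2^{\mathfrak c}\) embeddings of \(\mathbb R\)'' then follows from the standard facts that an \(\aleph_1\)-saturated real closed field contains \(2^{\mathfrak c}\) copies of \(\mathbb R\), and that algebraically closed fields of characteristic \(0\) and size at least \(\mathfrak c\) contain \(\mathbb C\), hence many copies of \(\mathbb R\).

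\textbf{Step 3: Galois theory.} Since \(K\) is relatively algebraically closed in the pseudo-finite field \(\mathbb F_{\hat p}\), restriction \(\mathrm{Gal}(\mathbb F_{\hat p})\to\mathrm{Gal}(K)\) is surjective. Hence \(\mathrm{Gal}(K)\) is procyclic.

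If \(\sqrt{-1}\in\mathbb F_{\hat p}\), I would argue that the relevant quotient is trivial, so \(K\) is algebraically closed. If \(\sqrt{-1}\notin\mathbb F_{\hat p}\), the embedded copy of \(\bar{\mathbb Q}\cap\mathbb R\) has small height, so it lies in \(K\). This forces \(K\cap\bar{\mathbb Q}\) to be real closed with Galois group \(\mathbb Z/2\). I would then use the Artin--Schreier theorem: a procyclic absolute Galois group with a nontrivial involution is \(\mathbb Z/2\).

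This Galois step is the main obstacle. Surjectivity only gives procyclicity; pinning the group down to \(1\) or \(\mathbb Z/2\) needs care. For example, one must exclude \(\mathbb Z_\ell\)-factors for odd \(\ell\). The first thing I would try is to show by transfer that every odd-degree polynomial over \(K\) already has a root in \(\mathbb F_{\hat p}\). I would do this by comparing with the standard primes \(p\) in a large set, where the same real-algebraic data split, and then use Step 1 to put that root in \(K\).
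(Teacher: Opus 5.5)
Your construction has a genuine gap, and it lies in the choice of the cut rather than only in the Galois theory of Step 3. Your \(C\) is defined purely by the size of \(\hat h\) relative to \(\hat p\), and nothing forces \(K=f^{-1}[C]\) to be real closed or algebraically closed. In general it is not.

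For example, let \(L_N\) be the Galois closure of \(\mathbb Q(i)\) together with the roots of all integer polynomials of degree and height at most \(N\). Let \(q_N>N\) be a prime unramified in \(L_N\). By Chebotarev, pick primes \(p_N>g^{(N)}(q_N)\) whose Frobenius is complex conjugation on \(L_N\) and is nontrivial on \(\sqrt{q_N}\). Then \(\bar{\mathbb Q}\cap\mathbb R\) embeds in \(\mathbb F_{\hat p}=\prod\mathbb F_{p_N}/\mathcal F\) and \(\sqrt{-1}\notin\mathbb F_{\hat p}\). However, the least quadratic non-residue \(\hat q\) of \(\hat p\) (whose representatives lie in \((N,q_N]\)) is nonstandard and satisfies \(g^{(n)}(\hat q)<\hat p\) for every standard \(n\), so \(\hat q\in C\). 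Hence \(\hat q=(\sqrt{\hat q-1})^2+1^2\) is a sum of two squares in \(K\) that is not a square, and \(K\) is not real closed. Taking the Frobenius trivial on \(L_N\) instead gives the same failure of algebraic closedness when \(\sqrt{-1}\in\mathbb F_{\hat p}\).

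This is why Step 3 stalls. Relative algebraic closedness only makes \(\mathrm{Gal}(K)\) a quotient of \(\hat{\mathbb Z}\). Indeed \(\mathbb F_{\hat p}\) itself contains \(\bar{\mathbb Q}\cap\mathbb R'\), omits \(\sqrt{-1}\), and has Galois group \(\hat{\mathbb Z}\) with no involution. Transfer from standard primes only controls polynomials of standard height, while the polynomials over \(K\) that matter have nonstandard height.

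The missing idea is to choose the cut by overspill with respect to exactly the properties you need.
Let \(\phi_d(\hat n)\) say that every monic polynomial of degree \(d\) (\(d\) odd, or any \(d>1\) when \(\sqrt{-1}\in\mathbb F_{\hat p}\)) with coefficients of height \(<\hat n\) has a root in \(\mathbb F_{\hat p}\). Let \(\phi_2(\hat n)\) say that a sum of two squares of elements of height \(<\hat n\) is a square. These internal statements hold for every standard \(n\), because the elements of standard height form the copy of \(\bar{\mathbb Q}\cap\mathbb R\) (resp.\ \(\bar{\mathbb Q}\)). So they hold up to nonstandard thresholds \(\hat n_d\).

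The paper takes \(C_{\hat p}=\bigcap_{d,i}\bigl[\lceil\sqrt[i]{\hat n_d}\,\rceil\bigr]\). With your height you could equally add the countably many conditions \(g^{(k)}(\hat h)<\hat n_d\), which keeps \(C\) a \(\delta\)-cut closed under \(g\). Real closedness is then immediate, with no Galois theory needed:
odd-degree roots come from \(\phi_d\);
transitivity of ``\(b-a\) is a square in \(\mathbb F_{\hat p}\)'' comes from \(\phi_2\);
totality comes from \(\hat p\equiv 3\bmod 4\);
square roots come from relative algebraic closedness.

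Two smaller points. First, your saturation argument only realizes internal conditions, so it needs quantifier elimination for RCF/ACF and the internal definition of the order. It must therefore come after closedness is proved. Second, when \(\sqrt{-1}\in\mathbb F_{\hat p}\) there is no internal order with which to realize Dedekind cuts. The bound \(|K|\ge\mathfrak c\) should instead come from \(K\supseteq f^{-1}[[\hat h]]\) for a nonstandard \(\hat h\in C\), which is an infinite internal set.
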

\begin{theorem}\label{r-sigma}
    \(\bar{\mathbb Q}\cap\mathbb R\) can be embedded in \(\mathbb F_{\hat p}\),
    then if \(\mathbb F_{\hat p}\) contains the square root of \(-1\) there there is an almost internal \(\sigma\)-sub­filed of \(\mathbb F_{\hat p}\) that is algebraically closed and of cardinality at least \(\mathfrak c\), and
    if \(\mathbb F_{\hat p}\) does not contain the square root of \(-1\)
    there is an almost internal \(\sigma\)-sub­field of \(\mathbb F_{\hat p}\) that is real closed, but no such field can be \(\aleph_1\)-saturated, even if the requirement to be almost internal is omitted. In both cases there are at least \(2^\mathfrak c\) options to embed \(\mathbb R\) in such an almost internal field. 
\end{theorem}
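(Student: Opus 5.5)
The plan is to build the field as \(f^{-1}[C]\) for a single internal ``height'' function \(f\) and a cut \(C\) that is a countable increasing union of initial segments. That makes the field automatically almost internal and a \(\sigma\)-set.

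\textbf{Construction.} Fix a nonstandard \(\hat h\in\ast{\mathbb N}\) with \(\hat h^k<\hat p\) for every standard \(k\), and with \(\hat h\) large enough that \(\ast{\mathbb Z}\)-coefficients of size up to \(\hat h\) already give \(\mathfrak c\) many non-infinitesimally-related values. For \(x\in\mathbb F_{\hat p}\) define
\[
f(x)=\min\{\hat h^{\deg P}\cdot H(P)\mid P\in\ast{\mathbb Z}[t]\setminus\{0\},\ P(x)\equiv 0 \pmod{\hat p}\},
\]
where \(H(P)\) is the maximum absolute value of the coefficients. This \(f\) is internal, being first-order definable from \(\hat h,\hat p\). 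Let \(C=\{\hat n\mid \exists k\in\mathbb N,\ \hat n\le\hat h^k\}\), which is a cut, and put \(K=f^{-1}[C]\). Then \(K=\bigcup_k\{x\mid f(x)\le\hat h^k\}\) is a countable union of internal sets. Moreover, \(x\in K\) exactly when \(x\) is a root of a nonzero polynomial of \emph{standard} degree \(d\) with coefficients bounded by \(\hat h^{k-d}\).

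\textbf{Field and closure properties.} Closure of \(K\) under \(+,\cdot,-,{}^{-1}\) comes from the standard resultant bounds. If \(P(x)=Q(y)=0\) with degrees \(d_1,d_2\) and heights \(H_1,H_2\), then \(x+y\) and \(xy\) are roots of a resultant of degree \(d_1d_2\). Its height is at most \((d_1+d_2)!^{O(1)}H_1^{d_2}H_2^{d_1}\), which is polynomial in \(\hat h\) because \(d_1,d_2\) are standard. The resultant is nonzero as an integer polynomial, and it stays nonzero mod \(\hat p\) because its height is \(<\hat p\). The same bound applied to a polynomial with coefficients in \(K\) shows that \(K\) is relatively algebraically closed in \(\mathbb F_{\hat p}\). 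By hypothesis \(K\supseteq\bar{\mathbb Q}\cap\mathbb R\) (respectively \(K\supseteq\bar{\mathbb Q}\) when \(\sqrt{-1}\in\mathbb F_{\hat p}\)).

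Cardinality \(\ge\mathfrak c\) needs an injection from \(\mathbb R\) into \(K\). For the reals in a bounded interval, take \(\lfloor r\hat h\rfloor\) divided by \(\hat h\). Distinct reals give distinct elements, and each element is a root of a linear polynomial of height \(\le\hat h\).

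\textbf{Main obstacle: real/algebraic closedness.} The hard step is to show that every polynomial of standard degree over \(K\) that ``should'' have a root actually has one in \(\mathbb F_{\hat p}\). Relative algebraic closedness alone does not give this.

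The argument I would try uses that \(\mathbb F_{\hat p}\) is pseudofinite, so its absolute Galois group is \(\hat{\mathbb Z}\). Relative algebraic closedness of \(K\) then makes \(\mathrm{Gal}(K)\) a quotient of \(\hat{\mathbb Z}\), hence procyclic. One then has to kill every quotient \(\mathbb Z/n\) with \(n>2\) (or \(n>1\) in the case \(\sqrt{-1}\in\mathbb F_{\hat p}\)). Since \(K(\sqrt{-1})\supseteq\bar{\mathbb Q}\) contains all roots of unity, such a quotient comes from a Kummer extension \(K(\sqrt[n]{a})\). It therefore suffices to show that every \(a\in K\) (resp. \(a\in K(\sqrt{-1})\)) is an \(n\)-th power in \(\mathbb F_{\hat p}\) up to a factor from \(\bar{\mathbb Q}\).

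I expect this to require choosing \(\hat h\), or refining the height, so that elements of \(K\) are ``\(\hat h\)-approximations of algebraic numbers''. One would then transfer the existence of roots from \(\bar{\mathbb Q}\cap\mathbb R\) by a Hensel/Newton-type argument carried out inside \(\ast{\mathbb Z}\). If this fails for the plain height, I would shrink \(C\) to a smaller countable union of initial segments adapted to these approximations.

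Granting this, the \(2^{\mathfrak c}\) embeddings follow in the standard way. In the algebraically closed case, \(\mathbb R\) embeds into an algebraically closed field of characteristic \(0\) and size \(\ge\mathfrak c\) in \(2^{\mathfrak c}\) ways, by transcendence bases and automorphisms. In the real closed case, \(K\) contains non-archimedean elements such as \(1/\hat h\), and one picks \(\mathfrak c\) algebraically independent elements in different archimedean positions.

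\textbf{Non-saturation.} Suppose \(K=\bigcup_n A_n\) with \(A_n\) internal, and set \(B_n=A_1\cup\dots\cup A_n\), which are internal and increasing. No \(B_n\) contains \(K\), since \(B_n\cap K=K\) would make \(K\) a countable-stage, and by \(\aleph_1\)-saturation of the ultraproduct internal, subfield, and \(\mathbb F_{\hat p}\) has no internal proper subfields containing \(\mathbb Q\). Pick \(e_n\in K\setminus B_n\).

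The goal is to use \(\aleph_1\)-saturation of the ordered field \(K\) to realize a countable type \(\{x>\!\!>e_n\}_n\) (or a suitable cut type). To turn this into a contradiction with \(x\in B_m\) for some \(m\), one applies overspill to the internal set \(B_m\) and uses the field operations: \(x-e_m\) or \(x/e_m\) would have to lie in a set of a fixed \(B_{m'}\). I consider this second step delicate as well, and would try to first prove that each \(B_n\cap K\) is bounded in the valuation (archimedean class) sense. Countably many bounded classes are then escaped by a single element of the \(\aleph_1\)-saturated field.
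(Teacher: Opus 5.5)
\textbf{Main gap: real (algebraic) closedness.} The decisive step is that \(K\) is real closed (resp.\ algebraically closed when \(\sqrt{-1}\in\mathbb F_{\hat p}\)). You leave exactly this step as ``granting this'', and the route you sketch cannot supply it.

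You are right that \(\hat h\) must be constrained, but an approximation property is the wrong kind of constraint. \(\mathbb F_{\hat p}\) carries no absolute value in which elements of \(K\) approximate algebraic numbers, so a Newton/Hensel argument has nothing to act on. The Kummer reduction only restates the problem.

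In fact, with \(\hat h\) subject only to \(\hat h^k<\hat p\), the claim can fail. Every standard positive integer is a square in \(\mathbb F_{\hat p}\), since its square root lies in \(\bar{\mathbb Q}\cap\mathbb R'\). So the least quadratic non-residue \(\hat q\) of \(\hat p\) is nonstandard, and your conditions do not exclude \(\hat h\ge\hat q\). In that case \(z=\sqrt{\hat q-1}\in\mathbb F_{\hat p}\) is a root of \(t^2-(\hat q-1)\), so \(f(z)\le\hat h^3\) and \(z\in K\). But \(z^2+1=\hat q\) is not a square in \(\mathbb F_{\hat p}\), so \(K\) is not real closed.

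\textbf{The missing idea is overspill.} Use a height that does not depend on \(\hat h\) (the paper's \(f_{\bar{\mathbb Q}}\)) and consider the internal statement \(\phi_d(\hat n)\).
\begin{itemize}
\item If \(\sqrt{-1}\notin\mathbb F_{\hat p}\): every monic polynomial of odd degree \(d\) with coefficients of height \(<\hat n\) has a root in \(\mathbb F_{\hat p}\), and for \(d=2\), \(x^2+y^2\) is a square whenever \(x,y\) have height \(<\hat n\).
\item If \(\sqrt{-1}\in\mathbb F_{\hat p}\): every monic polynomial of degree \(d>1\) with such coefficients has a root.
\end{itemize}
This holds for every standard \(n\), because the elements of standard height form \(\bar{\mathbb Q}\cap\mathbb R'\) (resp.\ \(\bar{\mathbb Q}\)). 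Hence it holds up to some nonstandard \(\hat n_d\).

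By \(\aleph_1\)-saturation, choose a nonstandard \(\hat n\) with \(\hat n^i<\hat n_d\) for all standard \(i,d\), and take the \(\sigma\)-cut \(\bigcup_i[\hat n^i]\). This cut is a semiring lying below every \(\hat n_d\). So the required roots exist in \(\mathbb F_{\hat p}\), and they lie in \(f_{\bar{\mathbb Q}}^{-1}\bigl[\bigcup_i[\hat n^i]\bigr]\) by relative algebraic closedness. The Pythagorean condition \(\phi_2\) makes ``\(b-a\) is a square'' an ordering. No Galois theory is needed, and your \(K\) could be rescued the same way by choosing \(\hat h\) below these overspill bounds.

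\textbf{Embedding \(\mathbb R\).} The same smallness of \(\hat h\) is silently used in your embedding of \(\mathbb R\). To place \(\lfloor r\hat h\rfloor/\hat h\) between the rationals around \(r\) in the field order, you need certain positive hyperintegers of size about \(\hat h\) to be squares mod \(\hat p\). Also, ``different archimedean positions'' is the wrong picture, because the image of \(\mathbb R\) consists of finite elements. The paper gets the \(2^{\mathfrak c}\) embeddings from the \(\aleph_1\)-saturated real closed field \(f_{\bar{\mathbb Q}}^{-1}\bigl[\bigcap_i[\lceil\sqrt[i]{\hat n}\rceil]\bigr]\), which sits inside the \(\sigma\)-field.

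\textbf{Non-saturation.} Your sketch does not close, and the detour through \(e_n\in K\setminus B_n\) is unnecessary. What you need is only that each piece is bounded above, and this is immediate from one observation. For any real closed \(F\subseteq\mathbb F_{\hat p}\) with \(\sqrt{-1}\notin\mathbb F_{\hat p}\), the order of \(F\) is the restriction of the \emph{internal} relation ``\(b-a\) is a square in \(\mathbb F_{\hat p}\)''. The reason is that a square root in \(\mathbb F_{\hat p}\) of an element of \(F\) lies in \(F(\sqrt{-1})\cap\mathbb F_{\hat p}=F\).

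Hence each nonempty internal piece \(S_i\subseteq F\) is a hyperfinite set totally ordered by an internal relation, so by transfer it has a maximum \(m_i\). The countable type \(\{x>m_i\}_{i\in\mathbb N}\) is finitely satisfiable in \(F\), but it is omitted, since every \(x\in F\) lies in some \(S_i\). No archimedean-class analysis is required.
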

Note that if \(|\ast{\mathbb N}|=\mathfrak c\), which in particular happens if \(|I|=\aleph_0\), the fields in the last two theorems have to also have cardinality \(\mathfrak c\), which means that if \(\mathbb F_{\hat p}\) contains a square root of \(-1\) these fields are isomorphic to the complex numbers and in both cases the bound of \(2^\mathfrak c\) is tight.

In Section \ref{countables} we show that \(\mathbb Q\) and either \(\bar{\mathbb Q}\cap\mathbb R'\) or \(\bar{\mathbb Q}\) are almost internal.
In Section \ref{sigma-delta} we prove some results about \(\sigma\)-sets and \(\delta\)-sets.
In section \ref{R} we prove Theorems \ref{R-no-sigma-delta} and \ref{r-bad} in the case that the field does not contain a square root of \(-1\). This section stands on its own and does not require prior sections in order to understand it.
In section \ref{superfields} we prove Theorems \ref{r-delta-good}
and \ref{r-sigma}
in the case that the field does not contain a square root of \(-1\).
In section \ref{more} we look at other substructures of the field we constructed for the proof of Theorem \ref{r-delta-good} constructed similarly.
In section \ref{complex} we prove theorems \ref{R-no-sigma-delta}, \ref{r-bad}, \ref{r-delta-good} and \ref{r-sigma}
in the case that the field contains a square root of \(-1\).
In addition, we prove some results about cuts in Appendix \ref{cuts}.

\section{Some almost internal sub­fields of \(\mathbb{R}'\)}\label{countables}
First, let us show that some sub­fields of \(\mathbb{R}'\) are almost internal, specifically the rational numbers \(\mathbb{Q}\) and in the case that \(\mathbb F_{\hat p}\) does not contain the square root of \(-1\) also the real algebraic numbers \(\bar{\mathbb Q}\cap\mathbb{R}'\). In the case that \(\mathbb F_{\hat p}\) contains the square root of \(-1\) we show that \(\bar{\mathbb Q}\) is an almost internal subset of it. The cut will be the standard natural numbers \(\mathbb N\) and the functions will be constructed in such a way that we will be able to prove that the result is a field in the first case, and a field that is algebraically closed relative to \(\mathbb{F}_{\hat p}\) in the second case, without relying on the fact that they are in fact \(\mathbb{Q}\) and \(\bar{\mathbb{Q}}\cap\mathbb{R}'\) or \(\bar{\mathbb Q}\) respectively. This will result from the fact that for both functions and for any \(a,b\in\mathbb{F}_{\hat p}\), \(f(a+b),f(ab),f(-a)\), and \(f(a^{-1})\) if \(a\ne 0\), will be bounded using expressions containing only \(f(a)\), \(f(b)\) and standard natural numbers, and in the case of \(\bar{\mathbb Q}\cap\mathbb R\) or \(\bar{\mathbb Q}\) the same will also be true for \(f(x)\) where \(x\) is a solution of some polynomial \(P\) with coefficients \(a_0,\dots,a_n\) and the expression will contain only \(f(a),\dots,f(a_n),n\) and standard natural numbers.

For \(\mathbb{Q}\) we use the following function:
\begin{definition}
    Let \(f_{\mathbb Q}\) be the function that assigns each \(x\in \mathbb{F}_{\hat p}\) the minimal value of \(\max(n, m)\) for \(n,m\in\ast{\mathbb N}\) where either \(x\equiv_{\hat p}\frac nm\) or \(x\equiv_{\hat p}-\frac nm\).
\end{definition}
This function can be defined over all the fields \(\mathbb F_p\) for standard prime \(p\) and therefore it is well defined and internal. For example, in \(\mathbb{F}_{13}\), \(f_{\mathbb Q}(7)=2\) because \(7\equiv_{13}\frac12\).

\begin{proposition}
    For \(f=f_\mathbb Q\), \(f^{-1}[\mathbb N]\cong\mathbb Q\).
\end{proposition}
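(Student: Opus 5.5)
The plan is to identify \(f^{-1}[\mathbb N]\) exactly with the canonical copy of \(\mathbb Q\) inside \(\mathbb F_{\hat p}\), the one fixed at the start. Since \(\hat p\) is a nonstandard prime, every standard nonzero integer is invertible modulo \(\hat p\). Thus every standard rational \(\pm n/m\) with \(n,m\in\mathbb N\) and \(m\neq 0\) has a well-defined image in \(\mathbb F_{\hat p}\). That image lies in \(f^{-1}[\mathbb N]\), because \(f_{\mathbb Q}\) of it is at most \(\max(n,m)\), which is standard. We also need the image map to be injective, and this holds because \(\mathbb F_{\hat p}\) has characteristic \(0\) in the sense that no standard nonzero integer is \(\equiv_{\hat p}0\). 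In fact the copy of \(\mathbb Q\) in \(\mathbb F_{\hat p}\) is exactly the image of the unique ring embedding \(\mathbb Q\to\mathbb F_{\hat p}\), so the inclusion \(\mathbb Q\subseteq f^{-1}[\mathbb N]\) comes down to this bound.

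For the reverse inclusion, I will take \(x\in\mathbb F_{\hat p}\) with \(f_{\mathbb Q}(x)=k\in\mathbb N\). By the definition of \(f_{\mathbb Q}\), which is a minimum of an internal, first-order definable quantity, there exist \(n,m\in\ast{\mathbb N}\) with \(\max(n,m)=k\) and \(x\equiv_{\hat p}\pm n/m\). Here \(m\neq 0\) is implicit, since \(n/m\) must make sense. The key observation is that any element of \(\ast{\mathbb N}\) that is at most a standard \(k\) is itself standard, because \(\mathbb N\) is an initial segment of \(\ast{\mathbb N}\). Hence \(n\) and \(m\) are standard, and \(x\) is the image of the standard rational \(\pm n/m\). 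Therefore \(f^{-1}[\mathbb N]\) is precisely the copy of \(\mathbb Q\), and in particular it is closed under the field operations and isomorphic to \(\mathbb Q\).

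The main subtlety, rather than a real obstacle, is justifying that \(f_{\mathbb Q}\) is total, so that every \(x\) has some representation \(\pm n/m\). This follows because \(x\equiv_{\hat p} x/1\) with \(x<\hat p\), so the minimum exists in \(\ast{\mathbb N}\) by transfer of well-ordering to internal sets. I also need to check that the internal statement "\(x\equiv_{\hat p}\pm n/m\)" for standard \(n,m\) agrees with the field-theoretic meaning of the embedded rational, which is routine.

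The paper also indicates a more intrinsic route that avoids identifying the set with \(\mathbb Q\) directly. One shows bounds such as \(f(a+b)\le 2f(a)f(b)\), \(f(ab)\le f(a)f(b)\), \(f(-a)=f(a)\) and \(f(a^{-1})=f(a)\), which make \(f^{-1}[\mathbb N]\) a field. One then uses that it is a field containing \(\mathbb Q\) all of whose elements are standard quotients. I would mention these bounds, since they set up the pattern used later for \(\bar{\mathbb Q}\), but the direct argument above already suffices for the isomorphism.
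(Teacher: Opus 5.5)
Your proof is correct and follows the paper's route. The inclusion \(\mathbb Q\subseteq f^{-1}[\mathbb N]\) comes from the bound \(f(\pm\frac nm)\le\max(n,m)\), and the reverse inclusion from the fact that any witness pair \(n,m\le f(x)\in\mathbb N\) is standard because \(\mathbb N\) is an initial segment of \(\ast{\mathbb N}\); your checks on totality and injectivity only spell out details the paper leaves implicit.
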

\begin{proof}
    In one direction, for each \(x\in \mathbb Q\), either \(x=\frac ab\) or \(x=-\frac ab\) for some numbers \(a,b\in\mathbb N\). Therefore \(f(x)\le\max(a, b)\in\mathbb N\). In the other direction, if \(x\in\mathbb F_{\hat p}\) has \(f(x)=n\in\mathbb N\) then \(x\) is either \(\pm\frac nm\) or \(\pm\frac mn\) for \(m\le n\). Therefore \(m\in\mathbb N\) too and so \(x\) is the fraction of two standard integers and so is a rational number.
\end{proof}

\begin{proposition}\label{f-q-properties}
    for \(f=f_{\mathbb Q}\) and for any \(a,b\in\mathbb{F}_{\hat p}\), \(f(a+b)\le 2f(a)f(b)\), \(f(ab)\le f(a)f(b)\), \(f(-a)=f(a)\) and if \(a\ne 0\) then also \(f(a^{-1})=f(a)\).
\end{proposition}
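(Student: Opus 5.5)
The plan is to work directly from the definition of \(f_{\mathbb Q}\) in each finite field \(\mathbb F_p\), and then transfer the inequalities to \(\mathbb F_{\hat p}\) by Łoś's theorem. Alternatively, the same argument can be read internally: the statements are first-order in the language of arithmetic with the parameter \(\hat p\), and we only use facts valid in every \(\mathbb F_{\hat p}\).

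We fix witnesses. Let \(f(a)=N\) be witnessed by \(a\equiv \varepsilon\frac{n}{m}\) with \(\varepsilon\in\{1,-1\}\) and \(\max(n,m)=N\). Likewise let \(f(b)=M\) be witnessed by \(b\equiv\delta\frac{k}{l}\) with \(\max(k,l)=M\). Note that \(m,l\neq 0\) modulo \(\hat p\), since they are denominators.

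\textbf{Negation and inverse.} The witness \(a\equiv\varepsilon\frac nm\) gives \(-a\equiv(-\varepsilon)\frac nm\), so \(f(-a)\le f(a)\). By symmetry (\(a=-(-a)\)) we get equality. If \(a\ne0\) then \(n\not\equiv0\), so \(a^{-1}\equiv\varepsilon\frac mn\) is a legitimate representation with the same maximum. Hence \(f(a^{-1})\le f(a)\), and applying this to \(a^{-1}\) gives equality.

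\textbf{Product.} We have \(ab\equiv\varepsilon\delta\frac{nk}{ml}\), and \(\max(nk,ml)\le NM\). Hence \(f(ab)\le f(a)f(b)\).

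\textbf{Sum.} This is the only case needing a little care. We write
\[a+b\equiv\frac{\varepsilon nl+\delta km}{ml}.\]
The numerator is an integer \(t\) with \(|t|\le nl+km\le 2NM\). Let \(\sigma\) be its sign, so that \(a+b\equiv\sigma\frac{|t|}{ml}\). Since \(ml\le NM\le 2NM\), this representation shows \(f(a+b)\le 2f(a)f(b)\).

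Two degenerate points need checking. First, \(t\) may be \(0\): then \(a+b=0\), and \(f(0)=0\) or \(1\) depending on the convention, which is fine. Second, the integers \(nl+km\) may exceed \(\hat p\). The definition of \(f\) takes \(n,m\in\ast{\mathbb N}\) without requiring them to lie below \(\hat p\), so reduction modulo \(\hat p\) is harmless. The only thing needed is that \(ml\not\equiv0\), which holds because \(\hat p\) is prime and \(m,l\not\equiv 0\).

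The main obstacle is purely bookkeeping: handling signs and zero correctly, and making sure the representation \(\pm\frac{n}{m}\) is allowed with arbitrary (possibly large) nonstandard \(n,m\). No earlier result is needed beyond the definition of \(f_{\mathbb Q}\).
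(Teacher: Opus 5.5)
Your proposal is correct and follows the paper's proof essentially verbatim: fix minimal signed-fraction witnesses for \(a\) and \(b\), read off representations of \(-a\), \(a^{-1}\), \(ab\) and \(a+b\) with the stated bounds on numerator and denominator, and use that negation and inversion are involutions to upgrade \(\le\) to equality. You are slightly more careful than the paper in two places: you correctly note that \(a\ne 0\) forces the numerator \(n\not\equiv 0\) (the paper writes \(m\ne 0\)), and that products of denominators stay nonzero because \(\hat p\) is prime.
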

\begin{proof}
    Notate \(a=\frac nm\) and \(b=\frac lk\) for \(n,l\in\ast{\mathbb Z}\) and \(m,k\in\ast{\mathbb N}\) where \(\max(|n|,m)\) and \(\max(|l|,k)\) are minimal. Then \(a+b=\frac{nk+lm}{mk}\) and therefore \begin{align*}
        f(a+b)&\le\max(|nk+lm|,mk)\le\max(|n|k+|l|m,mk)\le\\
        &\le\max(f(a)f(b)+f(b)f(a),f(a)f(b))=\\&=\max(2f(a)f(b),f(a)f(b))=2f(a)f(b).
    \end{align*}
    Similarly, \(ab=\frac{nl}{mk}\) and therefore \(f(ab)\le f(a)f(b)\). Also, \(-a=\frac{-n}{m}\) and so \(f(-a)\le\max(|n|,m)=f(a)\), and with the same logic \(f(a)=f(--a)\le f(-a)\). Similarly, if \(a\ne 0\) then \(m\ne 0\) and so \(a^{-1}=\frac{m}{n}=\pm\frac{m}{|n|}\). Therefore \(f(a^{-1})\le f(a)\) which means that \(f(a^{-1})=f(a)\).
\end{proof}
\begin{proposition}\label{field}
    For any field \(\mathbb F\), ordered semi­ring \(R\), downwards-closed sub-semi­ring \(S\subseteq R\) and function \(f:\mathbb{F}\to R\) satisfying that for each \(a, b\in\mathbb{F}\), \(f(a+b),f(ab),f(-a)\), and \(f(a^{-1})\) if \(a\ne 0\), are bounded by arithmetic terms in only \(f(a)\), \(f(b)\) and constant elements of \(S\), \(f^{-1}[S]\) is also a field.
\end{proposition}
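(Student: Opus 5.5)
The plan is to show that \(f^{-1}[S]\) contains \(0\) and \(1\) and is closed under addition, multiplication, negation and inverses of nonzero elements. The field axioms (associativity, distributivity, and so on) are then inherited from \(\mathbb F\), so \(f^{-1}[S]\) is a subfield of \(\mathbb F\).

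The key lemma I will prove first is that \(S\) is closed under evaluation of arithmetic terms. More precisely, if \(t(x_1,\dots,x_k)\) is a term built from variables, constants from \(S\), \(+\) and \(\cdot\), then \(t(s_1,\dots,s_k)\in S\) whenever all \(s_i\in S\). This follows by induction on the construction of \(t\). Variables and constants lie in \(S\) by hypothesis, and the inductive step uses that \(S\) is a sub-semiring, so it is closed under \(+\) and \(\cdot\). Here ``arithmetic terms'' means semiring terms: no subtraction or division, since \(R\) is only a semiring. This matches how the bounds in Proposition \ref{f-q-properties} look, for example \(2f(a)f(b)\), which is a term with the constant \(2\in S\).

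With this lemma the closure properties follow directly. Take \(a,b\in f^{-1}[S]\), so \(f(a),f(b)\in S\). By hypothesis there is a term \(t\) with constants from \(S\) such that \(f(a+b)\le t(f(a),f(b))\). By the lemma, \(t(f(a),f(b))\in S\). Since \(S\) is downwards-closed in \(R\), we get \(f(a+b)\in S\), that is, \(a+b\in f^{-1}[S]\). The cases \(ab\), \(-a\) and \(a^{-1}\) for \(a\ne0\) are identical. For \(0\) and \(1\), I will use that \(f^{-1}[S]\) is nonempty, which needs some care. If some \(a\in f^{-1}[S]\), then \(0=a+(-a)\) and, when \(a\neq 0\), \(1=aa^{-1}\) both lie in \(f^{-1}[S]\) by the closure just proved. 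If \(f^{-1}[S]\subseteq\{0\}\), then to get \(1\) I will apply the bound for inverses to \(a=1\): \(f(1)=f(1^{-1})\le t(f(1))\), which is circular and gives nothing. So the statement implicitly needs \(f(1)\in S\) (and \(f(0)\in S\)). In the intended application \(f_{\mathbb Q}(0)=0\) and \(f_{\mathbb Q}(1)=1\) are standard, so I will add this as a remark or mild hypothesis rather than treat it as a real obstacle.

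The main point needing care is therefore only the bookkeeping in the term lemma: the bounding expressions must be semiring terms whose constants come from \(S\), not merely from \(R\). Beyond that, every step is a one-line use of downward closure.
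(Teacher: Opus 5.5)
Your proof is correct and follows the paper's argument. The paper also reduces to closure under $+$, $\cdot$, $-$ and ${}^{-1}$, notes that a term with constants from $S$ evaluated at elements of $S$ lies in $S$ (your induction on terms makes this explicit), and then applies downward closure. Your worry about $0$ and $1$ is justified, and the paper's ``it suffices'' step silently skips it. The constant function $f\equiv\hat N\notin\mathbb N$ with $S=\mathbb N$ satisfies every bound (take the term $f(a)$), yet $f^{-1}[S]=\emptyset$. So a hypothesis such as $f(1)\in S$ is genuinely needed. It holds in the paper's applications because $f_{\mathbb Q}(1)=f_{\bar{\mathbb Q}}(1)=1$. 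One small slip: $f_{\mathbb Q}(0)=1$, not $0$, since the denominator must be nonzero; it is standard either way.
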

\begin{proof}
    It suffices to prove that for any \(a,b\in f^{-1}[S]\), \(a+b,ab,-a\in f^{-1}[S]\) and if \(a\ne 0\) then also \(a^{-1}\in f^{-1}[S]\). We will show the proof only for the case of a sum, with the other cases proven similarly. Let \(a,b\in f^{-1}[S]\). Then \(f(a+b)\le T(f(a),f(b))\) where \(T\) is an arithmetic term which may only have constants from \(S\). Now, because \(f(a),f(b)\in S\), also \(T(f(a),f(b))\in S\), and so, because \(S\) is downwards closed, \(f(a+b)\in S\) which means that \(a+b\in f^{-1}[S]\).
\end{proof}
\begin{corollary}
    \(\mathbb Q\) is a field.
\end{corollary}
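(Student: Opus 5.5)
The corollary follows by applying Proposition \ref{field} with \(\mathbb F=\mathbb F_{\hat p}\), \(R=\ast{\mathbb N}\), \(S=\mathbb N\) and \(f=f_{\mathbb Q}\). Since \(f_{\mathbb Q}^{-1}[\mathbb N]\cong\mathbb Q\) by the preceding proposition, the conclusion of Proposition \ref{field} then gives exactly that \(\mathbb Q\) is a field. As the paper intends, the argument uses only the bounds on \(f\) and not the independently known fact that \(\mathbb Q\) is a field.

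The plan is to check the hypotheses in order. First, \(\ast{\mathbb N}\) with its usual addition, multiplication and order is an ordered semiring. Second, \(\mathbb N\) is a sub-semiring of \(\ast{\mathbb N}\): it is closed under \(+\) and \(\cdot\) and contains \(0\) and \(1\). Third, \(\mathbb N\) is downwards closed in \(\ast{\mathbb N}\), since every nonstandard natural number exceeds every standard one. Equivalently, if \(\hat n\le n\) with \(n\) standard, then \(\hat n\) is one of \(0,\dots,n\), using the transfer of the first-order sentence \(\forall x\,(x\le n\to x=0\vee\dots\vee x=n)\).

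Next I verify the bounding hypothesis of Proposition \ref{field} using Proposition \ref{f-q-properties}. We have \(f(a+b)\le 2f(a)f(b)\), a term whose only constant, \(2\), lies in \(S\). We have \(f(ab)\le f(a)f(b)\), and \(f(-a)=f(a)\) and \(f(a^{-1})=f(a)\) for \(a\neq 0\), where the identity term bounds \(f(-a)\) and \(f(a^{-1})\). All four hypotheses are therefore met, so \(f_{\mathbb Q}^{-1}[\mathbb N]\) is a subfield of \(\mathbb F_{\hat p}\).

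I expect no real obstacle. The only point needing care is that \(\mathbb N\) is genuinely downwards closed in \(\ast{\mathbb N}\), which is the transfer fact noted above. One should also remember that Proposition \ref{field} only produces closure under the operations; \(0\) and \(1\) lie in \(f^{-1}[\mathbb N]\) directly, since \(f(0)=\max(0,1)=1\) and \(f(1)=1\).
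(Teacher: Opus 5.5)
Your proposal is correct and is exactly the paper's proof: apply Proposition \ref{field} with \(\mathbb F=\mathbb F_{\hat p}\), \(R=\ast{\mathbb N}\), \(S=\mathbb N\) and \(f=f_{\mathbb Q}\), with the bounds supplied by Proposition \ref{f-q-properties}. Your extra checks, that \(\mathbb N\) is a downwards-closed sub-semiring of \(\ast{\mathbb N}\) and that \(0,1\in f_{\mathbb Q}^{-1}[\mathbb N]\), just make explicit what the paper leaves implicit.
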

\begin{proof}
    Take \(\mathbb F=\mathbb F_{\hat p}\), \(R=\ast{\mathbb N}\), \(S=\mathbb N\) and \(f=f_\mathbb Q\).
\end{proof}

Now let us describe the function we use for \(\bar {\mathbb Q}\cap\mathbb R'\) and \(\bar{\mathbb Q}\). 
\begin{definition}
    Let \(f_{\bar{\mathbb Q}}\) be the function that maps each \(x\in \mathbb F_{\hat p}\) to the minimal value of \(n+\lceil\log_2\max(m,k)\rceil\) where \(k\ne 0\) and \(kx\) is an eigenvalue of an \(n\times n\) matrix over \(\mathbb F_{\hat p}\) with entries of absolute value at most \(m\).
\end{definition}
This function can again be defined over all the fields \(\mathbb F_p\) for standard prime numbers \(p\) and therefore it is well defined and internal. For example, in \(\mathbb F_{199}\), \(f_{\bar{\mathbb Q}}(10)=3\) because \(20=2\cdot 10\) is an eigenvalue of the matrix \(\begin{pmatrix}
    0&1\\2&0
\end{pmatrix}\) in \(\mathbb F_{199}\), and \(2+\lceil\log_2\max(2,2)\rceil=2+\lceil\log_22\rceil=2+1=3\).

\begin{proposition}\label{f-qbar}
    For \(f=f_{\bar{\mathbb Q}}\) , \(f^{-1}[\mathbb N]=F\) where \(F=\bar{\mathbb Q}\cap\mathbb R'\) if \(\mathbb F_{\hat p}\) does not contain a square root of \(-1\) and \(F=\bar{\mathbb Q}\) if it does.
\end{proposition}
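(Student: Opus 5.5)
The plan is to show two things. First, \(f(x)\in\mathbb N\) holds exactly when \(x\) is algebraic over \(\mathbb Q\) inside \(\mathbb F_{\hat p}\), that is, when \(x\) is a root of a nonzero polynomial with standard integer coefficients. Second, the field \(K\) of all such elements is \(\bar{\mathbb Q}\cap\mathbb R'\) or \(\bar{\mathbb Q}\) according to whether \(\mathbb F_{\hat p}\) contains a square root of \(-1\). As elsewhere in this section, I read ``\(kx\) is an eigenvalue of \(A\)'' as \(\det(kxI-A)=0\) in \(\mathbb F_{\hat p}\). Following the standing hypothesis of Theorems \ref{r-delta-good} and \ref{r-sigma}, I assume that \(\bar{\mathbb Q}\cap\mathbb R\) embeds in \(\mathbb F_{\hat p}\), with image \(\bar{\mathbb Q}\cap\mathbb R'\).

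\emph{Algebraic elements have standard \(f\)-value.} Let \(x\in K\). Take a nonzero polynomial \(P=\sum_{i\le n}c_it^i\in\mathbb Z[t]\) with \(c_n\neq 0\) and \(P(x)=0\); \(P\) and its coefficients are standard. Put \(k=|c_n|\), which is standard, nonzero, and less than \(\hat p\). The standard substitution \(y=c_nx\) turns \(P(x)=0\) into a monic integer polynomial equation \(Q(y)=0\) of degree \(n\) with standard coefficients. Then \(\pm kx\) is a root of \(Q\), hence an eigenvalue of the companion matrix of \(Q\). That matrix is \(n\times n\) with standard integer entries. If the root is \(-kx\), I replace the companion matrix by its negative, so that \(kx\) itself is an eigenvalue. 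The resulting value \(n+\lceil\log_2\max(m,k)\rceil\) is standard, so \(f(x)\in\mathbb N\).

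\emph{Standard \(f\)-value forces algebraicity.} Suppose \(f(x)=N\in\mathbb N\), witnessed by \(n\), \(m\), \(k\) and a matrix \(A\). Then \(n\le N\) and \(\max(m,k)\le 2^N\), so \(n\), \(m\) and \(k\) are all standard. The entries of \(A\) are elements of \(\mathbb F_{\hat p}\) of absolute value at most \(m\), so each entry is the image of a standard integer. Hence \(\chi_A(t)=\det(tI-A)\) is the reduction of a standard monic integer polynomial. Because it is monic, it stays nonzero after reduction modulo \(\hat p\), and \(\chi_A(kx)=0\). Since \(0<k<\hat p\), \(k\) is invertible, and \(x\) is a root of the nonzero standard polynomial \(\chi_A(kt)\). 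So \(x\in K\). This part is routine; the only point to check is that the bounds force every parameter to be standard.

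\emph{Identifying \(K\).} This is the main step, handled by Galois-theoretic maximality. \(K\) is a field that is algebraic over \(\mathbb Q\), so \(K\) embeds in \(\bar{\mathbb Q}\), and it contains the real closed field \(\bar{\mathbb Q}\cap\mathbb R'\). Since \([\bar{\mathbb Q}:\bar{\mathbb Q}\cap\mathbb R]=2\), there is no intermediate field, so \(K\) equals either \(\bar{\mathbb Q}\cap\mathbb R'\) or the field obtained from it by adjoining \(\sqrt{-1}\), which is all of \(\bar{\mathbb Q}\). If \(\mathbb F_{\hat p}\) has no square root of \(-1\), then \(K\neq\bar{\mathbb Q}\), so \(K=\bar{\mathbb Q}\cap\mathbb R'\). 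Otherwise \(\sqrt{-1}\in K\), and \(K\) is algebraically closed, isomorphic to \(\bar{\mathbb Q}\). Combining the three steps gives \(f^{-1}[\mathbb N]=K=F\).
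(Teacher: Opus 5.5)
Your proposal is correct and follows essentially the same route as the paper. Both use a companion-matrix witness for one inclusion and the standardness of \(n,m,k\) forcing algebraicity for the other. Both then identify the algebraic elements using the fact that a real closed field's algebraic closure is obtained by adjoining \(\sqrt{-1}\); the paper phrases this as writing \(x=a+bi\) in \(\mathbb R'[i]\) and forcing \(b=0\), while you argue \([K:\bar{\mathbb Q}\cap\mathbb R']\le 2\).

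Your version is somewhat more careful than the paper's. It explicitly settles the case \(\sqrt{-1}\in\mathbb F_{\hat p}\), and it uses only \(\bar{\mathbb Q}\cap\mathbb R'\) rather than all of \(\mathbb R'\). One phrasing fix: justify the last step by saying \(K\) is algebraic over the real closed field \(\bar{\mathbb Q}\cap\mathbb R'\). Going through an arbitrary embedding into \(\bar{\mathbb Q}\) needs care, because such an embedding need not carry \(\bar{\mathbb Q}\cap\mathbb R'\) onto \(\bar{\mathbb Q}\cap\mathbb R\).
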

\begin{proof}
    In one direction, if \(x\in F\) then it is an element of \(\mathbb F_{\hat p}\) and there is a standard rational monic polynomial \(P\) of degree \(n\) for which \(P(x)=0\). Notate by \(M\) the companion matrix of \(P\), by \(k\) the lowest common denominator of the entries of \(M\) and by \(m\) the maximal absolute value of an entry of \(kM\). Then \(x\) is an eigenvalue of \(M\) and therefore \(kx\) is an eigenvalue of \(kM\), which means that \(f(x)\le n+\lceil\log_2\max(k,m)\rceil\in \mathbb N\).

    In the other direction, if for some \(x\in\mathbb F_{\hat p}\), \(f(x)=N\in \mathbb N\), then there are \(n\le N\) and \(m,k\le 2^N\) such that \(k\ne 0\) and \(kx\) is an eigenvalue of some \(n\times n\) matrix with entries of absolute value at most \(m\), and because \(n, m, k\) are all standard natural numbers, this means that \(kx\) is an algebraic integer and so that \(x\in\bar{\mathbb Q}\) is algebraic. Additionally, if \(i=\sqrt{-1}\notin\mathbb F_{\hat p}\), because \(\mathbb R'\cong\mathbb R\) its algebraic closure is \(\mathbb R'[i]=\mathbb C'\cong\mathbb C\). Because \(x\in\bar{\mathbb Q}\), it also holds that \(x\in\mathbb C'\) and therefore \(x=a+bi\) for \(a,b\in\mathbb R'\). However, if \(b\ne0\) then we get that \(i=\frac{x-a}{b}\in\mathbb F_{\hat p}\), and therefore \(b=0\) which means that \(x\in\mathbb R'\).
\end{proof}
\begin{proposition}\label{f-qbar-properties}
    For \(f=f_{\bar{\mathbb Q}}\) and any \(a, b\in\mathbb{F}_{\hat p}\), \(f(a+b)\le f(a)f(b)+1\), \(f(ab)\le f(a)f(b)\), \(f(-a)=f(a)\), and if \(a\ne 0\), \(f(a^{-1})\le 2f(a)^2+f(a)\). Furthermore, if \(P\) is an internal nonstandard monic polynomial over \(\mathbb F_{\hat p}\) with coefficients \(a_0,\dots,a_{\hat n-1}\) (i.e. \(P(x)=x^{\hat n}+\sum_{i=0}^{\hat n-1}a_i x^i\)) then for each root \(x\in\mathbb F_{\hat p}\) of \(P\), \(f(x)\le \hat n\prod_{i=0}^{\hat n-1}f(a_i)\).
\end{proposition}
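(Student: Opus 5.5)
The plan is to prove every bound by an explicit matrix construction. If \(kx\) is an eigenvalue of an \(n\times n\) matrix \(A\) with entries of absolute value at most \(m\), write \(L_x=\lceil\log_2\max(m,k)\rceil\), so that \(f(x)=n+L_x\) for an optimal choice. Everything is internal: we are working in \(\mathbb F_{\hat p}\) with internal matrices of possibly nonstandard size. Hence all the facts about tensor products, characteristic polynomials and companion matrices hold there by transfer from the standard fields \(\mathbb F_p\). For each operation we take optimal witnesses for \(a\) and \(b\), build a witness for the new element, and estimate its size and its entry and denominator bounds.

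\textbf{Product, negation and sum.} Let \(ka\) be an eigenvalue of \(A\) (\(n\times n\), entries \(\le m\)) and \(lb\) an eigenvalue of \(B\) (\(n'\times n'\), entries \(\le m'\)).
\begin{itemize}
\item For the product, \(kl\cdot ab\) is an eigenvalue of \(A\otimes B\), which is \(nn'\times nn'\) with entries \(\le mm'\). Hence
\[f(ab)\le nn'+L_a+L_b\le (n+L_a)(n'+L_b),\]
using \(n,n'\ge1\).
\item Negation uses the witness \(-A\), so \(f(-a)\le f(a)\), and symmetry gives equality.
\item For the sum, \(kl(a+b)\) is an eigenvalue of the Kronecker sum \(lA\otimes I+I\otimes kB\), using a common eigenvector \(v\otimes w\). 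Each entry is at most \(lm+km'\le 2\max(k,m)\max(l,m')\), so the log term is at most \(L_a+L_b+1\). Therefore
\[f(a+b)\le nn'+L_a+L_b+1\le f(a)f(b)+1.\]
\end{itemize}

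\textbf{Inverse.} Let \(\chi(t)=\det(tI-A)=\sum c_it^i\). Its coefficients are sums of principal minors, so \(|c_i|\le\binom{n}{i}(n-i)!\,m^{n-i}\le (nm)^n\) as integers, and this gives \(\log_2|c_i|\le n(\log_2 n+L_a)\). Write \(\chi=t^jq\) with \(q(0)=c_j\ne0\); then \(ka\ne0\) is a root of \(q\). Set \(y=c_j/(ka)\). Then \(y\) is a root of the monic integer polynomial \(\sum_i c_{j+i}c_j^{\,i-1}t^{\deg q-i}\), obtained by reversing \(q\) and rescaling. The companion matrix of this polynomial has size \(\le n\) and entries bounded by \((nm)^{n^2}\) roughly. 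Finally \(c_j\cdot a^{-1}=ky\) is an eigenvalue of \(k\) times that companion matrix, so the denominator is \(c_j\) and \(\log_2 c_j\) is already controlled.

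The resulting bound is \(f(a^{-1})\le n+O(n(\log n+L_a))\). The delicate bookkeeping is to make this fit under \(2f(a)^2+f(a)\). I would first try to avoid the \(c_j^{i-1}\) blow-up: use the companion matrix of the reversed polynomial \(\sum c_{j+i}t^{\deg q-i}\) directly, with eigenvalue \(1/(ka)\) up to the leading coefficient \(c_j\), so that \(c_ja^{-1}\cdot k^{-1}\)-type scaling appears only once. The estimate to aim for is \(\log_2(\text{entries})\le n\log_2 n+nL_a\le 2(n+L_a)^2\), using \(\log_2 n\le n\).

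\textbf{Roots of polynomials.} This is the step I expect to be the main obstacle. For each \(i\) take a witness \(k_ia_i\) as an eigenvalue of \(A_i\), and put \(K=\prod k_i\). Then \(Kx\) is a root of the monic polynomial \(t^{\hat n}+\sum_i K^{\hat n-i}a_it^i\), and \(K^{\hat n-i}a_i\) is an eigenvalue of \((K^{\hat n-i}/k_i)A_i\). Work on the space \(W=\bigotimes_i W_i\), where \(W_i\) carries \(A_i\), and let \(\tilde A_i\) denote the scaled \(A_i\) acting on its own factor. These operators commute and have a common eigenvector with eigenvalues \(K^{\hat n-i}a_i\).

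The block companion matrix on \(\mathbb F_{\hat p}^{\hat n}\otimes W\) has last block row \(-\tilde A_0,\dots,-\tilde A_{\hat n-1}\) and identity blocks above. It has eigenvalue \(Kx\), with eigenvector \((1,Kx,\dots,(Kx)^{\hat n-1})\otimes w\). Its size is \(\hat n\prod n_i\), and the log of its entries is about \(\hat n\sum L_{a_i}\). Checking that \(\hat n\prod n_i+\hat n\sum_iL_{a_i}\le\hat n\prod_i(n_i+L_{a_i})\) is elementary, since each factor is \(\ge1\) and the product dominates the sum. The only care needed is if some \(a_i=0\); in that case we take the trivial \(1\times1\) witness.
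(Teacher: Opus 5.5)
Your product, negation and sum steps match the paper's proof (tensor product, Kronecker sum, same estimates). Your root step is a valid variant of the paper's. You rescale the coefficients to $K^{\hat n-i}a_i$ and keep identity blocks on the superdiagonal. The paper instead puts $kI$ on the superdiagonal and scales each $M_i$ only by $\prod_{j\ne i}k_j$. Your entries can be as large as $\bigl(\prod_i\max(m_i,k_i)\bigr)^{\hat n}$ instead of $\prod_i\max(m_i,k_i)$, but the resulting factor $\hat n$ in the log term is absorbed by the $\hat n$ in the target, so you still get $f(x)\le\hat n\prod_i f(a_i)$.

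\textbf{The gap is the inverse bound.}

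Your concrete construction takes the companion matrix of the monic polynomial $\sum_i c_{j+i}c_j^{\,i-1}t^{\deg q-i}$. Its entries go up to about $(nm)^{n^2}$, so the log term is of order $n^2(\log_2 n+L_a)$. That is cubic in $f(a)$ and does not fit under $2f(a)^2+f(a)$ (take $n=L_a$ large). Your stated resulting bound $n+O(n(\log n+L_a))$ does not follow from this construction, and an $O$-bound would not give the explicit constant anyway. A cubic bound would still be enough for Proposition~\ref{field}, but it does not prove the inequality as stated.

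Your suggested repair, using the companion matrix of the reversed polynomial directly, is not an integer matrix as written. That polynomial has leading coefficient $c_j$, so its companion matrix has entries $-c_\ast/c_j$.

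The missing idea is the \emph{scaled} companion matrix. If $\sum_{l\le d}b_lt^l$ has a root $r$, let $N$ have $b_d$ on the superdiagonal and last row $(-b_0,\dots,-b_{d-1})$. Then
\[
N\,(1,r,\dots,r^{d-1})^{T}=b_d r\,(1,r,\dots,r^{d-1})^{T}.
\]
Apply this to $t^dq(1/t)$, whose root is $1/(ka)$ and whose leading coefficient is $c_j$, and multiply by $k$. This shows $c_ja^{-1}$ is an eigenvalue of a $d\times d$ integer matrix with entries at most $k(nm)^n$; this is the paper's $M'$.

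In your terms, this is just $D^{-1}CD$ with $D=\mathrm{diag}(1,c_j,\dots,c_j^{d-1})$ and $C$ your companion matrix. Conjugation keeps the eigenvalue but removes the $c_j^{\,i-1}$ blow-up, and $f$ only needs \emph{some} matrix with small entries.

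With this matrix, and using $\lceil\log_2 n\rceil\le n-1$, you get
\[
f(a^{-1})\le n+\lceil\log_2k\rceil+n\lceil\log_2m\rceil+n\lceil\log_2n\rceil\le n^2+nL_a+L_a\le f(a)^2+f(a).
\]
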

\begin{proof}
    Notate by \(M\) an \(n\times n\) matrix with entries of absolute value at most \(m\) such that \(ka\) is an eigenvalue of \(M\) with eigenvector \(v\) and \(k\ne 0\), and the same for \(M',n',m',k',u\) and \(b\). Then \(M\otimes M'\) is an \(nn'\times nn'\) matrix with entries of absolute value at most \(mm'\) which has \(kk'ab\) as an eigenvalue: \[(M\otimes M')(v\otimes u)=Mv\otimes M'u=kav\otimes k'bu=kk'ab(v\otimes u).\] Therefore \begin{align*}
        f(ab)&\le nn'+\lceil\log_2\max(mm',kk')\rceil\le\\
        &\le nn'+\lceil\log_2\max(m,k)+\log_2\max(m',k')\rceil\le\\
        &\le nn'+\lceil\log_2\max(m,k)\rceil+\lceil\log_2\max(m',k')\rceil\le\\
        &\le nn'+\lceil\log_2\max(m,k)\rceil n'+n\lceil\log_2\max(m',k')\rceil\le\\
        &\le nn'+\lceil\log_2\max(m,k)\rceil n'+\\&+n\lceil\log_2\max(m',k')\rceil+\lceil\log_2\max(m,k)\rceil\lceil\log_2\max(m',k')\rceil=\\
        &=(n+\lceil\log_2\max(m,k)\rceil)(n'+\lceil\log_2\max(m',k')\rceil)=f(a)f(b).
    \end{align*}
    Similarly, \(M\otimes k'I_{n'}+kI_n\otimes M'\) is an \(n\times n'\) matrix with entries of absolute value at most \(mk'+m'k\) which has \(kk'(a+b)\) as an eigenvalue: \begin{align*}
        (M\otimes k'I_{n'}+kI_n\otimes M')(v\otimes u)&=Mv\otimes k'u+kv\otimes M'u=\\&=kav\otimes k'u+kv\otimes k'bu=kk'(a+b)(v\otimes u).
    \end{align*}
    Therefore,
    \begin{align*}
        f(a+b)&\le nn'+\lceil\log_2\max(mk'+m'k,kk')\rceil\\&\le nn'+\lceil\log_2(2\max(m,k)\max(m',k'))\rceil=\\
        &= nn'+\lceil\log_2\max(m,k)+\log_2 \max(m',k')\rceil+1\le f(a)f(b)+1
    \end{align*}
    For negation we can just take \(-M\), for which \(-ka\) is an eigenvector, and get, similarly to the case of \(f_{\mathbb{Q}}\), that \(f(-a)=f(a)\). 
    For the inverse, consider the polynomial 
    \[P(x)=x^np_M\left(\frac 1x\right)=\sum_{0\le i\le d}c_i x^i\] which has degree \(d\le n\). Note that this is an integer polynomial which has \(\frac{1}{ka}\) as a root. Also, since \(M\)'s eigenvalues \(\lambda_i\) are bounded by \(nm\), the coefficient of \(x^i\) in \(p_M(x)\) is bounded by \((mn)^{n-i}\binom ni\le (mn)^{n-i}n^i\le(mn)^n,\) which is therefore a bound on the coefficients \(c_i\) of \(P\) as well. Now, consider the matrix and the vector
    \[M'=k\begin{pmatrix}
        0&c_d&0&\cdots&0\\0&0&c_d&\cdots&0\\\vdots&\vdots&\vdots&\ddots&\vdots\\0&0&0&\cdots&c_d\\-c_0&-c_1&-c_2&\cdots&-c_{d-1}
    \end{pmatrix},v=\begin{pmatrix}
        1\\\frac 1{ka}\\\frac 1{(ka)^2}\\\vdots\\\frac 1{(ka)^{d-1}}
    \end{pmatrix}\]
    and note that they satisfy
    \[M'v=k\begin{pmatrix}
        \frac{c_d}{ka}\\\frac{c_d}{(ka)^2}\\\vdots\\-\sum_{i=0}^{d-1}\frac{c_i}{(ka)^i}
    \end{pmatrix}=\begin{pmatrix}
        k\frac{c_d}{ka}\\k\frac{c_d}{(ka)^2}\\\vdots\\k\frac{c_d}{(ka)^d}
    \end{pmatrix}=\begin{pmatrix}
        \frac{c_d}a\\\frac{c_d}a\frac1{ka}\\\vdots\\\frac{c_d}a\frac{1}{(ka)^{d-1}}
    \end{pmatrix}=\frac{c_d}av,\] and therefore \(c_da^{-1}\) is an eigenvalue of \(M'\). This means that
    \begin{align*}
        f(a^{-1})&\le d+\left\lceil\log_2\left(\max\left(k\max_i(c_i),c_d\right)\right)\right\rceil\le n+\lceil\log_2(k(mn)^n)\rceil\le\\&
        \le \lceil\log_2 k\rceil+n(\lceil\log_2 m\rceil+\lceil\log_2 n\rceil+1)\le\lceil\log_2 k\rceil+n(\lceil\log_2 m\rceil+n)\le\\&
        \le 2f(a)^2+f(a).
    \end{align*}
    
    Finally, for polynomials, for each \(0\le i<\hat n\) notate by \(M_i\) an \(n_i\times n_i\) matrix with entries of absolute value at most \(m_i\) such that \(k_i a_i\) is an eigenvalue of \(M_i\) with eigenvector \(v_i\) and \(k_i\ne 0\). Define \[k=\prod_{i=0}^{\hat n-1}k_i\] and the matrices \(K=kI_{\prod_{i=0}^{\hat n-1}n_i}\) and for each \(0\le i<\hat n\) \[M_i'=\prod_{j=0}^{i-1}k_jI_{\prod_{j=0}^{i-1}n_j}\otimes M_i\otimes\prod_{j=i+1}^{\hat n-1}k_jI_{\prod_{j=i+1}^{\hat n-1}n_j},\] and consider the block matrix
    \[M=\begin{pmatrix}
        0&K&0&\cdots&0\\
        0&0&K&\cdots&0\\
        \vdots&\vdots&\vdots&\ddots&\vdots\\
        0&0&0&\cdots&K\\
        -M_0'&-M_1'&-M_2'&\cdots&-M_{\hat n-1}'
    \end{pmatrix}.\]
    Note that if \(x\) is a solution of \(P\) then \[v=\begin{pmatrix}
        1\\x\\x^2\\\vdots\\x^{\hat n-1}
    \end{pmatrix}\otimes\bigotimes_{i=0}^{\hat n-1}v_i\] satisfies
    \begin{align*}
        Mv&=\begin{pmatrix}
            kx\\kx^2\\kx^3\\\vdots\\-\sum_{i=0}^{\hat n-1}\prod_{j=0}^{i-1}k_j\cdot k_ia_i\cdot\prod_{j=i+1}^{\hat n-1}k_j\cdot x^i
        \end{pmatrix}\otimes\bigotimes_{i=0}^{\hat n-1}v_i=\\&=\begin{pmatrix}
            kx\\kx^2\\kx^3\\\vdots\\-k\sum_{i=0}^{\hat n-1}a_i x^i
        \end{pmatrix}\otimes\bigotimes_{i=0}^{\hat n-1}v_i=\begin{pmatrix}
            kx\\kx^2\\kx^3\\\vdots\\kx^n
        \end{pmatrix}\otimes\bigotimes_{i=0}^{\hat n-1}v_i=kxv,
    \end{align*} and therefore \(kx\) is an eigenvalue of \(M\). This means that
    \begin{align*}
        f(x)&\le\hat n\prod_{i=0}^{\hat n-1}n_i+\left\lceil\log_2\prod_{i=0}^{\hat n-1}\max(m_i,k_i)\right\rceil=
        \\&=\hat n\prod_{i=0}^{\hat n-1}n_i+\left\lceil\sum_{i=0}^{\hat n-1}\log_2\max(m_i,k_i)\right\rceil\le
        \\&\le\hat n\prod_{i=0}^{\hat n-1}n_i+\sum_{i=0}^{\hat n-1}\lceil\log_2\max(m_i,k_i)\rceil\le
        \\&\le\hat n\left(\prod_{i=0}^{\hat n-1}n_i+\sum_{i=0}^{\hat n-1}\lceil\log_2\max(m_i,k_i)\rceil\right)\le
        \\&\le\hat n\prod_{i=0}^{\hat n-1}(n_i+\lceil\log_2\max(m_i,k_i)\rceil)=\hat n\prod_{i=0}^{\hat n-1}f(a_i)
    \end{align*}
\end{proof}
\begin{proposition}\label{subfield-alg}
    For any field \(\mathbb F\), ordered semi­ring \(R\), downwards-closed sub-semi­ring \(S\subseteq R\) and function \(f:\mathbb{F}\to R\) such that for every choice of \(a, b\in\mathbb{F}\), \(f(a+b),f(ab),f(-a)\), and \(f(a^{-1})\) if \(a\ne 0\), are bounded by arithmetic terms involving only \(f(a)\), \(f(b)\) and constants from of \(S\), and also for each \(n\) there is an arithmetic term with \(n+1\) variables using only constants from \(S\) such that for every monic polynomial \(P\) with coefficients \(a_0,\dots a_{n-1}\) and a root \(x\in\mathbb F\) of it, \(f(x)\) is bounded by the value of this term with \(f(a_0),\dots,f(a_{n-1}),n\) substituted into it, then \(f^{-1}[S]\) is also a field which is algebraically closed relative to \(\mathbb F\).
\end{proposition}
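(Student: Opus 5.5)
The first half of the statement, that \(f^{-1}[S]\) is a subfield of \(\mathbb F\), is exactly Proposition \ref{field}. The hypotheses here include those of Proposition \ref{field} verbatim, so we invoke it directly. What remains is to show that \(f^{-1}[S]\) is algebraically closed relative to \(\mathbb F\). That is, every \(x\in\mathbb F\) that is algebraic over \(f^{-1}[S]\) already lies in \(f^{-1}[S]\).

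Let \(x\in\mathbb F\) be a root of a nonzero polynomial \(Q\) whose coefficients \(b_0,\dots,b_n\) lie in \(f^{-1}[S]\), with \(b_n\ne0\). First we normalize to the monic case. Since \(f^{-1}[S]\) is a field, \(b_n^{-1}\in f^{-1}[S]\), and so each \(a_i=b_ib_n^{-1}\in f^{-1}[S]\) for \(0\le i<n\). The monic polynomial \(P(y)=y^n+\sum_{i<n}a_iy^i\) has the same roots as \(Q\), so \(x\) is a root of \(P\). The extra hypothesis gives a term \(T_n\) in \(n+1\) variables, with constants only from \(S\), such that
\[f(x)\le T_n\bigl(f(a_0),\dots,f(a_{n-1}),n\bigr).\]
Each \(f(a_i)\) lies in \(S\). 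The degree \(n\) is a natural number, built as \(1+\dots+1\) in the semiring, so it lies in \(S\) because \(S\) is a sub-semiring. The term \(T_n\) uses only semiring operations and constants from \(S\), so its value lies in \(S\). Since \(S\) is downwards closed, \(f(x)\in S\), which means \(x\in f^{-1}[S]\).

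The only delicate point is the role of \(n\) and the monic normalization. Relative algebraic closure concerns polynomials of standard (external) degree with coefficients in the subfield. The hypothesis is stated for each \(n\), so we only need it for that standard degree, and \(n\) must be read as the element \(n\cdot 1\in S\). I expect this identification to be the main thing needing care: the natural numbers must embed into \(S\) as the sums of \(1\). This holds as long as \(1\in S\), i.e. the semiring contains its unit, which I would note explicitly as part of the convention that \(S\) is a sub-semiring.

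Everything else is routine closure under arithmetic terms, exactly as in the proof of Proposition \ref{field}.
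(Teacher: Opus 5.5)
Your proof is correct and follows the paper's argument essentially verbatim. You invoke Proposition \ref{field} for the field part, divide by the leading coefficient to reduce to the monic case, and then conclude using closure of \(S\) under the bounding term together with downward closure; your explicit remark that the degree \(n\) must be read as \(n\cdot 1\in S\) (which requires \(1\in S\)) is a detail the paper's proof passes over silently, and it is a worthwhile addition.
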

\begin{proof}
    The fact that this is a field follows from Proposition \ref{field}. Let us consider a polynomial \(P\) with coefficients in \(f^{-1}[S]\) and a root of it \(x\in\mathbb F\). We can divide by its leading coefficient and get a monic polynomial with coefficients in \(f^{-1}[S]\) which still has \(x\) as a root, and so we can assume w.l.o.g that \(P\) is monic. Therefore \(f(x)\) is bounded by some arithmetic term involving \(f(a_0),\dots,f(a_{n-1})\) and constants from \(S\), which because \(S\) is a semi­ring, means that the value of the expression is in \(S\), and because \(S\) is downwards closed, means that \(f(x)\in S\), and so \(x\in f^{-1}[S]\).
\end{proof}
\begin{corollary}
    Let \(F\) be \(\bar{\mathbb Q}\cap\mathbb R'\) if \(\mathbb F_{\hat p}\) has no square root of \(-1\) and \(\bar{\mathbb Q}\) if it has. Then \(F\) is a field which is algebraically closed relative to \(\mathbb F_{\hat p}\).
\end{corollary}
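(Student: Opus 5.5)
The corollary follows by applying Proposition \ref{subfield-alg} with \(\mathbb F=\mathbb F_{\hat p}\), \(R=\ast{\mathbb N}\), \(S=\mathbb N\) and \(f=f_{\bar{\mathbb Q}}\). Proposition \ref{f-qbar} then identifies \(f^{-1}[\mathbb N]\) with \(F\): it is \(\bar{\mathbb Q}\cap\mathbb R'\) when \(\sqrt{-1}\notin\mathbb F_{\hat p}\), and \(\bar{\mathbb Q}\) otherwise. So it only remains to check the hypotheses of Proposition \ref{subfield-alg}.

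The structural hypotheses are immediate. \(\ast{\mathbb N}\) is an ordered semiring. \(\mathbb N\) is a sub-semiring of it, and it is downwards closed because any \(\hat n\le n\) with \(n\) standard is itself standard.

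The field-operation bounds are given by Proposition \ref{f-qbar-properties}:
\[f(a+b)\le f(a)f(b)+1,\qquad f(ab)\le f(a)f(b),\qquad f(-a)=f(a),\qquad f(a^{-1})\le 2f(a)^2+f(a).\]
Each right-hand side is an arithmetic term whose only constants are \(1\) and \(2\), both in \(\mathbb N\).

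For the polynomial hypothesis, fix a standard \(n\) and use the term \(T_n(y_0,\dots,y_{n-1},z)=z\,y_0y_1\cdots y_{n-1}\). Proposition \ref{f-qbar-properties}, applied to a monic polynomial of degree \(n\) with coefficients \(a_0,\dots,a_{n-1}\), gives \(f(x)\le n\prod_i f(a_i)=T_n(f(a_0),\dots,f(a_{n-1}),n)\) for every root \(x\in\mathbb F_{\hat p}\).

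The one point to watch is that Proposition \ref{f-qbar-properties} is stated for internal polynomials of possibly nonstandard degree \(\hat n\). A standard-degree polynomial over \(\mathbb F_{\hat p}\) is a special case, so the bound applies. Moreover, Proposition \ref{subfield-alg} needs the term only for each standard \(n\), so the product has finitely many factors and is a genuine arithmetic term. With all hypotheses verified, Proposition \ref{subfield-alg} shows that \(f^{-1}[\mathbb N]=F\) is a field algebraically closed relative to \(\mathbb F_{\hat p}\).
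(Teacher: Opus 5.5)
Your proposal is correct and is the paper's own argument: the paper proves the corollary in one line by applying Proposition \ref{subfield-alg} with \(\mathbb F=\mathbb F_{\hat p}\), \(R=\ast{\mathbb N}\), \(S=\mathbb N\) and \(f=f_{\bar{\mathbb Q}}\), relying implicitly on Propositions \ref{f-qbar-properties} and \ref{f-qbar}. You have made those hypothesis checks explicit, including the harmless specialization of the nonstandard-degree polynomial bound to standard degree.
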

\begin{proof}
    Choose \(\mathbb F=\mathbb F_{\hat p}\), \(R=\ast{\mathbb N}\), \(S=\mathbb N\) and \(f=f_{\bar{\mathbb Q}}\).
\end{proof}

\section{\(\sigma\)-sets and \(\delta\)-sets}\label{sigma-delta}
\begin{proposition}\label{internal-sigma-delta}
    Any internal set is both a \(\sigma\)-set and a \(\delta\)-set.
\end{proposition}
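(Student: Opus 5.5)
This is immediate from the definitions, taking the countable family to be the constant sequence. Let \(A\subseteq\ast{\mathbb N}\) be internal. Set \(A_n=A\) for every \(n\in\mathbb N\). Each \(A_n\) is internal, so \(\{A_n\}_{n\in\mathbb N}\) is a countable family of internal sets. Moreover
\[\bigcup_{n\in\mathbb N}A_n=A=\bigcap_{n\in\mathbb N}A_n.\]
Hence \(A\) is a \(\sigma\)-set and also a \(\delta\)-set.

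The only point to check is that the definitions accept a family in which the same set is repeated. Nothing in the definitions of \(\sigma\)-sets and \(\delta\)-sets demands distinct sets. If one insists on distinct members anyway, a countable subfamily of size one serves equally well, since a single set counts as countably many. No earlier result of the paper is needed, and I expect no real obstacle here.
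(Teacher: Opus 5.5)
Your proof is correct and is the paper's argument: the paper also uses the constant family and writes \(S=\bigcup_{i\in\mathbb N}S=\bigcap_{i\in\mathbb N}S\). Your remark about repeated members is harmless but not needed.
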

\begin{proof}
    Let \(S\) be an internal set. \(S=\bigcup_{i\in \mathbb N}S=\bigcap_{i\in \mathbb N}S\) is both a \(\sigma\)-set and a \(\delta\)-set.
\end{proof}
\begin{proposition}
    Every countable subset of \(\ast{\mathbb N}\) is a \(\sigma\)-set.
\end{proposition}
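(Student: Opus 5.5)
The idea is to write a countable set as a countable union of singletons and check that each singleton is internal.

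Let \(S\subseteq\ast{\mathbb N}\) be countable and enumerate it as \(S=\{\hat n_i\mid i\in\mathbb N\}\); repetitions are allowed, which covers finite \(S\). If \(S=\emptyset\), then \(S\) is itself internal: it is \(\ast\emptyset\), the image under \(*\) of the empty subset of \(\mathbb N\). It is therefore a \(\sigma\)-set by Proposition \ref{internal-sigma-delta}. In the nonempty case it suffices to show that every singleton \(\{\hat n\}\) with \(\hat n\in\ast{\mathbb N}\) is internal, since then
\[S=\bigcup_{i\in\mathbb N}\{\hat n_i\}\]
is a union of countably many internal sets.

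To see that \(\{\hat n\}\) is internal, write \(\hat n=[(n_j)_{j\in I}]\) and take the internal element of \(\ast{\mathcal P(\mathbb N)}\) given by the sequence of standard singletons \((\{n_j\})_{j\in I}\). By Łoś's theorem, an element \([(m_j)]\) belongs to this set if and only if \(\{j\mid m_j=n_j\}\in\mathcal F\), that is, if and only if \([(m_j)]=\hat n\). So this internal set is exactly \(\{\hat n\}\). Alternatively, one can argue by transfer: the standard sentence ``for every \(x\in\mathbb N\) there is \(A\in\mathcal P(\mathbb N)\) with \(\forall y\,(y\in A\leftrightarrow y=x)\)'' transfers to \(\ast{\mathbb N}\), and the \(A\) it produces is an element of \(\ast{\mathcal P(\mathbb N)}\), hence internal, and is equal to \(\{\hat n\}\).

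No step here is a real obstacle. The only point needing care is that the set obtained through Łoś's theorem (or transfer) is literally the singleton \(\{\hat n\}\) as a subset of \(\ast{\mathbb N}\). This holds because membership in \(V(\ast{\mathbb N})\) is interpreted via the elementary embedding \(*\).
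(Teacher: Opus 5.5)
Your proof is correct and is the same as the paper's: write \(S\) as the union of the countably many singletons \(\{x\}\), \(x\in S\), each of which is internal. You additionally justify why singletons are internal (via the ultrapower representation or transfer) and handle \(S=\emptyset\) separately, details the paper leaves implicit.
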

\begin{proof}
    Let \(S\in\mathcal P(\ast{\mathbb N})\) be countable. For any element \(x\in S\), \(\{x\}\) is internal and so \(S=\bigcup_{x\in S}\{x\}\) is a \(\sigma\)-set.
\end{proof}
\begin{proposition}\label{duality-}
    If \(S,T\) are \(\sigma\)-sets then both \(S\cup T\) and \(S\cap T\) are also \(\sigma\)-sets. Additionally, if \(S\) is a \(\sigma\)-set then \(\ast{\mathbb N}\setminus S\) is a \(\delta\)-set and vice-versa.
\end{proposition}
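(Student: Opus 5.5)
The plan is to work directly from the definitions, using three facts: internal sets are closed under finite Boolean operations, De Morgan's laws hold, and unions and intersections of countable families can be re-indexed as countable families.

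First, the union. Write \(S=\bigcup_{i\in\mathbb N}A_i\) and \(T=\bigcup_{j\in\mathbb N}B_j\) with all \(A_i,B_j\) internal. Then \(S\cup T\) is the union of the family \(\{A_i\}_{i\in\mathbb N}\cup\{B_j\}_{j\in\mathbb N}\). This family is countable, so after re-indexing it by \(\mathbb N\) (for instance \(C_{2i}=A_i\), \(C_{2i+1}=B_i\)) we see that \(S\cup T\) is a \(\sigma\)-set.

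Next, the intersection. Distributing gives
\[S\cap T=\bigcup_{(i,j)\in\mathbb N^2}(A_i\cap B_j).\]
Each \(A_i\cap B_j\) is internal. The internal sets form a Boolean algebra of subsets of \(\ast{\mathbb N}\), since \(*\) is an elementary embedding and so transfers the fact that the intersection of two subsets of \(\mathbb N\) is a subset of \(\mathbb N\). Because \(\mathbb N^2\) is countable, a bijection \(\mathbb N\to\mathbb N^2\) shows that \(S\cap T\) is a \(\sigma\)-set.

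Finally, complements. If \(S=\bigcup_i A_i\), De Morgan gives
\[\ast{\mathbb N}\setminus S=\bigcap_i(\ast{\mathbb N}\setminus A_i).\]
Each \(\ast{\mathbb N}\setminus A_i\) is internal, again by transfer of closure under relative complement in \(\mathbb N\), so \(\ast{\mathbb N}\setminus S\) is a \(\delta\)-set. Symmetrically, if \(S=\bigcap_i A_i\) is a \(\delta\)-set, then \(\ast{\mathbb N}\setminus S=\bigcup_i(\ast{\mathbb N}\setminus A_i)\) is a \(\sigma\)-set. This covers the ``vice-versa'' part of the statement.

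The only step needing any care is the claim that internal subsets of \(\ast{\mathbb N}\) are closed under finite intersection and complement. I would justify it by transfer, or concretely in the ultraproduct: if \(A=\prod A_\iota/\mathcal F\) and \(B=\prod B_\iota/\mathcal F\), then \(A\cap B=\prod(A_\iota\cap B_\iota)/\mathcal F\) and \(\ast{\mathbb N}\setminus A=\prod(\mathbb N\setminus A_\iota)/\mathcal F\). Everything else is bookkeeping with countable index sets.
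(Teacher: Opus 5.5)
Your proposal is correct and matches the paper's proof: the paper also writes the intersection as $\bigcup_{i,j\in\mathbb N}(S_i\cap T_j)$ and takes complements termwise by De Morgan, differing only in writing the union as $\bigcup_{i\in\mathbb N}(S_i\cup T_i)$ rather than interleaving the two families. Your explicit justification that internal subsets of $\ast{\mathbb N}$ are closed under finite intersection and complement (by transfer, or coordinatewise in the ultraproduct) is a step the paper leaves implicit.
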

\begin{proof}
    Let \(S=\bigcup_{i\in\mathbb N}S_i\) and \(T=\bigcup_{i\in\mathbb N}T_i\) be \(\sigma\)-sets. Then \[S\cup T=\bigcup_{i\in\mathbb N}(S_i\cup T_i),~ S\cap T=\bigcup_{i,j\in\mathbb N}(S_i\cap T_j), ~\ast{\mathbb N}\setminus S=\bigcap_{i\in\mathbb N}(\ast{\mathbb N}\setminus S_i).\] Similarly, let \(S=\bigcap_{i\in\mathbb N}S_i\) be a \(\delta\)-set, then \[\ast{\mathbb N}\setminus S=\bigcup_{i\in\mathbb N}(\ast{\mathbb N}\setminus S_i).\]
\end{proof}
\begin{corollary}\label{duality}
    If \(S,T\) are \(\delta\)-sets then \(S\cup T\) and \(S\cap T\) are also \(\delta\)-sets. Additionally, if \(S\) is a \(\sigma\)-set and \(T\) is a \(\delta\)-set then \(S\setminus T\) is \(\sigma\) and \(T\setminus S\) is \(\delta\).
\end{corollary}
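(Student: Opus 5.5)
The corollary follows from Proposition \ref{duality-} by taking complements and using De Morgan's laws. We will use the complementation part of Proposition \ref{duality-} in both directions: the complement of a \(\sigma\)-set is a \(\delta\)-set, and the complement of a \(\delta\)-set is a \(\sigma\)-set.

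For the first claim, let \(S,T\) be \(\delta\)-sets. Then \(\ast{\mathbb N}\setminus S\) and \(\ast{\mathbb N}\setminus T\) are \(\sigma\)-sets. By Proposition \ref{duality-}, their union and their intersection are also \(\sigma\)-sets. By De Morgan,
\[\ast{\mathbb N}\setminus(S\cap T)=(\ast{\mathbb N}\setminus S)\cup(\ast{\mathbb N}\setminus T),\qquad \ast{\mathbb N}\setminus(S\cup T)=(\ast{\mathbb N}\setminus S)\cap(\ast{\mathbb N}\setminus T),\]
so \(\ast{\mathbb N}\setminus(S\cap T)\) and \(\ast{\mathbb N}\setminus(S\cup T)\) are \(\sigma\)-sets. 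Taking complements once more shows that \(S\cap T\) and \(S\cup T\) are \(\delta\)-sets.

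For the second claim, let \(S\) be a \(\sigma\)-set and \(T\) a \(\delta\)-set, and write \(S\setminus T=S\cap(\ast{\mathbb N}\setminus T)\). Here \(\ast{\mathbb N}\setminus T\) is a \(\sigma\)-set, so \(S\setminus T\) is an intersection of two \(\sigma\)-sets and hence a \(\sigma\)-set by Proposition \ref{duality-}. Similarly, \(T\setminus S=T\cap(\ast{\mathbb N}\setminus S)\), where \(\ast{\mathbb N}\setminus S\) is a \(\delta\)-set, so \(T\setminus S\) is an intersection of two \(\delta\)-sets and hence a \(\delta\)-set by the first claim.

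There is no real obstacle. The only point to check is that the complementation statement in Proposition \ref{duality-} covers both directions, which its ``vice-versa'' clause does. Alternatively, the closure of \(\delta\)-sets under intersection can be seen directly from \(\bigcap_i S_i\cap\bigcap_j T_j=\bigcap_{i,j}(S_i\cap T_j)\), a countable intersection of internal sets.
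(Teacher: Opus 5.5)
Your proof is correct and follows the paper's: it obtains \(S\cup T\) and \(S\cap T\) as complements of the intersection and union of the \(\sigma\)-sets \(\ast{\mathbb N}\setminus S\) and \(\ast{\mathbb N}\setminus T\) via Proposition \ref{duality-}, and it writes \(S\setminus T=S\cap(\ast{\mathbb N}\setminus T)\) and \(T\setminus S=T\cap(\ast{\mathbb N}\setminus S)\) exactly as you do. Your write-up also states the De Morgan identity for \(S\cap T\) correctly, whereas the paper's displayed formula mislabels that case as a second \(S\cup T\).
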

\begin{proof}
    Let \(S\) and \(T\) be \(\delta\)-sets, then \[S\cup T=\ast{\mathbb N}\setminus((\ast{\mathbb N}\setminus S)\cap (\ast{\mathbb N}\setminus T)),~S\cup T=\ast{\mathbb N}\setminus((\ast{\mathbb N}\setminus S)\cup (\ast{\mathbb N}\setminus T))\] are also both \(\delta\). Similarly, let \(S\) be a \(\sigma\)-set and \(T\) be a \(\delta\)-set, then \[S\setminus T=S\cap(\ast{\mathbb N}\setminus T),~ T\setminus S=T\cap(\ast{\mathbb N}\setminus S)\] are \(\sigma\) and \(\delta\) respectively.
\end{proof}
\begin{theorem}\label{function-sigma}
    For any internal function \(f:\ast{\mathbb N}\to\ast{\mathbb N}\) and any set \(S\), if \(S\) is \(\sigma\) then \(f^{-1}[S]\) and \(f[S]\) are both \(\sigma\).
\end{theorem}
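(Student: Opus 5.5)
The plan is to write \(S=\bigcup_{i\in\mathbb N}S_i\) with each \(S_i\) internal, and then show that both preimages and images commute with countable unions while sending internal sets to internal sets.

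\textbf{Preimages.} Taking preimages commutes with arbitrary unions, so
\[f^{-1}[S]=\bigcup_{i\in\mathbb N}f^{-1}[S_i].\]
It then suffices to show that each \(f^{-1}[S_i]\) is internal. This follows from the internal definition principle, or directly from the ultraproduct representation. Write \(f=[f_j]\) and \(S_i=[S_{i,j}]\) for \(j\in I\). Then \(f^{-1}[S_i]=[f_j^{-1}[S_{i,j}]]\). Indeed, \(x=[x_j]\) satisfies \(f(x)\in S_i\) if and only if \(\{j\mid f_j(x_j)\in S_{i,j}\}\in\mathcal F\), by Łoś's theorem. Equivalently, \(f^{-1}[S_i]\) is defined by a bounded formula from the internal parameters \(f\) and \(S_i\), and so lies in \(\ast{V(\mathbb N)}\).

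\textbf{Images.} Images also commute with unions, so
\[f[S]=\bigcup_{i\in\mathbb N}f[S_i],\]
and by the same argument \(f[S_i]=[f_j[S_{i,j}]]\) is internal. Hence \(f[S]\) is a countable union of internal sets, that is, a \(\sigma\)-set.

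\textbf{Main obstacle.} The only real content is the claim that preimages and images of internal sets under internal functions are internal. I would verify this carefully via Łoś, checking both inclusions for the image case. The hard inclusion is \(\supseteq\): given \(y=[y_j]\) with \(y_j\in f_j[S_{i,j}]\) for \(\mathcal F\)-almost all \(j\), choose for each such \(j\) a witness \(x_j\in S_{i,j}\) with \(f_j(x_j)=y_j\). Then \(x=[x_j]\in S_i\) and \(f(x)=y\). After this, the closure of \(\sigma\)-sets under countable unions is immediate, since a countable union of countable unions is again a countable union.
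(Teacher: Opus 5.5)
Your proposal is correct and matches the paper's proof: write \(S=\bigcup_{i\in\mathbb N}S_i\) with the \(S_i\) internal, then use that preimages and images commute with unions, so \(f^{-1}[S]=\bigcup_{i\in\mathbb N}f^{-1}[S_i]\) and \(f[S]=\bigcup_{i\in\mathbb N}f[S_i]\). The only difference is that you explicitly check, via the ultraproduct representation or the internal definition principle, that \(f^{-1}[S_i]\) and \(f[S_i]\) are internal, whereas the paper simply asserts this step.
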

\begin{proof}
    Write \(S=\bigcup_{i\in\mathbb N}S_i\) for internal sets \(S_i\). Then \[f^{-1}[S]=f^{-1}\left[\bigcup_{i\in\mathbb N}S_i\right]=\bigcup_{i\in\mathbb N}f^{-1}[S_i]\] which are all internal sets and therefore \(f^{-1}[S]\) is \(\sigma\). Similarly, \[f[S]=f\left[\bigcup_{i\in\mathbb N}S_i\right]=\bigcup_{i\in\mathbb N}f[S_i]\] and therefore \(f[S]\) is \(\sigma\).
\end{proof}
\begin{corollary}\label{function-delta}
    For any internal function \(f:\ast{\mathbb N}\to\ast{\mathbb N}\) and any set \(S\), if \(S\) is \(\delta\) then \(f^{-1}[S]\) is also \(\delta\).
\end{corollary}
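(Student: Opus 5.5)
The natural route is through the duality of Proposition \ref{duality-} together with Theorem \ref{function-sigma}, rather than a direct manipulation of intersections. Let \(S\) be a \(\delta\)-set. By Proposition \ref{duality-}, its complement \(\ast{\mathbb N}\setminus S\) is a \(\sigma\)-set. Applying Theorem \ref{function-sigma} to this complement shows that \(f^{-1}[\ast{\mathbb N}\setminus S]\) is a \(\sigma\)-set.

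Preimages commute with complements, so
\[f^{-1}[\ast{\mathbb N}\setminus S]=\ast{\mathbb N}\setminus f^{-1}[S],\]
using that \(f\) is defined on all of \(\ast{\mathbb N}\). Applying Proposition \ref{duality-} once more, the complement of this \(\sigma\)-set is a \(\delta\)-set. That complement is exactly \(f^{-1}[S]\), which is what we want.

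A direct argument works just as well. Write \(S=\bigcap_{i\in\mathbb N}S_i\) with each \(S_i\) internal. Since preimages commute with intersections,
\[f^{-1}[S]=\bigcap_{i\in\mathbb N}f^{-1}[S_i].\]
Each \(f^{-1}[S_i]\) is internal, because it is defined from the internal parameters \(f\) and \(S_i\) (internal definition principle / transfer). Hence \(f^{-1}[S]\) is a countable intersection of internal sets.

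There is no real obstacle here. The only point needing care is that \(f\) is total on \(\ast{\mathbb N}\), so that the complement identity holds. It is also worth noting why the statement mentions only preimages and not images: images do not commute with intersections, so \(f[S]\) is deliberately omitted, in contrast to Theorem \ref{function-sigma}.
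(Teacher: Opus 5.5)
Your first argument is exactly the paper's proof: the identity \(f^{-1}[S]=\ast{\mathbb N}\setminus f^{-1}[\ast{\mathbb N}\setminus S]\) combined with Proposition \ref{duality-} and Theorem \ref{function-sigma}. Your direct argument via \(f^{-1}\bigl[\bigcap_i S_i\bigr]=\bigcap_i f^{-1}[S_i]\) is equally valid, but your closing remark about images is misleading: for decreasing internal \(T_n\), \(\aleph_1\)-saturation gives \(f\bigl[\bigcap_n T_n\bigr]=\bigcap_n f[T_n]\), so images of \(\delta\)-sets under internal functions are also \(\delta\).
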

\begin{proof}
    \[f^{-1}[S]=\ast{\mathbb N}\setminus f^{-1}[\ast{\mathbb N}\setminus S].\]
\end{proof}
\begin{theorem}\label{not-both}
    The only sets which are both \(\sigma\) and \(\delta\) are internal.
\end{theorem}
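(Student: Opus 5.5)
The plan is to use \(\aleph_1\)-saturation of the ultrapower, which holds because \(\mathcal F\) is countably incomplete. Suppose \(S=\bigcup_{i\in\mathbb N}A_i=\bigcap_{j\in\mathbb N}B_j\) with all \(A_i,B_j\) internal. Taking complements, \(\ast{\mathbb N}\setminus S=\bigcup_{j\in\mathbb N}(\ast{\mathbb N}\setminus B_j)\), so we obtain a countable cover
\[\ast{\mathbb N}=\bigcup_{i\in\mathbb N}A_i\cup\bigcup_{j\in\mathbb N}C_j,\qquad C_j=\ast{\mathbb N}\setminus B_j,\]
by internal sets. Moreover each \(A_i\) is disjoint from each \(C_j\), since \(A_i\subseteq S\) and \(C_j\cap S=\emptyset\).

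The key step is the standard countable-compactness consequence of \(\aleph_1\)-saturation. If a countable family of internal sets covers \(\ast{\mathbb N}\), then finitely many of them already cover it. To see this, suppose no finite subfamily covers. Then the complements \(\ast{\mathbb N}\setminus A_i\) and \(\ast{\mathbb N}\setminus C_j\) form a countable family of internal sets with the finite intersection property. By \(\aleph_1\)-saturation their intersection is nonempty, which contradicts the covering. I expect this to be the main point requiring justification. I would either cite \(\aleph_1\)-saturation of ultraproducts over countably incomplete ultrafilters (\cite[Theorem 6.1.1]{CK}), or prove it directly by the usual diagonal argument: choose a decreasing sequence \(I=I_0\supseteq I_1\supseteq\cdots\) in \(\mathcal F\) with empty intersection, and for each index \(\iota\) pick a coordinate point lying in the first \(n\) sets, where \(n\) is maximal with \(\iota\in I_n\) and \(\iota\) lies in the set of coordinates on which those \(n\) sets have nonempty common intersection.

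Applying this, there is an \(N\in\mathbb N\) with \(\ast{\mathbb N}=\bigcup_{i\le N}A_i\cup\bigcup_{j\le N}C_j\). Now \(S\subseteq\ast{\mathbb N}\setminus\bigcup_{j\le N}C_j\subseteq\bigcup_{i\le N}A_i\), where the second inclusion follows from the finite cover. Conversely, \(\bigcup_{i\le N}A_i\subseteq S\) trivially. Hence \(S=\bigcup_{i\le N}A_i\), which is a finite union of internal sets and therefore internal. This completes the argument.
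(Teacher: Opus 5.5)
Your proof is correct and is essentially the paper's argument: both apply \(\aleph_1\)-saturation to the same countable family of internal sets (the complements of the \(A_i\) together with the \(B_j\)). The paper argues by contradiction, using non-internality of \(S\) to get finite satisfiability, while you argue directly through a finite subcover, which has the small bonus of showing explicitly that \(S=\bigcup_{i\le N}A_i\).
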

\begin{proof}
    Assume that \(S=\bigcup_{i\in\mathbb N}S_i=\bigcap_{j\in\mathbb N}T_j\) is both \(\sigma\) and \(\delta\) but not internal. Consider the countable set of formulae \(x\not\in S_i\) for all \(i\) and \(x\in T_j\) for all \(j\). Each finite subset of them only includes a finite set of formulae of the form \(x\not\in S_i\), notate it by \(I\). Because \(S\) is not internal but \(\bigcup_{i\in I}S_i\) is they are not equal, and because the former includes the latter, we can find \[x\in S\setminus \bigcup_{i\in I}S_i.\] Such \(x\in\ast{\mathbb N}\) satisfies all the formulae in the finite set. Therefore the original set is finitely satisfiable. However, by \cite[Corollary 4.4.24]{CK} \(\ast{V(\mathbb N)}\) is an \(\aleph_1\)-saturated nonstandard model and so there is an element \(x\in\ast{\mathbb N}\) that satisfies all the formulae, which means that on one hand \(\forall i\in\mathbb N.~x\not\in S_i\), which means that \(x\not\in S\), but on the other \(\forall j\in\mathbb N.~x\in T_j\), which contradicts that.
\end{proof}
Theorem \ref{function-sigma} means it should not come as a surprise that we were able to represent both \(\mathbb Q\) and \(\bar{\mathbb Q}\cap \mathbb R'\) as pre­images of \(\mathbb N\) since they are all countable and so are all \(\sigma\)-sets. However, it turns out that \(\mathbb R'\) is not.

\section{The complicated situation of \(\mathbb R'\)}\label{R}
We assume for the rest of the section that \(\mathbb F_{\hat p}\) does not contain a square root of \(-1\). We detail the required adaptations for the case that it does contain a square root of \(-1\) in Section \ref{complex}
\begin{lemma}\label{finite-subset}
    \(\mathcal P(\mathbb R')\cap\ast{\mathcal P(\mathbb N)}=\mathcal P_{<\aleph_0}(\mathbb R')\), i.e. the internal subsets of \(\mathbb R'\) are exactly its finite subsets.
\end{lemma}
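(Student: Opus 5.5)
The inclusion of finite sets in the internal sets is immediate: every singleton \(\{x\}\) with \(x\in\ast{\mathbb N}\) is internal, and finite unions of internal sets are internal (by transfer of closure of \(\mathcal P(\mathbb N)\) under binary union). So every finite subset of \(\mathbb R'\) is internal. The content of the lemma is the converse: an infinite internal set \(A\subseteq\mathbb R'\) cannot exist. The plan is to argue by contradiction and to use the field structure of \(\mathbb F_{\hat p}\) together with the ordering of \(\mathbb R'\cong\mathbb R\).

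Suppose \(A\subseteq\mathbb R'\) is internal and infinite. Since \(A\) is internal and infinite, it is hyperfinite of nonstandard size, or at least unbounded in cardinality in the internal sense. Transfer then gives internal facts about it: any internal operation applied to \(A\) is again internal. The key observation is that \(\mathbb R'\) is closed under the field operations, while iterating them internally a nonstandard number of times escapes \(\mathbb R'\). Concretely, I would pick \(a\in A\) that is not a root of unity and not \(0\). Such an \(a\) exists because \(A\) is infinite and \(\mathbb R'\) contains only the roots of unity \(\pm1\). I would then consider internal sequences generated from \(A\).

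A cleaner route is to use overspill. \(A\) is infinite, so \(A\) contains at least \(n\) elements for every standard \(n\). The internal statement ``\(A\) has at least \(m\) elements'' holds for all standard \(m\), so by overspill it holds for some nonstandard \(\hat m\). Then \(A\) admits an internal enumeration \(a_0,\dots,a_{\hat m-1}\). Consider the internal product \(\prod_{i<\hat m}(x-a_i)\), or better an internal sum of squares \(s_{\hat k}=\sum_{i<\hat k}a_i^2\). Since every \(a_i\) lies in the real-closed field \(\mathbb R'\), each partial sum \(s_k\) with \(k\) standard is a sum of squares in \(\mathbb R'\). If \(a_i\ne0\), then \(s_k\) is strictly increasing in the order of \(\mathbb R'\), and in particular \(s_k\ne0\) for \(k\ge1\) standard. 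The internal set \(\{k\le\hat m: s_k\text{ is a nonzero square}\}\) contains all standard \(k\ge1\), because in \(\mathbb R'\) positive elements are squares and the squaring map agrees with that of \(\mathbb F_{\hat p}\). The problem is to turn this into a contradiction.

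The main obstacle is that \(\mathbb F_{\hat p}\) has no internal order, so ``positive'' is not internal. I would instead exploit that \(-1\) is not a square in \(\mathbb F_{\hat p}\). In \(\mathbb F_{\hat p}\), the set of nonzero squares is internal, and the internal statement ``every nonzero sum of \(k\) squares of elements of \(A\) is a nonzero square'' holds for all standard \(k\). By overspill it holds up to some nonstandard \(\hat k\). I would then derive a contradiction via a counting or pigeonhole argument. Since \(\mathbb F_{\hat p}\) is pseudo-finite and \(-1\) is a nonsquare, every element is a sum of two squares. Internally, sums over nonstandard many elements of \(A\) should be able to reach \(0\) or \(-1\), for instance because the internal subset sums of a set of \(\hat k\) squares cannot all avoid a prescribed value once \(\hat k\) is large, by a Cauchy–Davenport style argument. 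Making this pigeonhole precise, so that a sum of nonstandardly many squares from \(A\) equals \(-1\) or \(0\) with nonzero terms, is the step I expect to be hardest. Once it is done, the sum is a nonzero non-square, contradicting the overspilled statement, and the contradiction shows that no infinite internal \(A\subseteq\mathbb R'\) exists.
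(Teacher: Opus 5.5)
There is a genuine gap, and it lies in the step you flag as hardest. That step is not merely unfinished: it cannot be carried out from the information your argument uses. From $A\subseteq\mathbb R'$ you only extract two things. The first is the size of $A$. The second is that $\mathbb R'$ is a subfield of $\mathbb F_{\hat p}$ in which sums of nonzero squares are nonzero squares of $\mathbb F_{\hat p}$.

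The real-closed field $\mathbb R^\delta$ of Section~\ref{superfields} has the same property and sits in the same $\mathbb F_{\hat p}$. Yet it does contain infinite internal subsets. For example, take $A=f_{\bar{\mathbb Q}}^{-1}[[\hat n]]$ for a nonstandard $\hat n\in C_{\hat p}$:
\begin{itemize}
\item it is internal;
\item it is contained in $\mathbb R^\delta$, because $C_{\hat p}$ is a cut;
\item it is infinite, because it contains $\bar{\mathbb Q}\cap\mathbb R'$.
\end{itemize}
For this $A$ your overspill yields a nonstandard $\hat k$ for which no sum of at most $\hat k$ nonzero squares from $A$ is $0$ or a non-square. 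So no Cauchy--Davenport or pigeonhole argument that sees only $|A|$, $\hat k$ and the sum-of-squares property can produce the contradiction.

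The same failure shows up quantitatively. Results of Erd\H{o}s--Heilbronn or Olson type force subset sums of $k$ nonzero residues to hit a prescribed value only when $k$ is of order $\sqrt{\hat p}$. Overspill, by contrast, gives an uncontrolled nonstandard $\hat k$, and $|A|$ may be far below $\sqrt{\hat p}$. Concretely, the least quadratic non-residue $N$ mod $\hat p$ is nonstandard, since every standard positive integer is a square in $\mathbb R'$. For nonstandard $\hat m$ with $\hat m^3<N$, every subset sum of squares of $\{1,\dots,\hat m\}$ is then a nonzero square.

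What you are missing is the input that distinguishes $\mathbb R$ from other real-closed fields: the Archimedean property and completeness. The paper uses both, in two steps.
\begin{itemize}
\item It first shows that internal subsets of $\mathbb R'$ are bounded. If $S$ were internal and unbounded, $\aleph_1$-saturation gives $x\in S$ with $x-n$ a square in $\mathbb F_{\hat p}$ for every standard $n$. Since $x\in\mathbb R'$ is not infinite, $n-x$ is a nonzero square for some $n$, so $-1$ would be a square.
\item For a bounded infinite internal $S$, Bolzano--Weierstrass in $\mathbb R'\cong\mathbb R$ gives an accumulation point $p\in\mathbb R'$. Then $\{1/(x-p)\mid x\in S\setminus\{p\}\}$ is an unbounded internal subset of $\mathbb R'$, a contradiction.
\end{itemize}
Your first paragraph (finite subsets are internal) is fine. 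The converse needs these two ingredients rather than additive combinatorics.
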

\begin{proof}
    One direction is easy---if \(S\subset\mathbb R'\) is finite, then \(S=\bigcup_{x\in S}\{x\}\in \ast{\mathcal P(\mathbb N)}\).

    For the other direction, first let us show that any unbounded set \(S\subseteq \mathbb R'\) is not internal. Assume that there is an unbounded internal subset \(S\) of \(\mathbb R'\) and assume w.l.o.g that \(S\) is unbounded from above, which means that for each natural number \(n\) there is an element \(a_n\in S\) which is greater than \(n\). However, from \cite[Corollary 4.4.24]{CK} \(\ast{V(\mathbb N)}\) is an \(\aleph_1\)-saturated no standard model and so we can choose \(x\in \mathbb F_{\hat p}\) that satisfies \(x\in S\) as well as \(\exists y_n\in\mathbb F_{\hat p}.~(x-n)=y_n^2\) for all \(n\), since any finite subset of these formulae is satisfied by some \(a_i\). This means that this \(x\in S\subseteq \mathbb R'\), and therefore \(x<n\) for some \(n\), which means that \(0\ne\sqrt{n-x}\in\mathbb R'\subseteq\mathbb F_{\hat p}\), and therefore \(\left(\frac{y_n}{\sqrt{n-x}}\right)^2=\frac{x-n}{n-x}=-1\), which contradicts \(\mathbb F_{\hat p}\) not having a square root of \(-1\).

    Now, let \(S\subset\mathbb R'\) be a bounded infinite set. Because it is bounded, \(S\subseteq[a,b]\) which is a compact set. Therefore, because \(S\) is infinite, it has an accumulation point \(p\in\mathbb R'\), which means that \(\left\{\frac{1}{x-p}\middle|x\in S\setminus\{p\}\right\}\) is unbounded and so it is external, and therefore \(S\) is external as well.
\end{proof}

\begin{proof}[Proof of Theorem \ref{R-no-sigma-delta}]
    First, \(\mathbb R'\) is not \(\sigma\), since if it were, that would mean that \(\mathbb R'=\bigcup_{n\in\mathbb N}S_n\) for \(S_n\in\ast{\mathcal P(\mathbb N})\). This would mean that \(S_n\) are all finite and so \(\mathbb R\) is a countable union of finite sets and so it is countable---a contradiction.

    Second, \(\mathbb R'\) is not \(\delta\) either, since that would mean that \(\mathbb R'=\bigcap_{n\in\mathbb N}S_n\) for \(S_n\in\ast{\mathcal P(\mathbb N)}\). However, from \cite[Corollary 4.4.24]{CK} \(\ast{V(\mathbb N)}\) is an \(\aleph_1\)-saturated nonstandard model, and so there is \(x\in\mathbb F_{\hat p}\) that satisfies both \(x\in S_n\) for all \(n\) and \(\exists y_n\in\mathbb F_{\hat p}.~(x-n)=y_n^2\) for all \(n\). That would mean that there is \(x\in\mathbb R\) that is greater than all the natural numbers---a contradiction.
\end{proof}

\begin{definition}
    For each \(\hat n\in\ast{\mathbb N}\), \([\hat n]=\{\hat m\in\ast{\mathbb N}\mid\hat m<\hat n\}\).
\end{definition}

\begin{proof}[Proof of Theorem \ref{r-bad}]
    Let us show that if \(\mathbb R\) is almost internal we can enumerate it. Let \(f\) be the function that shows this and for each \(x\in\mathbb R\) notate \(\hat n=f(x)\) and observe that both of the sets \(f^{-1}[[\hat n]]\) and \(f^{-1}[\{\hat n\}]\) are internal subsets of \(\mathbb R'\) and so are finite. For every \(\hat n\in\ast{\mathbb N}\) we can enumerate \(f^{-1}[\{\hat n\}]\) by some \(e_{\hat n}:f^{-1}[\{\hat n\}]\to[|f^{-1}[\{\hat n\}]|]\) and define \[n_x=|f^{-1}[[\hat n]]|+e_{\hat n}(x).\]
    Let \(x\ne y\in\mathbb R\). If \(f(x)<f(y)\) then notate \(f(x)=\hat n,f(y)=\hat m\) and note that
    \begin{align*}
        n_x&=\left|f^{-1}\left[[\hat n]\right]\right|+e_{\hat n}(x)<\left|f^{-1}\left[[\hat n]\right]\right|+|f^{-1}[\{\hat n\}]|=\left|f^{-1}\left[[\hat n+1]\right]\right|\le\\&
        \le\left|f^{-1}\left[[\hat m]\right]\right|\le n_y,
    \end{align*}
    and similarly if \(f(x)>f(y)\) then \(n_x>n_y\). Otherwise, notate \(f(x)=f(y)=\hat n\), and note that
    \[n_x=\left|f^{-1}\left[[\hat n]\right]\right|+e_{\hat n}(x)\ne \left|f^{-1}\left[[\hat n]\right]\right|+e_{\hat n}(y)=n_y\] which means that this is indeed an enumeration of \(\mathbb R\)---a contradiction.
\end{proof}
\begin{corollary}
    There are also no cut \(C\) and internal function \(f:\ast{\mathbb N}\to\mathbb F_{\hat p}\) satisfying \(\mathbb R'=f[C]\).
\end{corollary}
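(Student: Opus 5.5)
The plan is to reduce the corollary to the proof of Theorem \ref{r-bad}, i.e.\ to show that such a representation would yield an injection of \(\mathbb R\) into a countable set. Suppose toward a contradiction that \(\mathbb R'=f[C]\) for a cut \(C\) and an internal function \(f:\ast{\mathbb N}\to\mathbb F_{\hat p}\).

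First, for every \(\hat n\in C\) the set \(f[[\hat n]]\) is internal, being the image of an internal set under an internal function. It is also contained in \(\mathbb R'\), since \([\hat n]\subseteq C\) because \(C\) is downwards closed. Lemma \ref{finite-subset} then says \(f[[\hat n]]\) is finite.

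Next, I use this to bound \(C\). I claim \(C\subseteq\mathbb N\); I am not claiming \(C=\mathbb N\), only that \(C\) cannot contain a nonstandard element. Suppose \(\hat n\in C\) is nonstandard. Then \(f[[\hat n]]\) is finite, say with exactly \(k\) elements for a standard \(k\). By transfer it is an internal set of internal cardinality \(k\). Now \(f[C]=\mathbb R'\) is uncountable, so there is \(\hat m\in C\) with \(f(\hat m)\notin f[[\hat n]]\). Necessarily \(\hat m\ge\hat n\), so \([\hat m]\supseteq[\hat n]\). Repeating the argument with successive elements, I get \(\hat n=\hat m_0<\hat m_1<\cdots\) in \(C\) such that the finite sets \(f[[\hat m_j]]\) are all distinct and increasing. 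Their union is contained in \(f[C]\), but this alone is no contradiction.

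The cleaner route is to avoid this bound altogether and enumerate \(\mathbb R'\) directly. For \(x\in\mathbb R'\), let \(g(x)\) be the least \(\hat m\in C\) with \(f(\hat m)=x\). This least element exists because \(f^{-1}[\{x\}]\) is internal and nonempty, so it has a least element, and that element lies in \(C\) by downward closure. The set \(g^{-1}[[\hat n]]=f[[\hat n]]\cap\mathbb R'=f[[\hat n]]\) is finite for each \(\hat n\in C\). Also \(g\) is injective, so each fiber \(g^{-1}[\{\hat n\}]\) has at most one element.

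I then set \(n_x=|f[[g(x)]]|\). Each \(n_x\) is a standard natural number, being the cardinality of a finite set. If \(g(x)<g(y)\), then \(f[[g(y)]]\) contains \(f[[g(x)]]\) together with \(x=f(g(x))\), and \(x\notin f[[g(x)]]\) by the minimality in the definition of \(g\). Hence \(n_x<n_y\). So \(x\mapsto n_x\) is an injection \(\mathbb R\to\mathbb N\), which is the desired contradiction.

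The main point to verify is that \(f[[\hat n]]\) really lies inside \(\mathbb R'\), so that Lemma \ref{finite-subset} applies. This holds because \([\hat n]\subseteq C\) whenever \(\hat n\in C\), which is exactly where downward closure of the cut is used. The rest mirrors the proof of Theorem \ref{r-bad}.
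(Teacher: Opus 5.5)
Your argument is correct, and its key device is the same as the paper's: the map \(g\) sending \(x\in\mathbb R'\) to the least element of \(f^{-1}[\{x\}]\), which lies in \(C\) by downward closure. The difference is in how you finish.

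The paper extends \(g\) to an internal function on all of \(\mathbb F_{\hat p}\), using a default value \(\hat N\notin C\); this is why it first rules out \(C=\ast{\mathbb N}\). It then checks \(\mathbb R'=g^{-1}[C]\) and cites Theorem \ref{r-bad} as a black box. You instead rerun the enumeration from the proof of Theorem \ref{r-bad} directly. This has some advantages: you never need \(g\) to be internal or total, the case \(C=\ast{\mathbb N}\) needs no separate treatment, and since \(g\) is injective on \(\mathbb R'\) the counting simplifies to \(n_x=|f[[g(x)]]|\) with no fiber enumeration \(e_{\hat n}\).

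What the paper's reduction buys in exchange is robustness. By exhibiting \(\mathbb R'\) as almost internal, it inherits Theorem \ref{r-bad} in the case \(\sqrt{-1}\in\mathbb F_{\hat p}\) as well. Your direct appeal to Lemma \ref{finite-subset} relies on the standing assumption of Section \ref{R}. In Section \ref{complex} that lemma is only recovered for almost internal fields, so there you would have to make the paper's reduction anyway.

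Finally, delete the abandoned paragraph claiming \(C\subseteq\mathbb N\). Nothing in it establishes that claim: finiteness of \(f[[\hat n]]\) is compatible with \(f\) being constant on long intervals. Your final argument does not use it.
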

\begin{proof}
    Assume there are ones, and note that \(C\ne\ast{\mathbb N}\) since that would imply that \(\mathbb R'=f[\ast{\mathbb N}]\) is internal. Now, define \(g:\ast{\mathbb N}\to\ast{\mathbb N}\) by \(g(x)=\min(f^{-1}[\{x\}])\) if this set is not empty and \(g(x)=\hat N\) for some \(\hat N\not\in C\) if it is, and note that \(g\) is well defined and internal. Now, On one hand, for each \(x\in\mathbb R'\) there is some \(\hat n\in C\) such that \(x=f(\hat n)\), and so \(g(x)\le\hat n\in C\), and on the other hand, if \(g(x)\in C\) then necessarily \(g(x)=\min(f^{-1}[\{x\}])\) which means that \(x=f(g(x))\in\mathbb R'\). Therefore, \(\mathbb R'=g^{-1}[C]\), which contradicts the theorem.
\end{proof}

In fact, Theorem \ref{R-no-sigma-delta} can also be proved from Theorem \ref{r-bad} using the following consequence of \cite[Theorem 4.4.23]{CK}:
\begin{lemma}
    \(\sigma\)-sets ans \(\delta\)-sets are almost internal.
\end{lemma}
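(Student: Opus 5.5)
Given a \(\sigma\)-set \(S=\bigcup_{i\in\mathbb N}S_i\) with each \(S_i\) internal, I will replace the family by an internal sequence and read off the index as an internal function. The plan is to use \(\aleph_1\)-saturation in the form of \cite[Theorem 4.4.23]{CK}: every countable sequence \((S_i)_{i\in\mathbb N}\) of internal sets extends to an internal sequence \((S_{\hat\imath})_{\hat\imath\in\ast{\mathbb N}}\), i.e. an internal function \(\hat\imath\mapsto S_{\hat\imath}\) on \(\ast{\mathbb N}\) agreeing with the given one on \(\mathbb N\).

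First I make the sequence increasing by passing to \(S'_{\hat\imath}=\bigcup_{\hat\jmath\le\hat\imath}S_{\hat\jmath}\), which is still internal as a sequence. Its standard part still has union \(S\). Then I define the internal function
\[f(x)=\min\{\hat\imath\mid x\in S'_{\hat\imath}\},\]
and set \(f(x)=\hat N\) for a fixed nonstandard \(\hat N\) when no such index exists. Since \(S'\) is increasing, \(x\in S\) iff \(x\in S'_i\) for some standard \(i\), iff \(f(x)\in\mathbb N\). Here the edge case \(x\notin\) every \(S'_{\hat\imath}\) is harmless, because then \(f(x)=\hat N\notin\mathbb N\). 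Hence \(S=f^{-1}[\mathbb N]\), and \(\mathbb N\) is a cut, so \(S\) is almost internal.

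For a \(\delta\)-set \(T=\bigcap_i T_i\) I extend the sequence internally in the same way, and I want to express \(T\) as a preimage of a cut. One option is to use \(g(x)=\min\{\hat\imath\mid x\notin T'_{\hat\imath}\}\), where \(T'_{\hat\imath}=\bigcap_{\hat\jmath\le\hat\imath}T_{\hat\jmath}\) is decreasing. Then \(x\in T\) iff \(g(x)\) is nonstandard (or undefined, in which case I send it to a fixed nonstandard value). So \(T=g^{-1}[\ast{\mathbb N}\setminus\mathbb N]\), but this set is upward closed rather than a cut.

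To turn it into a cut I need an order reversal. I set \(h(x)=\hat M-g(x)\) for a fixed nonstandard \(\hat M\) exceeding all values of \(g\); this is possible because \(g\) is internal with values capped at, say, \(\hat N\), so I may take \(\hat M=\hat N\). Then \(g(x)\) is nonstandard iff \(h(x)\in C\), where \(C=\{\hat k\mid \hat N-\hat k \text{ is nonstandard}\}\). This \(C\) is downward closed, so \(T=h^{-1}[C]\) is almost internal.

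The main obstacle is only the first step, the internal extension of a countable sequence of internal sets. That is exactly the cited consequence of \(\aleph_1\)-saturation (countable comprehension), so I would quote it directly. The remaining steps just check that the truncations \(\min\) and the caps are internal, which holds since they are first-order definable from internal parameters.
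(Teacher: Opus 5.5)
Your proposal is correct and is essentially the paper's argument. You extend the sequence internally via \cite[Theorem 4.4.23]{CK} and take the least index as an internal function, so the $\sigma$-set is its preimage of $\mathbb N$. For a $\delta$-set, your first-failure index $g$ is exactly the paper's function for the complementary $\sigma$-set, followed by the same order reversal into the cut $\{\hat k\mid \hat N-\hat k\notin\mathbb N\}$; the monotonization step is harmless but unnecessary. Note only that internality does not bound $g$ (it can be unbounded on $\ast{\mathbb N}$), so the cap at $\hat N$ must be imposed by definition, e.g.\ by replacing $g$ with $\min(g,\hat N)$, which does not change which values are nonstandard; this is the role of the paper's truncation $\max(\tilde m-h,0)$.
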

\begin{proof}
    Let \(S\) be a \(\sigma\)-set and write \(S=\bigcup_{i\in\mathbb N}S_i\) for internal sets \(S_i\). Define \(f:\mathbb N\to \ast{\mathcal P(\mathbb N)}\) such that for all \(i\), \(f(i)=S_i\). By \cite[Theorem 4.4.23]{CK}, \(f\) can be extended to an internal function \(g:\ast{\mathbb N}\to\ast{\mathcal P(\mathbb N)}\). Define \(h:\mathbb N\to\mathbb N\) such that \(h(\hat n)\) is the minimal \(\hat m\) satisfying \(\hat n\in g(\hat m)\) if one exists, or some nonstandard \(\tilde m\) if one does not. Note that since \(g\) is internal, \(h\) is well-defined and internal as well.

    Let us prove that \(h^{-1}[\mathbb N]=S\). First, for each \(x\in S\), \(x\in S_i\) for some \(i\in\mathbb N\). Therefore \(x\in f(i)=g(i)\) which means that \(h(x)\le i\) and so \(h(x)\in\mathbb N\). Second, for each \(x\in h^{-1}[\mathbb N]\), \(h(x)\ne\tilde m\) and so \(x\in g(h(x))=S_{h(x)}\subseteq S.\)

    Now let \(T\) be a \(\delta\)-set, and let \(S=\ast{\mathbb N}\setminus T\). By Proposition \ref{duality-}, \(S\) is \(\sigma\), and so there is an internal function \(h:\ast{\mathbb N}\to\ast{\mathbb N}\) such that \(h^{-1}[\mathbb N]=S\). Let \(\tilde m\) be any nonstandard integer, and define \[h'(\hat n)=\max(\tilde m-h(\hat n),0),\] \[C=\{\hat m\in\ast{\mathbb N}\mid\forall n\in\mathbb N.~\hat m<\tilde m-n\}.\]

    Then the following are equivalent:
    \[x\in T\;\;\;\;x\notin S\;\;\;\;h(x)\notin\mathbb N\;\;\;\;\tilde m-h'(x)\notin\mathbb N\;\;\;\;h'(x)\in C\;\;\;\;x\in h'^{-1}[C].\]
    Note that since the outputs of \(h'\) are always at most \(\tilde m\) the result of the subtraction in the fourth formula is always a member of \(\ast{\mathbb N}\).
\end{proof}

\section{Characterizations of superfields of \(\mathbb R\)}\label{superfields}
We assume for the following two sections that the field \(\mathbb F_{\hat p}\) is only known to contain a copy of  \(\bar{\mathbb Q}\cap \mathbb R\) and also that it does not contain \(\sqrt{-1}\). These fields can be relatively easily created as an ultra­product of one copy of each finite field using \cite[Theorem 3.10]{Chatzidakis}, which can also be used to show that they can be constructed as a quotient of any ultra­power of the naturals. We prove similar results in the case that the field does contain \(\sqrt{-1}\) in Section \ref{complex}
\begin{definition}
    We define the formula \(\phi_2(\hat n)\) as \[\forall x,y\in f_{\bar{\mathbb Q}}^{-1}[[\hat n]].\exists z\in\mathbb F_{\hat p}.~x^2+y^2=z^2.\] In addition, for any odd number \(d>1\), we define \(\phi_d(\hat n)\) as \[\forall a_{0},\dots a_{d-1}\in f_{\bar{\mathbb Q}}^{-1}[[\hat n]].\exists x\in\mathbb F_{\hat p}.~x^{d}+\sum_{i=0}^{d-1}a_ix^i=0.\]
\end{definition}
Note that these formulae hold for all \(n\in\mathbb N\), as by Proposition \ref{f-qbar}\footnote{The reader is welcome reread the proof and convince themself that it does not rely on the existence of transcendental real numbers in \(\mathbb F_{\hat p}\).}, \(f_{\bar {\mathbb Q}}^{-1}[\mathbb N]\) is isomorphic to \(\bar{\mathbb Q}\cap \mathbb R\) and hence is a real-closed field. However, they are false for \(\hat p\), since \(f_{\bar{\mathbb Q}}^{-1}\left[\left[\hat p\right]\right]=\mathbb F_{\hat p}\), which is an infinite pseudo-finite field and hence is not Pythagorean and has an algebraic extension of every order. Therefore, being first order formulae, for each such \(d\) there is some maximal nonstandard natural \(\hat n_d\) that satisfies \(\phi_d\).
\begin{definition}
    \[C_{\hat p}=\bigcap_{\substack{d=2,3,5,7,9,\dots\\i\in\mathbb N}}\left[\left\lceil\sqrt[i]{\hat n_d}\right\rceil\right]\]
\end{definition}
This \(\delta\)-cut includes \(\mathbb N\) and so from Theorem \ref{not-both} the inclusion is strict.
\begin{definition}
    \[\mathbb R^{\delta}_{\hat p}=f_{\bar{\mathbb Q}}^{-1}\left[C_{\hat p}\right]\] We omit \(\hat p\) when it is clear from the context.
\end{definition}
Note that by Corollary \ref{function-delta} this is a \(\delta\)-set.
\begin{lemma}\label{delta-semiring}
    Every cut of the form \[C=\bigcap_{\substack{\hat n\in S\\i\in\mathbb N}}\left[\left\lceil\sqrt[i]{\hat n}\right\rceil\right]\] for a set \(S\subseteq \ast{\mathbb N}\setminus\mathbb N\) is a semi­ring. In particular,
    \(C_{\hat p}\) is a semi­ring.
\end{lemma}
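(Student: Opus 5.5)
We must show that \(C\) contains \(0\) and \(1\) and is closed under addition and multiplication. Since \(C\) is an intersection of initial segments \([\lceil\sqrt[i]{\hat n}\rceil]\), it is downwards closed. So it suffices to show, for every fixed \(\hat n\in S\) and \(i\in\mathbb N\), that \(a,b\in C\) implies \(a+b,ab\in[\lceil\sqrt[i]{\hat n}\rceil]\). Membership of \(0\) and \(1\) (indeed of all of \(\mathbb N\)) comes first. For \(\hat n\in S\) we have \(\hat n\) nonstandard, so \(\hat n\ge k^i\) for every standard \(k\). Hence \(\lceil\sqrt[i]{\hat n}\rceil\ge k\) for every standard \(k\), which forces \(\lceil\sqrt[i]{\hat n}\rceil\) to be nonstandard. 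Therefore every standard \(k\) lies in \([\lceil\sqrt[i]{\hat n}\rceil]\).

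For closure under multiplication, fix \(\hat n\in S\) and \(i\in\mathbb N\), and take \(a,b\in C\). Since \(C\subseteq[\lceil\sqrt[2i]{\hat n}\rceil]\), we get \(a,b<\lceil\sqrt[2i]{\hat n}\rceil\). As both sides are integers, \(a,b\le\lceil\sqrt[2i]{\hat n}\rceil-1<\sqrt[2i]{\hat n}\). Then \(ab<\sqrt[i]{\hat n}\le\lceil\sqrt[i]{\hat n}\rceil\), so \(ab\in[\lceil\sqrt[i]{\hat n}\rceil]\). This argument is carried out internally in \(\ast{\mathbb N}\), using the transfer of the elementary facts about real \(i\)-th roots and ceilings, or equivalently the integer versions "\(x<\lceil y^{1/k}\rceil \iff x^k<y\)" for integers. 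It is cleanest to use the integer reformulation \(x\in[\lceil\sqrt[i]{\hat n}\rceil]\iff x^i<\hat n\). This holds because \(x<\lceil r\rceil\iff x<r\) for integer \(x\) and real \(r\), and it transfers from the standard model. With it, \(a^{2i},b^{2i}<\hat n\) gives \((ab)^{2i}=a^{2i}b^{2i}<\hat n^2\), which is not quite what we want. The correct computation is \((ab)^i=\sqrt{a^{2i}b^{2i}}<\hat n\): from \(a^{2i}<\hat n\) and \(b^{2i}<\hat n\) we get \(a^i,b^i<\sqrt{\hat n}\), so \((ab)^i=a^ib^i<\hat n\).

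For addition, assume \(a\le b\) without loss of generality. Then \(a+b\le 2b\le b^2\) when \(b\ge2\), so \(a+b\) is bounded by \(ab\)-type quantities; the small cases \(b\le1\) give standard sums. Concretely, \(a+b\le\max(2,b^2)\). Since \(b^2\in C\) by multiplicative closure and \(2\in C\), downward closure yields \(a+b\in C\).

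The only delicate point is making the root and ceiling manipulations rigorous in the nonstandard setting. I would handle this via the integer characterization \(x<\lceil\sqrt[i]{\hat n}\rceil\iff x^i<\hat n\), justified by transfer. The claim about \(C_{\hat p}\) then follows by taking \(S=\{\hat n_d\}\), since each \(\hat n_d\) is nonstandard: \(\phi_d\) holds for all standard \(n\), and \(\hat n_d\) is the maximal element satisfying it.
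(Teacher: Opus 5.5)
Your proof is correct and takes essentially the same route as the paper: both use \(C\subseteq\left[\left\lceil\sqrt[2i]{\hat n}\right\rceil\right]\) to absorb one squaring, and then handle sums through a bound like \(\max(2,b^2)\) together with downward closure. The differences are cosmetic. You bound \(ab\) directly rather than via \(a^2\), and you replace the paper's nested-ceiling identity with the cleaner equivalence \(x<\left\lceil\sqrt[i]{\hat n}\right\rceil\iff x^i<\hat n\). You also explicitly verify \(\mathbb N\subseteq C\), which the paper takes for granted. Your detour was unnecessary, since \((ab)^{2i}<\hat n^2\) already gives \((ab)^i<\hat n\).
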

\begin{proof}
    Because it is a subset of \(\ast{\mathbb N}\), it suffices to show that for each \(a,b\in C\), \(a+b,ab\in C\). Assume w.l.o.g that \(a>b\), and note that \(a+b,ab<\max(a^2,2)\). Therefore, because \(C\) is a cut that includes \(\mathbb N\), it suffices to show that \(a^2\in C\), and that is true since for each \(\hat n\in S\) and \(i\in\mathbb N\), \(a<\left\lceil\sqrt[2i]{\hat n}\right\rceil=\left\lceil\sqrt{\left\lceil\sqrt[i]{\hat n}\right\rceil}\right\rceil\) and therefore \(a^2<\left\lceil\sqrt[i]{\hat n}\right\rceil\).
\end{proof}
\begin{corollary}
    \(\mathbb R^\delta\) is a field which is algebraically closed relative to \(\mathbb F_{\hat p}\).
\end{corollary}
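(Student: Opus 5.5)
The plan is to apply Proposition \ref{subfield-alg} with \(\mathbb F=\mathbb F_{\hat p}\), \(R=\ast{\mathbb N}\), \(S=C_{\hat p}\) and \(f=f_{\bar{\mathbb Q}}\). The hypotheses on \(f\) are the ones already verified in Proposition \ref{f-qbar-properties}. Sums, products, negatives and inverses are bounded by \(f(a)f(b)+1\), \(f(a)f(b)\), \(f(a)\) and \(2f(a)^2+f(a)\). A root of a monic polynomial of degree \(n\) with coefficients \(a_0,\dots,a_{n-1}\) has \(f\)-value at most \(n\prod_{i<n}f(a_i)\). All of these are arithmetic terms whose only constants are \(1\) and \(2\), which lie in \(C_{\hat p}\) because \(\mathbb N\subseteq C_{\hat p}\).

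For the polynomial clause, Proposition \ref{subfield-alg} speaks of an arbitrary degree \(n\), while \(C_{\hat p}\) is a semiring only under finitely many operations. I will read that clause, as the proposition itself does, with \(n\) a standard natural number and one term for each \(n\). The term \(n\prod_{i<n}x_i\) is then a genuine finite arithmetic term, and \(n\in\mathbb N\subseteq C_{\hat p}\). Relative algebraic closedness only needs polynomials over \(\mathbb R^\delta\) of standard degree, since every polynomial in the ordinary sense has finite degree. So this reading is enough.

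It remains to check the structural hypotheses on \(S=C_{\hat p}\). It is downwards closed because it is an intersection of initial segments \([\,\cdot\,]\). It is a sub-semiring of \(\ast{\mathbb N}\) by Lemma \ref{delta-semiring}, applied with the set of indices \(\{\hat n_d\mid d=2,3,5,7,\dots\}\). For that lemma I need every \(\hat n_d\) to be nonstandard. This holds because \(\phi_d(n)\) is true for every standard \(n\), so the maximal \(\hat n_d\) satisfying \(\phi_d\) exceeds every standard natural number.

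Proposition \ref{subfield-alg} then gives that \(f^{-1}[C_{\hat p}]=\mathbb R^\delta\) is a field that is algebraically closed relative to \(\mathbb F_{\hat p}\). The only point needing care is the standard-degree reading of the polynomial clause in the previous paragraph. Everything else is direct substitution into the earlier propositions.
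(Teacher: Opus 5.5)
Your proposal is correct and follows the paper's own route: the paper proves this corollary by applying Proposition \ref{subfield-alg} with \(S=C_{\hat p}\) and \(f=f_{\bar{\mathbb Q}}\), taking the bounds from Proposition \ref{f-qbar-properties} and the semiring property of \(C_{\hat p}\) from Lemma \ref{delta-semiring}. Your additional checks spell out details the paper leaves implicit: that each \(\hat n_d\) is nonstandard, that \(\mathbb N\subseteq C_{\hat p}\) so the constants and \(n\) lie in \(S\), and that only polynomials of standard degree are needed.
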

\begin{proof}
    This follows from propositions \ref{f-qbar-properties}\footnote{\label{foo}The reader is welcome reread the proof and convince themself that it works for any field for which the function is well defined.} and \ref{subfield-alg}.
\end{proof}
\begin{definition}\label{field-le}
    For each pair of elements \(a,b\in\mathbb R^\delta\), \(a\le b\) iff there exists some \(x\in\mathbb F_{\hat p}\) such that \(b-a=x^2\).
\end{definition}
\begin{lemma}
    The relation \(\le\) is a (non-strict) total order on \(\mathbb R^\delta\), which makes it an ordered field.
\end{lemma}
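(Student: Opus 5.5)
We must show that \(\le\) on \(\mathbb R^\delta\) is reflexive, antisymmetric, transitive and total, and that it is compatible with the field operations: \(a\le b\Rightarrow a+c\le b+c\), and \(0\le a,b\Rightarrow 0\le ab\). Note that the witness \(x\) is only required to lie in \(\mathbb F_{\hat p}\). The first step is to show that it in fact lies in \(\mathbb R^\delta\). Indeed, \(x\) is a root of the monic polynomial \(X^2-(b-a)\), whose coefficients lie in \(\mathbb R^\delta\). By the preceding corollary, \(\mathbb R^\delta\) is algebraically closed relative to \(\mathbb F_{\hat p}\), so \(x\in\mathbb R^\delta\). Hence \(a\le b\) iff \(b-a\) is a square in \(\mathbb R^\delta\), and we are reduced to showing that the set \(P\) of squares of \(\mathbb R^\delta\) is a positive cone. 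That is, we need \(P+P\subseteq P\), \(PP\subseteq P\), \(P\cap -P=\{0\}\) and \(P\cup -P=\mathbb R^\delta\).

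The routine parts come first. Reflexivity holds via \(x=0\). Compatibility with addition is immediate, since \((b+c)-(a+c)=b-a\). Closure under products holds because \(x^2y^2=(xy)^2\). Antisymmetry follows from the absence of \(\sqrt{-1}\). If \(b-a=x^2\) and \(a-b=y^2\) with \(x\ne0\), then \((y/x)^2=-1\) in \(\mathbb F_{\hat p}\), a contradiction; so \(x=0\) and \(a=b\). Totality is the same kind of argument, using a standard fact about \(\mathbb F_{\hat p}\). It is the ultraproduct of finite fields \(\mathbb F_p\) (for \(p\) odd, on a large set), and in these the squares have index \(2\) in the multiplicative group. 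So for \(c\ne0\) exactly one of \(c,-c\) is a square in \(\mathbb F_{\hat p}\), since \(-1\) is a nonsquare. Alternatively, this transfers as the first-order sentence \(\forall c\,\exists x\,(x^2=c\vee x^2=-c)\), which holds in each \(\mathbb F_p\) with \(p\equiv 3\bmod 4\).

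The main obstacle is transitivity, i.e.\ closure of \(P\) under addition. This is exactly where \(\phi_2\) and the definition of \(C_{\hat p}\) enter. Take \(b-a=x^2\) and \(c-b=y^2\) with \(x,y\in\mathbb R^\delta\), and let \(\hat m=\max(f_{\bar{\mathbb Q}}(x),f_{\bar{\mathbb Q}}(y))\). Then \(\hat m\in C_{\hat p}\). In particular \(\hat m<\lceil\sqrt[1]{\hat n_2}\rceil=\hat n_2\), so \(\hat m+1\le\hat n_2\) and both \(x,y\in f_{\bar{\mathbb Q}}^{-1}[[\hat m+1]]\). Now \(\phi_2(\hat n)\) is downward closed in \(\hat n\), since it quantifies universally over \(f^{-1}_{\bar{\mathbb Q}}[[\hat n]]\), which grows with \(\hat n\). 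Therefore \(\phi_2(\hat m+1)\) holds, and \(\phi_2(\hat m+1)\) gives \(z\in\mathbb F_{\hat p}\) with \(x^2+y^2=z^2\). So \(c-a=z^2\) and \(a\le c\), and by the first step \(z\in\mathbb R^\delta\). I would take care to state explicitly that \(\hat n_d\) is the maximal element satisfying \(\phi_d\) and that \(\phi_d\) is downward closed; otherwise "maximal" would not give us \(\phi_2(\hat m+1)\).

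Combining these steps shows that \(P\) is a positive cone and \(\le\) is a total order compatible with \(+\) and \(\cdot\). Hence \((\mathbb R^\delta,\le)\) is an ordered field.
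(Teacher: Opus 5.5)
Your proof is correct and follows essentially the same route as the paper: reflexivity via $x=0$, antisymmetry from the absence of $\sqrt{-1}$, transitivity via relative algebraic closure (so $x,y\in\mathbb R^\delta$) combined with $\phi_2$, and the identities $(b+c)-(a+c)=b-a$ and $x^2y^2=(xy)^2$ for compatibility with the field operations. The differences are cosmetic. The paper applies $\phi_2(\hat n_2)$ directly, since $f_{\bar{\mathbb Q}}(x),f_{\bar{\mathbb Q}}(y)<\hat n_2$ already gives $x,y\in f_{\bar{\mathbb Q}}^{-1}[[\hat n_2]]$, so your downward-closure remark is not needed. It also proves totality with the explicit element $(a-b)^{\frac{\hat p+1}4}$, using $\hat p\equiv_4 3$, rather than your index-$2$/transfer argument.
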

\begin{proof} Let us show the 4 axioms of total orders:
    \begin{itemize}
        \item Reflexivity: For each \(a\) we can take \(x=0\) and get that \(a\le a\).
        \item Anti-symmetry: For each \(a,b\), if both \(a\le b\) and \(b\le a\) but \(a\ne b\) this means that there are \(0\ne x,y\in\mathbb F_{\hat p}\) such that \(x^2=a-b=-y^2\). Therefore, \(\left(\frac xy\right)^2=\frac{x^2}{y^2}=\frac{-y^2}{y^2}=-1\), contradicting \(\mathbb F_{\hat p}\) not having a square root of \(-1\).
        \item Transitivity: Assume \(a\le b\le c\) for elements of \(\mathbb R^\delta\). This means there are \(x,y\in\mathbb F_{\hat p}\) such that \(x^2=b-a\) and \(y^2=c-b\). Since \(\mathbb R^\delta\) is a field which is algebraically closed relative to \(\mathbb F_{\hat p}\), \(x,y\in\mathbb R^\delta\). Therefore, \(f_{\bar{\mathbb Q}}(x),f_{\bar{\mathbb Q}}(y)\in C_{\hat p}\) which means they are smaller than \(\hat n_2\). Finally, since \(\phi_2\) holds for \(\hat n_2\), there is \(z\in\mathbb F_{\hat p}\) such that \(z^2=x^2+y^2=b-a+c-b=c-a\), meaning that indeed \(a\le c\).
        \item Totality: Since \(\mathbb F_{\hat p}\) does not contain a square root of \(-1\), \(\hat p\equiv_4 3\). Let \(a,b\in\mathbb R^\delta\) and look at \(x=(a-b)^{\frac{\hat p+1}4}\). \(x^4=(a-b)^{\hat p+1}=(a-b)^2\) and therefore either \(x^2=a-b\) or \(x^2=b-a\), meaning that either \(a\le b\) or \(b\le a\).
    \end{itemize}
    Furthermore, if \(a\le b\) then for each \(c\), \((b+c)-(a+c)=b-a\) meaning that \(a+c\le b+c\). Also, if \(a,b\ge 0\) then there are \(x,y\) such that \(a=x^2\) and \(b=y^2\), meaning that \(ab=(xy)^2\), thereby proving that this relation indeed makes \(\mathbb R^\delta\) an ordered field.
\end{proof}
\begin{theorem}\label{real-closed}
    For each cut \(C\subseteq C_{\hat p}\) that is also a semi­ring, \(\mathbb R^C\coloneq f_{\bar{\mathbb Q}}^{-1}[C]\) is a real-closed field with the relation \(\le\). In particular, \(\mathbb R^\delta\) is a real-closed field.
\end{theorem}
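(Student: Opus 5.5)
The plan is to check the standard characterization of real-closed fields: an ordered field is real closed iff every nonnegative element is a square and every monic polynomial of odd degree has a root. I will verify both conditions for \(\mathbb R^C\) with the order \(\le\) of Definition \ref{field-le}. Both verifications rest on two facts: \(\mathbb R^C\) is algebraically closed relative to \(\mathbb F_{\hat p}\), and \(C\subseteq C_{\hat p}\subseteq[\hat n_d]\) for every \(d\). The second fact holds because the index \(i=1\) appears in the definition of \(C_{\hat p}\).

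First I show that \(\mathbb R^C\) is a field that is algebraically closed relative to \(\mathbb F_{\hat p}\). I apply Proposition \ref{subfield-alg} with \(\mathbb F=\mathbb F_{\hat p}\), \(R=\ast{\mathbb N}\), \(S=C\) and \(f=f_{\bar{\mathbb Q}}\), using the bounds of Proposition \ref{f-qbar-properties}. The bounding terms there have standard constants (\(1\), \(2\), and the degree \(n\)). So I must check that \(C\) contains \(\mathbb N\). If \(C\) is nonempty, then \(0\in C\) because \(C\) is downwards closed. If moreover \(1\in C\), closure under addition gives \(\mathbb N\subseteq C\). I will assume, as in the intended use, that the semiring \(C\) contains \(1\). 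Otherwise \(C\subseteq\{0\}\), and then \(\mathbb R^C\) is not a field at all, so this degenerate case is excluded.

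Next, the order. Since \(\mathbb R^C\subseteq\mathbb R^\delta\), the relation \(\le\) on \(\mathbb R^C\) is the restriction of the order from the previous lemma. Reflexivity, antisymmetry and totality transfer verbatim. Transitivity requires the witnesses \(x,y\) with \(x^2=b-a\) and \(y^2=c-b\) to lie in \(f_{\bar{\mathbb Q}}^{-1}[[\hat n_2]]\). This holds because relative algebraic closure puts them in \(\mathbb R^C\), and \(C\subseteq[\hat n_2]\). Compatibility with \(+\) and \(\cdot\) follows as before. Hence \(\mathbb R^C\) is an ordered field.

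Finally, real closedness. If \(a\ge 0\) in \(\mathbb R^C\), then \(a=x^2\) for some \(x\in\mathbb F_{\hat p}\), and relative algebraic closure gives \(x\in\mathbb R^C\). Now take a monic polynomial of odd degree \(d>1\) with coefficients \(a_0,\dots,a_{d-1}\in\mathbb R^C\); degree \(1\) is trivial. Then \(f_{\bar{\mathbb Q}}(a_i)\in C\subseteq[\hat n_d]\), so \(\phi_d(\hat n_d)\) yields a root \(x\in\mathbb F_{\hat p}\). By relative algebraic closure, \(x\in\mathbb R^C\). This proves the theorem, and the statement for \(\mathbb R^\delta\) is the case \(C=C_{\hat p}\), which is a semiring by Lemma \ref{delta-semiring}.

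The only delicate point is the bookkeeping in the first step. One must make sure that the constants in the bounding terms lie in \(C\), which is where \(\mathbb N\subseteq C\) is needed. One must also make sure that Proposition \ref{f-qbar-properties} applies to \(\mathbb F_{\hat p}\) without any appeal to transcendental reals, which is what the footnote to the previous corollary asserts.
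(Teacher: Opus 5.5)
Your proposal is correct and follows the paper's proof essentially step for step. Relative algebraic closure comes from Propositions \ref{f-qbar-properties} and \ref{subfield-alg}, the order is inherited from \(\mathbb R^\delta\), square roots come from relative algebraic closure, and odd-degree roots come from \(\phi_d\). The differences are cosmetic: you apply \(\phi_d(\hat n_d)\) directly via \(C\subseteq[\hat n_d]\), whereas the paper goes through \(\hat m+1\in C_{\hat p}\) and the tacit downward monotonicity of \(\phi_d\); you make explicit the need for \(1\in C\) (hence \(\mathbb N\subseteq C\)), which the paper leaves implicit in the word ``semiring''; and your separate check of transitivity is harmless but unnecessary, since it already transfers by restriction from \(\mathbb R^\delta\).
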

\begin{proof}
    From propositions \ref{f-qbar-properties}\footnote{The reader is welcome reread the proof and convince themself that it works for any field for which the function is well defined.}
    and \ref{subfield-alg} this is a field which is algebraically closed relative to \(\mathbb F_{\hat p}\), and since it is a sub­field of \(\mathbb R^\delta\), the previous lemma implies that it is ordered by the relation \(\le\). It remains to show that every positive element has a square root and that every monic polynomial of odd degree has a solution. First, let \(a\ge 0\) be an element of \(\mathbb R^C\). Since \(a\ge 0\), there is \(x\in\mathbb R^\delta\subseteq \mathbb F_{\hat p}\) such that \(x^2=a\), and since \(\mathbb R^C\) is algebraically closed relative to \(\mathbb F_{\hat p}\), \(x\in \mathbb R^C\). Second, let \[p(x)=x^d+\sum_{i=0}^{d-1}a_ix^i\] be a polynomial of odd degree with coefficients in \(\mathbb R^C\). Since \(a_i\in\mathbb R^C\), it follows that \(f_{\bar {\mathbb Q}}(a_i)\in C\), and in particular \[\hat m\coloneq\max_{i}f_{\bar {\mathbb Q}}(a_i)\in C\subseteq C_{\hat p}.\] Therefore, since \(C_{\hat p}\) is a semi­ring, \(\hat n=\hat m+1\in C_{\hat p}\), which means that \(\phi_{d}(\hat n)\) holds. Therefore there is some \(x\in\mathbb F_{\hat p}\) such that \(p(x)=0\), and since \(\mathbb R^C\) is algebraically closed relative to \(\mathbb F_{\hat p}\), \(x\in\mathbb R^C\).
\end{proof}
The following implies the first part of Theorem \ref{r-delta-good}:
\begin{theorem}\label{saturated-field}
    If \(C\subseteq C_{\hat p}\) is a cut that is a semi­ring and also not \(\sigma\) then \(\mathbb R^C\) is an \(\aleph_1\)-saturated real-closed field. In particular \(\mathbb R^\delta\) is an \(\aleph_1\)-saturated real-closed field.
\end{theorem}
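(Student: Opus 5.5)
The plan is to reduce $\aleph_1$-saturation of the real-closed field $\mathbb R^C$ to realizing countable cuts, and then to realize such a cut inside $\mathbb F_{\hat p}$ using the $\aleph_1$-saturation of $\ast{V(\mathbb N)}$ \cite[Corollary 4.4.24]{CK}. The hypothesis that $C$ is not $\sigma$ supplies a single bound in $C$ that dominates countably many elements of $C$.

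\textbf{Reduction to cuts.} By Theorem \ref{real-closed}, $\mathbb R^C$ is real closed with the order $\le$ of Definition \ref{field-le}. By quantifier elimination (o-minimality) for real-closed fields, a complete $1$-type $q$ over a countable $A\subseteq\mathbb R^C$ is determined by the cut it induces in the countable real-closed field $\operatorname{dcl}(A)$. If $q$ is algebraic it is realized in $\operatorname{dcl}(A)$ and there is nothing to prove. Otherwise we get countable sets $\{a_i\}$ and $\{b_j\}$ of elements of $\operatorname{dcl}(A)\subseteq\mathbb R^C$ with $a_i<b_j$ for all $i,j$. Either family may be empty, and the empty cases are handled in the same way. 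It suffices to find $x\in\mathbb R^C$ with $a_i<x<b_j$ for all $i,j$.

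\textbf{A uniform bound from non-$\sigma$-ness.} The countable set $\{f_{\bar{\mathbb Q}}(a_i),f_{\bar{\mathbb Q}}(b_j)\}$ lies in $C$. If it were cofinal in $C$, then, because $C$ is a cut, $C$ would equal the countable union of the internal sets $[f_{\bar{\mathbb Q}}(a_i)+1]$ and $[f_{\bar{\mathbb Q}}(b_j)+1]$. That would make $C$ a $\sigma$-set, contrary to hypothesis. So there is $\hat N\in C$ with $\hat N\ge 2$ and $\hat N\ge f_{\bar{\mathbb Q}}(a_i),f_{\bar{\mathbb Q}}(b_j)$ for all $i,j$. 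Since $C$ is a semiring, a fixed polynomial expression $\hat M$ in $\hat N$ (say $\hat M=8\hat N^4$, to be adjusted according to the bounds below) also lies in $C$.

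\textbf{Saturation step.} Consider the countable set of internal conditions on $x\in\mathbb F_{\hat p}$:
\[
f_{\bar{\mathbb Q}}(x)\le\hat M,
\]
together with, for every $i$ and $j$,
\[
x\neq a_i,\quad \exists y.\ x-a_i=y^2,\qquad x\neq b_j,\quad \exists y.\ b_j-x=y^2.
\]

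To check finite satisfiability, take finitely many of these conditions. Let $a$ be the largest of the $a_i$ involved and $b$ the smallest of the $b_j$ involved. Put $x=(a+b)/2$, or $x=a+1$ or $x=b-1$ if one side is absent. In the real-closed field $\mathbb R^C$ the element $x$ lies strictly between $a$ and $b$, so the differences $x-a$ and $b-x$ are positive, hence nonzero squares in $\mathbb F_{\hat p}$. By Proposition \ref{f-qbar-properties}, together with $f_{\bar{\mathbb Q}}(2^{-1})$ and $f_{\bar{\mathbb Q}}(1)$ being standard, $f_{\bar{\mathbb Q}}(x)$ is bounded by a fixed polynomial in $\hat N$, and $\hat M$ is chosen at least that large.

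By \cite[Corollary 4.4.24]{CK} some $x\in\mathbb F_{\hat p}$ satisfies all the conditions at once. Then $f_{\bar{\mathbb Q}}(x)\le\hat M\in C$, and since $C$ is downwards closed, $x\in\mathbb R^C$. The square conditions say exactly $a_i<x<b_j$ in the order $\le$, so $x$ realizes the cut. The last claim of the theorem follows because $C_{\hat p}$ is a semiring by Lemma \ref{delta-semiring}, is not $\sigma$ by Theorem \ref{not-both} (it is a non-internal $\delta$-set), and $\mathbb R^\delta=\mathbb R^{C_{\hat p}}$.

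\textbf{Main obstacle.} The step needing the most care is the reduction to cuts: one must make sure that saturation for the field language is fully captured by realizing such countable cuts. This follows from quantifier elimination, but one must also note that the order $\le$, defined through squares in $\mathbb F_{\hat p}$, coincides with the unique field ordering of $\mathbb R^C$. That holds because $\mathbb R^C$ is algebraically closed relative to $\mathbb F_{\hat p}$, so squares in $\mathbb F_{\hat p}$ of elements of $\mathbb R^C$ have their square roots in $\mathbb R^C$.
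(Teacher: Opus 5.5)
Your proof is correct. Its skeleton matches the paper's: non-$\sigma$-ness of $C$ gives a single bound in $C$, you add the internal condition $f_{\bar{\mathbb Q}}(x)\le$ bound to a countable internal system, realize that system by $\aleph_1$-saturation of $\ast{V(\mathbb N)}$, and use quantifier elimination to return to $\mathbb R^C$. The difference is in what you choose to bound.

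The paper does not reduce to cuts. It takes an arbitrary type $\Gamma$ over countably many parameters and picks, for each finite fragment $S_i\subseteq\Gamma$, a realization $y_i\in\mathbb R^C$. Non-$\sigma$-ness then gives $\hat n\in C$ with $\hat n\ge f_{\bar{\mathbb Q}}(y_i)$ for all $i$. Finite satisfiability of the quantifier-free part of $\Gamma$ (with $\le$ translated via Definition \ref{field-le}) together with $f_{\bar{\mathbb Q}}(x)\le\hat n$ is then immediate, because the $y_i$ themselves are the witnesses.

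You instead bound the parameters, namely the endpoints of the cut. So you first need the o-minimal reduction to cuts, and then you must build the witnesses yourself ($(a+b)/2$, $a+1$, $b-1$) and control their complexity with Proposition \ref{f-qbar-properties} and the semiring property of $C$.

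The paper's device is shorter and more robust. It needs no arithmetic estimates, and it would work verbatim for any structure of the form $f^{-1}[C]$ whose relations are internally definable on it. Your version makes the witnesses explicit and isolates exactly what saturation amounts to here, namely filling countable cuts, at the cost of the extra reduction.

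One small point you assert without justification is that $C_{\hat p}$ is not internal. It needs only one line. An internal cut is $\ast{\mathbb N}$ or some $[\hat n]$. The first is excluded because $C_{\hat p}\subseteq[\hat n_2]$. The second is excluded because a cut $[\hat n]$ with $\hat n$ nonstandard is not closed under squaring, whereas $C_{\hat p}$ is a semiring.
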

\begin{proof}
    It is a real-closed field from Lemma \ref{real-closed}. Let \(x_0,x_1,\dots\) be some countable set of elements of \(\mathbb R^C\) and let \(\Gamma\) be a type in one variable over \(\mathbb R^C_{x_0,x_1,\dots}\). Since the theory of real-closed fields is countable, this type is also countable, which means that we can enumerate its finite subsets \(\mathcal P_{<\aleph_0}(\Gamma)=\{S_0,S_1,\dots\}\). For each \(S_i\) choose some \(y_i\in\mathbb R^C\) that realizes it. Note that \[C'=\bigcup_{i\in\mathbb N}[f_{\bar {\mathbb Q}}(y_i)]\subsetneq C\] since for each \(y_i\), \(f_{\bar {\mathbb Q}}(y_i)\in C\), a cut that is not \(\sigma\), while \(C'\) is \(\sigma\). Therefore, there is some \(\hat n\in C\) that satisfies \(\hat n\ge f_{\bar {\mathbb Q}}(y_i)\) for all \(i\) and therefore there is a countable set of bounded quantifier formulae that contains all the formulae in \(\Gamma\) that have no quantifiers where every time \(\le\) appears it is replaced by Definition \ref{field-le}, as well as the formulae \(x\in\mathbb F_{\hat p}\) and \(f_{\bar {\mathbb Q}}(x)\le\hat n\). From \cite[Corollary 4.4.24]{CK}, \(\ast{V(\mathbb N)}\) is an \(\aleph_1\)-saturated nonstandard model and so this type is realized by some \(x\in\mathbb F_{\hat p}\). This \(x\) also satisfies \(f_{\bar{\mathbb Q}}(x)\le\hat n\) and therefore \(x\in \mathbb R^C\). To show that \(x\) realizes the type \(\Gamma\) we consider any formula \(\phi\in\Gamma\). By \cite[Theorem 8.4.4]{Hodges} the theory of real-closed fields has quantifier elimination, and so \(\phi\) is equivalent to a formula \(\psi\in\Gamma\), that has no quantifiers. Therefore if all the appearances of \(\le\) in \(\psi\) are replaced by its definition the resulting formula satisfies \(\psi'\in\Gamma'\). This means that it is satisfied by \(x\) and therefore \(\phi\) is also satisfied by \(x\).
\end{proof}
\begin{lemma}\label{alg-generate}
    There is a set \(G\) of algebraically independent real numbers that algebraically generates \(\mathbb C\), i.e. \(\overline{\mathbb Q(G)}=\mathbb C\).
\end{lemma}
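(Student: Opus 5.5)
Take a transcendence basis of \(\mathbb R\) over \(\mathbb Q\) and show it already works for \(\mathbb C\). By Zorn's lemma, choose a maximal set \(G\subseteq\mathbb R\) that is algebraically independent over \(\mathbb Q\). The union of a chain of algebraically independent sets is algebraically independent, because any polynomial relation involves only finitely many elements, and those all lie in a single member of the chain. So such a maximal \(G\) exists. By maximality, every real number is algebraic over \(\mathbb Q(G)\): if some \(r\in\mathbb R\) were transcendental over \(\mathbb Q(G)\), then \(G\cup\{r\}\) would still be algebraically independent, contradicting maximality.

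Next, show that every complex number is algebraic over \(\mathbb Q(G)\). Let \(z=a+bi\) with \(a,b\in\mathbb R\). Both \(a\) and \(b\) are algebraic over \(\mathbb Q(G)\), and \(i\) is algebraic over \(\mathbb Q\), being a root of \(x^2+1\). The algebraic elements over a field form a subfield, so \(z=a+bi\) is algebraic over \(\mathbb Q(G)\).

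Thus \(\mathbb C\) is an algebraic extension of \(\mathbb Q(G)\). Fix an algebraic closure \(\overline{\mathbb Q(G)}\) inside \(\mathbb C\), namely the set of elements of \(\mathbb C\) algebraic over \(\mathbb Q(G)\). It is algebraically closed because \(\mathbb C\) is, and it equals \(\mathbb C\) by the previous paragraph. Hence \(\overline{\mathbb Q(G)}=\mathbb C\) with \(G\subseteq\mathbb R\) algebraically independent.

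There is no real obstacle here. The only points to state carefully are the use of Zorn's lemma (or a transcendence-basis theorem) and the closure of algebraic elements under addition and multiplication. If the later argument needs it, we can also note that \(|G|=\mathfrak c\), since \(\mathbb C\) is algebraic over \(\mathbb Q(G)\) and so \(|\mathbb C|\le\max(|G|,\aleph_0)\).
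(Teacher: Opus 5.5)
Your proposal is correct and follows essentially the paper's proof: you use Zorn's lemma to get a maximal algebraically independent \(G\subseteq\mathbb R\), and maximality makes every real number algebraic over \(\mathbb Q(G)\). The only cosmetic difference is at the step from \(\mathbb R\) to \(\mathbb C\): you write \(z=a+bi\) and use that the algebraic elements form a field, whereas the paper notes that \(z\) is a root of \((x-z)(x-\bar z)\), whose coefficients \(z+\bar z\) and \(z\bar z\) are real and hence algebraic over \(\mathbb Q(G)\).
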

\begin{proof}
    This follows from Zorn's lemma. Consider \(S\) to be the set of all algebraically independent sets of (transcendental) real numbers, and order it by the subset relation. \(\emptyset\in S\) and so it is nonempty, and every chain in \(S\) has a union which is an upper bound, since every set that is not algebraically independent has a finite subset that is also not algebraically independent. Therefore there is a maximal element \(G\in S\). Let \(z\in\mathbb C\). Since \(G\) is maximal, both of the real numbers \(z+\bar z\) and \(z\bar z\) are algebraically dependent on \(G\), and therefore \((x-z)(x-\bar z)\) has \(z\) as a root but coefficients that are all algebraically dependent on \(G\), which means that \(z\in \overline{\mathbb Q(G)}\). Therefore \(\overline{\mathbb Q(G)}=\mathbb C\).
\end{proof}
The following implies the second part of theorem \ref{r-delta-good}:
\begin{corollary}\label{R-copy}
    For each nonstandard natural number \(\hat n\in\ast{\mathbb N}\setminus\mathbb N\), there are at last \(2^\mathfrak c\) copies of \(\mathbb R\) in \(f_{\bar {\mathbb Q}}^{-1}[[\hat n]]\).
\end{corollary}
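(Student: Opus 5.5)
The plan is to find, for the given \(\hat n\), a cut \(C\subseteq[\hat n]\) that is a semiring, is not \(\sigma\), and lies inside \(C_{\hat p}\). Then Theorem \ref{saturated-field} makes \(\mathbb R^C\) an \(\aleph_1\)-saturated real-closed field contained in \(f_{\bar{\mathbb Q}}^{-1}[[\hat n]]\). After that, a back-and-forth/transfinite construction over the transcendence basis \(G\) of Lemma \ref{alg-generate} produces \(2^\mathfrak c\) embeddings of \(\mathbb R\) into \(\mathbb R^C\).

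\textbf{The cut.} Take \(S=\{\min(\hat n,\hat n_2,\hat n_3,\hat n_5,\dots)\}\) or, more simply,
\[C=C_{\hat p}\cap\bigcap_{i\in\mathbb N}\left[\left\lceil\sqrt[i]{\hat n}\right\rceil\right]=\bigcap_{\substack{\hat m\in\{\hat n,\hat n_2,\hat n_3,\hat n_5,\dots\}\\ i\in\mathbb N}}\left[\left\lceil\sqrt[i]{\hat m}\right\rceil\right].\]
This cut has the following properties:
\begin{itemize}
\item It is contained in \([\hat n]\) and in \(C_{\hat p}\).
\item It is a semiring by Lemma \ref{delta-semiring}, since all the indices are nonstandard.
\item It is a countable intersection of internal sets, hence \(\delta\).
\item It strictly contains \(\mathbb N\) (by Theorem \ref{not-both}, as \(\mathbb N\) is \(\sigma\) but not internal, while \(C\supseteq\mathbb N\) is \(\delta\)).
\item It is not internal: an internal cut \([\hat k]\) closed under squaring would force \(\hat k\) to be standard by overspill.
\end{itemize}
Hence \(C\) is not \(\sigma\) by Theorem \ref{not-both}. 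Applying Theorem \ref{saturated-field}, \(\mathbb R^C\subseteq f_{\bar{\mathbb Q}}^{-1}[[\hat n]]\) is an \(\aleph_1\)-saturated real-closed field.

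\textbf{Embedding \(\mathbb R\), many times.} Well-order \(G=\{g_\alpha\}_{\alpha<\mathfrak c}\). Build embeddings of \(\overline{\mathbb Q(g_\beta:\beta<\alpha)}\cap\mathbb R\) into \(\mathbb R^C\) by recursion. At stage \(\alpha\) the image of \(g_\alpha\) must realize the cut type of \(g_\alpha\) over the previously embedded real-closed field \(K_\alpha\). Because \(\mathbb R\) is archimedean, this cut is determined by countably many rationals, so the partial type \(\{q<x<q' : q<g_\alpha<q',\ q,q'\in\mathbb Q\}\) together with \(\{x\ne a : a\}\) would suffice.

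That last condition is the main obstacle: \(K_\alpha\) has size up to \(\mathfrak c\), so \(\aleph_1\)-saturation does not directly give an \(x\) avoiding all elements algebraic over \(K_\alpha\). I will handle it by realizing the countable type of infinitesimally-close-to-\(g_\alpha\) elements, then choosing \(x\) among them. The set of realizations of the rational-interval type is itself large: by saturation it contains, around any realization, elements at every infinitesimal scale, giving \(2^{\aleph_0}\) or more choices. Where the cardinality count is tight, I will fall back on the stronger fact that \(\mathbb R^C\) has transcendence degree at least \(\mathfrak c\) over any countable subfield, arguing via \(|\mathbb R^C|\ge 2^{\aleph_0}\) and \(\aleph_1\)-saturation to pick \(x\) outside the algebraic closure of \(K_\alpha\). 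Once \(x\) is transcendental over \(K_\alpha\) with the right rational cut, order-preservation extends to \(K_\alpha(g_\alpha)\) and then uniquely to its real closure, using that real closures embed uniquely over ordered fields.

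To get \(2^\mathfrak c\) copies, at each of the \(\mathfrak c\) stages make at least two distinct choices of \(x\), e.g. \(x\) and \(x+\varepsilon\) for an infinitesimal \(\varepsilon\) with both transcendental over \(K_\alpha\). This yields a binary tree of height \(\mathfrak c\) whose branches give distinct embeddings, hence distinct images, since the images of \(g_\alpha\) differ. Finally, every subfield of \(\mathbb R^C\) lies in \(f_{\bar{\mathbb Q}}^{-1}[[\hat n]]\).
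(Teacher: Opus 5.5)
\textbf{Gap: transcendence of the image of \(g_\alpha\).} Your cut is fine. \(C_{\hat p}\cap\bigcap_i[\lceil\sqrt[i]{\hat n}\,\rceil]\) is just the smaller of the paper's two candidate cuts. It is a non-internal \(\delta\)-semiring inside \(C_{\hat p}\cap[\hat n]\), so Theorem~\ref{saturated-field} applies. (Drop the \(\min(\hat n,\hat n_2,\hat n_3,\dots)\) variant: a countable set of nonstandard naturals need not have a minimum.) The gap is at the step you flag yourself. You need \(x\in\mathbb R^C\) that realizes the cut of \(g_\alpha\) over the image \(K'_\alpha\) of \(K_\alpha\) and is transcendental over it. Neither remedy works as written. \(K_\alpha\) is not countable in general, so transcendence degree over countable subfields says nothing. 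And ``\(2^{\aleph_0}\) or more choices'' against ``\(K_\alpha\) of size up to \(\mathfrak c\)'' is a comparison from which nothing follows. A count can be rescued: for \(\alpha<\mathfrak c\) one has \(|K_\alpha|\le|\alpha|+\aleph_0<\mathfrak c\), and \(\aleph_1\)-saturation gives \(\mathfrak c\) distinct infinitesimals. But you argue neither point.

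\textbf{The missing idea: nothing has to be avoided.} \(K'_\alpha\) is order-isomorphic over \(\mathbb Q\) to \(K_\alpha\subseteq\mathbb R\), hence archimedean. Each \(a\in K'_\alpha\) corresponds to a real \(r_a\in K_\alpha\), and \(r_a\ne g_\alpha\) since \(g_\alpha\notin K_\alpha\). Suppose \(x\) satisfies \(q<x<q'\) for all rationals \(q<g_\alpha<q'\). Then a rational strictly between \(r_a\) and \(g_\alpha\) separates \(a\) from \(x\), so \(a<x\) exactly when \(r_a<g_\alpha\). Hence \emph{every} realization of the countable rational type, which exists by \(\aleph_1\)-saturation, realizes the cut of \(g_\alpha\) over \(K'_\alpha\) and lies outside \(K'_\alpha\). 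Since \(K'_\alpha\) is real closed, it is relatively algebraically closed in the ordered field \(\mathbb R^C\), so \(x\) is transcendental over it. The same holds for \(x+\varepsilon\). The same archimedean argument shows that branches differing at stage \(\alpha\) have different images, since an archimedean field cannot contain both \(x\) and \(x+\varepsilon\).

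\textbf{Relation to the paper's proof.} This is the fact the paper uses without comment when it extends the map on \(G\) to \(\mathbb Q(G)\). Realizations of the rational cuts of algebraically independent reals are algebraically independent, because the standard part is a ring homomorphism on bounded elements and \(P(g_0,\dots,g_{k-1})\ne 0\). Given this, your transfinite recursion is unnecessary. Choosing all images at once, as the paper does, gives \(\mathbb Q(f(G))\cong\mathbb Q(G)\) as ordered fields. The relative algebraic closure of \(\mathbb Q(f(G))\) in \(\mathbb R^C\) is then a real closure of it, hence isomorphic to \(\mathbb R\). The paper instead extends to \(\mathbb R\) differently: it sends each real to the correspondingly indexed root of its transported minimal polynomial, and transfers via completeness of the theory of real-closed fields.
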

\begin{proof}
    Let us define a cut \[C=\bigcap_{i\in\mathbb N}\left[\left\lceil\sqrt[i]{\hat n}\right\rceil\right].\] This cut is \(\delta\) and therefore from theorem \ref{not-both} it is not \(\sigma\). Also, From lemma \ref{delta-semiring} this is a semi­ring. If \(C\not\subseteq C_{\hat p}\), because they are both cuts this necessarily means that \(C_{\hat p}\subseteq C\) and so we can continue with \(C_{\hat p}\) as \(C\). Therefore, from Theorem \ref{saturated-field}, \(\mathbb R^C=f_{\bar{\mathbb Q}}^{-1}[C]\subseteq f_{\bar{\mathbb Q}}^{-1}[[n]]\) is an \(\aleph_1\)-saturated real-closed field.
    
    Let \(G\) an algebraically independent set of real numbers that algebraically generates \(\mathbb C\). Let us define a function \(f:G\to \mathbb R^C\). For each \(g\in G\), approximate it from below by one set of rationals \(q_i\to g\) and from above by another \(r_i\to g\). Note that there is a type in one variable \(y\) over \(\mathbb R^C\) that contains the formulae \(q_i\le y\le r_i\), since for any finite subset of them the maximal \(q_i\) mentioned in the set satisfies it. Therefore, by \(\aleph_1\)-saturation there is some \(y\in\mathbb R^C\) that satisfies all of them, define \(f(g)=y\). Since \(G\) is algebraically independent and \(\mathbb R^C\) is a field, this function extends to an embedding \(f:\mathbb Q(G)\hookrightarrow\mathbb R^C\).
    
    Now, let us extend this embedding to the entire \(\mathbb R\). Let \(a\in\mathbb R\). Since \(\mathbb R\subset\mathbb C=\overline{\mathbb Q(G)}\), it is the zero of a unique minimal monic polynomial \(m(x)\) with coefficients in \(\mathbb Q(G)\). If this polynomial has a double root then \(\gcd(m(x),m'(x))\) for its derivative \(m'\) is a polynomial of degree \(0<d<\deg m\) dividing \(m(x)\) and therefore either it or \(m(x)\) divided by it would be a polynomial of lower degree which has \(a\) as a root, contradicting its minimality, therefore \(m(x)\) has no double roots. Furthermore, this polynomial's coefficients only depend on a finite number of generators \(g_0,\dots,g_{n-1}\in G\), we notate by \(m[h_0\dots,h_{n-1}](x)\) the rational function in \(n+1\) variables over \(\mathbb Q\) that when substituting \(g_0,\dots,g_{n-1}\) into it we get the polynomial \(m(x)\). Now, since \(m(x)\) has no double roots, changing its coefficients a little would still lead to a polynomial that has the same number of roots, and therefore there are some rational numbers \(q_i<g_i<r_i\) such that for every real \(q_i<h_i<r_i\), \(m[h_0,\dots,h_{n-1}](x)\) has the same number of roots. It follows from Theorems 8.4.4, 8.3.1(e\(\to\)a), and 8.3.5 in \cite{Hodges} and from the fact that \(\bar{\mathbb Q}\cap\mathbb R\) can be embedded in every real closed field that the theory of real-closed fields is complete. Therefore, since this is a first order sentence that is true in \(\mathbb R\), it is true in any real-closed field, and in particular in \(\mathbb R^C\). Define \(f(a)\) to be the \(i\)th largest root of \(m[f(g_0),\dots,f(g_{n-1})](x)\) where \(a\) is the \(i\)th largest root of \(m(x)\).
    
    It is left to show that this function respects addition and multiplication. Let \(\star\) be either \(+\) or \(\cdot\), let \(a,b,c\) be 3 real numbers such that \(a\star b=c\) and notate by \(m_a(x),m_b(x),m_c(x)\) the minimal polynomials of \(a,b,c\) respectively, and by \(i,j,k\) the indices such that \(a\) is the \(i\)th largest root of \(m_a(x)\) etc. These polynomials depend only on a finite number of generators \(g_0,\dots,g_{n-1}\in G\). Define \(m_a[h_0,\dots,h_{n-1}](x)\) like before and similarly for \(b,c\) and note that since \(g_0,\dots,g_{n-1}\) are algebraically independent, for any choice of \(h_0,\dots,h_{n-1}\) there are (maybe complex) roots \(\alpha,\beta,\gamma\) of \(m_a[h_0,\dots,h_{n-1}](x),m_b[h_0,\dots,h_{n-1}](x)\) and \(m_c[h_0,\dots,h_{n-1}](x)\) respectively such that \(\alpha\star\beta=\gamma\). In particular, since \(m_a(x),m_b(x),\) and \(m_c(x)\) do not have double roots, there are some rational numbers \(q_\ell< g_\ell< r_\ell\) such that for all real \(q_\ell< h_\ell< r_\ell\), the \(i\)th largest root \(\alpha\) of \(m_a[h_0,\dots,h_{n-1}](x)\) and similarly for \(\beta\) and \(\gamma\) satisfy \(\alpha\star\beta=\gamma\). Again, since this is a first order sentence that is true in \(\mathbb R\) it is also true in \(\mathbb R^C\), and therefore \(f(a)\star f(b)=f(c)\), meaning this is a field embedding \(f:\mathbb R\hookrightarrow \mathbb R^C\subseteq f_{\bar{\mathbb Q}}^{-1}[[\hat n]]\).

    To see that there are at least \(2^\mathfrak c\) such copies, note that if we choose some \(0<\varepsilon\in\mathbb R^C\) that is smaller than any positive rational, which exists as a result of Theorem \ref{saturated-field}, the proof does not change if for some subset \(S\) of \(G\) we increase \(f(g)\) for each \(g\in S\) by \(\varepsilon\). Therefore, since \(|G|=\mathfrak c\), there are at least \(2^{\mathfrak c}\) copies of \(\mathbb R\) inside \(f_{\bar{\mathbb Q}}^{-1}[[\hat n]]\). 
\end{proof}

Note that no embedding \(f:\mathbb R\hookrightarrow\mathbb F_{\hat p}\) is the ``simplest" in the sense of having a minimal value of \(f_{\bar{\mathbb Q}}\), since for each such \(f\) we can find a ``simpler" copy of \(\mathbb R\) in \(f_{\bar{\mathbb Q}}^{-1}[[f(e)]]\).

\begin{proof}[Proof of Theorem \ref{r-sigma}]
    Let \(\hat n\in C_{\hat p}\setminus\mathbb N\), and let \(C=\bigcup_{i\in\mathbb N}\left[\hat n^i\right]\). This \(\sigma\)-cut is a semi­ring, since it contains \(2\) and for each \(a\in C\), \(a<\hat n^i\) for some \(i\) which means that \(a^2<n^{2i}\) and so \(a^2\in C\). It is also a sub-cut of \(C_{\hat p}\), since by Lemma \ref{delta-semiring}, \(\hat n^i\in C_{\hat p}\) for all \(i\). Therefore by Theorem \ref{real-closed}, \(\mathbb R^C\) is a real-closed field and by Theorem \ref{function-sigma} it is \(\sigma\).

    Let \(F\) be a \(\sigma\) real-closed sub­field of \(\mathbb F_{\hat p}\), and let us show that it is not \(\aleph_1\)-saturated. Since it is \(\sigma\), \(F=\bigcup_{i\in\mathbb N}S_i\) for internal \(S_i\), and since the order from Definition \ref{field-le} is internal and coincides with the order of \(F\) as a real-closed field, each \(S_i\) has a maximal element \(m_i\). Let us consider a type in one variable over \(F\) that contains the formula \(x>m_i\) for each \(i\). Every finite set of these formulae only mentions finitely many \(m_i\)s and their maximum \(+ 1\) satisfies this subset, which means that they can indeed be completed to a type. However they cannot all be satisfied at once, since for each \(x\in F\), \(x\in S_i\) for some \(i\) and therefore \(x\le m_i\).
\end{proof}

\section{Other algebraic structures included in \(\mathbb R^\delta\)}\label{more}
We can require that not only any standard odd degree polynomial is solvable, but also nonstandard odd degree polynomials, by replacing all \(\phi_i\) for odd \(i\) with one \(\phi_f\) for some (weakly) increasing unbounded \(f:\mathbb N\to\mathbb N\) where \(\phi_f(\hat n)\) requires that every nonstandard polynomial with coefficients in \(f_{\mathbb Q}^{-1}[[\hat n]]\) and odd degree up to \(\ast f(\hat n)\) has a root in \(\mathbb F_{\hat p}\): \begin{align*}
    &\forall \hat d<\ast f(\hat n), s\in\ast{(\mathbb N^\mathbb N)}.~\left(\hat d\equiv_21\wedge\forall \hat m.~s(\hat m)<\hat p\wedge f_{\bar{\mathbb Q}}(s(\hat m))<\hat n\right)\to\\&\to\exists x\in\mathbb F_{\hat p}.~x^{\hat d}+\sum_{\hat m<\hat d}s(\hat m)x^{\hat m}=0.
\end{align*} However, in this case the cuts \(C\) do not only need to be semi­rings, but we should also require that \(\forall x\in C.~x^{\ast f(x)}\in C\), which is stronger for any unbounded function \(f\):
\begin{proposition}
    If a cut \(\mathbb N\subseteq C\) satisfies \(\forall x\in C.~x^{\ast f(x)}\in C\) for some unbounded increasing \(f:\mathbb N\to\mathbb N\), then \(C\) is a semi­ring.
\end{proposition}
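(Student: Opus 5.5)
We need to show that \(C\) is closed under addition and multiplication. As in Lemma \ref{delta-semiring}, since \(C\) is a cut containing \(\mathbb N\), it suffices to show that \(a^2\in C\) for every \(a\in C\). Indeed, given \(a,b\in C\), we may assume w.l.o.g. that \(a\ge b\). Then \(a+b\le 2a\) and \(ab\le a^2\). If \(a\ge 2\) then \(2a\le a^2\), and if \(a<2\) then \(a+b,ab\le 4\in\mathbb N\subseteq C\). Downward closure then puts \(a+b\) and \(ab\) in \(C\).

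So fix \(a\in C\). If \(a\in\mathbb N\), then \(a^2\in\mathbb N\subseteq C\) and we are done. Otherwise \(a\) is nonstandard, and the plan is to show that \(\ast f(a)\ge 2\). Then \(a^2\le a^{\ast f(a)}\in C\) (using \(a\ge 1\)), and downward closure gives \(a^2\in C\).

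The main step is the claim that \(\ast f(\hat a)\) is nonstandard (in particular \(\ge 2\)) for nonstandard \(\hat a\). Since \(f\) is unbounded and increasing, there is some standard \(N\) with \(f(N)\ge 2\). Monotonicity then gives the first-order statement \(\forall x\ge N.~f(x)\ge 2\), which transfers to \(\forall x\ge N.~\ast f(x)\ge 2\) in \(\ast{\mathbb N}\). A nonstandard \(a\) exceeds every standard \(N\), so \(\ast f(a)\ge 2\). The same argument with any standard \(k\) in place of \(2\) shows that \(\ast f(a)\) is nonstandard, though only \(\ge 2\) is needed.

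The only real subtlety is this transfer step. We need to check that \(x\mapsto x^{\ast f(x)}\) is being read internally, so that \(a^{\ast f(a)}\) is a genuine element of \(\ast{\mathbb N}\) comparable with \(a^2\), and that the inequality \(a^2\le a^{\hat k}\) for \(\hat k\ge 2\), \(a\ge1\) holds in \(\ast{\mathbb N}\) by transfer of the corresponding fact about \(\mathbb N\). The stronger-than-semiring remark in the statement is then immediate, since closure under \(x\mapsto x^{\ast f(x)}\) dominates squaring on the nonstandard part of \(C\).
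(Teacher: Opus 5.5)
Your proof is correct and follows the paper's argument. The paper also treats the all-standard case directly and otherwise bounds \(a+b,ab\) by \(\max(a,b)^2\le\max(a,b)^{\ast f(\max(a,b))}\in C\). Your transfer argument that \(\ast f(\hat a)\ge 2\) for nonstandard \(\hat a\) spells out a step the paper leaves implicit. Your non-strict inequalities are also slightly more accurate than the paper's strict ones, since \(ab=\max(a,b)^2\) when \(a=b\).
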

\begin{proof}
    Let \(a,b\in C\). If they are both in \(\mathbb N\) then \(a+b,ab\in\mathbb N\subseteq C\). Otherwise, \[a+b,ab<\max(a,b)^2<\max(a,b)^{\ast f(\max(a,b))}\in C.\]
\end{proof}
\begin{proposition}
    For every function \(f\) there is a \(\delta\)-cut \(C_{\hat p}(f)\) which is a semi­ring such that every element of it \(x\in C_{\hat p}(f)\) satisfies both \(\phi_2\) and \(\phi_f\), and also \(x^{\ast f(x)}\in C\).
\end{proposition}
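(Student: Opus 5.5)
We will imitate the construction of \(C_{\hat p}\), replacing the root-closure \(\hat n\mapsto\lceil\sqrt[i]{\hat n}\rceil\) by a closure adapted to the growth \(x\mapsto x^{\ast f(x)}\).

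\textbf{Step 1: the two thresholds.} First note that \(\phi_2(n)\) and \(\phi_f(n)\) hold for every standard \(n\). For \(\phi_f\), a standard \(n\) gives \(\ast f(n)=f(n)\) standard, so only standard odd degrees occur. The coefficients then lie in \(f_{\bar{\mathbb Q}}^{-1}[[n]]\subseteq\bar{\mathbb Q}\cap\mathbb R'\), which is real closed, so each such polynomial has a root. Both formulas fail at \(\hat p\). For \(\phi_f\) this needs care: we use that \(\ast f\) is unbounded, so that some odd standard degree \(d<\ast f(\hat p)\) has an irreducible polynomial over \(\mathbb F_{\hat p}\). Both formulas are internal and downward-closed in \(\hat n\) (their hypotheses only weaken as \(\hat n\) decreases; for \(\phi_f\) this uses that \(f\) is increasing). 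Hence there are maximal nonstandard \(\hat n_2\) and \(\hat n_f\) satisfying them. Put \(\hat N=\min(\hat n_2,\hat n_f)\), which is nonstandard.

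\textbf{Step 2: the closure operation.} Define internally \(g(x)=x^{\ast f(x)}\), which is increasing. Let \(h\) be its internal ``inverse'', namely \(h(\hat m)=\max\{x\mid g(x)\le \hat m\}\). Define \(\hat m_0=\hat N\) and \(\hat m_{i+1}=h(\hat m_i)\). By \cite[Theorem 4.4.23]{CK} applied to this countable sequence, or simply because each \(\hat m_i\) is internally defined from \(\hat N\), each \(\hat m_i\) is well defined. Set
\[C_{\hat p}(f)=\bigcap_{i\in\mathbb N}[\hat m_i].\]
This is a countable intersection of internal intervals, hence a \(\delta\)-cut.

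\textbf{Step 3: \(\mathbb N\subseteq C_{\hat p}(f)\).} We show that every \(\hat m_i\) is nonstandard, by induction. If \(\hat m_i\) is nonstandard and \(k\) is standard, then \(g(k)=k^{f(k)}\) is standard, so \(g(k)\le\hat m_i\) and \(h(\hat m_i)\ge k\). Thus \(\hat m_{i+1}\) exceeds every standard \(k\).

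\textbf{Step 4: closure and the required properties.} If \(x\in C_{\hat p}(f)\), then for each \(i\) we have \(x<\hat m_{i+1}\). Monotonicity gives \(g(x)\le g(\hat m_{i+1})\le\hat m_i\), with strict inequality after a harmless off-by-one adjustment (e.g.\ defining \(h\) with \(g(x+1)\le\hat m\)). Hence \(x^{\ast f(x)}\in C_{\hat p}(f)\). By the previous proposition, \(C_{\hat p}(f)\) is then a semiring. Finally every element is below \(\hat m_0=\hat N\le\hat n_2,\hat n_f\), so by downward closure it satisfies \(\phi_2\) and \(\phi_f\).

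\textbf{Main obstacle.} The delicate points are the monotonicity of \(\phi_f\) in \(\hat n\) and its failure at \(\hat p\), together with the off-by-one bookkeeping in \(h\). If \(\phi_f\) is not literally monotone, we can replace it with \(\psi(\hat n)\equiv\forall \hat k\le\hat n.\,\phi_f(\hat k)\), which is still internal and still holds on \(\mathbb N\).
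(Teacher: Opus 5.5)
Your proof is correct and is essentially the paper's construction. The paper takes \(\hat n\) maximal satisfying both \(\phi_2\) and \(\phi_f\) and sets \(C_{\hat p}(f)=\{\hat m\in\ast{\mathbb N}\mid\forall i\in\mathbb N.~g^i(\hat m)\le\hat n\}\); by monotonicity of \(g\) this is the same cut as your \(\bigcap_i[h^i(\hat N)]\), closure under \(g\) comes from \(g^i(g(x))=g^{i+1}(x)\), and the semiring property from the previous proposition. Iterating \(g\) forward on elements, rather than \(h\) on the threshold, just avoids your off-by-one adjustment, which you don't actually need: \(h(\hat m)<\hat m\) for nonstandard \(\hat m\) already gives \(g(x)\le\hat m_{i+1}<\hat m_i\).
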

\begin{proof}
    Because both \(\phi_2\) and \(\phi_f\) are equivalent to first order formulae, the set of nonstandard natural numbers that satisfy both of them is some \(S\in \ast{\mathcal P(\mathbb N)}\) and therefore it has a maximal element \(\hat n\). Note that for all \(\hat m<\hat n\), \(\phi_2(\hat m)\) and because \(f\) is increasing the same is true for \(\phi_f\). Therefore \(S=[\hat n+1]\). Now, let \(g(x)=x^{\ast f(x)}\) and define \[C_{\hat p}(f)=\{\hat m\in\ast{\mathbb N}\mid\forall i\in\mathbb N.~g^i(\hat m)\le\hat n\}\] where exponentiation denotes repeated application. Note that this cut is \(\delta\), and because \(\ast f\) sends elements of \(\mathbb N\) to \(\mathbb N\), \(\mathbb N\subseteq C_{\hat p}(f)\). Also, for every \(x\in C\), and every \(i\in\mathbb N\), \[g^i\left(x^{\ast f(x)}\right)=g^i(g(x))=g^{i+1}(x),\] and therefore \(x^{\ast f(x)}\in C_{\hat p}(f)\) as well, meaning it is also a semi­ring. Finally, \(x=g^0(x)\le n\) as well meaning that \(x\) satisfies both \(\phi_2\) and \(\phi_f\).
\end{proof}
\begin{proposition}
    \(C_{\hat p}(f)\subseteq C_{\hat p}\).
\end{proposition}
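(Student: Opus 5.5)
Since both sides are cuts, it suffices to show that every \(x\in C_{\hat p}(f)\) lies in \(\left[\left\lceil\sqrt[i]{\hat n_d}\right\rceil\right]\) for every \(d\in\{2,3,5,7,9,\dots\}\) and every \(i\in\mathbb N\). We then need two things: an upper bound on \(C_{\hat p}(f)\) in terms of the maximal witnesses \(\hat n_d\), and closure of \(C_{\hat p}(f)\) under powers.

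For the upper bound, we reuse the construction in the previous proof. There \(\hat n\) is the maximal element satisfying both \(\phi_2\) and \(\phi_f\), and every \(x\in C_{\hat p}(f)\) satisfies \(x=g^0(x)\le\hat n\). Since \(\phi_2(\hat n)\) holds and \(\hat n_2\) is maximal for \(\phi_2\), we get \(\hat n\le\hat n_2\). For odd standard \(d>1\) we need \(\phi_f(\hat n)\Rightarrow\phi_d(\hat n)\). This requires \(d<\ast f(\hat n)\), which holds because \(\hat n\) is nonstandard and \(f\) is unbounded and increasing: the statement \(\forall m\ge M.~f(m)>d\) transfers, so \(\ast f(\hat n)>d\). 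A degree-\(d\) polynomial with coefficients in \(f_{\bar{\mathbb Q}}^{-1}[[\hat n]]\) is then a special case of the internal sequences \(s\) quantified over in \(\phi_f\) (extend \(s\) by \(0\), which has \(f_{\bar{\mathbb Q}}(0)\) standard, hence \(<\hat n\)). So \(\phi_d(\hat n)\) holds and \(\hat n\le\hat n_d\) for every \(d\).

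For closure under powers, take \(x\in C_{\hat p}(f)\) and \(i\in\mathbb N\). If \(x\in\mathbb N\), then \(x^i\in\mathbb N\subseteq C_{\hat p}(f)\). If \(x\) is nonstandard, then \(\ast f(x)\) is nonstandard, so \(x^i\le x^{\ast f(x)}=g(x)\), and \(g(x)\in C_{\hat p}(f)\) by the previous proposition. In both cases \(x^i\le\hat n\le\hat n_d\), which gives \(x\le\sqrt[i]{\hat n_d}\). To get the strict inequality \(x<\left\lceil\sqrt[i]{\hat n_d}\right\rceil\), apply the same argument to \(x+1\), which is also in \(C_{\hat p}(f)\) because it is a semiring containing \(1\).

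The main obstacle is the implication \(\phi_f\Rightarrow\phi_d\): the definition of \(\phi_f\) ranges over \(\ast(\mathbb N^{\mathbb N})\) with coefficients bounded by \(\hat p\), and one must check carefully that the coefficient tuples of \(\phi_d\) are covered. There is also a mismatch to handle: \(\phi_f\) is written with \(f_{\mathbb Q}\) in the text but with \(f_{\bar{\mathbb Q}}\) in the formula. I will take \(f_{\bar{\mathbb Q}}\), matching the displayed formula.
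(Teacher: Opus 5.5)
Your proposal is correct and takes essentially the same route as the paper: show that \(\phi_f\) implies \(\phi_d\) for each standard odd \(d\) by padding the coefficient sequence with zeros, then use closure of \(C_{\hat p}(f)\) under standard powers to get \(\hat m^i\le\hat n_d\). The differences are minor. You apply the implication once at the top element \(\hat n\) rather than at each element, and you are more careful than the paper about two points it glosses over: the requirement \(d<\ast f(\hat n)\), and the strict inequality \(x<\left\lceil\sqrt[i]{\hat n_d}\right\rceil\), which you obtain via \(x+1\).
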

\begin{proof}
    Let \(\hat m\in C_{\hat p}(f)\),
    let \(d\in\mathbb N\) for which \(\phi_d\) is defined and let us show that \(\hat m\) satisfies \(\phi_d\). If \(d=2\) then this is true by the definition of \(C_{\hat p}(f)\). Otherwise, \(d\ge 3\) is odd. Let \(a_0,\dots,a_{d-1}\in f_{\bar{\mathbb Q}}^{-1}[[\hat m]]\). Choose \(\hat d=d\), \(s(i)=a_i\) for \(i<d\) and \(s(i)=0\) for \(i\ge d\), and note that \(\phi_f\) guarantees that \[x^d+\sum_{m<d}a_m x^m\] is zero for some \(x\in\mathbb F_{\hat p}\), which means that \(\hat m\) satisfies \(\phi_d\). This is true for any element of \(C_{\hat p}(f)\) and therefore, since it is a semi­ring, also true for \(\hat m^i\) for each \(i\in\mathbb N\). Therefore \(\hat m^i< \hat n_d\), i.e. \(\hat m<\left\lceil \sqrt[i]{\hat n_d}\right\rceil\) for all \(i,d\), which means that \(\hat m\in C_{\hat p}\).
\end{proof}
\begin{definition}
    \(\mathbb R^\delta_{\hat p}(f)\coloneq f_{\bar{\mathbb Q}}^{-1}[C_{\hat p}(f)]\). As before, \(\hat p\) is omitted when it is clear from the context.
\end{definition}
\begin{proposition}
    \(\mathbb R^\delta(f)\) is an \(\aleph_1\)-saturated real-closed field that also contains the root of every nonstandard polynomial \(P\in\ast{V(\mathbb N)}\) of odd degree of at most \(\ast f(\hat n)\) for any \(\hat n\in C_{\hat p}(f)\).
\end{proposition}
\begin{proof}
    From Theorem \ref{saturated-field} this is an \(\aleph_1\)-saturated real-closed field. Let \(\hat n\in C_{\hat p}(f)\) and let \[P(x)=\sum_{i\le \hat d}a_i x^i\] for some nonstandard \(\hat d\le\ast f(\hat n)\) be a nonstandard polynomial over \(\mathbb R^\delta(f)\). Define \(s(i)=\frac{a_i}{a_{\hat d}}\) for \(i<\hat d\) and \(s(i)=0\) for \(i\ge\hat d\) and note that since \(P\) is a nonstandard polynomial, \(s\) is internal. Therefore, there is some \[\hat m=\max_{i\le\hat n}f_{\bar{\mathbb Q}}(s(i)).\] This \(\hat m=f_{\bar{\mathbb Q}}\left(\frac{a_i}{a_{\hat n}}\right)\) for some \(i\) and therefore \(\hat m\in C_{\hat p}(f)\). Thus by \(\phi_f[\max(\hat n,\hat m)]\) there is some \(x\in\mathbb F_{\hat p}\) that satisfies \[x^{\hat d}+\sum_{i<\hat d}\frac{a_i}{a_{\hat d}}x^i=0\] and therefore \(P(x)=0\). Finally, from Proposition \ref{f-qbar-properties}, \[f_{\bar{\mathbb Q}}(x)\le\hat d\hat m^{\hat d}<\left(\hat m^{\hat d}\right)^2\in C_{\hat p}(f)\] and therefore \(x\in\mathbb R^\delta(f)\).
\end{proof}
\begin{proposition}
    If \(f'\) grows faster than \(f\) then \(C_{\hat p}(f')\subseteq C_{\hat p}(f)\).
\end{proposition}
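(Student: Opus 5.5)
I will work directly from the construction of \(C_{\hat p}(f)\) in the previous propositions, taking ``\(f'\) grows faster than \(f\)'' to mean \(f'(n)\ge f(n)\) for all sufficiently large standard \(n\). After adjusting finitely many values of \(f'\), which changes neither the relevant cuts nor the property of being increasing and unbounded, I may assume \(f'(n)\ge f(n)\) for all \(n\in\mathbb N\). By transfer, \(\ast f'(\hat x)\ge\ast f(\hat x)\) for every \(\hat x\in\ast{\mathbb N}\).

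Write \(\hat n\) and \(\hat n'\) for the maximal elements of the internal sets of numbers satisfying \(\phi_2\wedge\phi_f\) and \(\phi_2\wedge\phi_{f'}\) respectively. Write \(g(x)=x^{\ast f(x)}\) and \(g'(x)=x^{\ast f'(x)}\), so that \(C_{\hat p}(f)=\{\hat m\mid\forall i.~g^i(\hat m)\le\hat n\}\) and similarly for \(f'\).

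\textbf{Step 1: \(\hat n'\le\hat n\).} Since \(\ast f'(\hat m)\ge\ast f(\hat m)\), the statement \(\phi_{f'}(\hat m)\) quantifies over every odd degree \(\hat d<\ast f(\hat m)\) and every sequence \(s\) that \(\phi_f(\hat m)\) quantifies over. It therefore implies \(\phi_f(\hat m)\). Hence every \(\hat m\) satisfying \(\phi_2\wedge\phi_{f'}\) also satisfies \(\phi_2\wedge\phi_f\), and taking maxima gives \(\hat n'\le\hat n\).

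\textbf{Step 2: \(g\le g'\) pointwise and both are monotone.} For \(x\ge1\) we have \(g(x)=x^{\ast f(x)}\le x^{\ast f'(x)}=g'(x)\). At \(x=0\) the exponent convention gives \(0\) or \(1\) for both; I will treat this by hand, noting that \(0\in\mathbb N\subseteq C_{\hat p}(f)\) anyway. Moreover \(g\) is increasing on positive arguments, because both the base and the exponent are nondecreasing (transfer of the monotonicity of \(f\)).

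\textbf{Step 3: induction on iterates.} I will show by induction on \(i\in\mathbb N\) that \(g^i(\hat m)\le g'^i(\hat m)\). The inductive step is
\[g^{i+1}(\hat m)=g(g^i(\hat m))\le g(g'^i(\hat m))\le g'(g'^i(\hat m)),\]
using monotonicity of \(g\) first and then the pointwise bound \(g\le g'\). Now take \(\hat m\in C_{\hat p}(f')\). For every \(i\) this gives \(g^i(\hat m)\le g'^i(\hat m)\le\hat n'\le\hat n\), so \(\hat m\in C_{\hat p}(f)\).

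The main obstacle is interpreting ``grows faster'' correctly. If only eventual domination is intended, I must check that changing \(f'\) on finitely many standard arguments does not alter \(C_{\hat p}(f')\). I expect this to hold because such a change only affects \(\ast f'\) on standard numbers, where all iterates stay in \(\mathbb N\subseteq\) both cuts. If that argument is awkward, I would instead assume domination everywhere as the definition.
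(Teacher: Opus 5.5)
Your proof is correct and follows the paper's argument: show \(\hat n_{f'}\le\hat n_f\) because \(\phi_{f'}\) implies \(\phi_f\), show \(g\le g'\) pointwise, then compare the iterates. You also supply the monotonicity of \(g\) that the iterate comparison \(g^i\le g'^i\) needs, which the paper asserts without justification, and you handle the standard arguments and eventual domination explicitly.
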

\begin{proof}
    Note that if \(f'\) grows faster than \(f\) then for every non-standard natural \(\hat n\in\ast{\mathbb N}\setminus\mathbb N\), \(\ast{f'}(\hat n)>\ast f(n)\). Therefore if \(\phi_{f'}[\hat n]\) is true then so is \(\phi_f(\hat n)\), which means that the maximal element that satisfies \(\phi_{f'}\), call it \(\hat n_{f'}\) is smaller than or equal to the maximal one that satisfies \(\phi_f\), \(\hat n_f\). Also, if we define \(g(x)=x^{\ast f(x)}\) and \(g'(x)=x^{\ast{f'}(x)}\), then \(g'(\hat n)>g(\hat n)\) for all \(\hat n\in\ast{\mathbb N}\setminus\mathbb N\). Therefore, if \(\hat n\in C_{\hat p}(f')\) then for all \(i\in\mathbb N\),
    \[{g}^i(\hat n)<{g'}^i(\hat n)<\hat n_{f'}\le\hat n_{f}\] and so \(\hat n\in C_{\hat p}(f)\), which means that \(C_{\hat p}(f')\subseteq C_{\hat p}(f)\).
\end{proof}

Another possible restriction is to only allow some of the parameters of the matrix to be nonstandard naturals while keeping the rest of them in \(\mathbb N\), or even \(1\). For convenience, we will only consider this restriction here for cuts that, for each element \(\hat n\) of them, also contain \(2^{\hat n}\), although similar conclusions also hold for other cuts. It does not make sense to restrict the entry bound \(m\) more than the denominator \(k\) because then addition would break, and it also does not make sense to restrict it more than the matrix's size \(n\), since the matrices of the form \[\begin{pmatrix}
    1&\dots&1&1&0&\dots&0&0\\
    0&1&\dots&1&1&0&\dots&0\\
    \vdots&\ddots&\ddots&\ddots&\ddots&\ddots&\ddots&\vdots\\
    0&\dots&0&1&\dots&1&1&0\\
    0&\dots&0&0&1&\dots&1&1\\
    1&0&\dots&0&0&1&\dots&1\\
    \vdots&\ddots&\ddots&\ddots&\ddots&\ddots&\ddots&\vdots\\
    1&\dots&1&0&\dots&0&0&1
\end{pmatrix}\] have an eigenvector of all ones with an eigenvalue of the number of ones in each row or column, which can be any number between \(0\) and the number of rows of this matrix, which means that replacing each number in the matrix with a block like this produces a matrix with the same eigenvalues (and maybe more).

Restricting all the parameters to be at most \(1\) just gives us the set \(\{-1, 0, 1\}\), while proposition \ref{f-qbar} shows that restricting all of them to be standard naturals gives us the copy of \(\bar{\mathbb Q}\cap\mathbb R\) present in the field. The reader may check that allowing the entries to be any natural number but keeping just the size, just the denominator or both \(1\) gives us \(\mathbb Q,\) the ring of real algebraic integers \(\bar{\mathbb Z}\cap\mathbb R\) and \(\mathbb Z\) respectively.

Note also that when restricting only the matrix size to \(1\) from propositions \ref{f-q-properties} and \ref{field} we still get a field, which is in fact the field \(\Frac(C)\) of fractions of the ring that contains both the elements of the cut \(C\) chosen and their opposites. Also, when restricting the matrix size to natural numbers we get a field that is algebraically closed relative to \(\mathbb F_{\hat p}\), since the proof of Proposition \ref{f-qbar-properties} still works, and since it is a sub­field of \(\mathbb R^\delta\) this means that it is also real closed. Also, since in the proof of Theorem \ref{R-copy} the embeddings of the generators can be chosen to be in \(\Frac(C)\), there is also a copy of \(\mathbb R\) in that field.

Going the other way, and restricting the denominator, only gives us rings and not fields. Restricting it to \(1\) gives us a ring \(\bar C_{\hat p}\)\footnote{Even though it is notated the same as an algebraic closure, this is larger than the standard algebraic closure of \(C_{\hat p}\), since it also contains roots of nonstandard monic polynomials.} that is similar to the ring of real algebraic integers in the sense that it is ordered, closed under square roots of positive elements, and every monic polynomial of odd degree has a root, again from a careful consideration of the proof of Proposition \ref{f-qbar-properties} and Theorem \ref{real-closed}. In fact the connection between these rings goes even deeper:
\begin{proposition}
    \(\mathbb R^C\) is the field of fractions of \(\bar C_{\hat p}\).
\end{proposition}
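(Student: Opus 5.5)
The plan is to prove the two inclusions directly from the definitions. Let \(g\) be the internal function obtained from \(f_{\bar{\mathbb Q}}\) by forcing the denominator to be \(1\): \(g(x)\) is the minimal value of \(n+\lceil\log_2 m\rceil\) such that \(x\) itself is an eigenvalue of an \(n\times n\) matrix over \(\mathbb F_{\hat p}\) with entries of absolute value at most \(m\). Then \(\bar C_{\hat p}=g^{-1}[C]\), where \(C\) is the cut in use. I take for granted, as the paper states just before the proposition, that \(\bar C_{\hat p}\) is a ring, so that its field of fractions inside \(\mathbb F_{\hat p}\) makes sense.

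\emph{Step 1: \(\Frac(\bar C_{\hat p})\subseteq\mathbb R^C\).} Taking \(k=1\) in the definition of \(f_{\bar{\mathbb Q}}\) shows \(f_{\bar{\mathbb Q}}(x)\le g(x)\) for every \(x\), up to the harmless difference between \(\lceil\log_2 m\rceil\) and \(\lceil\log_2\max(m,1)\rceil\), which only matters when \(m=0\). So if \(g(x)\in C\), then \(f_{\bar{\mathbb Q}}(x)\in C\) because \(C\) is downwards closed. Hence \(\bar C_{\hat p}\subseteq\mathbb R^C\). By Theorem \ref{real-closed}, \(\mathbb R^C\) is a field, so it contains all quotients of elements of \(\bar C_{\hat p}\).

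\emph{Step 2: \(\mathbb R^C\subseteq\Frac(\bar C_{\hat p})\).} Let \(x\in\mathbb R^C\) and pick \(n,m,k\) with \(k\ne0\) witnessing \(f_{\bar{\mathbb Q}}(x)=n+\lceil\log_2\max(m,k)\rceil\in C\). The same \(n\times n\) matrix has \(kx\) as an eigenvalue with denominator \(1\), so \(g(kx)\le f_{\bar{\mathbb Q}}(x)\) and therefore \(kx\in\bar C_{\hat p}\). The element \(k\) is the eigenvalue of the \(1\times1\) matrix \((k)\), so \(g(k)\le 1+\lceil\log_2 k\rceil\le 1+f_{\bar{\mathbb Q}}(x)\). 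Since \(C\) is a semiring containing \(\mathbb N\), this bound lies in \(C\), and hence \(k\in\bar C_{\hat p}\). Since \(k\ne0\), we get \(x=\frac{kx}{k}\in\Frac(\bar C_{\hat p})\).

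The only points needing care are bookkeeping ones. First, \(k\ne0\) must hold as an element of \(\mathbb F_{\hat p}\) and not merely in \(\ast{\mathbb N}\); this follows because \(k\le 2^{f_{\bar{\mathbb Q}}(x)}<\hat p\), as \(C\) lies far below \(\hat p\). Second, the constant \(1\) added in \(g(k)\) must be absorbed by the semiring property of \(C\). I do not expect any step to be a real obstacle: the proposition amounts to the observation that clearing the denominator \(k\) splits \(x\) into two denominator-free pieces at no cost in the complexity measure.
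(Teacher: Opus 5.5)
Your proof is correct and follows the paper's argument. \(\bar C_{\hat p}\subseteq\mathbb R^C\) gives one inclusion, and writing \(x=\frac{kx}{k}\), where \(kx\) and \(k\) are both eigenvalues of denominator-free matrices with complexity in \(C\), gives the other. Your extra bookkeeping fills in details the paper leaves implicit: \(1+\lceil\log_2 k\rceil\in C\) by the semiring property, and \(k\not\equiv 0\pmod{\hat p}\) because every element of \(C_{\hat p}\) lies below \(\log_2\hat p\), since otherwise the \(1\times 1\) matrices \((x)\) would put all of \(\mathbb F_{\hat p}\) into \(\mathbb R^\delta\).
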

\begin{proof}
    On one hand \(\bar C_{\hat p}\subseteq\mathbb R^C\), and therefore \(\Frac\left(\bar C_{\hat p}\right)\subseteq\mathbb R^C\) as well. On the other hand, given \(x\in\mathbb R^C\) there is some \(\hat n\times\hat n\) matrix \(M\in\ast{V(\mathbb N)}\) with entries up to \(\hat m\) such that \(\hat kx\) is an eigenvalue of it for some \(\hat k\ne 0\) and that \(\hat n+\lceil\log_2(\max(m,k))\rceil\in C\). Therefore \(\hat kx\in \bar C_{\hat p}\) and \(\hat k\) is as well since it is an eigenvalue of the matrix \(\begin{pmatrix}\hat k\end{pmatrix}\), which means that \(x=\frac{\hat kx}{\hat k}\in\Frac\left(\bar C_{\hat p}\right)\). Therefore \(\mathbb R^C\subseteq \Frac\left(\bar C_{\hat p}\right)\) which means that they are the same.
\end{proof}
However, this ring has no copy of \(\mathbb R\) in it since it has no copy of \(\mathbb Q\) in it. This is a consequence of the following proposition:
\begin{proposition}
    There is no element of \(\emph{Frac}(C)\) represented in \(\bar C_{\hat p}\) which is not in \(C\) itself.
\end{proposition}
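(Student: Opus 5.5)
The plan is to transfer the classical fact that a rational number which is an algebraic integer must be an integer (the rational root theorem). I read \(\Frac(C)\) as in the preceding discussion: elements of \(\mathbb F_{\hat p}\) of the form \(x=\frac ab\) with \(a\in\ast{\mathbb Z}\), \(b\in\ast{\mathbb N}\setminus\{0\}\) and \(|a|,b\in C\). The claim then says that if such an \(x\) lies in \(\bar C_{\hat p}\), then \(x\equiv_{\hat p}\pm c\) for some \(c\in C\). As in the paragraph introducing these restrictions, I assume that \(C\subseteq C_{\hat p}\) is a cut that contains \(2^{\hat n}\) for each of its elements \(\hat n\). Since \(C\subseteq C_{\hat p}\subseteq[\hat n_2]\subseteq[\hat p]\), every element of \(C\) is smaller than \(\hat p\).

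\textbf{Step 1 (reduce the fraction).} Using the internal \(\gcd\), divide \(a\) and \(b\) by \(\gcd(|a|,b)\). This keeps \(|a|,b\in C\), because \(C\) is downwards closed. So I may assume \(\gcd(|a|,b)=1\).

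\textbf{Step 2 (integer polynomial).} Since \(x\in\bar C_{\hat p}\), there is an internal \(\hat n\times\hat n\) matrix \(M\) with integer entries of absolute value at most \(\hat m\) such that \(x\) is an eigenvalue of \(M\) (denominator \(1\)), and \(\hat n+\lceil\log_2\hat m\rceil\in C\). Let \(p_M(t)=t^{\hat n}+\sum_{i<\hat n}c_it^i\) be its characteristic polynomial computed over \(\ast{\mathbb Z}\), before reducing mod \(\hat p\). As in the proof of Proposition \ref{f-qbar-properties}, \(|c_i|\le(\hat m\hat n)^{\hat n}\), and this bound transfers internally. Because \(p_M(x)\equiv_{\hat p}0\), multiplying by \(b^{\hat n}\) gives
\[N\coloneq a^{\hat n}+\sum_{i<\hat n}c_ia^ib^{\hat n-i}\equiv_{\hat p}0 .\]

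\textbf{Step 3 (lift the congruence to an equality).} This is the main step. I need \(|N|<\hat p\), so that \(N=0\) holds in \(\ast{\mathbb Z}\). The crude bound is
\[|N|\le(\hat n+1)(\hat m\hat n)^{\hat n}\max(|a|,b)^{\hat n}\le 2^{\hat n(\log_2\hat m+\log_2\hat n+\log_2\max(|a|,b)+1)+\hat n}.\]
The exponent is a polynomial expression in \(\hat n\), \(\lceil\log_2\hat m\rceil\) and \(\max(|a|,b)\), all of which lie in \(C\). Since \(C\) is a semiring, the exponent lies in \(C\). Since \(C\) is closed under \(\hat k\mapsto 2^{\hat k}\), the bound itself lies in \(C\), and therefore \(|N|<\hat p\). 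This is exactly where the exponential closure of the cut is used. If it turns out to be needed only in weaker form, I would instead try to bound \(\hat n\log_2\max(|a|,b)\) using the root-structure of \(C_{\hat p}\) in Lemma \ref{delta-semiring}.

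\textbf{Step 4 (conclude).} Now \(N=0\) in \(\ast{\mathbb Z}\), so \(b\mid a^{\hat n}\). By transfer of the standard statement "\(\gcd(a,b)=1\) and \(b\mid a^k\) imply \(b=1\)", we get \(b=1\). Hence \(x=a=\pm|a|\) with \(|a|\in C\). So every element of \(\Frac(C)\) lying in \(\bar C_{\hat p}\) is already an element of \(C\) or the opposite of one.

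As a consequence, \(\bar C_{\hat p}\) cannot contain \(\frac12\): indeed \(2\in C\) and \(\frac12\) is not of the form \(\pm c\) with \(c\in C\), since \(\pm2c\equiv_{\hat p}1\) would force \(2c\pm1\equiv_{\hat p}0\) with \(0<2c\pm1<\hat p\). Therefore \(\bar C_{\hat p}\) contains no copy of \(\mathbb Q\).
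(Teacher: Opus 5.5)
Your proof is correct and follows the paper's argument essentially verbatim. You clear the denominator in \(p_M(a/b)\equiv_{\hat p}0\), bound the resulting integer by a power of \(2\) whose exponent lies in \(C\) (using the exponential closure of the cut), conclude that it vanishes in \(\ast{\mathbb Z}\), and use coprimality to force \(b=1\); your bound is in fact slightly more careful than the paper's displayed one, which omits the \(\hat n\log_2\hat n\) contribution of \((\hat m\hat n)^{\hat n}\) to the exponent (a harmless slip, since that term also lies in \(C\)).
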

\begin{proof}
    Assume w.l.o.g that the element in question is positive. Let \(\hat a, \hat b\in C\) be two coprime elements for which there is some \(x\in\bar C_{\hat p}\) such that \(x\hat b\equiv_{\hat p}\hat a\). Because \(x\in \bar C_{\hat p}\) there is an \(\hat n\times\hat n\) matrix \(M\in\ast{V(\mathbb N)}\) with coefficients of absolute value up to \(\hat m\) such that \(x\) is an eigenvalue of \(M\) i.e. \(p_M(x)=0\). Therefore \(b^{\hat n}p_M\left(\frac ab\right)\equiv_{\hat p}0\). However, this number can be bounded. First of all, as shown during the proof of Proposition \ref{f-qbar-properties}, the coefficients of \(p_M\) are bounded by \((\hat m\hat n)^{\hat n}\). Therefore, this entire number is bounded in absolute value by \[\hat n\max(a,b)^{\hat n}(\hat m\hat n)^{\hat n}<2^{\lceil\log_2 \hat n\rceil+\lceil\log_2 \max(a,b)\rceil\hat n+\lceil\log_2\hat m\rceil\hat n}\in C\] which in particular means that it is less than \(\hat p\) and therefore \(0\). However, this number is of the form \(a^{\hat n}+by\), which means that \(b|a^{\hat n}\), and since they are coprime it follows that \(b=1\).
\end{proof}

Still restricting the denominator but only to natural numbers gives us a larger ring, \(\mathbb N^{-1}\bar C_{\hat p}\), which contains all the standard real algebraic integers since they are \(f_{\bar{\mathbb Q}}^{-1}[\mathbb N]\).
\begin{lemma}\label{alg-integer}
    For any finite set of monic polynomials \(\{p_i\mid i<n\}\) over \(\bar{\mathbb Q}\) and every segment \(I\subset\mathbb R\) of positive length there is an algebraic integer \(\alpha\in I\) such that \(\frac 1{p_i(\alpha)}\) for each \(i<n\) is also an algebraic integer.
\end{lemma}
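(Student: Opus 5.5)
The plan is to reduce the statement to finding a \emph{real algebraic unit-type} element and then to place it inside \(I\) by a perturbation argument, as in the proof of Corollary \ref{R-copy}. First I will normalise the data. Let \(K\) be a number field containing all coefficients of all \(p_i\), let \(N\in\mathbb N\) be a common denominator so that every \(Np_i\) has coefficients that are algebraic integers, and let \(P(x)=\prod_{i<n}Np_i(x)\). It suffices to find a real algebraic integer \(\alpha\in I\) such that each \(Np_i(\alpha)\) is an algebraic unit. Indeed, then \(\frac1{p_i(\alpha)}=N\cdot\frac1{Np_i(\alpha)}\) is an algebraic integer. Moreover, \(Np_i(\alpha)\) is already an algebraic integer, so it is a unit as soon as it divides a unit in the ring of algebraic integers.

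The construction I would try is to take \(\alpha\) to be a root of a polynomial that ties \(P(\alpha)\) to a unit. Concretely, I would consider a family
\[G(x)=P(x)B(x)-u(x)\]
where \(u(x)\) is chosen so that \(u(\alpha)\) is forced to be a unit, and \(B\) has algebraic-integer coefficients, chosen so that \(G\) is monic up to a unit. Then \(G(\alpha)=0\) gives \(Np_j(\alpha)\cdot\bigl(\prod_{i\ne j}Np_i(\alpha)B(\alpha)\bigr)=u(\alpha)\), so each \(Np_j(\alpha)\) divides a unit. Before that I need \(\alpha\) to be integral, which requires \(G\) to be monic over the algebraic integers. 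Once \(G\) is arranged, the real-root condition is handled by a degree and scaling argument: I would pick \(B\) of large degree with a real simple root of \(G\) near a prescribed point \(c\in I\), for instance \(B(x)=\lambda(x-c')^{k}\) with rational \(c'\) close to \(c\), so that \(G\) changes sign inside \(I\). The intermediate value theorem in \(\mathbb R\) then gives a real root, and any real root is a real algebraic number.

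The step I expect to be the main obstacle is making \(G\) simultaneously monic with integral coefficients and making \(u(\alpha)\) a genuine unit. The leading coefficient of \(P\) is \(N^n\), so \(PB\) can never be monic with integral \(B\) unless \(N\) is a unit. The natural fix I would try first is to enlarge the base field: adjoin an element \(\nu\) with \(\nu^{M}=N\) in \(\bar{\mathbb Q}\cap\mathbb R\) and work in \(K(\nu)\), where \(N\) is still not a unit, so this alone does not help. Alternatively, I would replace the target by a substitution \(x=y+\beta\) with \(\beta\) an algebraic integer chosen so that the leading behaviour is absorbed. Failing that, I would argue via norms: choose \(\alpha\) as a unit of a suitable real number field, obtained through Dirichlet's unit theorem applied to a field in which units are dense in an interval after rational translation. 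I would then arrange \(\mathrm{N}(Np_i(\alpha))=\pm1\) by choosing \(\alpha\) so that all non-real conjugates are controlled, for example using \(\alpha=\varepsilon^{m}+q\) with \(\varepsilon\) a Pisot or Salem-type unit. I expect this last part, making the finitely many norms \(\pm1\) while keeping \(\alpha\in I\), to need the most care.
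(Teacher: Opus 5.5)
There is a genuine gap: the step you flag at the end as ``the main obstacle'' is essentially the whole content of the lemma, and the proposal does not supply it. In your template $G=PB-u$ with $B$ integral, $PB$ has leading coefficient divisible by $N^n$. So if $G$ is to be monic up to a unit, $u$ must carry the leading term, i.e.\ $\deg u=\deg G$. Then nothing forces $u(\alpha)$ to be a unit. Since $G\equiv -u$ modulo $P$, one gets $\prod_{G(\alpha)=0}P(\alpha)=\res(P,u)$ up to a unit. Every $P(\alpha)$ is an algebraic integer, so what the construction really requires is a monic $u$ with $\res(P,u)$ a unit. Cheap choices fail: a constant $u$ returns you to the non-monic situation, and $u=x^k$ would need $\res(P,x^k)=\pm P(0)^k$ to be a unit, which is false in general.

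The existence of such a $u$ is exactly the paper's Corollary \ref{all-res-1}: for a primitive integer polynomial $p'$, a monic integer $q$ with $\res(p',q)=\pm1$. That is where all the work of the appendix goes. For an irreducible factor with leading coefficient $c$ and root $\alpha$, finiteness of the class group makes a power of $\langle c,c\alpha\rangle$ principal, with a generator that is a homogeneous integer polynomial in $c$ and $c\alpha$ (Lemmas \ref{int-times-unit}--\ref{irreducible-res-1}). A Noether-type $AF+BG$ result (Theorem \ref{ap+bq}) then glues the factors together with the extra factor $y$, which is what enforces monicity. Your fallbacks do not replace this. Adjoining $N^{1/M}$ does nothing, as you note. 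Taking $\alpha=\varepsilon^m+q$ with a Pisot or Salem unit only controls archimedean sizes of conjugates, and a non-integral rational $q$ would already break integrality of $\alpha$. The required unit condition is an exact arithmetic statement about a norm, which size estimates cannot force.

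Your normalisation also needs repair when the coefficients are not rational. The target ``each $Np_i(\alpha)$ is a unit'' can be unattainable. For $p(x)=x-\sqrt2/2$, every admissible $N$ is even and $Np(\alpha)=\frac{N}{2}\sqrt2(\sqrt2\alpha-1)$ is never a unit, although $\alpha=1+\sqrt2$ gives $1/p(\alpha)=2-\sqrt2\in\bar{\mathbb Z}$. Non-real coefficients also make $G$ non-real on $\mathbb R$, so the intermediate value theorem is unavailable. Both issues are handled by passing, as the paper does, to the primitive integer multiple $p'$ of the rational polynomial $\prod_i\prod_\sigma p_i^\sigma$, and aiming for $p'(\alpha)$ to be a unit. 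Returning to the individual $p_i$ then uses that contents are multiplicative over $\bar{\mathbb Z}$.

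Finally, the sign change in $I$ has to be produced without losing monicity or integrality of $G$, and your $B=\lambda(x-c')^k$ does not address this. The paper does it in Lemma \ref{res-1-in-segment}: raise $q$ to a power so that $\deg q\ge\deg p'+2$, then replace it by $q+(mx-n)p'$ with integers $m,n$. This stays monic, keeps $\res(p',q)=\pm1$ because only a multiple of $p'$ was added, and for suitable $m,n$ changes sign on $I$.
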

The proof of the lemma is deferred to Appendix \ref{alg-integer-lemma-proof}.
\begin{proposition}
    Assuming CH, the real numbers can be embedded in \(\mathbb N^{-1}\bar C_{\hat p}\).
\end{proposition}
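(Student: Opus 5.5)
Under CH, \(\mathbb R\) is a union of an increasing \(\omega_1\)-chain of countable subfields. I will build the embedding by transfinite recursion over such a chain, keeping every image inside \(\mathbb N^{-1}\bar C_{\hat p}\). Only countably many constraints are active at each stage, so \(\aleph_1\)-saturation of \(\ast{V(\mathbb N)}\) can realize them. Lemma \ref{alg-integer} supplies the finite satisfiability needed to keep values in the ring rather than merely in the field \(\mathbb R^C\).

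\emph{Setup.} Fix a transcendence basis \(G=\{g_\alpha\mid\alpha<\omega_1\}\) of \(\mathbb R\) over \(\mathbb Q\), as in Lemma \ref{alg-generate}, enumerated in type \(\omega_1\) using CH. Put \(K_\alpha=\overline{\mathbb Q(g_\beta\mid\beta<\alpha)}\cap\mathbb R\); each \(K_\alpha\) is a countable real-closed field and \(\mathbb R=\bigcup_{\alpha<\omega_1}K_\alpha\). I will maintain an order-preserving field embedding \(\iota_\alpha:K_\alpha\to\mathbb R^C\) with the extra property that every element with \(\iota_\alpha(x)\in\mathbb N^{-1}\bar C_{\hat p}\) lies in a designated subring, large enough that \(K_\alpha\) is its fraction field. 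At limit stages take unions. At \(\alpha=0\), Proposition \ref{f-qbar} gives \(K_0=\bar{\mathbb Q}\cap\mathbb R=f_{\bar{\mathbb Q}}^{-1}[\mathbb N]\subseteq\mathbb N^{-1}\bar C_{\hat p}\).

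\emph{Successor step.} I must choose \(y=\iota_{\alpha+1}(g_\alpha)\in\mathbb R^C\). Requirements:
\begin{itemize}
\item \(y\) realizes the cut of \(g_\alpha\) over \(\iota_\alpha[K_\alpha]\);
\item \(y\) is "integral": \(f\)-bounded with denominator \(1\);
\item for each of the countably many monic polynomials \(p\) over \(K_\alpha\) (transported by \(\iota_\alpha\)), \(1/p(y)\) is also in \(\bar C_{\hat p}\).
\end{itemize}
The third condition lets every element of \(K_\alpha(g_\alpha)\) be written as a quotient (integral element)/(standard natural). More precisely, denominators \(p(y)\) become units of the ring, so \(K_\alpha[g_\alpha]\) localized at all polynomials lands in \(\mathbb N^{-1}\bar C_{\hat p}\). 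Real algebraic elements over this field then follow by the closure of \(\bar C_{\hat p}\) under roots of monic polynomials, after clearing denominators by standard naturals.

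To realize this countable type I will proceed as in Theorem \ref{saturated-field}. Each finite piece is satisfiable by Lemma \ref{alg-integer}, applied inside \(\iota_\alpha[K_\alpha]\), via transfer through completeness of real-closed fields as in the proof of Corollary \ref{R-copy}. The bounds ``\(f(y)\le\hat n\)'', ``\(f(1/p(y))\le\hat n\)'' use a single \(\hat n\in C\) exceeding all the countably many witnesses. Such an \(\hat n\) exists because \(C\) is not \(\sigma\), by Theorem \ref{not-both}, as \(C_{\hat p}\) is \(\delta\). Saturation then gives \(y\).

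\emph{Main obstacle.} The hard step is making Lemma \ref{alg-integer} apply to the image field rather than to \(\bar{\mathbb Q}\). The lemma is stated for polynomials over \(\bar{\mathbb Q}\) and intervals in \(\mathbb R\), while I need it for polynomials whose coefficients are in \(\iota_\alpha[K_\alpha]\), with integrality measured by the internal matrix function \(f_{\bar{\mathbb Q}}\) with denominator \(1\). I would first try to strengthen the lemma to coefficients that are algebraic integers in a real-closed field, and then internalize it. ``\(\alpha\) is an eigenvalue of an integer matrix of bounded size whose inverse-polynomial values are too'' is expressible with the already-chosen images as parameters. I would arrange inductively that all polynomial coefficients used are integral, after scaling by standard naturals, which the lemma's monic hypothesis tolerates. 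If this transfer fails, the fallback is to choose \(y\) among elements of \(\bar C_{\hat p}\) directly, using the density of such elements in \(\mathbb R^C\) given by the proposition \(\mathbb R^C=\Frac(\bar C_{\hat p})\).
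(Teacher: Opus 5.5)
\textbf{There is a genuine gap, at exactly the step you flag as the main obstacle: finite satisfiability of the successor-stage type.} Once \(\iota_\alpha\) is fixed, the polynomials \(p\) for which you need \(1/p(y)\in\bar C_{\hat p}\) have nonstandard coefficients in \(\iota_\alpha[K_\alpha]\). Lemma \ref{alg-integer}, however, is a statement about standard algebraic integers and polynomials over \(\bar{\mathbb Q}\).

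Your proposed transfer ``through completeness of real-closed fields'' cannot carry it. The conditions \(f_{\bar{\mathbb Z}}(y)\le\hat n\) and \(f_{\bar{\mathbb Z}}(1/p(y))\le\hat n\) are not expressible in the language of ordered fields, so the argument of Corollary \ref{R-copy} does not apply. There is also no evident internal statement, true in the \(\mathbb F_p\), from which a nonstandard version of the lemma could be transferred.

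Your fallback via \(\mathbb R^C=\Frac(\bar C_{\hat p})\) gives at most density of \(\bar C_{\hat p}\). It says nothing about making the values \(p(y)\) units. Finally, your recursion has no invariant guaranteeing that the values \(\iota(g_\beta)\) already chosen leave the later unit conditions satisfiable. (Your invariant about a ``designated subring'' does not address this.)

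\textbf{The missing idea is to never fix earlier images before the later constraints are checked.} The paper writes down one set of formulae in all variables \(h_i\), \(i<\omega_1\), at once:
\begin{itemize}
\item \(h_i\in f_{\bar{\mathbb Z}}^{-1}[[\hat m]]\) for a fixed \(\hat m\in C\setminus\mathbb N\);
\item rational bounds \(q\le h_i\le r\) around \(g_i\);
\item \(1/r\in f_{\bar{\mathbb Z}}^{-1}[[\hat m]]\), but only for \emph{last-monic} \(r\), i.e.\ \(r\) monic in some \(h_i\) over \((\bar{\mathbb Q}\cap\mathbb R)(h_j\mid j<i)\).
\end{itemize}
All coefficients here are standard. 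A finite fragment then mentions finitely many variables \(h_{i_0},\dots,h_{i_{n-1}}\) with \(i_0<\dots<i_{n-1}\). It can be satisfied by standard real algebraic integers chosen in increasing order: once the earlier values are standard, each last-monic \(r\) is a monic polynomial over \(\bar{\mathbb Q}\) in the current variable, and the rational bounds give a segment of positive length. So Lemma \ref{alg-integer} applies verbatim.

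\(\aleph_1\)-saturation then realizes the whole set. This is where CH enters: there are \(\omega_1\) variables and only countably many parameters. A stage-by-stage recursion like yours can only work if each choice realizes the projection of such a global type, so that the remainder stays finitely satisfiable.

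General denominators are recovered afterwards by a transfinite induction with norms. A denominator in \(K_i[g_i]\) is a constant times a monic \(m\), and \(1/m=\frac{\|m\|}{m}\cdot\frac{1}{\|m\|}\) with \(\|m\|\) last-monic. Your stronger requirement, that \(1/p(y)\) be integral for all monic \(p\) over \(K_\alpha\), would make the closure step simpler. But it is precisely what puts the satisfiability step out of reach of the standard lemma.
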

\begin{proof}
    Let us call a rational function 
    on the variables \(x_i\), where \(i\in I\) for some totally ordered set \(I\), and over a field \(\mathbb F\) ``last-monic" if it is a monic polynomial in \(x_i\) over \(\mathbb F(\{x_j\mid j<i\})\) for some index \(i\). 

    Let \(G\) be a set of algebraically-independent real numbers that algebraically generates \(\mathbb C\), \(\overline{\mathbb Q(G)}=\mathbb C\), which exists from Lemma \ref{alg-generate}.
    From CH, we can write \(G=\{g_i\mid i\in\omega_1\}\). 
    Let \(f_{\bar{\mathbb Z}}\) be the same as \(f_{\bar{\mathbb Q}}\) but with the denominator \(k\) restricted to only be the constant \(1\) and note that by definition \(f_{\bar{\mathbb Z}}^{-1}[C]=\bar C_{\hat p}\). In addition, let \(\hat m\in C\setminus\mathbb N\) and let us consider the following bounded quantifier formulae:
    \begin{enumerate}
        \item\label{in-set} For each \(i\in\omega_1\) the formula \(h_i\in f_{\bar{\mathbb Z}}^{-1}[[\hat m]]\).
        \item\label{inv-poly} For each last-monic rational function \(r\in(\bar{\mathbb Q}\cap\mathbb R)(\{h_i\mid i\in\omega_1\})\), the formula \(\frac 1{r}\in f_{\bar{\mathbb Z}}^{-1}[[\hat m]]\).
        \item\label{larger} For each \(i\in\omega_1\) and \(q\in\mathbb Q\) such that \(q<g_i\) the formula \(q\le h_i\) (where \(\le\) is from Definition \ref{field-le})
        \item\label{smaller} For each \(i\in\omega_1\) and \(q\in\mathbb Q\) such that \(q>g_i\) the formula \(h_i\le q\) (where \(\le\) is again from Definition \ref{field-le})
    \end{enumerate}
    Let us show that this set is finitely satisfiable by elements of \(\mathbb F_{\hat p}\). Let \(\Phi\) be a finite subset of these formulae, and let \(h_{i_j}\) for some \(0\le j<n\) be the variables that appear in them for \(i_j\in\omega_1\). W.l.o.g these \(i\)s are sorted from smallest to largest, and so we choose values in \(\bar{\mathbb Z}\) sequentially: In every step 
    we consider all the formulae that only mention \(h_{i_k}\) for \(k\le j\). Those that do not mention \(h_{i_j}\) are already satisfied. By choosing \(v_{i_j}\in\bar{\mathbb Z}\) we satisfy the formulae of type \ref{in-set}. Since smaller values are already chosen, each formula of type \ref{inv-poly} can be satisfied by requiring that for some monic polynomial \(p\in (\bar{\mathbb Q}\cap\mathbb R)[x]\), \(\frac 1{p(x)}\in\mathbb Z\). Formulae of type \ref{larger} and \ref{smaller} only require that \(v_{i_j}\) is in a segment \(I\) has rational endpoints and contains the irrational \(g_{i_j}\) and so is has positive length. This means that by Lemma \ref{alg-integer} we can choose \(v_{i_j}\) as needed.

    From \cite[Corollary 4.4.24]{CK} \(\ast{V(\mathbb N)}\) is an \(\aleph_1\)-saturated nonstandard model, and therefore there is a mapping \(f\) that sends each \(g_i\) to some element of \(\mathbb F_{\hat p}\) such that all the formulae are satisfied by substituting \(f(g_i)\) in \(h_i\). From the formulae of type \ref{in-set} they are in fact all in \(f_{\bar{\mathbb Z}}^{-1}[[\hat m]]\subset\bar C_{\hat p}\), and from the formulae of type \ref{larger} and \ref{smaller} their real closure, which is included in \(\mathbb R^C\), is isomorphic to that of \(\mathbb Q(g_0,g_1,\dots)\) which is \(\mathbb R\). It is left to show that this real closure is a subset of \(\mathbb N^{-1}\bar C_{\hat p}\).

    This is proven by transfinite induction: For each \(\alpha\le\omega_1\) we prove that the real closure \(K_i\) of \(\mathbb Q(\{g_i\mid i<\alpha\})\) is included in \(\mathbb N^{-1}\bar C_{\hat p}\). The base case is \(\alpha=0\), where the real closure is \(\bar {\mathbb Q}\cap\mathbb R\subset\mathbb N^{-1}\bar C_{\hat p}\) since it is even more restricted. The limit case is also easy since the field there is a union of the previous cases. Let us prove the successor case. Let \(i<\omega_1\) and let \(x\in K_{i+1}\). It is an element of some algebraic extension of \(K_{i}(g_i)\), and so it is of the form \(\frac yz\) where both \(y\) and \(z\) are in some integral algebraic extensions of \(K_i[g_i]\). In particular, \(z\) has a norm \(\|z\|\in K_i[g_i]\) which satisfies that \(\|z\|x=y\cdot\frac{\|z\|}z\) is integral over \(K_i[g_i]\). Therefore by IH it is also in \(\mathbb N^{-1}\bar C_{\hat p}\), and it is left to show that \(\frac 1{\|z\|}\in \mathbb N^{-1}\bar C_{\hat p}\). Since \(\|z\|\) an element of \(K_i[g_i]\) it can be factored to a leading coefficient \(c\in K_i\) and a monic polynomial \(m\in K_i[g_i]\). \(\frac 1c\in K_i\subset\mathbb N^{-1}\bar C_{\hat p}\). As for \(m\), it is contained in some algebraic extension of \(\mathbb Q(\{g_j\mid j<i\})[g_i]\), and so has a norm \(\|m\|\) which on one hand satisfies \(\frac{\|m\|}m\in K_i[g_i]\subset\mathbb N^{-1}\bar C_{\hat p}\) and on the other hand is a last-monic function. Therefore \(\frac 1{\|m\|}\in\bar C_{\hat p}\) from a formula of type \ref{inv-poly} and so \(\frac1m=\frac{\|m\|}{m}\frac1{\|m\|}\in\mathbb N^{-1}\bar C_{\hat p}\).
\end{proof}

\section{The case where the field contains a square root of -1}\label{complex}
Let us first prove Theorems \ref{r-delta-good} and \ref{r-sigma} in this case. For that we assume that \(\mathbb F_{\hat p}\) is only known to include a copy of \(\bar{\mathbb Q}\cap\mathbb R\) and an element \(i\) that satisfies \(i^2=-1\), i.e. that it contains a copy of \((\bar{\mathbb Q}\cap\mathbb R)[i]=\bar{\mathbb Q}\).
\begin{definition}
    For any natural number \(d>1\), define \(\phi_d(\hat n)\) as \[\forall a_0,\dots,a_{d-1}\in f_{\bar{\mathbb Q}}^{-1}[[\hat n]].\exists x\in\mathbb F_{\hat p}.~x^d+\sum_{i=0}^{d-1}a_ix^i=0\]
\end{definition}
Note that similarly to before these formulae hold for all \(n\in\mathbb N\), as by proposition \ref{f-qbar}\footnote{The reader is again welcome reread the proof and convince themself that it does not rely on the existence of transcendental real numbers in \(\mathbb F_{\hat p}\).}, \(f_{\bar{\mathbb Q}}^{-1}[\mathbb N]\) is isomorphic to \(\bar{\mathbb Q}\), and hence is an algebraically closed field. However, they are again false for \(\hat p\) since \(f_{\bar{\mathbb Q}}^{-1}[[\hat p]]=\mathbb F_{\hat p}\), which is a pseudo-finite field and hence has an algebraic extension of any order. Therefore, for each \(d\) there is some maximal nonstandard natural \(\hat n_d\) that satisfies \(\phi_d\).
\begin{definition}
    \[C_{\hat p}=\bigcap_{\substack{1<d\in\mathbb N\\i\in\mathbb N}}\left[\left\lceil\sqrt[i]{\hat n_d}\right\rceil\right]\]
\end{definition}
This \(\delta\)-cut also includes \(\mathbb N\) and so from Theorem \ref{not-both} the inclusion is strict.
\begin{definition}
    \[\mathbb C_{\hat p}^{\delta}=f_{\bar{\mathbb Q}}^{-1}\left[C_{\hat p}\right]\]
    We again omit \(\hat p\) when it is clear from the context.
\end{definition}
Note that by Corollary \ref{function-delta} this is again a \(\delta\)-set.

The following implies Theorem \ref{r-delta-good} in this case:
\begin{theorem}\label{semiring-C}
    For each cut \(\mathbb N\ne C\subseteq C_{\hat p}\) that is also a semi­ring, \(\mathbb C^C\coloneq f_{\bar{\mathbb Q}}[C]\) is an algebraically closed field of cardinality at least \(\mathfrak c\) and therefore contains at least \(2^\mathfrak c\) copies of \(\mathbb R\). In particular \(\mathbb C^{\delta}\) is an algebraically closed field of cardinality at least \(\mathfrak c\).
\end{theorem}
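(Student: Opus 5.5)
Note that \(\mathbb C^C\) should be read as \(f_{\bar{\mathbb Q}}^{-1}[C]\), by analogy with \(\mathbb R^C\). The plan has three steps: algebraic closedness, the cardinality bound, and the embeddings of \(\mathbb R\).

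\textbf{Algebraic closedness.} First, Propositions \ref{f-qbar-properties} and \ref{subfield-alg}, with \(S=C\) a downwards-closed sub-semiring of \(\ast{\mathbb N}\), show that \(\mathbb C^C\) is a field which is algebraically closed relative to \(\mathbb F_{\hat p}\). So it suffices to show that every monic polynomial \(x^d+\sum_{i<d}a_ix^i\) with coefficients in \(\mathbb C^C\) has a root in \(\mathbb F_{\hat p}\). Exactly as in the proof of Theorem \ref{real-closed}, set \(\hat m=\max_i f_{\bar{\mathbb Q}}(a_i)\in C\); then \(\hat m+1\in C\subseteq C_{\hat p}\), because \(C_{\hat p}\) is a semiring by Lemma \ref{delta-semiring}. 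Hence \(\hat m+1<\hat n_d\), and \(\phi_d(\hat m+1)\) holds. Here I must check that \(\phi_d\) is downward closed, which is clear since \(f_{\bar{\mathbb Q}}^{-1}[[\hat n]]\) grows with \(\hat n\). So a root exists in \(\mathbb F_{\hat p}\), and it lies in \(\mathbb C^C\) by relative algebraic closedness.

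\textbf{Cardinality.} This is the main step. Since \(C\ne\mathbb N\) and \(C\supseteq\mathbb N\), pick \(\hat n\in C\setminus\mathbb N\). The set \(A=f_{\bar{\mathbb Q}}^{-1}[[\hat n]]\subseteq\mathbb C^C\) contains every element of the form \(\sum_{i<\hat\ell}\epsilon_i 2^i\) with \(\epsilon_i\in\{0,1\}\), provided \(\hat\ell\) is a nonstandard number small enough that \(\hat\ell+1\le\hat n\). Such an element is a \(1\times1\) matrix with entry below \(2^{\hat\ell}\), so \(f_{\bar{\mathbb Q}}\le 1+\hat\ell\). These integers are distinct modulo \(\hat p\) as long as \(2^{\hat\ell}<\hat p\), which holds because \(\hat\ell<\hat n\le\hat p\). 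There are \(2^{\hat\ell}\) of them internally, and a nonstandard internal finite set has cardinality at least \(\mathfrak c\), via the standard \(\aleph_1\)-saturation argument \cite[Corollary 4.4.24]{CK}. Concretely, map each infinite binary sequence to a realization of the countable type prescribing its first standard-many bits. Distinct sequences yield distinct elements, so \(|\mathbb C^C|\ge\mathfrak c\).

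\textbf{Copies of \(\mathbb R\).} An algebraically closed field of characteristic \(0\) and cardinality at least \(\mathfrak c\) has transcendence degree at least \(\mathfrak c\). Take a transcendence basis \(G\) of \(\mathbb C\) over \(\mathbb Q\) (Lemma \ref{alg-generate}) and any injection of \(G\) into a transcendence basis of \(\mathbb C^C\). This extends to an embedding \(\mathbb C=\overline{\mathbb Q(G)}\hookrightarrow\mathbb C^C\), and restricting it gives an embedding of \(\mathbb R\). For the count, there are \(2^{\mathfrak c}\) distinct injections of \(G\), since \(|G|=\mathfrak c\) and the target basis has size at least \(\mathfrak c\). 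Distinct injections give distinct images of \(G\subseteq\mathbb R\), hence distinct embeddings, which gives at least \(2^{\mathfrak c}\) copies. This counts embeddings; distinct images can be obtained by shifting a subset of the images of \(G\) to other basis elements, exactly as in Corollary \ref{R-copy}.

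Finally, \(\mathbb C^\delta\) is the case \(C=C_{\hat p}\). This is legitimate because \(C_{\hat p}\) is a semiring by Lemma \ref{delta-semiring} and \(C_{\hat p}\ne\mathbb N\) by Theorem \ref{not-both}.
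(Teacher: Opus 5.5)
Your proof is correct. The algebraic-closedness step (Propositions \ref{f-qbar-properties} and \ref{subfield-alg}, then $\phi_d(\hat m+1)$ with $\hat m+1<\hat n_d$) is exactly the paper's, but the other two steps take a different route.

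For the cardinality bound, the paper applies overspill to the formula $\psi(\hat n)$, which says that $\hat n$ is a value of $f_{\bar{\mathbb Q}}$. It chooses $\hat k'\in C\setminus\mathbb N$ below the first failure of $\psi$ and uses that $f_{\bar{\mathbb Q}}^{-1}[[\hat k']]$ maps onto $[\hat k']\setminus\{0\}$. That requires $f_{\bar{\mathbb Q}}$ to attain every standard value, i.e.\ lower bounds on $f_{\bar{\mathbb Q}}$, which are delicate. Indeed, the witnesses named there do not all work: $\sqrt2$ is an eigenvalue of $\bigl(\begin{smallmatrix}1&1\\1&-1\end{smallmatrix}\bigr)$, so $f_{\bar{\mathbb Q}}(\sqrt2)=2$, not $3$ (the integers $2^{n-1}$ do work). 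Your family of integers below $2^{\hat\ell}$ needs only the trivial upper bound $f_{\bar{\mathbb Q}}\le 1+\hat\ell$. You also prove inline, by realizing bit-types, that an internal set of nonstandard size has at least $\mathfrak c$ elements.

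For the copies of $\mathbb R$, the paper takes an $\aleph_1$-saturated real-closed field $\mathbb R^*$ of size $\mathfrak c$ with $\mathbb R^*[i]\cong\mathbb C$. It reuses the infinitesimal-shift count of Corollary \ref{R-copy} to get $2^{\mathfrak c}$ copies of $\mathbb R$ inside $\mathbb C$, and leaves the embedding $\mathbb C\hookrightarrow\mathbb C^C$ implicit. You make that embedding explicit through transcendence bases and count by varying the injection of $G$. This is shorter and needs no saturation at all.

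Two small repairs are needed.

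First, $\hat\ell<\hat n\le\hat p$ does not give $2^{\hat\ell}<\hat p$. Also require $\hat\ell\le\log_2\hat p$ (still nonstandard), so that distinct bit patterns give distinct residues. Relatedly, $f_{\bar{\mathbb Q}}\le\hat\ell+1\le\hat n$ places these elements in $\mathbb C^C$ because $C$ is a cut containing $\hat n$. It does not necessarily place them in $f_{\bar{\mathbb Q}}^{-1}[[\hat n]]$.

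Second, distinctness of the images is not obtained ``exactly as in Corollary \ref{R-copy}'' (there one adds an infinitesimal), but it follows from algebraic independence. Take disjoint injections $\iota,j$ of $G$ into the target basis, and let $\iota_S$ agree with $j$ on $S$ and with $\iota$ elsewhere. If $g\in S\setminus S'$, then $\iota(g)$ lies in the $S'$-image of $\mathbb R$. But $\iota(g)$ is transcendental over $\mathbb Q(\iota_S(G))$, so it is not in the $S$-image, which lies in the algebraic closure of $\mathbb Q(\iota_S(G))$ in $\mathbb C^C$.
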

\begin{proof}
    From propositions \ref{f-qbar-properties}\footnote{The reader is again welcome reread the proof and convince themself that it works for any field for which the function is well defined.} and \ref{subfield-alg} this is a field which is algebraically closed relative to \(\mathbb F_{\hat p}\). Let us show that it is algebraically closed. Let \[p(x)=x^d+\sum_{i=0}^{d-1}a_ix^i\] be a monic polynomial with coefficients in \(\mathbb C^C\). Since \(a_i\in\mathbb C^C\), \(f_{\bar{\mathbb Q}}(a)\in C\), and in particular \[\hat m=\max_if_{\bar{\mathbb Q}}(a_i)\in C\subseteq C_{\hat p}.\] Therefore, since \(C_{\hat p}\) is a semi­ring, \(\hat n=\hat m+1\in C_{\hat p}\), which means that \(\phi_{d}(\hat n)\) holds. Therefore there is some \(x\in\mathbb F_{\hat p}\) such that \(p(x)=0\), and since \(\mathbb C^C\) is algebraically closed relative to \(\mathbb F_{\hat p}\), \(x\in\mathbb C^C\).

    Next, let us show that \(\left|\mathbb C^C\right|\ge\mathfrak c\), from which it will follow that it contains a copy of \(\mathbb C\). Define the formula \(\psi(\hat n)\) as \(\hat n=0\lor\exists x\in f_{\bar{\mathbb Q}}^{-1}[\{\hat n\}].\) As \(f_{\bar{\mathbb Q}}(1)=1\) and \(f_{\bar{\mathbb Q}}(\sqrt[n-1]2)=n\) for all \(n\ge 2\), this formula holds for all \(n\in\mathbb N\). However it is false for \(\hat p\). Therefore there is some minimal non-standard natural \(\hat k\) that does not satisfy \(\psi\). In particular, since \(C\ne\mathbb N\), we can choose \(\hat k'\in C\) such that all the elements of \(\left[\hat k'\right]\) satisfy \(\psi\). Therefore \[\left|\mathbb C^{C}\right|=\left|f_{\bar{\mathbb Q}}^{-1}[C]\right|\ge\left|f_{\bar{\mathbb Q}}^{-1}\left[\left[\hat k'\right]\right]\right|\ge\left|\left[\hat k'\right]\right|\ge\mathfrak c.\]

    Finally, let \(\mathbb R^*\) be any \(\aleph_1\)-saturated real-closed field of cardinality \(\mathfrak c\), and note that \(\mathbb R^*[i]\cong\mathbb C\) since it is algebraically closed and has the cardinality of the continuum. Therefore the rest of the proof of Theorem \ref{r-delta-good} from Section \ref{superfields} shows that there are \(2^{\mathfrak c}\) copies of \(\mathbb R\) in \(\mathbb C\) which can be embedded in \(\mathbb C^C\).
\end{proof}

\begin{proof}[Proof of Theorem \ref{r-sigma}]
    Let \(\hat n\in C_{\hat p}\setminus\mathbb N\), and let \(C=\bigcup_{i\in\mathbb N}\left[\hat n^i\right]\). As in the other case, this is a sub-\(\sigma\)-cut of \(C_{\hat p}\) that is also a semi­ring, and so by Theorems \ref{semiring-C} and \ref{function-sigma}, \(\mathbb C^C\) is an almost internal \(\sigma\)-sub­field of \(\mathbb F_{\hat p}\) that is algebraically closed and of cardinality at least \(\mathfrak c\) which contains at least \(2^\mathfrak c\) copies of \(\mathbb R\).
\end{proof}

The results in Section \ref{more} can be similarly adapted to this case as well.

To prove Theorem \ref{r-bad} in this case note that the proof of Lemma \ref{finite-subset} only used the assumption that \(\sqrt{-1}\notin\mathbb F_{\hat p}\) to show that there is an internal relation \(G\) on \(\mathbb F_{\hat p}\) such that for each two different elements \(x, y\in\mathbb R'\), \(x>y\) (where the order follows from the isomorphism with \(\mathbb R\)) iff \(G(x,y)\), and that the proof of theorem \ref{r-bad} given that lemma did not use it at all. Therefore the following implies the theorem in this case:
\begin{lemma}
    If \(F\) is an almost internal real-closed subfield of \(\mathbb F_{\hat p}\) there is an internal relation \(G\) such that for each \(x,y\in F\), \(x\ge y\) iff \(G(x, y)\).
\end{lemma}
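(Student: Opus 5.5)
The idea is to define the order internally by comparing the "complexity" under \(f\) of square roots of \(x-y\) and of \(y-x\), rather than by quantifying over \(F\), which is external. Write \(F=f^{-1}[C]\) with \(f\) internal and \(C\) a cut. For \(a\in\mathbb F_{\hat p}\) that is a nonzero square, let \(\mu(a)\) be the minimum of \(f(z)\) over the (at most two) \(z\in\mathbb F_{\hat p}\) with \(z^2=a\). Then \(\mu\) is internal, being defined by a bounded first order formula from the internal \(f\).

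Define \(G(x,y)\) to hold iff one of the following holds:
\begin{itemize}
\item \(x=y\);
\item \(x-y\) is a nonzero square and \(y-x\) is not a square in \(\mathbb F_{\hat p}\);
\item both \(x-y\) and \(y-x\) are nonzero squares and \(\mu(x-y)\le\mu(y-x)\).
\end{itemize}
This is a first order definition in \(\ast{V(\mathbb N)}\) using only internal parameters, so \(G\) is internal. It remains to check that \(G(x,y)\) holds iff \(x\ge y\), where \(\ge\) is the order of the real-closed field \(F\).

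The key algebraic fact is as follows. Since \(F\) is real closed, \(-1\) is not a square in \(F\). Hence for nonzero \(a\in F\), at most one of \(a\) and \(-a\) has a square root lying in \(F\). Indeed, if \(z^2=a\) and \(w^2=-a\) with \(z,w\in F\), then \((w/z)^2=-1\) in \(F\). On the other hand, exactly one of \(a,-a\) is positive in \(F\), and that one has both of its square roots in \(F\).

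Now let \(x,y\in F\) with \(x\neq y\).

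Suppose \(x>y\). Then \(x-y\) has its roots in \(F\), so \(\mu(x-y)\in C\). If \(y-x\) is also a square in \(\mathbb F_{\hat p}\), its roots are not in \(F\) by the key fact. So \(\mu(y-x)\notin C\), and because \(C\) is downward closed, \(\mu(y-x)>\mu(x-y)\). Thus \(G(x,y)\) holds.

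Suppose \(x<y\). Then symmetrically \(y-x\) is a square with \(\mu(y-x)\in C\). Either \(x-y\) is not a square, in which case neither the second nor the third clause applies. Or \(x-y\) is a square with \(\mu(x-y)\notin C\), so \(\mu(x-y)>\mu(y-x)\) and the third clause fails. Hence \(G(x,y)\) fails.

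The only delicate step is the one just used: \(x-y\) may well be a square in \(\mathbb F_{\hat p}\) even when \(x<y\), via roots outside \(F\). This is exactly why the definition compares \(\mu\) values instead of testing squareness alone. The comparison works only because \(C\) is a cut: an element of \(f^{-1}[C]\) has strictly smaller \(f\)-value than any element outside it.
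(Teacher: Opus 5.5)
Your proof is correct, but it takes a different route from the paper. The paper finds a single internal threshold. The formula \(\phi(\hat n)\colon \exists x,y\in f^{-1}[[\hat n+1]].~x^2=-y^2\ne 0\) fails for every \(\hat n\in C\), because then \(f^{-1}[[\hat n+1]]\subseteq F\). It holds once \(\hat n\ge\max(f(1),f(i))\). So there is a largest \(\hat n\) at which it fails, and the paper sets \(G(x,y)\) to be ``\(x-y=z^2\) for some \(z\) with \(f(z)\le\hat n\)''. In other words, it produces an internal superset \(f^{-1}[[\hat n+1]]\supseteq F\) that still contains no pair \(z,z'\) with \(z^2=-z'^2\ne 0\), and confines the square-root witness to it. You skip the threshold entirely. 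You compare \(\mu(x-y)\) with \(\mu(y-x)\), using only that \(C\) is downward closed and that for \(x\ne y\) in \(F\) exactly one of \(\pm(x-y)\) has its square roots in \(F\). Your version needs no maximal-element argument, and it never uses \(\sqrt{-1}\in\mathbb F_{\hat p}\), so it works uniformly in both cases. The paper's maximum exists only because \(i\in\mathbb F_{\hat p}\). The paper's version keeps \(G\) in the same shape as Definition \ref{field-le}, namely ``\(x-y\) is a square'' with the witness restricted to an internal set. That is the pattern reused in the following \(\sigma\)/\(\delta\) lemma, where the witness is restricted to \(\bigcap_{j<i}S_j\) or \(S_i\). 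One small point: in your case \(x<y\), say explicitly that the second clause also fails when \(x-y\) is a square, simply because \(y-x\) is a square as well.
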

\begin{proof}
    Let \(f\) be an internal function and \(C\) be a cut such that \(f^{-1}[C]=F\). Let \(\phi(\hat n)\) be the formula \(\exists x,y\in f^{-1}[[\hat n+1]].~x^2=-y^2\ne 0\). Since for each \(\hat n\in C\), \(f^{-1}[[\hat n+1]]\subseteq F\), this formula does not hold for any \(\hat n\in C\), but if we consider \(\hat n=f(i)\) for \(i^2=-1\), then \(1,i\in f^{-1}[[\hat n]]\) and therefore \(\phi\) holds for this \(\hat n\). Therefore there is some maximal \(\hat n\) for which this formula fails. Define \(G(x,y)\) as \[\exists z\in f^{-1}[[\hat n+1]].~x-y=z^2.\] For each \(x,y\in F\) if \(x\ge y\) then there is some \(z\in F\) such that \(x-y=z^2\). Since \(z\in F\), \(f(z)\in C\) and therefore \(f(z)\le \hat n\), i.e. \(z\in f^{-1}[[\hat n]]\), and therefore the formula holds. On the other hand, if \(x<y\), there is \(0\ne z\in F\) such that \(y-x=z^2\), and again \(z\in f^{-1}[[\hat n]]\). Therefore, if some other \(z'\in f^{-1}[[\hat n+1]]\) satisfies \(x-y=z'^2\), \(z^2=y-x=-z'^2\), implying \(\phi(\hat n)\)---a contradiction.
\end{proof}

For Theorem \ref{R-no-sigma-delta} note that this relation is only used for a saturation argument, and the argument can also work with a conjunction of countably many internal relations. Also note the only use of the requirement that \(\sqrt{-1}\notin\mathbb F_{\hat p}\) in the proof of Theorem \ref{R-no-sigma-delta} that is not as a part of Lemma \ref{finite-subset} is again to state the order relation for a saturation argument. Therefore, similarly to before, the following implies the general case of the theorem:
\begin{lemma}
    If \(F\) is a real-closed subfield of \(\mathbb F_{\hat p}\) that is either \(\sigma\) or \(\delta\) there are relations \(G_i\) for \(i\in\mathbb N\) such that for each \(x\ne y\in F\), \(x>y\) iff \(\forall i\in\mathbb N.~G_i(x,y)\).
\end{lemma}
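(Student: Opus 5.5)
We mimic the proof of the previous lemma, replacing the single cut bound by countably many internal pieces. The relations \(G_i\) are presumably meant to be internal, since they feed a saturation argument, and we construct them so. The idea is to let \(x>y\) mean ``\(x-y\) is a nonzero square of some element from a suitable internal set'', where that set is large enough to contain the relevant square roots from \(F\) yet small enough that it cannot contain two nonzero elements \(z,z'\) with \(z^2=-z'^2\). Since \(i\notin F\) (because \(F\) is real closed, hence formally real), no two nonzero elements of \(F\) satisfy \(z^2=-z'^2\). Otherwise \((z/z')^2=-1\) would hold in \(F\).

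\emph{The \(\sigma\) case.} Write \(F=\bigcup_{i\in\mathbb N}S_i\) with \(S_i\) internal, and replace \(S_i\) by \(S_0\cup\dots\cup S_i\) so that the sequence is increasing. Define
\[G_i(x,y)\iff \exists z\in \mathbb F_{\hat p}.~z\ne 0\wedge x-y=z^2\wedge \forall z'\in S_i.~z'^2\ne -z^2 .\]
This is internal because \(S_i\) is.
\begin{itemize}
\item If \(x>y\) in \(F\), then \(x-y=z^2\) for some nonzero \(z\in F\). Any \(z'\in S_i\subseteq F\) with \(z'^2=-z^2\) would give a square root of \(-1\) in \(F\), so no such \(z'\) exists. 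Hence \(G_i(x,y)\) holds for every \(i\).
\item If \(x<y\), then \(y-x=w^2\) for some nonzero \(w\in F\), and \(w\in S_j\) for some \(j\). Any \(z\) witnessing \(G_j(x,y)\) satisfies \(z^2=x-y=-w^2\) with \(w\in S_j\), which the definition forbids. So \(G_j(x,y)\) fails.
\end{itemize}
Therefore, for \(x\ne y\) in \(F\), we have \(x>y\) iff \(G_i(x,y)\) holds for all \(i\). Here we use that the order on \(F\) is total and that positive elements are nonzero squares, both because \(F\) is real closed.

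\emph{The \(\delta\) case.} Write \(F=\bigcap_i T_i\) with \(T_i\) internal and decreasing. We would like to shrink each \(T_i\) to an internal \(T_i'\) that still contains \(F\) but has no pair \(z,z'\ne0\) with \(z^2=-z'^2\). We find such a \(T_i'\) by saturation. For fixed \(i\), consider the internal sets \(U_j=T_i\cap T_j\).

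Suppose every \(U_j\) contains a pair \(z_j,z_j'\) of nonzero elements with \(z_j^2=-z_j'^2\). Because the \(T_j\) are decreasing, the countable family of formulas in two variables \(z,z'\) consisting of \(z,z'\in T_j\) for every \(j\) together with \(z^2=-z'^2\ne0\) is finitely satisfiable. By \(\aleph_1\)-saturation (\cite[Corollary 4.4.24]{CK}) it is realized, giving \(z,z'\in F\) with \((z/z')^2=-1\), a contradiction.

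Hence some \(T_{j_0}\) has no such pair. Using it, define
\[G(x,y)\iff\exists z\in T_{j_0}.~x-y=z^2\wedge z\ne0 .\]
\begin{itemize}
\item If \(x>y\), the witness \(z\in F\subseteq T_{j_0}\), so \(G(x,y)\) holds.
\item If \(x<y\), write \(y-x=w^2\) with \(0\ne w\in F\subseteq T_{j_0}\). A witness \(z\in T_{j_0}\) for \(G(x,y)\) would satisfy \(z^2=-w^2\), forming a forbidden pair in \(T_{j_0}\). So \(G(x,y)\) fails.
\end{itemize}
In this case a single internal relation suffices, and we take \(G_i=G\) for all \(i\).

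The main obstacle is the \(\delta\) case, namely choosing the internal set carefully so that it contains all square roots in \(F\) while excluding ``imaginary'' pairs. The saturation step above handles this. We should also double-check the routine fact that the conclusion is only required for \(x\ne y\), which avoids the degenerate witness \(z=0\).
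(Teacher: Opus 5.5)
Your proof is correct. In the $\sigma$ case it is essentially the paper's argument. The paper takes $G_i(x,y)$ to be $\neg\exists z\in S_i.~y-x=z^2$. Your conjunct $\forall z'\in S_i.~z'^2\ne -z^2$ is exactly this condition, because $-z^2=y-x$. The extra requirement that $x-y$ be a nonzero square is harmless but unnecessary, and so is making the $S_i$ increasing.

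In the $\delta$ case you take a genuinely different route.
\begin{itemize}
\item The paper avoids saturation. It sets $G_i(x,y)$ to be $\exists z\in\bigcap_{j<i}S_j.~x-y=z^2$. For $x<y$ it uses that $x-y$ has at most the two square roots $\pm z$ in $\mathbb F_{\hat p}$, both outside $F$. Hence both are excluded from $\bigcap_{j<i}S_j$ at some finite stage $i$.
\item You instead use $\aleph_1$-saturation to find a single internal $T_{j_0}\supseteq F$ containing no pair of nonzero $z,z'$ with $z^2=-z'^2$. This is the same idea as the paper's proof of the preceding lemma for almost internal fields, where the internal set $f^{-1}[[\hat n+1]]$ plays this role.
\end{itemize}
The paper's version is more elementary and parallels its $\sigma$ case. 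Yours gives a stronger conclusion: the order on a $\delta$ real-closed subfield is the trace of one internal relation. This is consistent with the paper. You could also obtain it by combining the lemma that $\delta$-sets are almost internal with the preceding lemma on almost internal real-closed subfields. A minor point: the auxiliary index in $U_j=T_i\cap T_j$ is superfluous, since the $T_j$ are already decreasing.
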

\begin{proof}
    Let us begin by the case that \(F\) is \(\delta\), i.e. \(F=\bigcap_{i\in\mathbb N}S_i\). Define \(G_i(x, y)\) as \[\exists z\in \bigcap_{j<i}S_j.~x-y=z^2\] (where an empty intersection is \(\mathbb F_{\hat p}\)). For each \(x\ne y\in F\), if \(x>y\) then there is some \(z\in F\) such that \(x-y=z^2\). This \(z\in S_i\) for each \(i\), and therefore \(G_i(x, y)\) holds for all \(i\). On the other hand, if \(x<y\) then if there is some \(z\) such that \(x-y=z^2\) then \(z,-z\notin F\), and therefore there is sone \(i\) such that \(z,-z\notin\bigcap_{j<i}S_j\), and therefore \(G_i(x, y)\) fails.
    
    It remains to consider the case that \(F\) is \(\sigma\), i.e. \(F=\bigcup_{i\in\mathbb N}S_i\). Define \(G_i(x,y)\) as \[\neg\exists z\in S_i.~y-x=z^2.\] For each \(x\ne y\in F\), if \(x>y\) then there is no \(z\in F\) such that \(y-x=z^2\). Therefore there is no such \(z\) in any \(S_i\), which means that \(G_i(x,y)\) holds for all \(i\). On the other hand, if \(x<y\) then there is some \(z\in F\) such that \(y-x=z^2\). Therefore there is some \(i\) such that \(z\in S_i\) and therefore \(G_i(x,y)\) fails.
\end{proof}

\appendix
\section{A more elaborate discussion of cuts}\label{cuts}
The following details some observations about cuts that were proven before Lemma \ref{finite-subset} was stated as an attempt to prove Theorem \ref{R-no-sigma-delta}.

\(\mathbb N\) is an example for a cut. Another example is \(\{\hat n\in\ast{\mathbb N}\mid\forall n\in \mathbb N.~ \hat n<\hat m-n\}\) for some nonstandard natural \(\hat m\). A simpler example is \([\hat m]\coloneq\{\hat n\in\ast{\mathbb N}\mid\hat n<\hat m\}\), but this cut is internal. In fact,
\begin{lemma}
    The internal cuts are exactly either all of \(\ast{\mathbb N}\) or \([\hat n]\) for some \(\hat n\in\ast{\mathbb N}\).
\end{lemma}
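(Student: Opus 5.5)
The statement has two directions, and the easy one is that the listed sets are internal cuts. The set \(\ast{\mathbb N}\) is internal, since it is \(\ast{}\) of \(\mathbb N\in V(\mathbb N)\setminus\mathbb N\). For a fixed \(\hat n\), the set \([\hat n]\) is defined by a bounded first-order formula with the parameter \(\hat n\), so it is internal, exactly as \(\mathbb F_{\hat p}\) was noted to be internal in the introduction. Both sets are clearly downwards closed.

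For the converse, let \(C\) be an internal cut. I would use the transfer principle. In \(V(\mathbb N)\) the following sentence holds: every subset \(A\subseteq\mathbb N\) that is downwards closed is either \(\mathbb N\) itself or equal to \(\{m\mid m<n\}\) for some \(n\in\mathbb N\). To see this in the standard model, take \(n\) to be the least element of \(\mathbb N\setminus A\) when that set is nonempty, which uses well-ordering. The sentence is first order with quantification bounded by \(\mathcal P(\mathbb N)\) and \(\mathbb N\), and \(*\) is an elementary embedding. So the sentence transfers to the statement that every internal \(A\in\ast{\mathcal P(\mathbb N)}\) that is downwards closed in the order of \(\ast{\mathbb N}\) equals \(\ast{\mathbb N}\) or equals \([\hat n]\) for some \(\hat n\in\ast{\mathbb N}\).

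The one point that needs checking is that "cut" in the external sense matches the transferred internal notion of downwards-closed. Downwards-closedness is the formula \(\forall x\in A\,\forall y<x.~y\in A\). For an internal \(A\), this formula holds in \(V(\ast{\mathbb N})\) exactly when \(A\) is a cut, because its quantifiers range over \(\ast{\mathbb N}\) with the genuine order \(<\) on \(\ast{\mathbb N}\). With that observation, the transferred sentence yields the lemma directly.

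As an alternative that avoids quoting transfer for sentences over \(\mathcal P(\mathbb N)\), I could argue concretely. If \(C\ne\ast{\mathbb N}\), then \(\ast{\mathbb N}\setminus C\) is a nonempty internal subset of \(\ast{\mathbb N}\). It has a least element \(\hat n\), by the internal well-ordering principle, which is itself a transfer of the statement that every nonempty subset of \(\mathbb N\) has a minimum. Downwards-closedness then gives \(C=[\hat n]\). I expect no real obstacle here; the only subtlety is making sure the minimum principle is applied only to internal sets.
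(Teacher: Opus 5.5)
Your proposal is correct and follows the paper's own argument: it also gets internality of \(\ast{\mathbb N}\) and \([\hat n]\) from bounded formulas with internal parameters, and gets the converse by transferring the fact that every downwards-closed subset of \(\mathbb N\) is \(\mathbb N\) or some \([n]\). Your alternative, applying the internal least-element principle to \(\ast{\mathbb N}\setminus C\), is just a more concrete instance of the same transfer argument.
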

\begin{proof}
    Both \(\ast{\mathbb N}\) and \([\hat n]\) are cuts by definition and they can be defined by substituting elements of \(\ast{V(\mathbb N)}\) in first order bounded quantifier formulae and so are internal. On the other hand, the notion of being a cut can also be formulated by substituting only elements of \(\ast{V(\mathbb N)}\) in a first order bounded quantifier formula and so the fact that cuts from \(\mathcal P(\mathbb N)\) are either \(\mathbb N\) or of the form \([n]\) implies that internal cuts in \(\ast{\mathcal P(\mathbb N)}\) are either \(\ast{\mathbb N}\) or of the form \([\hat n]\).
\end{proof}
In fact, a similar characterization exists for \(\sigma\)-cuts and \(\delta\)-cuts:
\begin{lemma}
    A cut is \(\sigma\) if and only if it is either the entire \(\ast{\mathbb N}\) or of the form \(\{\hat n\in\ast{\mathbb N}\mid\exists i\in\mathbb N.~\hat n<\hat a_i\}\) for some series \(\hat a_i\) of non-standard natural numbers. Similarly, it is \(\delta\) if and only if it is either the entire \(\ast{\mathbb N}\) or of the form \(\{\hat n\in\ast{\mathbb N}\mid\forall i\in\mathbb N.~\hat n<\hat a_i\}\) for some series \(\hat a_i\) of non-standard natural numbers.
\end{lemma}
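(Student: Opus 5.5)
The plan is to prove both characterizations directly from the definitions. The only nonstandard ingredient is transfer: every nonempty internal subset of \(\ast{\mathbb N}\) that is bounded above has a maximum, and every nonempty internal subset has a minimum. The ``if'' directions are immediate. The sets \([\hat a_i]\) are internal, so \(\bigcup_i[\hat a_i]\) is a \(\sigma\)-set and \(\bigcap_i[\hat a_i]\) is a \(\delta\)-set. Both are downwards-closed, and \(\ast{\mathbb N}\) is internal, hence both \(\sigma\) and \(\delta\) by Proposition \ref{internal-sigma-delta}. One point of interpretation needs fixing first. \(\mathbb N=\bigcup_{n\in\mathbb N}[n]\) is a \(\sigma\)-cut, but it is not a union of \([\hat a_i]\) with every \(\hat a_i\) nonstandard. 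So I will read ``series of non-standard natural numbers'' as a sequence of elements of \(\ast{\mathbb N}\), which may be standard, and prove the lemma in that form.

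For the \(\sigma\) case, let \(C=\bigcup_{i\in\mathbb N}S_i\) with \(S_i\) internal, and assume \(C\ne\ast{\mathbb N}\). Pick \(\hat m\notin C\). Since \(C\) is a cut, \(C\subseteq[\hat m]\), so each \(S_i\) is an internal subset of \([\hat m]\).
\begin{enumerate}
\item If \(S_i\) is nonempty, it has a maximum \(s_i\) by transfer, and I set \(\hat a_i=s_i+1\).
\item If \(S_i\) is empty, I set \(\hat a_i=0\).
\end{enumerate}
Every element of \(C\) lies in some \(S_i\), so it is \(<\hat a_i\). Conversely, if \(\hat n<\hat a_i\) then \(\hat n\le s_i\in C\), so \(\hat n\in C\) because \(C\) is downwards-closed. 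Hence \(C=\{\hat n\mid\exists i.~\hat n<\hat a_i\}\).

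For the \(\delta\) case, let \(C=\bigcap_i T_i\) with \(T_i\) internal. I discard every \(T_i\) equal to \(\ast{\mathbb N}\); if all of them are discarded, then \(C=\ast{\mathbb N}\). For each remaining \(T_i\), transfer gives a least \(\hat b_i\notin T_i\), and the internal cut \([\hat b_i]\) is contained in \(T_i\). I claim \(C=\bigcap_i[\hat b_i]\).
\begin{enumerate}
\item The inclusion \(\bigcap_i[\hat b_i]\subseteq\bigcap_i T_i=C\) follows from \([\hat b_i]\subseteq T_i\).
\item For the reverse, if \(\hat n\in C\) and \(\hat n\ge\hat b_i\), then \(\hat b_i\in C\) because \(C\) is a cut. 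But \(C\subseteq T_i\) and \(\hat b_i\notin T_i\), a contradiction. So \(\hat n<\hat b_i\) for every \(i\).
\end{enumerate}
An alternative for the \(\delta\) case is to pass to the complement via Proposition \ref{duality-}. That produces an upward-closed \(\sigma\)-set, which is described by minima in the same way as above.

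The proof is essentially routine. The only step needing care is the reduction from arbitrary internal pieces \(S_i\), \(T_i\) to initial segments \([\hat a_i]\), which uses the cut property together with the existence of maxima and minima of internal sets. I expect the main obstacle to be the bookkeeping around degenerate cases: empty pieces, pieces equal to \(\ast{\mathbb N}\), and standard values of \(\hat a_i\) as in the example \(\mathbb N\). I will handle these explicitly as above rather than through the literal ``non-standard'' wording.
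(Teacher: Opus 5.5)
Your proof is correct.

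\textbf{The \(\sigma\) direction.} Here you follow the paper's argument (\(\hat a_i=\max S_i+1\)), but more carefully.
\begin{itemize}
\item You justify the existence of the maximum by \(S_i\subseteq C\subseteq[\hat m]\) for some \(\hat m\notin C\). The paper's stated reason (that \(S_i\) is internal, nonempty and not \(\ast{\mathbb N}\)) would not suffice on its own, as the internal set of even elements of \(\ast{\mathbb N}\) shows.
\item Setting \(\hat a_i=0\) for empty pieces covers the empty cut, which the paper's ``w.l.o.g.\ no \(S_i\) is empty'' does not.
\item Your reading of ``non-standard'' as ``element of \(\ast{\mathbb N}\)'' is what the paper's proof actually delivers anyway, since \(\hat m_i+1\) may be standard (e.g.\ for \(C=\mathbb N\)).
\end{itemize}

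\textbf{The \(\delta\) direction.} Here your route genuinely differs. The paper fixes \(\hat n\notin C\) and reflects \(C\) through \(\hat m\mapsto\hat n-\hat m\). This gives the \(\sigma\)-cut \(C'=[\hat n+1]\setminus\{\hat n-\hat m\mid\hat m\in C\}\), which relies on the closure results of Section~\ref{sigma-delta}. The paper then applies the \(\sigma\) characterization to \(C'\) and translates back via \(\hat b_i=\hat n-\hat a_i+1\). You instead take the least \(\hat b_i\notin T_i\) for each proper \(T_i\), and note that the cut property forces \(C\subseteq[\hat b_i]\subseteq T_i\).

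Your version is shorter, uses only transfer of well-ordering, treats \(C=\ast{\mathbb N}\) uniformly, and avoids the truncated-subtraction bookkeeping. The paper's version instead makes the \(\sigma\)/\(\delta\) duality for cuts explicit, reducing both characterizations to the single \(\sigma\) case.
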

\begin{proof}
    From the previous lemma cuts of these forms are \(\sigma\) and \(\delta\) respectively, and \(\ast{\mathbb N}\) is internal and so both \(\sigma\) and \(\delta\). Let \(C=\bigcup_{i\in \mathbb N}S_i\) be a \(\sigma\)-cut that is not \(\ast{\mathbb N}\), and assume w.l.o.g that no \(S_i\) is empty. Because \(S_i\) are all in \(\ast{\mathcal P(\mathbb N)}\) and are neither empty nor \(\ast{\mathbb N}\) itself, each has a maximal element \(\hat m_i\). Set \(\hat a_i=\hat m_i+1\). Now, on one hand, for every \(\hat n\in C\) there is some \(i\) such that \(\hat n\in S_i\) and therefore \(\hat n\le\hat m_i<\hat a_i\), and on the other, for any \(\hat n\in\ast{\mathbb N}\) that satisfies \(\hat n<\hat a_i\) for some \(i\), \(\hat n\le\hat m_i\in S_i\subseteq C\) and so because \(C\) is a cut, \(\hat n\in C\). Therefore \(C=\{\hat n\in\ast{\mathbb N}\mid\exists i\in\mathbb N.~\hat n<\hat a_i\}\).
    
    Now, let \(C\ne\ast{\mathbb N}\) be a \(\delta\)-cut, and let \(\hat n\in \ast{\mathbb N}\setminus C\). Define \[C'=[\hat n+1]\setminus\{\hat n-\hat m\mid \hat m\in C\}.\] For any \(\hat k<\hat m\in C'\), \(\hat k\in[\hat n]\) as well and also \(\hat n-\hat k>\hat n-\hat m\not\in C\) and so \(\hat k\in C'\), meaning that \(C'\) is also a cut, and from Proposition \ref{internal-sigma-delta}, Corollary \ref{duality} and Theorem \ref{function-sigma} we get that \(C'\) is \(\sigma\) and therefore it is a \(\sigma\)-cut i.e. it is of the form \(\{\hat m\in\ast{\mathbb N}\mid\exists i\in\mathbb N.~\hat m<a_i\}\). Set \(b_i=\hat n-a_i+1\) and note that for a non-standard natural number \(\hat m\), the following are equivalent: \[\hat m\in C\] \[\hat m<\hat n\wedge \hat n-\hat m\not\in C'\] \[\hat m<\hat n\wedge \forall i\in\mathbb N.~\hat n-\hat m\ge a_i\] \[\forall i\in\mathbb N.~\hat m\le\hat n-a_i\] \[\forall i\in\mathbb N.~\hat m<\hat n-a_i+1=b_i\] and therefore \(C\) is of the required form.
\end{proof}
Theorem \ref{not-both} says that no cuts are both \(\sigma\) and \(\delta\).
This particularly means that \(\mathbb N\) is not \(\delta\), while the other simple cut, \(\{\hat n\in\ast{\mathbb N}\mid\forall n\in \mathbb N.~\hat n<\hat m-n\}\), is not \(\sigma\).
But there is another question: can cuts be neither \(\sigma\) nor \(\delta\)? In the rest of the appendix we prove that, under some assumptions on the filter used to create the nonstandard model, the answer is affirmative.
In particular, we assume for simplicity that the index set is \(I=\omega\).
\begin{definition}
    Given a countable partition \(P=(P_j)_{j\in\omega}\) of \(\omega\) into small sets, its \emph{partition cut} \(C_P\) is defined as follows: \(\hat n\in C_P\) if and only if there is a representative \(\hat n=[n_i]\) that is bounded separately on each of the sets \(P_j\), i.e. \[C_P=\{[n_i]\mid\forall j\in\omega.\exists b\in\mathbb N.\forall i\in P_j.~n_i\le b\}\]
\end{definition}
\begin{proposition}
    Partition cuts are cuts.
\end{proposition}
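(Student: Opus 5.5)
To show that a partition cut \(C_P\) is a cut, I need two things: that \(C_P\) is well defined as a subset of \(\ast{\mathbb N}\) (membership should not depend on a particular representative except through the existential ``there is a representative''), and that it is downwards closed. Membership is defined as the existence of some representative \([n_i]\) of \(\hat n\) that is bounded on each \(P_j\). So \(C_P\) is simply the set of classes having at least one such representative, and there is nothing further to check for well-definedness.

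For downward closure, let \(\hat n\in C_P\) with a representative \((n_i)\) that is bounded on every \(P_j\), say by \(b_j\). Let \(\hat m\le\hat n\) be arbitrary, with some representative \((m_i)\). Since \(\hat m\le\hat n\), the set \(A=\{i\in\omega\mid m_i\le n_i\}\) belongs to the ultrafilter. I then define a new representative \(m'_i=\min(m_i,n_i)\). It agrees with \((m_i)\) on \(A\), which is in the ultrafilter, so \([m'_i]=\hat m\).

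It remains to bound this representative on each block. For each \(j\) and each \(i\in P_j\) we have \(m'_i\le n_i\le b_j\), so \((m'_i)\) is bounded on every \(P_j\) by the same bounds \(b_j\). Therefore \(\hat m\in C_P\).

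The only subtle point is the need to modify the representative, since the given representative of \(\hat m\) may be unbounded on some \(P_j\) on a null set. Taking the pointwise minimum with the representative of \(\hat n\) handles this. The phrase ``small sets'' in the definition of a partition is not needed for this proposition; it presumably matters only for later claims, such as \(C_P\) containing nonstandard elements or failing to be \(\sigma\) or \(\delta\).
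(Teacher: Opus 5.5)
Your proof is correct and follows essentially the same route as the paper. Both fix a representative of \(\hat n\) bounded by \(b_j\) on each \(P_j\), alter the representative of \(\hat m\) on a small set so that it lies pointwise below that of \(\hat n\) (your choice \(m'_i=\min(m_i,n_i)\) is exactly such an alteration), and then inherit the bounds \(b_j\).
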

\begin{proof}
    Let \(P\) be a countable partition of \(\omega\) into small sets. For any \(\hat n\in C_P\) and any \(\hat m<\hat n\) we can choose a representative \(\hat n=[n_i]\) that is bounded on the \(P_j\)s by some sequence \(b_j\). Let us choose a representative \(\hat m=[m_i]\) and alter it on a small set such that \(m_i\le n_i\) for all \(i\). Then for all \(j\in\omega\) and \(i\in P_j\), \(m_i\le n_i\le b_j\) and so \(m\in C_P\) as well.
\end{proof}
\begin{proposition}
    Partition cuts are sub-semi­rings of \(\ast{\mathbb N}\)
\end{proposition}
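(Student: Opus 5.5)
Partition cuts contain \(0\) and \(1\) trivially: the constant sequences are bounded by the same constant on every \(P_j\). Since \(C_P\subseteq\ast{\mathbb N}\) already inherits associativity, commutativity and distributivity, it suffices to show that \(C_P\) is closed under addition and multiplication. The plan is to work directly with representatives, choosing for each of the two elements one that is bounded on every \(P_j\), and then checking that the pointwise sum and product are again bounded on every \(P_j\).

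Concretely, let \(\hat n,\hat m\in C_P\). By the definition of \(C_P\) we choose representatives \(\hat n=[n_i]\) and \(\hat m=[m_i]\), together with sequences \((b_j)_{j\in\omega}\) and \((c_j)_{j\in\omega}\) of standard naturals, such that \(n_i\le b_j\) and \(m_i\le c_j\) for all \(i\in P_j\). The operations on the ultraproduct are computed coordinatewise, so \(\hat n+\hat m=[n_i+m_i]\) and \(\hat n\hat m=[n_im_i]\). For \(i\in P_j\) we then have \(n_i+m_i\le b_j+c_j\) and \(n_im_i\le b_jc_j\). Since \(b_j+c_j\) and \(b_jc_j\) are standard natural numbers, these representatives witness \(\hat n+\hat m\in C_P\) and \(\hat n\hat m\in C_P\).

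There is essentially no obstacle. The only point needing care is that the representatives are fixed once, and the bound \(b\) in the definition of \(C_P\) is allowed to depend on \(j\), which is exactly what lets \(b_j+c_j\) serve as the new bound. We do not need the sets \(P_j\) to be small, or any property of the ultrafilter, beyond the fact that the operations are computed coordinatewise. Note also that \(C_P\) contains \(\mathbb N\) by the same constant-sequence argument.
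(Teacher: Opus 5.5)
Your proposal is correct and follows essentially the same route as the paper: you pick representatives bounded on each \(P_j\) by \(b_j\) and \(c_j\), then observe that the coordinatewise sum and product are bounded by \(b_j+c_j\) and \(b_jc_j\). Your explicit check that \(0,1\in C_P\) via constant sequences is a small addition the paper leaves implicit.
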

\begin{proof}
    It is enough to prove that they are closed under addition and multiplication. Given a partition \(P\) and two nonstandard natural numbers \(\hat n, \hat m\in C_P\) with representatives \(\hat n=[n_i],\hat m=[m_i]\) that are bounded by \(b_j\) and \(b_j'\) respectively on \(P_j\), the natural representatives for their sum and product satisfy that for each \(j\) and \(i\in P_j\), \(n_i+m_i\le b_j+b'_j\) and \(n_im_i\le b_j b'_j\), and so \(\hat n+\hat m,\hat n\hat m\in C_P\).
\end{proof}
\begin{lemma}\label{partition-sigma}
    A partition cut is \(\sigma\) if and only if it is the entire \(\ast{\mathbb N}\).
\end{lemma}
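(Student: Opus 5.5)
We prove both directions. The direction from \(C_P=\ast{\mathbb N}\) to \(\sigma\) is immediate: \(\ast{\mathbb N}\) is internal, hence \(\sigma\) by Proposition \ref{internal-sigma-delta}. The substance is the converse. Suppose \(C_P\ne\ast{\mathbb N}\) is a \(\sigma\)-cut. By the characterization lemma for \(\sigma\)-cuts above, there is a sequence \(\hat a_0,\hat a_1,\dots\) of nonstandard naturals with
\[C_P=\{\hat n\in\ast{\mathbb N}\mid\exists i\in\mathbb N.~\hat n<\hat a_i\}.\]
The plan is a diagonal construction. We build a single element of \(C_P\) that dominates every \(\hat a_i\). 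This contradicts the displayed description, since such an element is not below any \(\hat a_i\).

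First, for each \(i\), \(\hat a_i-1<\hat a_i\), so \(\hat a_i-1\in C_P\). By the definition of the partition cut, we can fix a representative \(\hat a_i-1=[n^{(i)}_k]_{k\in\omega}\) and bounds \(b^{(i)}_j\in\mathbb N\) with \(n^{(i)}_k\le b^{(i)}_j\) for all \(k\in P_j\). Using countable choice, we fix all these data simultaneously for every \(i\).

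Next we define the diagonal sequence. For \(k\in P_j\), set
\[m_k=\max_{i\le j} n^{(i)}_k.\]
This is a finite maximum, so it is well defined. On \(P_j\) it is bounded by the standard number \(\max_{i\le j}b^{(i)}_j\). Hence \(\hat m=[m_k]\in C_P\). Now fix \(i\). For every \(k\) in some \(P_j\) with \(j\ge i\), we have \(m_k\ge n^{(i)}_k\). So the set where \(m_k<n^{(i)}_k\) is contained in \(P_0\cup\dots\cup P_{i-1}\). This is a finite union of small sets, hence small, i.e. not in the ultrafilter. Therefore \(\hat m\ge\hat a_i-1\) for every \(i\).

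Finally, partition cuts are semirings (the preceding proposition) and contain \(1\), so \(\hat m+1\in C_P\). Yet \(\hat m+1\ge\hat a_i\) for all \(i\), so \(\hat m+1\) is not below any \(\hat a_i\). This contradicts the displayed description of \(C_P\), and so a partition cut other than \(\ast{\mathbb N}\) cannot be \(\sigma\).

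The only delicate point is the step where the exceptional set \(P_0\cup\dots\cup P_{i-1}\) must be small. That requires "small" to mean "not in \(\mathcal F\)", so that finite unions of small sets stay small, which holds since \(\mathcal F\) is an ultrafilter. The choice of \(\max_{i\le j}\) rather than a maximum over all \(i\) is exactly what keeps \(\hat m\) bounded on each \(P_j\) while still dominating each \(\hat a_i-1\) almost everywhere.
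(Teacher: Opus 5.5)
Your proof is correct and uses essentially the same diagonal argument as the paper. The paper first assumes w.l.o.g.\ that the \(\hat a_n\) and their representatives are monotone, then sets \(n_i=a^j_i\) on \(P_j\); your \(\max_{i\le j}\) builds that monotonicity in directly, and your \(\hat m+1\) step plays the role of the paper's passing from bounded representatives of \(\hat a_n-1\) to bounded representatives of \(\hat a_n\).
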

\begin{proof}
    \(\ast{\mathbb N}\) is internal and so in particular \(\sigma\). Assume there is a partition \(P\) such that \(C_P\ne \ast{\mathbb N}\) is \(\sigma\). This means that it is of the form \(\{\hat n\in\ast{\mathbb N}\mid\exists i\in\mathbb N.~\hat n<\hat a_i\}\). W.l.o.g \(\hat a_n\ge\hat a_m\) for each \(n\ge m\), since otherwise setting \(\hat a_n=\hat a_m\) will not affect the resulting set. For each \(n\), \(\hat a_n-1\in C_P\) and so we can choose a representative of \(\hat a_n-1\), and hence also \([a^n_i]=\hat a_n\), that is bounded on the sets \(P_j\), and again w.l.o.g \(a^n_i\ge a^m_i\) for each \(n\ge m\) and for all \(i\), since otherwise the representative can be altered on a small set for that to be true. Define a series \(n_i\) like so: For each \(i\in \omega\) there is \(j\in\omega\) such that \(i\in P_j\). Define \(n_i=a^j_i\). Now, for each \(j\) we can choose \(b\) that bounds \(a^j_i\) on \(i\in P_j\), and therefore it also bounds \(n_i\) on these \(i\)s which means that \(\hat n=[n_i]\in C_P\). However, that would mean that \(\hat n<\hat a_j\) for some \(j\), but for all \(i \in\bigcup_{k\ge j} P_k\), we can choose \(k\) such that \(i\in P_k\) and observe that because \(k\ge j\), \(n_i\ge a^k_i\ge a^j_i\), and because \(\bigcup_{k\ge j} P_k\) is a large set, \(\hat n\ge\hat a_j\)---a contradiction.
\end{proof}
\begin{corollary}
    A partition cut is internal if and only if it equals \(\ast{\mathbb N}\).
\end{corollary}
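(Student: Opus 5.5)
The corollary follows by combining the lemma characterizing internal cuts with Lemma \ref{partition-sigma}. One direction is immediate: \(\ast{\mathbb N}\) is internal, since it is \(\ast\) applied to \(\mathbb N\in V(\mathbb N)\setminus\mathbb N\) (it also appears explicitly in the lemma listing the internal cuts). So if \(C_P=\ast{\mathbb N}\), then \(C_P\) is internal.

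For the converse, let \(P\) be a countable partition of \(\omega\) into small sets and suppose \(C_P\) is internal. By Proposition \ref{internal-sigma-delta}, every internal set is a \(\sigma\)-set (take the constant union \(C_P=\bigcup_{i\in\mathbb N}C_P\)), so \(C_P\) is a \(\sigma\)-set. We already know \(C_P\) is a cut by the proposition that partition cuts are cuts. Hence \(C_P\) is a partition cut that is \(\sigma\), and Lemma \ref{partition-sigma} gives \(C_P=\ast{\mathbb N}\).

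There is no real obstacle; the only point to check is that the hypotheses of Lemma \ref{partition-sigma} are met. That lemma requires only that \(C_P\) be a partition cut and a \(\sigma\)-set, and both hold. An alternative route avoids Lemma \ref{partition-sigma}: by the internal-cut lemma, \(C_P\) is either \(\ast{\mathbb N}\) or \([\hat n]\), and \([\hat n]\) is a \(\sigma\)-cut of the form \(\{\hat m\mid \exists i.~\hat m<\hat a_i\}\) with the constant sequence \(\hat a_i=\hat n\). Either way the argument ends by appealing to Lemma \ref{partition-sigma}, so the direct route via Proposition \ref{internal-sigma-delta} is the one I will use.
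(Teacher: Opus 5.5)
Your proposal is correct and is essentially the paper's own argument. The paper notes that \(\ast{\mathbb N}\) is internal, and that any partition cut other than \(\ast{\mathbb N}\) is not \(\sigma\) by Lemma \ref{partition-sigma}, hence not internal; your step through Proposition \ref{internal-sigma-delta} followed by Lemma \ref{partition-sigma} is exactly the contrapositive of this.
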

\begin{proof}
    \(\ast{\mathbb N}\) is internal, and every partition cut that is not \(\ast{\mathbb N}\) is not \(\sigma\) and so in particular not internal.
\end{proof}
\begin{definition}
    An ultra­filter \(\mathcal F\) is called a \emph{P-point} (or weakly selective) if for every partition \(P\) of \(\omega\) to small sets \(\forall i.~P_i\notin\mathcal F\), there is a large set \(S\in\mathcal F\) such that \(\forall i.~|S\cap P_i|\in\omega\) is finite.
\end{definition}
\begin{definition}
    A partition \(P\) of \(\omega\) is a \emph{non-weak-selectivity witness} (or NWSW for short) of an ultra­filter \(\mathcal F\) if all of the sets \(P_i\) are small, and no set \(S\in\mathcal F\) intersects each \(P_i\) in a finite set.
\end{definition}
Note that an ultra­filter has a non-weak-selectivity witness iff it is not a P-point.
\begin{lemma}\label{P-external}
    The external partition cuts are exactly those of the NWSWs of the ultra­filter used to construct the nonstandard model. In particular, ones exist iff this ultra­filter is not a P-point.
\end{lemma}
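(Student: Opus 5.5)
We need to show that for a partition \(P\) into small sets, \(C_P\) is external iff \(P\) is an NWSW, i.e. iff there is no large \(S\in\mathcal F\) meeting every \(P_j\) in a finite set. By the preceding corollary, \(C_P\) is external iff \(C_P\ne\ast{\mathbb N}\). So the plan is to prove \(C_P=\ast{\mathbb N}\) iff some \(S\in\mathcal F\) meets each \(P_j\) finitely. The ``in particular'' clause then follows from the remark that an ultrafilter has an NWSW iff it is not a P-point.

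\emph{If such an \(S\) exists, then \(C_P=\ast{\mathbb N}\).} Let \(\hat n=[n_i]\) be arbitrary. Define \(n'_i=n_i\) for \(i\in S\) and \(n'_i=0\) otherwise. Since \(S\in\mathcal F\), this gives \([n'_i]=\hat n\). On each \(P_j\) the sequence \(n'_i\) is nonzero only on the finite set \(S\cap P_j\), so it is bounded there by \(\max_{i\in S\cap P_j}n_i\). Hence \(\hat n\in C_P\).

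\emph{If \(C_P=\ast{\mathbb N}\), then such an \(S\) exists.} Let \(j(i)\) be the index with \(i\in P_{j(i)}\), and consider \(\hat n=[j(i)]\)? That sequence is already bounded on each \(P_j\), so it gives nothing. Instead take \(\hat n=[i]\), the identity sequence. By assumption there is a representative \([n_i]=[i]\) bounded on each \(P_j\) by some \(b_j\). Then \(S=\{i\mid n_i=i\}\in\mathcal F\), and \(S\cap P_j\subseteq\{i\in P_j\mid i\le b_j\}\) is finite. So \(P\) is not an NWSW.

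Combining the two directions: \(C_P\) is external iff \(C_P\ne\ast{\mathbb N}\) iff \(P\) is an NWSW. The only subtle point is using the earlier corollary to replace ``external'' by ``\(\ne\ast{\mathbb N}\)''; the rest is a direct choice of representatives, with the identity sequence as the key test element. If the statement is also read as saying every external partition cut arises from such a partition, that is immediate, since every partition cut is \(C_P\) for some partition into small sets.
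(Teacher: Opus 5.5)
Your proof is correct and follows the paper's argument. You use the preceding corollary to reduce ``external'' to \(C_P\ne\ast{\mathbb N}\), you modify a representative outside the large set \(S\) (zeroing it there rather than capping at the maximum on \(S\cap P_j\), an immaterial difference), and you use the identity sequence \([i]\) as the test element, your version of that direction being just the contrapositive of the paper's.
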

\begin{proof}
    Let \(P\) be a NWSW of the ultra­filter 
    and let us consider the element \([id]\in\ast{\mathbb N}\) for the identity function \(\forall i\in\omega.~id_i=i\). This element cannot have a representative that is bounded on all the sets \(P_j\), since if such representative \(n_i\) is bounded by \(b_j\) on \(P_j\) then for all \(i\in \omega\) let \(j_i\) be the index such that \(i\in P_{j_i}\) and note that \[\{i\in\omega\mid i=n_i\}\subseteq \{i\in\omega\mid i<b_{j_i}\}.\] However the former is large but the latter is small, since it intersects every \(P_j\) in only a finite number of places, which is a contradiction.

    On the other hand, if \(P\) is not a NWSW then either one of the sets \(P_i\) is large, in which case the partition cut is undefined, or there is a large set \(S\) that intersects each element of \(P\) in a finite subset. Therefore for any \(\hat n\in\ast{\mathbb N}\) we can choose a representative of it and alter it such that in each set \(P_j\) it does not go over its maximum on \(S\cap P_j\) (which exists since it is a finite set). Because we changed the representative only outside \(S\) we changed it only on a small set and hence we found a representative of \(\hat n\) which is bounded on each \(P_j\). Therefore \(C_P=\ast{\mathbb N}\) is internal.
\end{proof}
\begin{definition}
    Given a filter \(\mathcal F\), a series of filters \((\mathcal D_0,\mathcal D_1,\dots)\) and a partition \(P=(P_0,P_1,\dots)\) of \(\omega\) into countable sets, the \emph{Fubini sum} \[\tilde{\mathcal F}=\sum_{\mathcal F}^P\mathcal D\] is defined as the subset of \(\mathcal P(\omega)\) containing all the sets whose intersection with a large portion of the sets \(P_i\) according to \(\mathcal F\) is large according to the corresponding filters \(\mathcal D_i\): \[\tilde {\mathcal F}\coloneq\{S\in\mathcal P(\omega)\mid \{i\in\omega\mid\phi_{P_i}^{-1}[S\cap P_i]\in\mathcal D_i\}\in \mathcal F\}\] where for countable \(T\subseteq \omega\), \(\phi_T:\omega\to T\) is the set isomorphism that maps \(0\) to the smallest element in \(T\), \(1\) to the one after it etc.
\end{definition}
From the notes after Definition 8 in \cite{Garner} it follows that this indeed produces an ultra­filter.
\begin{lemma}\label{composition}
    If \(\mathcal F\) and \(\mathcal D_i\) for every \(i\) are ultra­filters and \(P\) is a countable partition of \(\omega\) into countable sets, then \(P\) is a NWSW of \(\sum_{\mathcal F}^P\mathcal D\).
\end{lemma}
\begin{proof}
    Notate \(\tilde{\mathcal F}=\sum_{\mathcal F}^P\mathcal D\).
    If we have \(S\) such that \(S\cap P_i\) is finite for every \(i\), this means that \(\phi_{P_i}^{-1}[S\cap P_i]\) is finite as well and so it is not in \(\mathcal D_i\). Therefore the set of \(i\)s corresponding to \(S\) is empty and in particular not in \(\mathcal F\), which means that \(S\not\in\tilde{\mathcal F}\).
\end{proof}
\begin{theorem}
    If the ultra­filter used to define the nonstandard model is a Fubini sum \(\tilde{\mathcal F}=\sum_\mathcal F^P\mathcal D\), then there is a cut which is neither \(\sigma\) nor \(\delta\).
\end{theorem}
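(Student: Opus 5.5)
We take the cut to be the partition cut \(C_P\) of the partition \(P\) used to form \(\tilde{\mathcal F}=\sum_{\mathcal F}^P\mathcal D\); write \(\hat n|_{P_j}\) for the restriction of a representative to \(P_j\). By Lemma \ref{composition}, \(P\) is a NWSW of \(\tilde{\mathcal F}\). By Lemma \ref{P-external}, \(C_P\) is therefore external, so in particular \(C_P\neq\ast{\mathbb N}\). Lemma \ref{partition-sigma} then shows that \(C_P\) is not \(\sigma\). What remains, and this is the real work, is to show that \(C_P\) is not \(\delta\).

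The first step is to describe \(C_P\) in terms of the Fubini sum. We claim that \([n_i]\in C_P\) iff for \(\mathcal F\)-most \(j\) the \(\mathcal D_j\)-class of \(\phi_{P_j}^{-1}\) applied to \(n|_{P_j}\) is a standard natural number, i.e. \(n|_{P_j}\) is bounded on a \(\mathcal D_j\)-large subset of \(P_j\).
\begin{itemize}
\item For one direction, modify the representative on the \(\mathcal D_j\)-small parts of those \(P_j\), and on all of \(P_j\) for the \(\mathcal F\)-small set of remaining \(j\). The union of these changes is \(\tilde{\mathcal F}\)-small, so we still have a representative of the same element, and it is now bounded on every \(P_j\).
\item For the other direction, suppose the \(\mathcal D_j\)-class is nonstandard for \(\mathcal F\)-many \(j\). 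Then every representative exceeds every bound \(b\) on a \(\mathcal D_j\)-large part of each such \(P_j\). Hence no representative is bounded on all \(P_j\).
\end{itemize}
The same bookkeeping shows that \(\hat n<\hat m\) iff for \(\mathcal F\)-most \(j\) the restriction \(n|_{P_j}\) is below \(m|_{P_j}\) on a \(\mathcal D_j\)-large set.

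Now suppose toward a contradiction that \(C_P\) is \(\delta\). By the appendix characterization of \(\delta\)-cuts, \(C_P=\{\hat n\mid\forall k.~\hat n<\hat a_k\}\). We make three reductions.
\begin{itemize}
\item We may take \((\hat a_k)\) weakly decreasing, by replacing each \(\hat a_k\) with \(\min_{l\le k}\hat a_l\).
\item We may take it strictly decreasing. If it were eventually constant, \(C_P\) would equal some \([\hat a]\), which is internal, a contradiction.
\item No \(\hat a_k\) lies in \(C_P\). Otherwise, since \(C_P\) is a semiring, \(\hat a_k+1\in C_P\), but \(\hat a_k+1<\hat a_k\) is false.
\end{itemize}
For each \(j\), let \(K_j\) be the set of \(k\) for which \(a_k|_{P_j}\) is \(\mathcal D_j\)-nonstandard. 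Because the \(a_k\) decrease, \(K_j\) is an initial segment of \(\omega\): either all of \(\omega\), or \(\{0,\dots,m_j-1\}\). For each fixed \(k\), \(\hat a_k\notin C_P\) gives \(\{j\mid k\in K_j\}\in\mathcal F\). The difficulty is that \(\{j\mid K_j=\omega\}\) is a countable intersection of such sets and need not be \(\mathcal F\)-large, so the \(j\) with finite \(K_j\) must be handled separately.

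We build \(\hat n\) coordinatewise on each \(P_j\) as follows.
\begin{itemize}
\item If \(K_j=\omega\), the \(\mathcal D_j\)-ultrapower of \(\mathbb N\) is \(\aleph_1\)-saturated (it is a countably incomplete ultrapower; cf.\ \cite[Corollary 4.4.24]{CK}). So there is a \(\mathcal D_j\)-nonstandard element below all the classes of \(a_k|_{P_j}\); let \(n|_{P_j}\) be a representative of it.
\item If \(K_j=\{0,\dots,m_j-1\}\) with \(m_j\ge1\), set \(n|_{P_j}=a_{m_j-1}|_{P_j}\), which is nonstandard modulo \(\mathcal D_j\).
\item If \(m_j=0\), set \(n|_{P_j}=0\).
\end{itemize}
Then \(n|_{P_j}\) is nonstandard for every \(j\) with \(0\in K_j\), which is an \(\mathcal F\)-large set, so \(\hat n\notin C_P\). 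On the other hand, fix \(k\). For the \(\mathcal F\)-large set of \(j\) with \(k+1\in K_j\), either \(K_j=\omega\), where \(n|_{P_j}<a_k|_{P_j}\) by construction, or \(m_j-1\ge k+1\). In the latter case \(n|_{P_j}=a_{m_j-1}|_{P_j}\le a_{k+1}|_{P_j}\), and this is below \(a_k|_{P_j}\) modulo \(\mathcal D_j\). Hence \(\hat n<\hat a_k\) for every \(k\), so \(\hat n\in C_P\), a contradiction.

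The main obstacle is this final step. It depends on two things that need care: the strict decrease \(\hat a_{k+1}<\hat a_k\) must hold coordinatewise, i.e. modulo \(\mathcal D_j\) for \(\mathcal F\)-most \(j\); and the comparison criterion via the Fubini sum from the first step must be established correctly.
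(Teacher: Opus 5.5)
Your proof is correct. It follows the paper's outline: the cut is \(C_P\), Lemmas \ref{composition}, \ref{P-external} and \ref{partition-sigma} give externality and non-\(\sigma\)-ness, and non-\(\delta\)-ness comes from an element that lies below every \(\hat a_k\) but outside \(C_P\). What differs is how that element is built.

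\textbf{The paper's route.} It first normalizes the representatives. Since \(\hat a_n\notin C_P\), the set \(S_n\) of blocks on which \(a^n\) is \(\mathcal D_j\)-bounded is \(\mathcal F\)-small. Hence \(\bigcup_{j\in S_n}P_j\) is \(\tilde{\mathcal F}\)-small, and \(a^n\) can be redefined there to be \(\mathcal D_j\)-unbounded. After taking pointwise minima, every \(a^n\) is \(\mathcal D_j\)-nonstandard on \emph{every} block.

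In your notation this makes \(K_j=\omega\) for all \(j\) at no cost. Each representative is modified separately on its own small set, so the countable-intersection problem you identified never arises. The paper then takes the plain diagonal \(n|_{P_j}=a^j|_{P_j}\). It gets \(\hat n\le\hat a_j\) from the pointwise decrease on the tail \(\bigcup_{k\ge j}P_k\), which is \(\tilde{\mathcal F}\)-large because \(\mathcal F\) is nonprincipal.

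\textbf{Your route.} You replace this with a case split on \(K_j\) and the \(\aleph_1\)-saturation of the block ultrapowers \(\mathbb N^\omega/\mathcal D_j\). This is longer and uses saturation where the paper needs none. In exchange, it makes explicit two points the paper leaves tacit:
\begin{itemize}
\item the \(\hat a_k\) can be taken strictly decreasing, which the paper needs to pass from \(\hat n\le\hat a_j\) to \(\hat n<\hat a_j\);
\item no \(\hat a_k\) lies in \(C_P\).
\end{itemize}

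\textbf{Two details to state explicitly.}

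First, you use monotonicity on each individual block: that \(K_j\) is an initial segment, and that \(a_{m_j-1}|_{P_j}\le a_{k+1}|_{P_j}\). This requires representatives that decrease pointwise. You get them by replacing each representative \(a_k\) with the pointwise minimum \(\min_{l\le k}a_l\), which still represents \(\hat a_k\). Decrease of the classes \(\hat a_k\) alone only gives monotonicity for \(\mathcal F\)-most \(j\) for each pair of indices. A countable union of \(\mathcal F\)-small exceptional sets need not be small, and the index \(m_j\) varies with \(j\).

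Second, in the last step the strict inequality \(a_{k+1}|_{P_j}<a_k|_{P_j}\) modulo \(\mathcal D_j\) holds only on some \(\mathcal F\)-large set \(T_k\) of blocks. You should therefore argue on \(\{j\mid k+1\in K_j\}\cap T_k\), which is still \(\mathcal F\)-large.

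With these two adjustments the argument closes.
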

\begin{proof}
    From Lemma \ref{composition} the partition \(P\) is a NWSW of \(\tilde{\mathcal F}\) and therefore from Lemma \ref{P-external} the cut \(C_P\) is external and from Lemma \ref{partition-sigma} it is not \(\sigma\). This means that it suffices to show that it is also not \(\delta\).

    Assume that it is \(\delta\), and so because it is external it has to be of the form \(\{\hat n\in\ast{\mathbb N}\mid\forall i\in\mathbb N.~\hat n<\hat a_i\}\). W.l.o.g the sequence is decreasing, i.e. for each \(n\ge m\), \(\hat a_n\le\hat a_m\). For each \(n\in\mathbb N\), let \(a^n_i\) be a representative of \(\hat a_n\), and let \(S_n=\{j\in\omega\mid\exists b\in\mathbb N.~\phi_{P_j}^{-1}[\{i\in P_j\mid a^n_i<b\}]\in\mathcal D_j\}.\) If \(S_n\in\mathcal F\) for some \(n\) then we can alter the representative on a small set such that on the \(P_j\)s for all \(j\in S_n\) it will be bounded by the corresponding \(b\) and on the \(P_j\)s for \(j\notin S\) it will be \(0\) and therefore also bounded, which means that \(\hat a_n\in C_P\). Therefore for all \(n\), \(S_n\not\in\mathcal F,\) which means that, because the \(P_j\)s are countable, we can change the representative on a small set such that for no \(j\) there is a bound \(b\) such that \(\phi_{P_j}^{-1}[\{i\in P_j\mid a^n_i<b\}]\in\mathcal D_j\). Now, also alter the representatives to be decreasing and note that it does not change the previous condition, since for each \(j\) and \(b\), \[\phi_{P_j}^{-1}\left[\left\{i\in P_j\middle| \min_{k\le n}a^k_i<b\right\}\right]=\bigcup_{k\le n}\phi_{P_j}^{-1}\left[\left\{i\in P_j\middle| a^k_i<b\right\}\right]\not\in\mathcal D_j.\]
    Now, define a sequence \(n_i\) like so: For each \(i\), there is \(j\) such that \(i\in P_j\). Set \(n_i=a^j_i\). It satisfies \(\hat n=[n_i]\le\hat a_j\) for all \(j\), since this happens on \(\bigcup_{k\ge j}P_k\). Therefore, because the \(\hat a_j\)s are decreasing it also holds that \(\hat n<\hat a_j\) for all \(j\) and therefore \(\hat n\in C_p\). This means that there is some representative \(\hat n=[n'_i]\) that is bounded on every \(P_j\), i.e. \(\forall j.\exists b.\forall i\in P_j.~ n'_i<b\). However for all \(j\in\omega\), \(n_i\) satisfies \[\phi_{P_j}^{-1}[\{i\in P_j\mid n_i<b\}]=\phi_{P_j}^{-1}[\{i\in P_j\mid a^j_i<b\}]\not\in \mathcal D_j\] and therefore \(\forall j.~\phi^{-1}_{P_j}[\{i\in P_j\mid n_i=n'_i\}]\not\in\mathcal D_j\),  but this implies that the set \(\{i\in\omega\mid n_i= n'_i\}\not\in\tilde{\mathcal F}\), which contradicts the fact that they are representatives of the same element.
\end{proof}
\begin{conjecture}
    External partition cuts are never \(\delta\).
\end{conjecture}

\section{The proof of Lemma \ref{alg-integer}}\label{alg-integer-lemma-proof}

\begin{lemma}\label{int-times-unit}
    For any algebraic integers \(a_1,\dots a_m\) there is a homogeneous integer polynomial \(h\in\mathbb Z[x_1,\dots,x_m]\) of some degree \(n\) such that \(h(a_1,\dots,a_m)=kg\) where \(k\in\mathbb Z\) and \(g\) generates the ideal \(\langle a_1,\dots,a_m\rangle^n\).
\end{lemma}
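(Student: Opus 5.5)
The plan is to work in a number field and use the finiteness of the class group. Let \(K\) be a finite Galois extension of \(\mathbb Q\) containing \(a_1,\dots,a_m\), let \(\mathcal O_K\) be its ring of integers, and let \(I=\langle a_1,\dots,a_m\rangle\subseteq\mathcal O_K\). Since the class group of \(K\) is finite, \(I^{n_0}\) is principal for \(n_0=h_K\), the class number. The ideal \(I^{n_0}\) is generated over \(\mathcal O_K\) by the degree-\(n_0\) monomials \(a^\mu=a_1^{\mu_1}\cdots a_m^{\mu_m}\). So any generator \(g_0\) of \(I^{n_0}\) can be written as \(g_0=\sum_\mu c_\mu a^\mu\) with \(c_\mu\in\mathcal O_K\).

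In other words, \(g_0=P(a_1,\dots,a_m)\) for the homogeneous polynomial \(P=\sum_\mu c_\mu x^\mu\in\mathcal O_K[x_1,\dots,x_m]\). The only defect is that \(P\) need not have integer coefficients.

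To get integer coefficients I will take a Galois norm on the coefficients:
\[h(x)=\prod_{\sigma\in\mathrm{Gal}(K/\mathbb Q)}P^\sigma(x),\qquad P^\sigma=\sum_\mu\sigma(c_\mu)x^\mu .\]
This polynomial is homogeneous of degree \(n=n_0[K:\mathbb Q]\). Its coefficients are Galois-invariant algebraic integers, hence lie in \(\mathbb Z\). Evaluating at \(a\) gives \(h(a)=g_0\cdot\prod_{\sigma\ne1}P^\sigma(a)\). Each factor \(P^\sigma(a)\) lies in \(I^{n_0}=(g_0)\), so \(h(a)=g_0^{[K:\mathbb Q]}\varepsilon\) for some \(\varepsilon\in\mathcal O_K\). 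Here \(g:=g_0^{[K:\mathbb Q]}\) generates \(I^n\), so the remaining task is to arrange that \(\varepsilon\) is a rational integer \(k\) times a unit, after which the unit can be absorbed into \(g\).

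This last step is the main obstacle: the cofactor ideal \(J=\prod_{\sigma\ne1}P^\sigma(a)\cdot I^{-n_0([K:\mathbb Q]-1)}\) has no reason to be generated by a rational integer. The first approach I would try is to exploit the freedom in choosing the coefficients \(c_\mu\), which may be changed by any element of the syzygy module of the monomials \(a^\mu\). I would choose them so that each \(P^\sigma(a)/g_0\) is coprime to any prescribed finite set of primes, using the Chinese remainder theorem in \(\mathcal O_K\). Then I would replace \(n_0\) by a multiple \(tn_0\), so that the obstruction becomes a norm-type ideal that is stable under the Galois group.

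If that does not close the gap, the fallback is to use the identity \(\prod_\sigma\sigma(J)=N(J)\mathcal O_K\), whose right-hand side is generated by a rational integer. I would multiply \(h\) by a further integer homogeneous polynomial whose value at \(a\) realizes the conjugate cofactors \(\sigma(J)\) up to principal factors. Finally I would track the leftover factors as powers of \(g\) and as rational integers, enlarging \(n\) each time so that \(h\) stays homogeneous.
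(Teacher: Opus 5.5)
Your setup is correct as far as it goes: $I^{n_0}$ is principal, and a generator $g_0$ equals $P(a)$ for a homogeneous $P$ with coefficients in $\mathcal O_K$. Also $h=\prod_\sigma P^\sigma$ has integer coefficients, and $h(a)\in g_0^{[K:\mathbb Q]}\mathcal O_K$. But the step you yourself call the main obstacle is exactly where the content of the lemma lies, and you do not carry it out.

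You conjugate the coefficients but not the point, so $P^\sigma(a)$ is not $\sigma(P(a))=P^\sigma(\sigma(a))$. For $\sigma\neq1$ it is just some element of $I^{n_0}$. Hence the cofactor $\varepsilon$ is uncontrolled, and nothing even prevents it from being $0$.

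Your first remedy does not fix this. Making each $P^\sigma(a)/g_0$ prime to a prescribed finite set of primes leaves every other prime factor in place. It gives no reason for $(\varepsilon)$ to be extended from an ideal of $\mathbb Z$.

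The fallback is not an argument either. $J=(\varepsilon)$ is already principal, so ``up to principal factors'' says nothing. What you would actually need is an integer homogeneous polynomial whose value \emph{at $a$} is, up to a unit, a rational integer times a power of $g$ times $\prod_{\sigma\neq1}\sigma(\varepsilon)$. But $\sigma(\varepsilon)$ arises from evaluating at $\sigma(a)$, and producing such a polynomial is a problem of the same kind as the lemma itself.

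The idea you are missing is that the factor $k\in\mathbb Z$ in the statement permits rational coefficients, so no norm is needed at all. It suffices to find a generator of some $I^n$ in the $\mathbb Q$-span of the degree-$n$ monomials in the $a_i$. Write $\beta_i=a_i/a_m$ and $K'=\mathbb Q(\beta_1,\dots,\beta_{m-1})$. This span is $a_m^n$ times the $\mathbb Q$-span of the monomials of total degree at most $n$ in the $\beta_i$, i.e.\ all of $a_m^nK'$ once $n\ge\sum_i(\deg\beta_i-1)$. So the generator must be sought in the ratio field $K'$, and this is what the paper does:
\begin{itemize}
\item By finiteness of the class group of $K'$, the fractional ideal $\langle1,\beta_1,\dots,\beta_{m-1}\rangle$ of $\mathcal O_{K'}$ has a principal power $\langle b\rangle$, with $b\in K'$ and $n$ as large as needed.
\item The element $b$ is a $\mathbb Q$-combination of the monomials $\prod_i\beta_i^{d_i}$ with $d_i<\deg\beta_i$.
\item Hence $g=ba_m^n$ generates $I^n$ and is a rational homogeneous polynomial of degree $n$ in the $a_i$. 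Clearing denominators yields $h$ and $k$.
\end{itemize}
Your arbitrary generator $g_0\in\mathcal O_K$ has no reason to satisfy $g_0/a_m^{n_0}\in K'$. That is why you found no rational-coefficient expression and were pushed into the norm construction.
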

\begin{proof}
    Consider the field \(K=\mathbb Q\left[\frac {a_1}{a_m},\dots\frac{a_{m-1}}{a_m}\right]\) and consider the fractional ideal \(\left\langle\frac{a_1}{a_m},\dots\frac{a_{m-1}}{a_m},1\right\rangle\) over \(\mathcal O_K\). It follows from \cite[Theorem I.6.3]{Neukirch} that there is a power of this ideal that is principal \(\left\langle\frac{a_1}{a_m},\dots\frac{a_{m-1}}{a_m},1\right\rangle^n=\langle b\rangle.\) Let us choose such a power \[n\ge\sum_{i=1}^m\left(\deg m_{\frac{a_i}{a_0}}-1\right)\] where \(m_\alpha\) is the minimal polynomial of \(\alpha\) and note that the set \[S=\left\{\prod_{i=1}^{m-1}\left(\frac{a_i}{a_m}\right)^{d_i}\middle|d_i<\deg m_{\frac{a_i}{a_m}}\right\}\] spans \(K\) as a linear space over \(\mathbb Q\) and therefore \(b=\sum_{\alpha\in S}c_\alpha\alpha\) for some choice of \(c_\alpha\in\mathbb Q\). We can multiply by \(a_m^n\) and get that \(g=ba_m^n\) is on one hand a generator of the ideal \(\langle a_1,\dots a_m\rangle^n\) and on the other hand a rational linear combination of elements of the form \(\alpha a_m^n\) for \(\alpha\in S\). Since all the elements of \(S\) are some products of powers of \(a_i\)s divided by a power of \(a_m\) whose exponent is the sum of their exponents, and this sum is at most \[\sum_{i=1}^m\left(\deg m_{\frac{a_i}{a_m}}-1\right)\le n,\] each element of the form \(\alpha a_m^n\) is a degree-\(n\) monomial in \(a_1,\dots,a_m\) which means that \(g\) is a homogeneous polynomial in \(a_1,\dots,a_m\) over \(\mathbb Q\). Multiply by the common denominator of its coefficients \(k\) to get a homogeneous polynomial \(h\) over \(\mathbb Z\) such that \(h(a_0,\dots,a_m)=kg\).
\end{proof}
\begin{lemma}\label{poly-nonzero}
    Let \(F\) be a finite field and let \(n>0\). Then there is a
    homogeneous integer polynomial \(f\) with \(n\) variables such that \[\forall \alpha_1,\dots,\alpha_n\in F.~f(\alpha_1,\dots,\alpha_n)=0\leftrightarrow\alpha_1=\dots=\alpha_n=0.\]
\end{lemma}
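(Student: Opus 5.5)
The plan is to build \(f\) from a norm form of a field extension of \(F\). Let \(q=|F|\) and let \(E=\mathbb F_{q^n}\) be the degree-\(n\) extension of \(F\), with basis \(\omega_1,\dots,\omega_n\) over \(F\). The candidate is
\[N(x_1,\dots,x_n)=N_{E/F}\Bigl(\sum_{j=1}^n x_j\omega_j\Bigr)=\prod_{k=0}^{n-1}\Bigl(\sum_{j=1}^n x_j\omega_j^{q^k}\Bigr).\]
This is a homogeneous polynomial of degree \(n\) with coefficients in \(F\). Since \(N_{E/F}(\beta)=0\) iff \(\beta=0\), and the \(\omega_j\) are linearly independent, for \(\alpha_i\in F\) we get \(N(\alpha_1,\dots,\alpha_n)=0\) iff all \(\alpha_i=0\). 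Its coefficients are symmetric expressions fixed by Frobenius, so they lie in \(F\).

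The statement asks for an \emph{integer} polynomial, so the coefficients must be lifted. When \(F=\mathbb F_p\) is prime, I would lift each coefficient to a representative in \(\{0,\dots,p-1\}\subset\mathbb Z\). Evaluation on elements of \(F\) happens through reduction mod \(p\), so the lift \(f\) agrees with \(N\) on \(F^n\). It is still homogeneous of degree \(n\), which proves the lemma.

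For a general finite field \(F=\mathbb F_{p^r}\), the coefficients lie in \(F\) but not necessarily in \(\mathbb F_p\), so they cannot be lifted to \(\mathbb Z\) directly. I would fix this by taking the norm form of \(\mathbb F_{p^{rn}}/\mathbb F_p\) in \(rn\) variables, which has \(\mathbb F_p\)-coefficients, and restricting it to the \(F\)-subspace \(F\cdot\omega_1\oplus\dots\oplus F\cdot\omega_n\). That restriction is again not obviously over \(\mathbb F_p\).

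The cleaner fix is to compose. Take \(N_{E/F}\) as above and apply \(N_{F/\mathbb F_p}\) to its value, giving \(M=\prod_{s=0}^{r-1}N^{\sigma^s}\), where \(\sigma\) is the Frobenius \(x\mapsto x^p\) acting on the coefficients. Then \(M\) is homogeneous of degree \(rn\) and its coefficients are \(\sigma\)-invariant, hence in \(\mathbb F_p\). On inputs from \(F\), \(M(\alpha)=N_{F/\mathbb F_p}(N(\alpha))\), because applying \(\sigma^s\) to the coefficients of \(N\) and evaluating at \(\alpha\) gives \(\sigma^s\bigl(N(\sigma^{-s}\alpha)\bigr)\). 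This product is not literally \(N_{F/\mathbb F_p}(N(\alpha))\), but it vanishes iff some \(N(\sigma^{-s}\alpha)=0\). That happens iff \(\sigma^{-s}\alpha=0\), iff \(\alpha=0\). So \(M\) has the required zero set on \(F^n\), and lifting its \(\mathbb F_p\)-coefficients to \(\mathbb Z\) finishes the proof.

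The main point needing care is that the coefficient field is \(\mathbb F_p\), together with the meaning of "integer polynomial evaluated in \(F\)", namely via the canonical map \(\mathbb Z\to F\). Everything else is the standard nonvanishing of the norm on nonzero elements.
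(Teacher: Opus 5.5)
Your proof is correct, but it takes a different route from the paper.

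\textbf{The paper's route.} The paper starts in two variables. It takes a polynomial over $\mathbb F_p$ with no root in $F$ and homogenizes it. This gives a binary form $f_2$ whose coefficients are already in $\mathbb F_p$ and whose only zero on $F^2$ is the trivial one. It then nests $f_2(f(\cdot),f(\cdot))$ to get such forms in $2^m$ variables, and finally sets the surplus variables to zero.

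\textbf{Your route.} You write down one such form directly: the norm form of $\mathbb F_{q^n}/F$. Its coefficients lie only in $F$, so you descend to $\mathbb F_p$ by multiplying its $r$ Frobenius conjugates $N^{\sigma^s}$. The identity $N^{\sigma^s}(\alpha)=\sigma^s\bigl(N(\sigma^{-s}\alpha)\bigr)$ for $\alpha\in F^n$ is exactly what keeps the zero set trivial. The $\sigma$-invariance of $M$ follows from $N^{\sigma^r}=N$.

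\textbf{Comparison.} The paper never leaves $\mathbb F_p$, so it needs no descent step. It uses nothing beyond the existence of a rootless polynomial (e.g.\ $x^{|F|}-x+1$) and the fact that composing such forms again gives only the trivial zero. The price is that its degree is $(\deg f_2)^{\lceil\log_2 n\rceil}$. Your construction needs a little Galois theory of finite fields, but it yields an explicit form of degree $rn$, linear in $n$.

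\textbf{Two small fixes.} Delete the sentence asserting $M(\alpha)=N_{F/\mathbb F_p}(N(\alpha))$. As you yourself note right after, it is false in general, and your argument only uses $M(\alpha)=\prod_s\sigma^s\bigl(N(\sigma^{-s}\alpha)\bigr)$. The abandoned paragraph about restricting the norm form of $\mathbb F_{p^{rn}}/\mathbb F_p$ can also go.
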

\begin{proof}
    First note that any non-constant homogeneous polynomial returns \(0\) on an input of all zeros.
    
    Let us start by proving the lemma for 2 variables. In this case, Let \(\ell\) be the characteristic of \(F\) and let us take any polynomial over \(\mathbb F_\ell\) that has no root in \(F\) and homogenize it to get a homogeneous polynomial \(f\) in two variables. \(f(x,0)\) for \(x\ne 0\) does not equal \(0\) since by being a homogenization it has a monomial that does not include the second variable, and any other nonzero point will not produce \(0\) either since then we will be able to divide the value of the first variable by that of the second and get a root of the original polynomial.

    The lemma follows for powers of 2 by composition: If \(f\) works for \(2^m\) variables and \(f_2\) is the polynomial for \(2\) variables then the following are equivalent: \[f_2(f(\alpha_1,\dots,\alpha_{2^m}),f(\alpha_{2^m+1},\dots,\alpha_{2^{m+1}}))=0\]\[f(\alpha_1,\dots,\alpha_{2^m})=f(\alpha_{2^m+1,},\dots,\alpha_{2^{m+1}})=0\]\[\alpha_1=\dots=\alpha_{2^{m+1}}=0\]
    Note that this is still a homogeneous polynomial.

    Finally, for any number of variables \(n\) we can choose \(2^m\ge n\), take the polynomial \(f\) for it and substitute zeros in all but \(n\) variables of \(f\). If the result is \(0\) all the variables of \(f\) have value \(0\) and in particular the \(n\) ones left free.
\end{proof}
\begin{lemma}\label{int-2-1}
    For an integer \(k\in\mathbb Z\) and an algebraic integer \(\alpha\), let \(K=\mathbb Q[\alpha]\). If \(\langle k,\alpha\rangle=\mathcal O_K\) there is some homogeneous polynomial \(h\in\mathbb Z[x,y]\) such that \(h(k,\alpha)=1.\)
\end{lemma}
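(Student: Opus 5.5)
The plan is to convert the ideal-theoretic hypothesis \(\langle k,\alpha\rangle=\mathcal O_K\) into two congruences inside the order \(\mathbb Z[\alpha]\), and then to homogenize the resulting Bezout-type identity. The degenerate cases \(\alpha=0\) (then \(k=\pm1\), so \(h=\pm x\) works) and \(k=\pm1\) are handled first, so I assume \(\alpha\ne0\) and \(|k|>1\).

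The first step is to show that \(\gcd(k,N(\alpha))=1\). Let \(m(x)=x^d+c_{d-1}x^{d-1}+\dots+c_0\) be the minimal polynomial of \(\alpha\), so \(c_0=\pm N(\alpha)\). The ideal norm of \(\langle\alpha\rangle\) is \(|c_0|\). If a prime \(\ell\) divided both \(k\) and \(c_0\), then some prime \(\mathfrak p\) of \(\mathcal O_K\) above \(\ell\) would contain \(\alpha\), and it contains \(k\). This contradicts \(\langle k,\alpha\rangle=\mathcal O_K\).

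The second step uses the relation \(c_0=-\alpha(\alpha^{d-1}+c_{d-1}\alpha^{d-2}+\dots+c_1)\). Since \(c_0\) is invertible modulo \(k\), this shows that \(\alpha\) is a unit in the finite ring \(\mathbb Z[\alpha]/k\mathbb Z[\alpha]\cong(\mathbb Z/k)[x]/(m)\). Hence there is \(T\), which I take to be a multiple of the unit group's order with \(T\ge d\), such that
\[
\alpha^T=1+k\,r(\alpha),\qquad r(x)=\sum_{j<d}r_jx^j\in\mathbb Z[x].
\]
Symmetrically, \(\mathbb Z[\alpha]/\alpha\mathbb Z[\alpha]\cong\mathbb Z/c_0\mathbb Z\) and \(\gcd(k,c_0)=1\), so \(k^S=1+\alpha w(\alpha)\) for some \(S\ge1\) and \(w\in\mathbb Z[x]\). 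Let \(V_n\) denote the \(\mathbb Z\)-span of the monomials \(k^i\alpha^{n-i}\). Then \(kV_n,\alpha V_n\subseteq V_{n+1}\), and the goal is \(1\in V_N\) for some \(N\).

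The third step, which I expect to be the main obstacle, is the homogenization. The two identities give \(1\in V_T+\sum_{e=1}^{d}V_e\) and \(1\in V_S+\sum_{e\ge1}V_e\), but with mixed degrees. Raising the degree of a low-degree term \(x\in V_e\) is easy: \(x=\alpha^Tx-k\,r(\alpha)x\). However, this produces terms of degree \(e+T\), possibly above the target. My first attempt would be to work with \(t=k/\alpha\) in \(K\). Dividing \(\alpha^T=1+kr(\alpha)\) by \(\alpha^T\) gives
\[
\alpha^{-T}=1-\sum_{j<d}r_j\,t\,\alpha^{-(T-1-j)},
\]
and all the exponents \(T-1-j\) lie in \([0,T-1]\). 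So the \(\mathbb Z[t]\)-module \(M=\sum_{s<T}\mathbb Z[t]\alpha^{-s}\) is stable under multiplication by \(\alpha^{-1}\). The statement \(1\in V_N\) is exactly \(\alpha^{-N}\in\mathbb Z[t]\).

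To finish, I would try to use the second relation \(k^S\equiv1\pmod\alpha\), rewritten as \(1=t^S\alpha^S-\alpha w(\alpha)\), to show that the generators \(\alpha^{-s}\) of \(M\) can be pushed into \(\mathbb Z[t]\) after multiplying by a common power \(\alpha^{-N'}\). The idea is a determinant/Cayley–Hamilton argument on \(M\) that expresses a suitable power \(\alpha^{-N}\) as a \(\mathbb Z[t]\)-combination of \(1\). If that fails, the fallback is a direct degree bookkeeping. Fix \(N\) a large common multiple of \(S\) and \(T\). Repeatedly substitute \(1=\alpha^T-kr(\alpha)\) into terms of degree below \(N\). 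Terms that overshoot \(N\) would be reduced using \(\alpha^d=-\sum_{j<d}c_j\alpha^j\) combined with \(k^S=1+\alpha w\), so that the dropped-degree constants are reabsorbed as degree-\(S\) multiples of \(k\). One then checks that this rewriting terminates with all monomials of degree exactly \(N\).
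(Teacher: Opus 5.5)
Your first two steps are correct: $\gcd(k,c_0)=1$, so $\alpha$ is a unit in $\mathbb Z[\alpha]/k$ and $k$ is a unit modulo $\alpha$. But the proof stops where the lemma's actual content begins. The third step, the homogenization, is only a list of things you would try; neither the Cayley--Hamilton idea nor the rewriting scheme is carried out.

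The rewriting has no reason to terminate. Substituting $1=\alpha^T-kr(\alpha)$ or $1=k^S-\alpha w(\alpha)$ into a deficient term always creates new terms of mixed degree and raises $\alpha$-exponents, possibly above $N$. Reducing those with the minimal polynomial then lowers the total degree again.

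Overshooting the target degree is harmless when the excess is in powers of $k$: $k^a\alpha^b$ with $b\le N\le a+b$ is an integer multiple of $k^{N-b}\alpha^b\in V_N$. The only real obstructions are terms with $\alpha$-exponent above $N$ or total degree below $N$.

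Your reformulation via $t=k/\alpha$ is also inaccurate. $1\in V_N$ means that $\alpha^{-N}$ is an integer polynomial in $t$ \emph{of degree at most $N$}, not merely that $\alpha^{-N}\in\mathbb Z[t]$. For example, $k=2$, $\alpha=4$ gives $\alpha^{-1}=t^2\in\mathbb Z[t]$ but $1\notin V_1=2\mathbb Z$. An integrality or determinant argument over $\mathbb Z[t]$ gives no such degree bound, and as sketched does not even give membership in $\mathbb Z[t]$.

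The missing idea, which is how the paper proceeds, is to expand in \emph{negative} powers of $\alpha$ and to work modulo $k^{d-1}$ rather than $k$, allowing denominators prime to $k$.
\begin{itemize}
\item The ring $\mathbb Z[1/c_0][\alpha]$ is free over $\mathbb Z[1/c_0]$ with basis $1,\alpha^{-1},\dots,\alpha^{-(d-1)}$, is finite modulo $k^{d-1}$, and $\alpha$ is a unit in it.
\item So for some $m\ge d$ we have $\alpha^{-m}\equiv1\pmod{k^{d-1}}$. Equivalently, $1=\sum_{i<d}v_i\alpha^{m-i}$ with $v_0\equiv1$ and $v_i\equiv0\pmod{k^{d-1}}$ for $i\ge1$.
\item Multiply by a power $g$ of $c_0$ that clears denominators. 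Every term becomes an integer multiple of $\alpha^m$ or of $k^{d-1}\alpha^{m-i}$, hence of $k^i\alpha^{m-i}$, so $g\in V_m$.
\item Since also $k^m\in V_m$ and $\gcd(g,k)=1$, Bezout gives $1\in V_m$.
\end{itemize}

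The point is that all $\alpha$-exponents lie within $d-1$ of the top degree $m$, so the number of factors of $k$ needed to pad them is bounded independently of $m$. In your positive-power identity $\alpha^T=1+kr(\alpha)$, the low terms $k\alpha^j$ would need roughly $T-j$ factors of $k$. Since $T$ in general grows with the modulus, replacing $k$ by a higher power of $k$ does not rescue that form. Your second relation $k^S\equiv1\pmod\alpha$ is not needed.
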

\begin{proof}
    Let \(d=[K:\mathbb Q]\) be the degree of \(\alpha\) over \(\mathbb Q\), and for each \(m\in\mathbb N\) let us construct a vector \(v^m\in\mathbb Q^d\) such that \[\left\langle \begin{pmatrix}
        \alpha^m\\\vdots\\\alpha^{m-d+1}
    \end{pmatrix},v^m\right\rangle=\sum_{i=0}^{d-1}v^m_i\alpha^{m-i}=1.\] First, \(v^0=\begin{pmatrix}
        1\\0\\\vdots\\0
    \end{pmatrix}.\) For higher \(m\) note that if \(M\) is the companion matrix of the characteristic polynomial of \(\alpha^{-1}\) then it satisfies \(M\begin{pmatrix}
        \alpha^m\\\vdots\\\alpha^{m-d+1}
    \end{pmatrix}=\begin{pmatrix}
        a^{m-1}\\\vdots\\a^{m-d}
    \end{pmatrix}\) and therefore \[\left\langle\begin{pmatrix}
        a^m\\\vdots\\a^{m-d+1}
    \end{pmatrix},M^Tv^{m-1}\right\rangle=
    \left\langle\begin{pmatrix}
        a^{m-1}\\\vdots\\a^{m-d}
    \end{pmatrix},v^{m-1}\right\rangle=1\] which means we can choose \(v^m=M^Tv^{m-1}\). Note that since \(\langle k,\alpha\rangle=\mathcal O_K\), \(\mathcal N(\alpha)\) and \(k\) are coprime, and so since \(\mathcal N(\alpha)\alpha^{-1}\) is an algebraic integer, \(M\) (and therefore also \(M^T\)) can be defined modulo any power of \(k\), which means that the \(v^m\)s can also be defined modulo any power of \(k\). Also note that since \(\det M^T=\det M=\mathcal N(\alpha^{-1})=\mathcal N(\alpha)^{-1}\) it is invertible wherever it is defined.

    From the pigeonhole principle, there are \(m_1<m_2\) such that \(v^{m_1}\equiv_{k^{d-1}}v^{m_2}\), and because \(M^T\) is invertible modulo \(k^{d-1}\), \(v^{m_2-m_1}\equiv_{k^{d-1}}v^0\). Since each vector in the series is determined by the pervious one (and even modularly so), this is true for each multiple of \(m_2-m_1\). Let us choose some multiple \(m\ge d\) of \(m_2-m_1\) and get that \[v^m\equiv_{k^{d-1}}v^0=\begin{pmatrix}
        1\\0\\\vdots\\0
    \end{pmatrix}.\] Let \(g\) be the common denominator of the entries of \(v^m\) and note that \(g\) is coprime with \(k\), \(gv^m\) is an integer vector and \(\sum_{i=0}^{d-1}gv^m_i\alpha^{m-i}=g.\) However, since every entry of \(v^m\) but the first is \(0\) modulo \(k^{d-1}\), every summand in this sum is either an integer multiple of \(\alpha^m\) or an integer multiple of \(k^{d-1}\alpha^{m-i}\) for \(i\le d-1<m\) which is an integer multiple of \(k^i\alpha^{m-i}\). Therefore this sum is a degree-\(m\) homogeneous integer polynomial in \(k\) and \(\alpha\). \(k^m\) is also a degree-\(m\) homogeneous polynomial in \(k\) and \(\alpha\), and since \(g\) is coprime with \(k\), it is also coprime with \(k^m\), which means that \(1=ag+bk^m\) can also be expressed as a homogeneous integer polynomial in \(k\) and \(\alpha\).
\end{proof}

\begin{lemma}\label{trivial-ideal-integer-polynomial}
    For any algebraic integers \(a_1,\dots a_m\) there is a homogeneous integer polynomial \(h\in\mathbb Z[x_1,\dots,x_m]\) of some degree \(n\) such that \(h(a_1,\dots,a_m)\) is a generator of \(\langle a_1,\dots,a_m\rangle^n\).
\end{lemma}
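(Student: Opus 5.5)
The plan is to start from Lemma \ref{int-times-unit} and remove the integer factor \(k\) by passing to the "quotients by \(g\)", which generate the unit ideal. Lemma \ref{int-times-unit} gives a homogeneous \(h_1\in\mathbb Z[x_1,\dots,x_m]\) of degree \(n_1\) with \(h_1(a_1,\dots,a_m)=kg\), where \(k\in\mathbb Z\) and \(\langle g\rangle=\langle a_1,\dots,a_m\rangle^{n_1}\). Work in the ring of integers \(\mathcal O_L\) of a number field \(L\) containing all the \(a_i\) and \(g\). For every monomial \(\mu\) of degree \(n_1\) in the \(x_i\), set \(b_\mu=\mu(a)/g\).

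First I check the properties of the \(b_\mu\).
\begin{itemize}
\item Since the degree-\(n_1\) monomials in the \(a_i\) generate \(\langle a_1,\dots,a_m\rangle^{n_1}=\langle g\rangle\), each \(b_\mu\) lies in \(\mathcal O_L\).
\item For the same reason \(\langle b_\mu\rangle=\mathcal O_L\).
\item Writing \(h_1=\sum_\mu c_\mu\mu\), we get \(k=\sum_\mu c_\mu b_\mu\), a linear integer form in the \(b_\mu\).
\end{itemize}

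The key reduction is the following. Suppose I find a homogeneous \(H\in\mathbb Z[y_\mu]\) of degree \(e\) with \(H(b)=1\). Substituting \(y_\mu=\mu(x)\) gives a homogeneous \(h\) of degree \(n=n_1e\) with \(h(a)=g^eH(b)=g^e\), and \(g^e\) generates \(\langle a_1,\dots,a_m\rangle^{n}\). So the task becomes: given algebraic integers \(b_\mu\) generating the unit ideal, with an integer \(k\) equal to a linear form in them, produce a homogeneous integer polynomial in the \(b_\mu\) equal to \(1\).

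Step 1 produces an element coprime to \(k\). For each prime \(\ell\mid k\), let \(F_\ell\) be a finite field of characteristic \(\ell\) containing every residue field \(\mathcal O_L/\mathfrak P\) with \(\mathfrak P\mid\ell\). Lemma \ref{poly-nonzero} gives a homogeneous \(f_\ell\) with no nontrivial zero over \(F_\ell\), hence none over any subfield. The \(b_\mu\) are not all in any prime \(\mathfrak P\), because they generate \(\mathcal O_L\). Hence \(f_\ell(b)\notin\mathfrak P\) for every \(\mathfrak P\mid\ell\). Multiplying suitable powers of these \(f_\ell\) so that the degrees agree, I get a homogeneous \(F\) of some degree \(d\) with \(\alpha=F(b)\) lying in no prime above any divisor of \(k\). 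Thus \(\langle k,\alpha\rangle=\mathcal O_L\), and also \(\langle k^d,\alpha\rangle=\mathcal O_L\).

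Step 2 is to apply Lemma \ref{int-2-1} to the integer \(k^d\) and \(\alpha\). First I have to descend the hypothesis to \(\mathcal O_{\mathbb Q[\alpha]}\). If \(\langle k^d,\alpha\rangle\) were proper there, some prime of \(\mathcal O_{\mathbb Q[\alpha]}\) would contain it. A prime of \(\mathcal O_L\) lying over that prime would then contain both \(k^d\) and \(\alpha\), which is a contradiction. So Lemma \ref{int-2-1} gives a homogeneous \(H_2\in\mathbb Z[x,y]\) with \(H_2(k^d,\alpha)=1\). Both \(k^d=(\sum c_\mu b_\mu)^d\) and \(\alpha=F(b)\) are homogeneous of degree \(d\) in the \(b_\mu\). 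Therefore \(H(y)=H_2\bigl((\sum c_\mu y_\mu)^d,F(y)\bigr)\) is homogeneous of degree \(e=d\deg H_2\), and \(H(b)=1\) as required.

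I expect the main obstacle to be Step 1 together with the homogeneity bookkeeping. The point of choosing a single field \(F_\ell\) that contains all the residue fields above \(\ell\) is that one \(f_\ell\) then works for every prime above \(\ell\) simultaneously. The point of replacing \(k\) by \(k^d\) is to match the degree of \(\alpha\), so that the final polynomial is homogeneous. The degenerate case \(k=\pm1\) is trivial: \(h_1\), or \(h_1^2\), already works.
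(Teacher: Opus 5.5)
Your route is the paper's own, except that you divide by \(g\) at the start instead of at the end. The ingredients are the same: Lemma \ref{int-times-unit}, the normalized monomials \(b_\mu\) generating the unit ideal with \(k\) a linear form in them, Lemma \ref{poly-nonzero} over a field containing all residue fields above each \(\ell\mid k\), and Lemma \ref{int-2-1} applied to \(k^d\) and \(\alpha\).

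There is, however, a genuine error in Step 1 whenever \(k\) has two distinct prime factors: the product \(F=\prod_\ell f_\ell^{e_\ell}\) need not avoid the primes above \(k\). For \(\mathfrak P\mid\ell\) you only know \(f_\ell(b)\notin\mathfrak P\). A factor \(f_{\ell'}\) with \(\ell'\ne\ell\) is controlled only over fields of characteristic \(\ell'\), so it can vanish at \(b\) modulo \(\mathfrak P\) and drag the whole product into \(\mathfrak P\).

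For example, take \(a_1=a_2=1\), \(g=1\) and \(h_1=3x_1+3x_2\), so \(k=6\) and \(b=(1,1)\) in \(\mathcal O_L=\mathbb Z\). The forms \(f_2=x^2+xy+y^2\) and \(f_3=x^2+y^2\) have no nontrivial zeros over \(\mathbb F_2\) and \(\mathbb F_3\) respectively, and they already have equal degree. Yet \(F(b)=f_2(1,1)f_3(1,1)=3\cdot2=6\), so \(\langle k^d,\alpha\rangle=6\mathbb Z\) and Lemma \ref{int-2-1} cannot be invoked. Note also that matching degrees serves no purpose for a product; it is exactly what an additive combination needs.

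The missing ingredient is the Chinese remainder step the paper uses. After raising the \(f_\ell\) to powers of a common degree \(d\), put \(F=\sum_{\ell\mid k}c_\ell f_\ell^{e_\ell}\), with integers \(c_\ell\equiv1\pmod{\ell}\) and \(c_\ell\equiv0\pmod{\ell'}\) for every other prime \(\ell'\mid k\). This \(F\) is homogeneous of degree \(d\), and for each \(\mathfrak P\mid\ell\) we get \(F(b)\equiv f_\ell(b)^{e_\ell}\not\equiv0\pmod{\mathfrak P}\). In the example, \(3f_2+4f_3\) gives \(17\).

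With this replacement the rest of your argument is sound. In particular, your lying-over argument that descends \(\langle k^d,\alpha\rangle=\mathcal O_L\) to \(\mathcal O_{\mathbb Q[\alpha]}\) is needed, because Lemma \ref{int-2-1} is stated for \(K=\mathbb Q[\alpha]\). The paper leaves that step implicit, so your version supplies it.
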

\begin{proof}
    By Lemma \ref{int-times-unit} there is a homogeneous integer polynomial \(h_0\) of degree \(n_0\) such that \(h_0(a_0,\dots,a_m)=kg\) for \(k\in\mathbb Z\) and a generator \(g\) of \(\langle a_1,\dots,a_m\rangle^n\). Let \(b_1,\dots,b_{m'}\) be all the multiples of \(n_0\) (possibly repeating) \(a\)s, and note that \[\langle b_1,\dots,b_{m'}\rangle=\langle a_1,\dots,a_m\rangle^{n_0}\] and that \(kg\) is an integer linear combination of \(b_1,\dots,b_{m'}\), and so any homogeneous integer polynomial in \(b_1,\dots,b_{m'},kg\) is also a homogeneous integer polynomial in \(b_0,\dots,b_{m'}\) which means it is also a homogeneous integer polynomial in \(a_0,\dots,a_m\).

    Notate \(K=\mathbb Q[a_1,\dots,a_m]\). For each prime \(\ell|k\), \(\mathcal O_K/\ell\) is a product of finite fields of characteristic \(\ell\), let \(F_\ell\) be the compositum of these fields. By Lemma \ref{poly-nonzero} there is a homogeneous polynomial \(h_\ell\) for each \(\ell|k\) such that for any choice of \(\alpha_1,\dots,\alpha_{m'}\in F_\ell\), if any of them is nonzero then \(h_\ell(\alpha_1,\dots,\alpha_{m'})\ne 0\). Now, note that if \(\frac{b_1}g,\dots,\frac{b_{m'}}g\) are all mapped to \(0\) by the map to some component of \(\mathcal O_K/\ell\), then any \(x\in\langle b_1,\dots,b_{m'}\rangle\) is the product of \(g\) by an element that is mapped to \(0\) by the same map, and so \(\langle b_1,\dots,b_{m'}\rangle\ne\langle g\rangle\), which is a contradiction. Therefore, \(h_\ell\left(\frac{b_0}g,\dots,\frac{b_{m'}}g\right)\) is invertible modulo \(\ell\), i.e. \[\left\langle h_\ell\left(\frac{b_0}g,\dots,\frac{b_{m'}}g\right),\ell\right\rangle=\mathcal O_K,\] and the same is true for any power of \(\ell\). By considering powers of these polynomials that all have degree \(n_k=\lcm_{\ell|k}(\deg h_\ell)\) and using the Chinese remainder theorem we can get a homogeneous polynomial \(h_k\) of degree \(n_k\) such that \[\left\langle g^{-n_k}h_k\left(b_0,\dots,b_{m'}\right),k\right\rangle=\left\langle h_k\left(\frac{b_0}g,\dots,\frac{b_{m'}}g\right),k\right\rangle=\mathcal O_K.\]

    Let \(\alpha=g^{-n_k}h_k(b_0,\dots,b_{m'})\). By Lemma \ref{int-2-1} there is a homogeneous integer polynomial \(h_1\) of degree \(n_1\) such that \(h_1(k^{n_k},\alpha)=1\). Therefore \[h_1((kg)^{n_k},h_k(b_0,\dots,b_{m'}))=h_1(k^{n_k}g^{n_k},\alpha g^{n_k})=g^{n_1n_k}h_1(k^{n_k},\alpha)=g^{n_1n_k}\] is a degree \(n=n_0n_kn_1\) homogeneous polynomial in \(a_0,\dots,a_k\) which is a generator of \(\langle a_0,\dots,a_k\rangle^n\).
\end{proof}

We recall the basic notion of a resultant two polynomials, which can be defined either as \[\res(f,g)=a^mb^n\prod_{i=1}^n\prod_{j=1}^m(\alpha_i-\beta_j)=a^m\prod_{i=1}^ng(\alpha_i)\] for \(f=a\prod_{i=1}^n(x-\alpha_i)\) and \(g=b\prod_{j=1}^m(x-\beta_j)\), or as the determinant of the Sylvester matrix \[\res(f,g)=\begin{vmatrix}
    a_n&a_{n-1}&a_{n-2}&\dots&a_0&0&\dots&0\\
    0&a_n&a_{n-1}&\dots&a_1&a_0&\ddots&\vdots\\
    \vdots&\ddots&\ddots&\ddots&\ddots&\ddots&\ddots&0\\
    0&\dots&0&a_n&a_{n-1}&\dots&a_1&a_0\\
    b_m&b_{m-1}&b_{m-2}&\dots&b_0&0&\dots&0\\
    0&b_m&b_{m-1}&\dots&b_1&b_0&\ddots&\vdots\\
    \vdots&\ddots&\ddots&\ddots&\ddots&\ddots&\ddots&0\\
    0&\dots&0&b_m&b_{m-1}&\dots&b_1&b_0
\end{vmatrix}\] for \(f=\sum_{i=0}^na_ix^i\) and \(g=\sum_{i=0}^mb_ix^i\). The proof of equivalence can be found in \cite[Proposition IV.8.3]{Lang}.
We note that the first definition shows that the resultant is multiplicative, and symmetric up to sign change. We also note that by multiplying the Sylvester matrix with its adjugate we can see that \(\res(f,g)\in\langle f,g\rangle,\) and this allows us to naturally define resultants of homogeneous polynomials of two variables.

\begin{lemma}\label{irreducible-res-1}
    For every irreducible integer polynomial \(f\) with coprime coefficients there is another non-constant integer polynomial \(g\) such that \(\res(f, g)=1\)
\end{lemma}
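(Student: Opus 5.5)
The plan is to reduce the statement to a norm computation in \(K=\mathbb Q(\alpha)\), where \(\alpha\) is a root of \(f\), and to produce \(g\) with Lemma \ref{trivial-ideal-integer-polynomial}. Write \(f=a\prod_{i=1}^d(x-\alpha_i)\) with \(d=\deg f\ge 1\), and \(\alpha=\alpha_1\). For a polynomial \(g\) of formal degree \(n\) (homogeneous resultant, so a vanishing top coefficient is allowed) we have
\[\res(f,g)=a^n\prod_{i=1}^d g(\alpha_i)=a^n N_{K/\mathbb Q}(g(\alpha)).\]
Here we use that \(f\) is irreducible, so the \(\alpha_i\) are exactly the conjugates of \(\alpha\). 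The goal is therefore to find \(g\) with \(N_{K/\mathbb Q}(g(\alpha))=\pm a^{-n}\). We then replace \(g\) by \(g^2\), using multiplicativity \(\res(f,g^2)=\res(f,g)^2\), to get exactly \(1\).

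\textbf{Key arithmetic input.} Let \(\mathfrak a=\langle 1,\alpha\rangle\), a fractional ideal of \(\mathcal O_K\). I claim \(N(\mathfrak a)=|a|^{-1}\). The intended proof uses Gauss's lemma for Dedekind domains: the content ideal of a product of polynomials is the product of the content ideals. Apply it in the Galois closure \(L\) to \(f=a\prod_i(x-\alpha_i)\). The content of \(f\) is \(\langle 1\rangle\) because \(f\) is primitive, and the content of \(x-\alpha_i\) is \(\langle1,\alpha_i\rangle\). This gives \(a\prod_i\langle 1,\alpha_i\rangle\mathcal O_L=\mathcal O_L\). The ideals \(\langle1,\alpha_i\rangle\) are the Galois conjugates of \(\mathfrak a\mathcal O_L\), so their product is \(N_{K/\mathbb Q}(\mathfrak a)\mathcal O_L\), and the claim follows. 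I expect this step to be the main obstacle. It relies on the Dedekind–Kronecker content lemma, which the paper does not prove, so I would cite it or give a short localization argument: at each prime the conjugate ideals become principal, and Gauss's lemma holds over a DVR.

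\textbf{Construction of \(g\).} The numbers \(a\) and \(a\alpha\) are algebraic integers; for \(a\alpha\) this is the standard fact for a primitive integer polynomial, visible from \(a^{d-1}f(x/a)\) being monic. Lemma \ref{trivial-ideal-integer-polynomial} gives a homogeneous \(h\in\mathbb Z[x,y]\) of degree \(n\) such that \(h(a,a\alpha)\) generates \(\langle a,a\alpha\rangle^n=a^n\mathfrak a^n\). Since \(h(a,a\alpha)=a^n h(1,\alpha)\), the element \(h(1,\alpha)\) generates \(\mathfrak a^n\). Replacing \(h\) by a power, I may assume \(n\ge d\). Set \(g(x)=h(1,x)+x^{n-d}f(x)\). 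This has formal degree \(n\) and still satisfies \(g(\alpha)=h(1,\alpha)\), because \(f(\alpha)=0\). If it is constant I adjust the added multiple of \(f\), for instance using \(2x^{n-d}f\), so that \(g\) is non-constant.

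\textbf{Conclusion.} Combining the steps gives
\[\res(f,g)=a^nN(g(\alpha))=\pm a^nN(\mathfrak a)^n=\pm 1.\]
If the sign is \(-1\), then \(\res(f,g^2)=1\), and \(g^2\) is still non-constant. One bookkeeping point needs care: the homogeneous resultant must be taken with formal degree \(n\) rather than the actual degree, or else the adding of \(x^{n-d}f\) must be arranged so that the top coefficient is nonzero.
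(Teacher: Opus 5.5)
Your proof is correct and has the same skeleton as the paper's proof. Both use the ideal \(\langle a,a\alpha\rangle\) (the paper writes \(c\) for the leading coefficient), apply Lemma \ref{trivial-ideal-integer-polynomial}, and rely on the identity \(\res(f,g)=a^{\deg g}N_{K/\mathbb Q}(g(\alpha))\).

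The real difference is the step you call the main obstacle. You need the exact value \(N(\mathfrak a)=|a|^{-1}\), and hence the Dedekind--Kronecker content lemma. The paper needs only the easy inclusion:
\begin{itemize}
\item Pick \(a\) or \(a\alpha_i\) from the \(i\)-th conjugate of \(\langle a,a\alpha\rangle\). This shows that the product of the conjugates contains \(a^{d}e_k(\alpha_1,\dots,\alpha_d)=\pm a^{d-1}\cdot(\text{a coefficient of }f)\) for every \(k\).
\item Since the coefficients are coprime, the product contains \(a^{d-1}\). So \(N(\langle a,a\alpha\rangle)\mid a^{d-1}\), that is, \(|a|N(\mathfrak a)=1/k\) for some positive integer \(k\).
\item Your own computation then gives \(|\res(f,g)|=k^{-n}\). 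A resultant of integer polynomials is an integer, so \(k=1\).
\end{itemize}
Integrality of the resultant thus supplies the missing inequality for free. Your route spends a cited theorem to get the norm exactly; the paper's route is self-contained.

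In the other direction, your bookkeeping is more careful than the paper's. The paper simply takes \(g=h(1,x)\) and stops at \(\pm1\). That \(g\) can be constant: when \(f\) is monic, \(\langle 1,\alpha\rangle=\mathcal O_K\), so \(h=x_1^{n}\) is a legitimate output of Lemma \ref{trivial-ideal-integer-polynomial}. The statement also asks for exactly \(1\). Your correction term and the squaring fix both problems.

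One small simplification: in \(g=h(1,x)+t\,x^{n-d}f\), choose \(t\in\{1,2\}\) with \(h(0,1)+ta\neq0\). Then \(\deg g=n\ge1\) exactly, which settles non-constancy and the formal-versus-actual degree issue at once.
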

\begin{proof}
    Notate by \(\alpha\) a root of \(p\) and by \(n\) the degree of \(p\) and look at \(K=\mathbb Q[\alpha]\), which satisfies \([K:\mathbb Q]=n\). Also notate by \(c\) the leading coefficient of \(p\) and note that \(c\alpha\)'s minimal polynomial has integer coefficients and so it is an algebraic integer, \(c\alpha\in\mathcal O_K\). Now consider the ideal \(\langle c, c\alpha\rangle\). Note that if we multiply this ideal by all its conjugates (in the Galois closure of \(K\)) we get an ideal that contains \(c^n\pi\) for every product \(\pi=\alpha_i\alpha_j\alpha_k\dots\) of distinct roots of \(p\), and therefore also \(c^ns_i(\alpha_1,\dots\alpha_n)\) for every symmetric polynomial in the roots of \(p\). However, these are exactly the coefficients of \(p\) times \(c^{n-1}\). Therefore, the norm of the ideal \(\langle c,c\alpha_i\rangle\) divides \(c^{n-1}\).
    
    By Lemma \ref{trivial-ideal-integer-polynomial} there is a homogeneous polynomial \(h\) of degree \(m\) such that \(h(c,c\alpha)\) generates \(\langle c,c\alpha\rangle^m\). Therefore \(\mathcal N(h(c,c\alpha))|c^{m(n-1)}\), which means that \[\res\left(p(x),h(1,x)\right)=c^{\deg h(1,x)}\mathcal N\left(h(1,\alpha)\right)~|~
    c^{m}\mathcal N\left(\frac{h(c,c\alpha)} {c^{m}}\right)=\frac{\mathcal N(h(c,c\alpha))}{c^{m(n-1)}}\] is on one hand an integer and on the other hand divides \(1\), which means it has to be \(\pm 1\)
\end{proof}

\begin{lemma}\label{monic-multiple}
    Let \(u\) be a polynomial that is monic when looking modulo some prime number \(\ell\). Then for each \(n\ge 1\) there is some polynomial \(v\) such that \((1+\ell v)u\) is monic modulo \(\ell^n\).
\end{lemma}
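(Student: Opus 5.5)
The plan is to run a Hensel-type induction on \(n\), improving the multiplier \(1+\ell v\) one power of \(\ell\) at a time. The only tool needed is division with remainder by a monic integer polynomial, which works over \(\mathbb Z\).

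We interpret ``monic modulo \(\ell^k\)'' as follows: \(u\) reduced modulo \(\ell^k\) is a monic polynomial over \(\mathbb Z/\ell^k\). Equivalently, there are an integer \(d\) and a monic integer polynomial \(m\) of degree \(d\) with \(u\equiv m \pmod{\ell^k}\) coefficientwise.

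The inductive claim for \(k\ge 1\) is that there are an integer polynomial \(v_k\) and a monic integer polynomial \(m_k\) of degree \(d\) such that
\[(1+\ell v_k)u\equiv m_k \pmod{\ell^k}.\]
Here \(d\) is fixed once and for all as the degree of \(u\) modulo \(\ell\).

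For the base case \(k=1\), take \(v_1=0\). Let \(m_1\) be the integer polynomial obtained from \(u\) by deleting the coefficients in degrees above \(d\), which are divisible by \(\ell\), and replacing the degree-\(d\) coefficient, which is \(\equiv 1\pmod\ell\), by \(1\). Then \(m_1\) is monic of degree \(d\) and \(u\equiv m_1\pmod\ell\).

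For the inductive step, suppose the claim holds for \(k\). Write
\[(1+\ell v_k)u=m_k+\ell^k r_k\]
with \(r_k\in\mathbb Z[x]\). Since \(m_k\) is monic, we can divide in \(\mathbb Z[x]\) to get \(r_k=q\,m_k+s\) with \(\deg s<d\). Now multiply by \(1-\ell^k q\):
\[(1-\ell^k q)(m_k+\ell^k r_k)=m_k+\ell^k(r_k-q m_k)-\ell^{2k}q r_k=m_k+\ell^k s-\ell^{2k}q r_k.\]
Because \(k\ge 1\) we have \(2k\ge k+1\), so modulo \(\ell^{k+1}\) this is \(m_{k+1}\coloneq m_k+\ell^k s\). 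This polynomial is still monic of degree \(d\), since \(\deg s<d\).

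It remains to check that the new multiplier has the required shape. The product \((1-\ell^k q)(1+\ell v_k)\) equals \(1+\ell v_{k+1}\) with
\[v_{k+1}=v_k-\ell^{k-1}q-\ell^k q v_k\in\mathbb Z[x],\]
which is again an integer polynomial because \(k\ge1\). This completes the induction, and taking \(k=n\) gives the lemma.

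The only point needing care is the bookkeeping: the correction at step \(k\) must not disturb the congruence modulo \(\ell^k\), and the degree \(d\) must stay fixed throughout. Both are handled by \(\deg s<d\) together with the estimate \(2k\ge k+1\). I do not expect any genuine obstacle beyond that.
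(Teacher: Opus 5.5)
Your proposal is correct and is essentially the paper's argument. Both run an induction on \(n\): at each step the error term is divided by a monic degree-\(d\) polynomial congruent to \(u\) modulo \(\ell\), and the multiplier is corrected by \(\ell^k\) times the quotient, so only a remainder of degree \(<d\) survives and monicity is preserved. The differences are cosmetic: you divide by an integral monic lift \(m_k\) in \(\mathbb Z[x]\) rather than by \(u\) over \(\mathbb F_\ell\), and you multiply by \(1-\ell^k q\) rather than subtracting \(\ell^k q u\), and these two corrections agree modulo \(\ell^{k+1}\).
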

\begin{proof}
    The proof is by induction on \(n\). For \(n=1\) we can choose \(v=0\) since \(u\) is already monic modulo \(\ell\). Assume that when looking at \((1+\ell v)u\) modulo \(\ell^n\) it is monic, but modulo \(\ell^{n+1}\) it is not, i.e. it is of the form \((1+\ell v)u=\ell^nx^df+g\) where \(g\) is monic and \(\deg g=d\) is also the degree of \(u\) modulo \(\ell\). Divide \(x^df\) by \(u\) modulo \(\ell\) to get \(x^df\equiv_\ell au+b\) for \(\deg b<\deg_\ell u=\deg g,\) and note that \[(1+\ell v-\ell^na)u=(1+\ell v)u-\ell^nau\equiv_{\ell^{n+1}}\ell^nx^df+g-\ell^nx^df+\ell^nb=g+\ell^nb\] which is also monic.
\end{proof}

\begin{lemma}\label{coprime-lift}
    Let \(w\in\mathbb Z[x]/\ell^n\) for prime \(\ell\) and natural \(n\) be monic and let \(r_1,r_2\in\mathbb F_\ell\) be coprime monic polynomials such that \(w\equiv_\ell r_1r_2\). Then there are monic \(r_1',r_2'\in\mathbb Z[x]/\ell^n\) such that \(w=r_1'r_2'\) and \(r_i\equiv_\ell r_i'\).
\end{lemma}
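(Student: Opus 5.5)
This is Hensel's lemma for coprime factorizations. I would prove it by induction on \(n\), lifting the factorization one power of \(\ell\) at a time, exactly as in the proof of Lemma \ref{monic-multiple}. For \(n=1\) take \(r_i'=r_i\). Throughout I keep the degrees fixed: \(\deg r_i'=\deg r_i=d_i\), with \(d_1+d_2=\deg w=d\). This is legitimate because \(w\) is monic, so its degree modulo \(\ell\) equals its degree modulo \(\ell^n\).

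For the inductive step, assume we have monic lifts \(s_1,s_2\) of \(r_1,r_2\) with \(\deg s_i=d_i\) and \(w\equiv s_1s_2\pmod{\ell^n}\). Write
\[w-s_1s_2=\ell^n e\pmod{\ell^{n+1}},\]
where \(e\) is taken modulo \(\ell\). Since both \(w\) and \(s_1s_2\) are monic of degree \(d\), we have \(\deg e<d\). We look for corrections of the form \(s_i+\ell^n t_i\) with \(\deg t_i<d_i\); this keeps the lifts monic of the right degree. Expanding modulo \(\ell^{n+1}\), the term \(\ell^{2n}t_1t_2\) vanishes because \(2n\ge n+1\). So the requirement reduces to
\[t_1r_2+t_2r_1\equiv e\pmod{\ell}.\]

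Solving this congruence is the one substantive step. Since \(r_1\) and \(r_2\) are coprime in \(\mathbb F_\ell[x]\), Bézout gives \(a,b\) with \(ar_2+br_1=1\), so \(ae\,r_2+be\,r_1=e\). To control degrees, divide \(ae\) by \(r_1\): write \(ae=qr_1+t_1\) with \(\deg t_1<d_1\), and set \(t_2=be+qr_2\). Then \(t_1r_2+t_2r_1=e\) still holds. Moreover \(t_2r_1=e-t_1r_2\) has degree less than \(d\), so \(\deg t_2<d_2\). This is exactly where \(r_1\) being monic is needed, to perform the division.

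Finally, lift \(t_1\) and \(t_2\) arbitrarily to \(\mathbb Z[x]\) with the same degree bounds. The new factors \(s_i+\ell^n t_i\) are monic, congruent to \(r_i\) modulo \(\ell\), and their product is congruent to \(w\) modulo \(\ell^{n+1}\). This completes the induction; the remaining work is only routine bookkeeping of degrees.
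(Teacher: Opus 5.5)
Your proposal is correct and follows essentially the same route as the paper: induction on \(n\), B\'ezout's identity in \(\mathbb F_\ell[x]\), and division with remainder so that the corrections \(\ell^n t_i\) have degree below \(\deg r_i\), which keeps the lifts monic. The only difference is cosmetic: you divide once and compensate in the other coefficient, whereas the paper reduces both \(ef\) and \(eg\) and then uses \(\deg e<\deg(r_1r_2)\) to show that the leftover multiple \((a+b)r_1r_2\) vanishes (as a minor aside, division in \(\mathbb F_\ell[x]\) does not actually need \(r_1\) to be monic; monicity is what keeps the lifted factors monic).
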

\begin{proof}
    The proof is again by induction in \(n\). For \(n=1\) we can choose \(r'_i=r_i\). Assume that there are monic \(r_1'\) and \(r_2'\) such that \(w\equiv_{\ell^n}r_1'r_2'\) but not modulo \(\ell^{n+1}\), i.e. \(w=r_1'r_2'+\ell^ne\) for \(\deg e<\deg w\), and also \(r'_i\equiv_\ell r_i\). Since \(r_1\) and \(r_2\) are coprime polynomials over a field, there are \(f, g\) such that \(fr_1+gr_2=1\). Divide \(ef\) by \(r_2\) to get \(ef=ar_2+f'\), such that \(\deg f'<\deg r_2\). Similarly, \(eg=br_1+g'\). Now, \[e=efr_1+egr_2=ar_1r_2+f'r_1+br_1r_2+g'r_2=(a+b)r_1r_2+f'r_1+g'r_2.\] Since \(\deg e<\deg w=\deg(r_1r_2)\), \(\deg (f'r_1),\deg (g'r_2)<\deg(r_1r_2)\) as well, and \(r_1r_2\) is monic, \(e=f'r_1+g'r_2\), meaning that \(r''_1=r'_1+\ell^ng'\) and \(r''_2=r'_2+\ell^nf'\) are monic polynomials equivalent to \(r_1\) and \(r_2\) respectively modulo \(\ell\) that satisfy \begin{align*}
        r''_1r''_2&=(r'_1+\ell^ng')(r'_2+\ell^nf')\equiv_{\ell^{n+1}}r'_1r'_2+\ell^n(f'r'_1+g'r'_2)\equiv_{\ell^{n+1}}\\&\equiv_{\ell^{n+1}}r'_1r'_2+\ell^n(f'r_1+g'r_2)=r'_1r'_2+\ell^ne=w.
    \end{align*}
\end{proof}

\begin{lemma}
    Let \(f, g\in\mathbb Z[[x]]\) be integer power series in \(x\), and for a power series \(s\) let \(c(s)\) denote the \(\gcd\) of the coefficients of \(s\). Then \(c(fg) = c(f)c(g)\).
\end{lemma}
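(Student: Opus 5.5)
This is Gauss's lemma for integer power series, and the plan is to run the classical prime-by-prime argument, using only that \(\mathbb F_\ell[[x]]\) is an integral domain. Coefficients of power series are always well defined, since the coefficient of \(x^n\) in \(fg\) is the finite sum \(\sum_{i+j=n}f_ig_j\). We may assume \(f,g\ne0\); the degenerate case is handled by setting \(c(0)=0\). Also \(c(s)\) is a genuine positive integer for every nonzero \(s\): it is the generator of the ideal of \(\mathbb Z\) generated by the coefficients, and that ideal is generated by finitely many of them.

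\textbf{Reduction to primitive series.} Write \(f=c(f)f_0\) and \(g=c(g)g_0\), where \(f_0,g_0\in\mathbb Z[[x]]\) have coprime coefficients, i.e. \(c(f_0)=c(g_0)=1\). Then \(fg=c(f)c(g)f_0g_0\). Since \(c(\lambda s)=|\lambda|c(s)\) for any integer \(\lambda\), it suffices to show \(c(f_0g_0)=1\).

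\textbf{The primitive case.} Suppose some prime \(\ell\) divides every coefficient of \(f_0g_0\). Reduce modulo \(\ell\) via the ring homomorphism \(\mathbb Z[[x]]\to\mathbb F_\ell[[x]]\). Because \(c(f_0)=1\), \(\ell\) does not divide all coefficients of \(f_0\), so \(\bar f_0\ne0\); likewise \(\bar g_0\ne0\). By hypothesis \(\bar f_0\bar g_0=0\). But \(\mathbb F_\ell[[x]]\) is a domain: if \(a\) is the least index with \(\bar f_{0,a}\ne0\) and \(b\) the least index with \(\bar g_{0,b}\ne0\), then the coefficient of \(x^{a+b}\) in the product is \(\bar f_{0,a}\bar g_{0,b}\ne0\). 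This contradiction shows no prime divides all coefficients of \(f_0g_0\), so \(c(f_0g_0)=1\).

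\textbf{Main obstacle.} The only real point is the domain property, which is settled by looking at lowest-order rather than highest-order terms. This avoids any issue with infinitely many coefficients. Everything else is bookkeeping.
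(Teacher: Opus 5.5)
Your proof is correct and follows essentially the same route as the paper: factor out the contents, reduce modulo a prime \(\ell\) dividing every coefficient of the product of the primitive parts, and contradict the fact that \(\mathbb F_\ell[[x]]\) is an integral domain. You also justify the domain property via lowest-order terms and treat the zero series separately, both of which the paper leaves implicit.
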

\begin{proof}
    Let \(f=c(f)f'\) and \(g=c(g)g'\). First note that \(fg=c(f)c(g)f'g'\) and therefore \(c(f)c(g)|c(fg)\). Now, assume that \(c(fg)\ne c(f)c(g)\) and let \(\ell\) be a prime dividing \(\frac{c(fg)}{c(f)c(g)}\). This means that \(\ell\) divides all the coefficients of \(f'g'=\frac{fg}{c(f)c(g)}\). Therefore \(f'g'\equiv_\ell0\), but both \(f'\) and \(g'\) have a coefficient \(\gcd\) of 1, and so \(f',g'\not\equiv_\ell0\). This means that we found zero divisors in \(\mathbb F_\ell[[x]]\), but this is an integral domain---a contradiction.
\end{proof}

\begin{lemma}\label{series-intersection}
    Let \(p,q\in\mathbb Z[x]\) be coprime polynomials with coefficient \(\gcd\) \(1\) and let \(f\in\mathbb Z[[x]]\). If both \(p|f\) and \(q|f\) then \(pq|f\).
\end{lemma}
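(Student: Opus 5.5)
Write \(f=pa\) and \(f=qb\) with \(a,b\in\mathbb Z[[x]]\). The goal is to show that \(q\mid a\) in \(\mathbb Z[[x]]\). The natural first step is to pass to \(\mathbb Q[x]\), where Bézout holds. Since \(p,q\) are coprime, there are \(u,v\in\mathbb Z[x]\) and a nonzero integer \(N\) with \(up+vq=N\); this is the resultant identity \(\res(p,q)\in\langle p,q\rangle\) noted before Lemma \ref{irreducible-res-1}, so we may take \(N=\res(p,q)\). Multiplying \(f=pa\) by \(v\) and using \(vq=N-up\) gives
\[
Na=(up+vq)a=upa+vqa=uf+vqa=uqb+vqa=q(ub+va).
\]
Hence \(q\mid Na\) in \(\mathbb Z[[x]]\), say \(Na=qs\) with \(s\in\mathbb Z[[x]]\).

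The remaining task is to remove the integer factor \(N\), and this is where the hypothesis that \(q\) has coefficient \(\gcd\) equal to \(1\) enters, through the content lemma just proved (\(c(fg)=c(f)c(g)\) in \(\mathbb Z[[x]]\)). It suffices to handle one prime \(\ell\mid N\) at a time: if \(\ell a=qs\), we want \(\ell\mid s\). Reducing modulo \(\ell\) gives \(\bar q\bar s=0\) in \(\mathbb F_\ell[[x]]\). Since \(c(q)=1\), \(\bar q\ne0\), and because \(\mathbb F_\ell[[x]]\) is an integral domain (the same fact used in the content lemma), \(\bar s=0\). So \(s=\ell s'\), and since \(\mathbb Z[[x]]\) is torsion-free, \(a=qs'\). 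Iterating over the prime factors of \(N\) yields \(a=qt\) with \(t\in\mathbb Z[[x]]\).

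Finally, \(f=pa=pqt\), so \(pq\mid f\). Note that the hypothesis \(c(p)=1\) is not needed in this direction; it would be needed only in the symmetric argument. The only step needing care is the cancellation of \(N\). One subtlety is that the Bézout identity needs \(N\ne0\), that is, coprimality in \(\mathbb Q[x]\). I will read ``coprime'' that way, since coprimality in \(\mathbb Z[x]\) together with content \(1\) implies it by Gauss's lemma.
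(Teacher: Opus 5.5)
Your proof is correct and follows essentially the same route as the paper. Both arguments use \(N=\res(p,q)\in\langle p,q\rangle\) to get a divisibility after multiplying by \(N\), and then cancel \(N\) by a content argument in \(\mathbb Z[[x]]\). The paper does this ideal-theoretically, via \(Nf\in(\langle p\rangle+\langle q\rangle)(\langle p\rangle\cap\langle q\rangle)\subseteq\langle pq\rangle\), and then cancels \(N\) from \(pqg=Nf\) using \(c(pqg)=c(g)\), which needs both \(c(p)=1\) and \(c(q)=1\). Your element-wise version cancels \(N\) from \(Na=qs\), so, as you observe, it only needs \(c(q)=1\).
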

\begin{proof}
    Since \(f\) and \(g\) are coprime integer polynomials, by the first definition \(\res(f,g)\ne 0\), and since \(\res(p,q)\in\langle p,q\rangle=\langle p\rangle+\langle q\rangle\), \[\res(pq)f\in(\langle p\rangle+\langle q\rangle)(\langle p\rangle\cap\langle q\rangle)\subseteq\langle p\rangle\langle q\rangle=\langle pq\rangle,\] therefore \(pq|\res(p,q)f\), i.e. there is a power series \(g\) such that \(pqg=\res(p,q)f\). However, from the previous lemma, \[c(g)=c(p)c(q)c(g)=c(pqg)=c(\res(p,q),f)=\res(p,q)c(f),\] and therefore \(\frac g{\res(p,q)}\) is also an integer power series. This means that \[pq\frac g{\res(p,q)}=\frac{\res(p,q)f}{\res(p,q)}=f,\] and therefore \(pq|f.\)
\end{proof}

\begin{definition}
    Two ideals \(I, J\) in a ring \(R\) are called \emph{comaximal} if \(I+J=R\).
\end{definition}

We note that if \(I,J\) are comaximal then \(I\cap J=(I\cap J)(I+J)\subseteq IJ\subseteq I\cap J\) and so their product and intersection are equal. Also note that if \(I\) is comaximal with both \(J\) and \(K\) then \[I+JK=I+IK+JK=I+(I+J)K=I+K=R\] and so \(I\) is also comaximal with \(JK\).

\begin{theorem}\label{ap+bq}
    For every pair of coprime homogeneous integer polynomials \(p, q\) with two variables there is 
    a finite set of ideals of the form \(\langle \ell^n,r\rangle\) for prime integer \(\ell\) and homogeneous polynomial \(r\) that is equivalent modulo \(\ell\) to the power of an irreducible polynomial modulo \(\ell\), \(r\equiv_\ell s^m\), such that every homogeneous integer polynomial \(h\) of degree 
    \(\ge \deg p+\deg q-1\)
    in the intersection of these ideals can be expressed as \(h=ap+bq\) for some homogeneous integer polynomials \(a, b\). Also, for each such ideal, \(p,q\in\langle \ell,r\rangle\)
\end{theorem}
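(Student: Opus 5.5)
We will work with the ideal $J=\langle p,q\rangle\subseteq\mathbb Z[x,y]$ and study it one degree at a time. For each $d$, let $H_d$ be the free $\mathbb Z$-module of homogeneous integer polynomials of degree $d$. Let $J_d=\{ap+bq\}\cap H_d$, with $a,b$ homogeneous of degrees $d-\deg p$ and $d-\deg q$.

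\textbf{Step 1: $J_d$ has full rank for $d\ge\deg p+\deg q-1$.} Over $\mathbb Q$, the map $(a,b)\mapsto ap+bq$ is given by a Sylvester-type matrix. Since $p,q$ are coprime, its kernel in degree $d$ is $\{(cq,-cp)\}$. So the image has dimension
\[
(d-\deg p+1)+(d-\deg q+1)-(d-\deg p-\deg q+1)=d+1=\dim H_d .
\]
Hence $H_d/J_d$ is a finite abelian group. Its order divides a power of $\res(p,q)$, because $\res(p,q)\,x^{i}y^{j}\in J$ for every monomial of degree $\ge\deg p+\deg q-1$; this is the homogeneous version of $\res(p,q)\in\langle p,q\rangle$ noted above. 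So only the finitely many primes $\ell\mid\res(p,q)$ matter, and it suffices to work locally at each such $\ell$: find ideals $I_{\ell,i}$ with $\bigcap_i I_{\ell,i}\cap H_d\subseteq J_d+\ell^N H_d$, where $\ell^N$ kills the $\ell$-part of $H_d/J_d$. By the Chinese remainder theorem on the finite group $H_d/J_d$, an element lying in all the local ideals then lies in $J_d$.

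\textbf{Step 2: local decomposition at $\ell$.} Take $\ell\mid\res(p,q)$ and factor $\gcd(\bar p,\bar q)$ over $\mathbb F_\ell$ into powers of distinct irreducible homogeneous forms $s_1^{m_1}\cdots s_k^{m_k}$. For large $N$, use Lemma~\ref{coprime-lift} (after dehomogenizing, with Lemma~\ref{monic-multiple} to make things monic modulo $\ell$, and handling the factor $y$ separately) to split modulo $\ell^N$ into pieces $r_i$ with $r_i\equiv_\ell s_i^{M}$. By construction $p,q\in\langle\ell,r_i\rangle$, since $s_i\mid\bar p,\bar q$; choosing the exponent appropriately gives the final clause of the statement. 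Localized at $\ell$, the pieces $\langle\ell^N,r_i\rangle$ are pairwise comaximal away from $\ell$. Using the remarks on comaximal ideals (intersection equals product, and comaximality passes to products), together with Lemma~\ref{series-intersection} to combine divisibility by coprime factors, $\bigcap_i\langle\ell^N,r_i\rangle$ should coincide with $J+\langle\ell^N\rangle$ in high degrees. Intuitively, away from the common zeros $s_i$, one of $p,q$ is a unit in the local ring modulo $\ell$. Near each $s_i$, a suitably large power $r_i$ of $s_i$ lifted to $\ell^N$ lies inside $J+\ell^N$.

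\textbf{Main obstacle.} The hard step is showing that near each $s_i$ the ideal $\langle\ell^N,r_i\rangle$ is contained in $J$ locally in high degree, rather than the reverse inclusion. $J$ localized at the prime $\langle\ell,s_i\rangle$ need not be a power of a single ideal, so the choice of $r_i$ must depend on $p,q$ and not only on $\bar p,\bar q$. My first attempt would be the following. Locally, $J$ is $\mathfrak m$-primary, with $\mathfrak m=\langle\ell,s_i\rangle$ in the two-dimensional regular local ring. So $J\supseteq\mathfrak m^{e}\supseteq\langle\ell^{e},\tilde s_i^{\,e}\rangle$ for some $e$, where $\tilde s_i$ is a lift of $s_i$. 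I would then take $r_i=\tilde s_i^{\,e}$, made monic modulo $\ell^N$ via Lemma~\ref{monic-multiple}. If this inclusion produces ideals too small to recover $J_d$ exactly, I would enlarge the set by adding several ideals per $(\ell,s_i)$ with varying exponents.
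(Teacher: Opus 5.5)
Your Step 1 is correct and is a tidy replacement for the paper's reduction $\langle p,q\rangle=\langle p,q,\res(p,q)\rangle=\bigcap_i\langle p,q,\ell_i^{n_i}\rangle$. Because $\res(p,q)H_d\subseteq J_d$ for $d\ge\deg p+\deg q-1$, it does suffice to find, for each $\ell\mid\res(p,q)$, ideals whose intersection lies in $J+\ell^N\mathbb Z[x,y]$ in those degrees.

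The genuine gap is in Step 2, where you yourself locate it. You never say which polynomial Lemma~\ref{coprime-lift} is applied to. Your tentative choice $r_i=\tilde s_i^{\,e}$ with $\mathfrak m^e\subseteq J_{\mathfrak m}$ is incompatible with the last clause of the theorem. The condition $p,q\in\langle\ell,r_i\rangle$ forces $r_i\bmod\ell$ to divide $\gcd(\bar p,\bar q)$, so the exponent of $s_i$ is at most its multiplicity $m_i$ there, while $e$ is in general larger. For $p=x$, $q=x+\ell^2y$ one has $\res(p,q)=\ell^2$, $\gcd(\bar p,\bar q)=x$ and $J_{\mathfrak m}=\langle x,\ell^2\rangle\not\ni\ell$. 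So $e\ge 2$, and then $p\notin\langle\ell,r_i\rangle$.

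Lowering the exponent to $m_i$ with a lift chosen from $s_i$ alone breaks the inclusion instead. For $p=(x+\ell y)^2$, $q=\ell^3y^2$, the form $x^2y$ lies in $\langle\ell^N,x^2\rangle$ for every $N$ and has degree $3=\deg p+\deg q-1$. But writing $x^2y=(\alpha x+\beta y)p+(\gamma x+\delta y)q$ forces $\alpha=0$, $\beta=1$ and then $2\ell+\ell^3\gamma=0$, which is impossible in $\mathbb Z$. Adding further ideals with other exponents cannot help, since each must satisfy the final clause on its own.

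What is missing is the paper's construction of $r$:
write $\gcd_\ell(\bar p,\bar q)=a\bar p+b\bar q$ over $\mathbb F_\ell$;
lift $a,b$ so that $u=ap+bq\in\langle p,q,\ell^n\rangle$;
make it monic modulo $\ell^n$ as $w=(1+\ell v)u$ by Lemma~\ref{monic-multiple};
split $w=\prod_i r_i$ with $r_i\equiv_\ell s_i^{m_i}$ by Lemma~\ref{coprime-lift}.
Then, in $\mathbb Z[x]$, $\bigcap_i\langle\ell^n,r_i\rangle=\langle\ell^n,w\rangle\subseteq\langle p,q,\ell^n\rangle$. Also $p,q\in\langle\ell,u\rangle=\langle\ell,w\rangle\subseteq\langle\ell,r_i\rangle$ because $u$ divides $\bar p$ and $\bar q$. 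So the inclusion and the final clause come out together, and the lift is dictated by $p,q$ through $w$ — the dependence you correctly suspected was needed.

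The passage from these local conditions to $h\in J_d+\ell^NH_d$ for every $d\ge\deg p+\deg q-1$ is also only asserted (``should coincide \dots\ in high degrees''). In $\mathbb Z[x,y]$ the graded ideals $\langle\ell^N,r_i\rangle$ and $\langle\ell^N,r_j\rangle$ are never comaximal, since their sum lies in $\langle\ell,x,y\rangle$. So the remarks on comaximal ideals do not apply there, and intersection and product genuinely differ in low degrees. The factor $y$ that you set aside is exactly where this bites.

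The paper handles this in three moves. It does the comaximality argument after setting $y=1$. It adds an ideal $\langle\ell^n,r\rangle$ with $r\equiv_\ell y^m$ that controls $h|_{x=1}$ in $\mathbb Z[[y]]$. It then uses Lemma~\ref{series-intersection} to lower the degrees of the $a,b$ found in the chart $y=1$ until they homogenize to degrees $\deg h-\deg p$ and $\deg h-\deg q$.

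If you prefer to keep your Step 1, you would instead need two things. First, prove that $J+\ell^N\mathbb Z[x,y]$ is saturated with respect to $\langle x,y\rangle$ in degrees $\ge\deg p+\deg q-1$; this can be extracted from the Koszul resolution of $\langle p,q\rangle$. Second, show that your chosen ideals, including one at $y=0$, are contained in $J+\ell^N\mathbb Z[x,y]$ after localizing at each prime $\langle\ell,s\rangle$.
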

\begin{proof}
    Let us prove first the case of non-homogeneous polynomials of one variable. Since \(\res(p, q)\in\langle p, q\rangle\), \(\langle p, q\rangle=\langle p, q,\res(p, q)\rangle\). Factor \(\res(p, q)=\prod_{i=0}^{k-1}\ell_i^{n_i}\) and note that \(\ell_i^{n_i}\) are pairwise coprime and the ideals \(\langle p, q, \ell_i^{n_i}\rangle\) are pairwise comaximal and therefore \[\langle p, q, \res(p, q)\rangle=\prod_{i=0}^{k-1}\langle p, q,\ell_i^{n_i}\rangle=\bigcap_{i=0}^{k-1}\langle p, q,\ell_i^{n_i}\rangle.\] 
    For each such \(\ell, n\), let \(u=\gcd_{\ell}(p,q)\) be the monic polynomial \(\gcd\) over \(\mathbb F_\ell\). Since \(\mathbb F_\ell[x]\) is a PID (and even euclidean), it follows that \(u=ap+bq\) for some polynomials \(a, b\in\mathbb F_\ell[x]\). Lift \(a\) and \(b\) to \(\mathbb Z[x]/\ell^n\) arbitrarily and set \(u=ap+bq\in\mathbb Z[x]/\ell^n\) accordingly. By lemma \ref{monic-multiple} there is \(v\in\mathbb Z[x]/\ell^n\) such that \((1+\ell v)u\) is monic. Notate \(w=(1+\ell v)u\) and note that \(w\equiv_{\ell^n}(1+\ell v)(ap+bq)\) and therefore \(w\in\langle p,q,\ell^n\rangle\), i.e. \(\langle p,q,\ell^n\rangle\subseteq\langle\ell^n,w\rangle\). Factor \(w\) to irreducible powers modulo \(\ell\), \(w\equiv_\ell\prod_{i=0}^{k-1}s_i^{m_i}\). By induction on Lemma \ref{coprime-lift} there are polynomials \(r_0,r_1,\dots\in\mathbb Z[x]/\ell^n\) such that \(w=\prod_{i=0}^{k-1}r_i\) and \(r_i\equiv_\ell s_i^{m_i}\). Now, note that if \(a\) and \(b\) are polynomials that are coprime modulo \(\ell\) then \(\langle\ell^n,a\rangle\) and \(\langle\ell^n,b\rangle\) are compaximal for each \(n\), since \[\langle\ell^n,a\rangle+\langle\ell^n,b\rangle=\langle\ell^n,a,b\rangle=\langle\ell^{n-1},a,b\rangle=\dots=\langle1,a,b\rangle=\langle1\rangle.\] This means that \[\langle\ell^n,a\rangle\langle\ell^n,b\rangle=\langle\ell^{2n},\ell^na,\ell^nb,ab\rangle=\langle\ell^n\rangle\langle\ell^n,a,b\rangle+\langle ab\rangle=\langle\ell^n\rangle+\langle ab\rangle=\langle\ell^n,ab\rangle,\] and therefore \[\langle\ell^n,w\rangle=\prod_{i=0}^{k-1}\langle\ell^n, r_i\rangle=\bigcap_{i=0}^{k-1}\langle\ell^n,r_i\rangle,\] which means that \(\langle p,q\rangle\) is a subset of an intersection of ideals of the form \(\langle\ell^n,r\rangle\) where \(r=s^m\) for some irreducible \(s\). I.e. if there is a polynomial \(h\) that is in all of these ideals then \(h\in\langle p,q\rangle\) and therefore \(h=ap+bq\) for some \(a,b\). Also, note that since \(u=\gcd_\ell(p,q)\) it divides both \(p\) and \(q\) modulo \(\ell\) and therefore \(p,q\in\langle\ell,u\rangle=\langle\ell,w\rangle\subseteq\langle\ell,r\rangle\).

    For the homogeneous case, the constraints that arise from \(p|_{y=1}\) and \(q_{y=1}\) can be homogenized to make equivalent constraints on \(h\), but more are needed. Specifically, for each \(\ell|\res(p,q)\)
    we also need to calculate \(\gcd_\ell(p|_{x=1},q|_{x=1})\), find \(r\equiv_\ell y^m\) for \(y^m||\gcd_\ell(p|_{x=1},q|_{x=1})\) such that \(p|_{x=1},q|_{x=1}\in\langle r,\ell^n\rangle\) for \(\ell^n||\res(p,q)\) (which can be done similarly to before) and require that \(h\in\langle\ell^n,r\rangle.\) For such an ideal, we can write \(\gcd_{\ell^n}(p|_{x=1},q|_{x=1})=r|_{x=1}v\) for some polynomial \(v\in\mathbb Z[y]/\ell^n\), and because \(r\equiv_\ell y^m||\gcd_\ell(p,q)\), the free coefficient of \(v\) is not zero modulo \(\ell\). Therefore, there is \(v^{-1}\in\mathbb Z[[y]]/\ell^n\) such that \(vv^{-1}\equiv_{\ell^n}1\). This means that for every \(h\in\langle\ell^n,r\rangle\), \(h|_{x=1}\in\langle p|_{x=1},q|_{x=1},\ell^n\rangle_{\mathbb Z[[y]]}\), and so, since these ideals for different \(\ell\)s are pairwise comaximal, each polynomial \(h\) that is in all of these ideals satisfies \[h|_{x=1}\in\langle p|_{x=1},q|_{x=1},\res(p,q)\rangle_{\mathbb Z[[y]]}=\langle p|_{x=1},q|_{x=1}\rangle_{\mathbb Z[[y]]},\] i.e. there are two power series \(f,g\in\mathbb Z[[y]]\) such that \(h=fp+gq\).
    
    Now, let us apply the non-homogeneous case on \(p|_{y=1}\), \(q|_{y=1}\) and \(h|_{y=1}\) to get that there are polynomials \(a\) and \(b\) such that \(h|_{y=1}=ap|_{y=1}+bq|_{y=1}\). Let \(d=\max(\deg a+\deg p,\deg b+\deg q).\) If \(d\le\deg h\), homogenize \(a\) as a \(\deg h-\deg p\) polynomial and \(b\) as a \(\deg h-\deg q\) polynomial and get the required result.
    
    Otherwise, let \(a'=a(y^{-1})y^{d-\deg p}\) and \(b'=b(y^{-1})y^{d-\deg q}\) and note that \(a'\) and \(b'\) are polynomials and \(y^{d-\deg h}h|_{x=1}=a'p|_{x=1}+b'q|_{x=1}.\) Therefore, \((a'-y^{d-\deg h}f)p|_{x=1}=(y^{d-\deg h}g-b')q|_{x=1}\), i.e. it is a power series divisible by both \(p|_{x=1}\) and \(q|_{x=1}\), which are coprime and have a coefficient \(\gcd\) of 1, and therefore from Lemma \ref{series-intersection}, \[p|_{x=1}q|_{x=1}|(a'-y^{d-\deg h}f)p|_{x=1}=(y^{d-\deg h}g-b')q|_{x=1},\] i.e. there is a power series \(s\) such that \(a'-y^{d-\deg h}f=q|_{x=1}s\) and similarly also \(y^{d-\deg h}g-b'=p|_{x=1}s\). Let \(s'\) be the polynomial that satisfies \(\deg s'<d-\deg h\) and \(y^{d-\deg h}|s-s'(y^{-1})y^{d-\deg h-1}\), and note that
    \begin{align*}
        y^{d-\deg h}~|&~(s-s'(y^{-1})y^{d-\deg h-1})q|_{x=1}=sq|_{x=1}-s'(y^{-1})y^{d-\deg h-1}q|_{x=1}=\\=&~a'-y^{d-\deg h}f-s'(y^{-1})y^{d-\deg h-1}q|_{x=1}\equiv_{y^{d-\deg h}}\\\equiv&_{y^{d-\deg h}}~a'-s'(y^{-1})y^{d-\deg h-1}q|_{x=1}
    \end{align*}
    which means that \[\deg(a-s'x^{d-\deg p-\deg q}q|_{y=1})\le d-\deg p-(d-\deg h)=\deg h-\deg p,\] and similarly \(\deg(b+s'x^{d-\deg p-\deg q}p|_{y=1})\le\deg h-\deg q.\) In addition, note that
    \begin{align*}
        &(a-s'x^{d-\deg p-\deg q}q|_{y=1})p|_{y=1}+(b+s'x^{d-\deg p-\deg q}p|_{y=1})q|_{y=1}=\\&=ap|_{y=1}+bq|_{y=1}+s'x^{d-\deg p-\deg q}p|_{y=1}q|_{y=1}-s'x^{d-\deg p-\deg q}p|_{y=1}q|_{y=1}=\\&=h|_{y=1}.
    \end{align*} 
    Homogenize and get the required result.
\end{proof}
\begin{corollary}
    for every pair of coprime homogeneous integer polynomials \(p,q\), if there are non-constant homogeneous \(f\) and \(g\) such that \(\res(p,f)=\pm1\) and \(\res(q, g)=\pm1\) then there is also a non-constant homogeneous \(h\) such that \(\res(pq, h)=\pm1\).
\end{corollary}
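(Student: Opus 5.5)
The plan is to use multiplicativity of the resultant, \(\res(pq,h)=\res(p,h)\res(q,h)\). It then suffices to build a single non-constant homogeneous \(h\) with \(\res(p,h)=\pm1\) and \(\res(q,h)=\pm1\) simultaneously.

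The key observation is that \(\res(p,h)\) depends only on \(h\) modulo \(p\), provided the degree of \(h\) is fixed. By the first (root-product) definition, if \(h=h'+ap\) with \(h,h'\) homogeneous of the same degree, then \(h\) and \(h'\) agree on every root of \(p\), so \(\res(p,h)=\res(p,h')\). Since \(\res(p,f^k)=\res(p,f)^k=\pm1\) and likewise \(\res(q,g^j)=\pm1\), I will look for exponents \(k,j\ge1\) and homogeneous \(a,b\) such that
\[h=f^k-ap=g^j+bq .\]
Equivalently, \(f^k-g^j=ap+bq\). For this I need \(k\deg f=j\deg g=:D\), and I need \(D\ge\deg p+\deg q-1\) in order to apply Theorem \ref{ap+bq}. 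The resulting \(h\) has degree \(D>0\) and is non-constant, since \(f\) and \(g\) are non-constant.

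To verify the hypothesis of Theorem \ref{ap+bq}, take the finitely many ideals \(\langle\ell^n,r\rangle\) it provides, each with \(r\equiv_\ell s^m\) for some \(s\) irreducible modulo \(\ell\), and with \(p,q\in\langle\ell,r\rangle\).
\begin{itemize}
\item The condition \(p\in\langle\ell,r\rangle\) says that \(s\) divides \(p\) modulo \(\ell\). Since \(\res(p,f)=\pm1\) is not divisible by \(\ell\), \(f\) has no common factor with \(p\) modulo \(\ell\); in particular \(s\nmid f\) modulo \(\ell\).
\item Hence \(f\) is invertible in the quotient by \(\langle\ell,s\rangle\). It is therefore invertible in the local finite ring obtained as the quotient by \(\langle\ell^n,r\rangle\), after dehomogenizing in the chart \(y=1\) or \(x=1\) in which the ideal lives.
\item By the same argument, using \(q\in\langle\ell,r\rangle\), \(g\) is invertible in each of these quotients.
\end{itemize}

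Let \(e\) be a common multiple of the orders of the (finite) unit groups of all these quotients. Choose \(k=Ne\deg g\) and \(j=Ne\deg f\) with \(N\) large. Then \(k\deg f=j\deg g=D\) is as large as needed, and \(f^k\equiv1\equiv g^j\) in every quotient. Therefore \(f^k-g^j\) lies in every one of the ideals, Theorem \ref{ap+bq} yields \(a,b\), and the construction above finishes the proof.

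The step I expect to be the main obstacle is the homogeneous bookkeeping. Theorem \ref{ap+bq} uses ideals coming from both charts, and "\(\equiv1\)" must be interpreted correctly for a homogeneous form. I would handle this by dehomogenizing \(f^k-g^j\) in the chart relevant to each ideal and checking that the dehomogenized \(f\) and \(g\) are still units there. This needs \(f\) not to vanish at the point at infinity modulo \(\ell\) when \(s=y\); that again follows from the coprimality of \(p\) and \(f\) modulo \(\ell\). If that approach causes trouble, the fallback is to replace \(1\) by \(x^D\) or \(y^D\) as the homogeneous representative of the unit and compare \(f^k\) and \(g^j\) to it in each chart.
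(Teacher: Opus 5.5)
Your proposal is correct and is essentially the paper's own proof. Like the paper, you pass to the finite local rings $\mathbb Z[x]/\langle\ell^n,r'\rangle$ attached to the ideals of Theorem \ref{ap+bq}, use $\ell\nmid\res(p,f),\res(q,g)$ to make $f,g$ units there, raise them to powers that become $1$ and share a degree $\ge\deg p+\deg q-1$, write $f^k-g^j=ap+bq$, and take $h=f^k-ap=g^j+bq$. Your explicit check of the chart $x=1$ (that $y\nmid f$ modulo $\ell$ when $s=y$) spells out a point the paper leaves implicit.
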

\begin{proof}
    Let us notate by \(\ell_i\), \(n_i\), \(r_i\), \(s_i\) and \(m_i\) the numbers and polynomials guaranteed to exist from the theorem. Since \(r_i\) is an irreducible power modulo \(\ell_i\), by dividing by the ideal \(\langle\ell_i^{n_i},r_i\rangle\) and setting either \(x\) or \(y\) to \(1\) we can construct a morphism from homogeneous polynomials to the finite ring \(\mathbb Z[x]/\langle\ell_i^{n_i},r_i'\rangle\), where \(r_i'\) is either \(r_i|_{y=1}=r_i(x,1)\) or \(r_i(1,x)\), which outputs \(0\) exactly for the polynomials in the ideal. Also, since \(r_i\equiv_{\ell_i} s_i^{m_i}\), which is irreducible modulo \(\ell_i\), dividing by \(\langle\ell_i,s'_i\rangle\) where \(s_i'\) is again either \(s_i(x,1)\) or \(s_i(1,x)\) gives a map from this ring to the field \(\mathbb Z[x]/\langle\ell_i,s_i'\rangle\) which equals zero exactly for the ring elements that are not invertible.

    Now, note that neither \(f\) nor \(g\) can be in any \(\langle \ell_i,s_i\rangle\), since that would mean that their resultants with \(p\) and \(q\) are divisible by \(\ell_i\) and in particular both are not 1. Therefore, they are mapped to nonzero elements of the field which means they are mapped to invertible elements of the ring. Since it is a finite ring its group of invertible elements is also finite which means that there are some powers \(u, v\) such that \(f^u\) and \(g^v\) are both mapped to \(1\). By raising by further powers we can bring them to the same large enough degree, i.e. there is \(f'\) a power of \(f\) that is mapped to \(1\) in all these rings and the same for \(g'\) a power of \(g\) with the same degree \(\ge\deg p+\deg q-1\). They still satisfy \(\res(p,f'),\res(q,g')=\pm1\) by the multiplicativity of the resultant. Because they are mapped to \(1\) by all the ring maps, their difference \(f'-g'\) is mapped to \(0\) by all the ring maps, meaning \(f'-g'\in \langle \ell_i^{n_i},r_i\rangle\) for all \(i\). Therefore, by the lemma, \(f'-g'=ap+bq\). This means that \(h=f'-ap=g'+bq\) satisfies \begin{align*}
        \res(pq, h)&=\res(p,h)\res(q,h)=\res(p,f'-ap)\res(q,g'+bq)\\&=\res(p,f')\res(q,g')=\pm1.
    \end{align*}
\end{proof}
\begin{corollary}\label{all-res-1}
    For every integer polynomial \(p\) with coprime coefficients there is a non-constant monic integer polynomial \(q\) such that \(\res(p, q)=\pm1\).
\end{corollary}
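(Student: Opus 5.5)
We will prove Corollary \ref{all-res-1} by induction on the number of irreducible factors of \(p\), counted with multiplicity, using Lemma \ref{irreducible-res-1} for the irreducible factors and the preceding corollary to glue them. Throughout we pass to homogeneous polynomials in two variables: write \(p^h(x,y)=y^{\deg p}p(x/y)\). For homogeneous \(P,Q\) of degrees \(n,m\) the resultant satisfies \(\res(P,Q)=\res(P|_{y=1},Q|_{y=1})\) up to sign when both leading coefficients survive. More generally it equals the Sylvester determinant with the formal degrees, so it is multiplicative in each argument.

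\textbf{Step 1 (factorisation).} Since \(p\) has coprime coefficients, Gauss's lemma factors it over \(\mathbb Z\) as \(p=\pm\prod_i f_i^{e_i}\). Each \(f_i\) is irreducible with coprime coefficients, and the \(f_i\) are pairwise coprime. By multiplicativity, \(\res(f^e,q)=\res(f,q)^e\). So for a single irreducible factor it suffices to get \(\res(f_i,q)=\pm1\), and then the same \(q\) works for \(f_i^{e_i}\).

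\textbf{Step 2 (distinct factors).} Lemma \ref{irreducible-res-1} gives, for each \(f_i\), a non-constant \(g_i\) with \(\res(f_i,g_i)=\pm1\). We then combine these by induction. Suppose \(P=\prod_{i<k}f_i^{e_i}\) has a homogeneous \(G\) with \(\res(P^h,G)=\pm1\), and \(Q=f_k^{e_k}\) has \(g_k^h\) with \(\res(Q^h,g_k^h)=\pm1\). The forms \(P^h\) and \(Q^h\) are coprime, so the preceding corollary yields a non-constant homogeneous \(H\) with \(\res((PQ)^h,H)=\pm1\).

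\textbf{Step 3 (making \(q\) monic and non-constant).} This is the step I expect to be the main obstacle. The corollary produces a homogeneous \(H\), and \(H(x,1)\) need not be monic; it may even drop in degree if \(y\mid H\). The plan is to exploit the freedom in choosing \(h=f'-ap\) in that proof. The first thing to try is to include the factor \(y\) itself in the product: replace \(p^h\) by \(y\cdot p^h\), noting \(y\) is coprime to \(p^h\) because the leading coefficient of \(p\) is nonzero. Note that \(\res(y,g)=\pm g(1,0)\), namely the leading coefficient of \(g(x,1)\). Moreover \(g=x\) gives \(\res(y,x)=\pm1\), so \(y\) itself satisfies the hypothesis of the corollary. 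Applying the gluing corollary to \(y\) and \(p^h\) gives a homogeneous \(H\) with \(\res(y\,p^h,H)=\res(y,H)\res(p^h,H)=\pm1\). Hence \(H(1,0)=\pm1\), so \(q=\pm H(x,1)\) is monic of degree \(\deg H\ge1\). Finally, \(\res(p,q)=\pm\res(p^h,H)=\pm1\).

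One technical point needs checking. In the corollary's proof \(f'\) and \(g'\) can be taken of arbitrarily large degree, so \(H\) is non-constant. Since \(H(1,0)\ne0\), dehomogenising does not lower the degree. So the only real verification is that the homogeneous resultant agrees, up to sign, with the one-variable resultant once the leading coefficient of \(q\) is \(\pm1\).
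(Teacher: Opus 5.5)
Your proposal is correct and is essentially the paper's proof: factor \(p\) into irreducible powers, homogenize the partners from Lemma \ref{irreducible-res-1} (using multiplicativity for the powers), glue them with the preceding corollary, and include \(y\) with partner \(x\) so that \(\res(y,H)=\pm1\) forces \(H(1,0)=\pm1\) and makes \(\pm H(x,1)\) monic. The only difference is presentational: you spell out the gluing as an explicit induction and add \(y\) as a final step, whereas the paper puts \(y\) into the list from the start.
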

\begin{proof}
    Factor \(p\) to irreducible powers.
    Homogenize the irreducible polynomials and add \(y\) to the list. Homogenize the solutions that are guaranteed to exist from Lemma \ref{irreducible-res-1}, by the multiplicativity of the resultant these solutions also work for the irreducibles' powers. Use the last corollary to combine them (and for example \(x\) which achieves a resultant of \(-1\) with \(y\)) to a polynomial \(q\) that has resultant \(\pm 1\) with both \(p'\), the homogenization of \(p\), and \(y\). The resulting solution (or its opposite) will also be monic since this is equivalent to the requirement that the resultant with \(y\) is \(\pm1\). Note that since \(p'\) and \(q\) have a monomial that does not include \(y\), the former because it is the homogenization of \(p\) and the latter because its resultant with \(y\) is \(\pm1\ne 0\), \[\res(p,q|_{y=1})=\res(p'|_{y=1},q|_{y=1})=\res(p',q)=\pm1,\] showing the result holds for \(p\) as well. 
\end{proof}

\begin{lemma}\label{res-1-in-segment}
    For every integer polynomial \(p\) with coprime coefficients and every segment \(I\subset\mathbb R\) of positive length, there is a non-constant monic integer polynomial \(q\) with a root in \(I\) such that \(\res(p, q)=\pm1\).
\end{lemma}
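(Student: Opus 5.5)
The plan is to start from the polynomial given by Corollary \ref{all-res-1} and perturb it by a multiple of \(p\). Such a perturbation does not change the resultant, but it can be chosen to force a sign change inside \(I\).

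\textbf{Step 1: a starting polynomial.} By Corollary \ref{all-res-1} there is a non-constant monic integer polynomial \(q_0\) with \(\res(p,q_0)=\pm1\). Write \(c\) for the leading coefficient of \(p\) and \(\alpha_1,\dots,\alpha_n\) for its roots. By multiplicativity of the resultant, for every \(N\ge1\) the monic polynomial \(q_0^N\) satisfies \(\res(p,q_0^N)=\res(p,q_0)^N=\pm1\).

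\textbf{Step 2: the perturbation.} I will look for \(q\) of the form
\[q=q_0^N+M\,p\,t,\]
where \(M\in\mathbb Z\), \(t\in\mathbb Z[x]\), and \(\deg(pt)<N\deg q_0\). The degree condition keeps \(q\) monic, of the same degree \(D=N\deg q_0\) as \(q_0^N\). The resultant is then unchanged. Using the first definition of the resultant with \(p\) as the first argument,
\[\res(p,q)=c^{D}\prod_i q(\alpha_i)=c^{D}\prod_i q_0(\alpha_i)^N=\res(p,q_0^N)=\pm1,\]
since \(p(\alpha_i)=0\) and both polynomials have degree \(D\).

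\textbf{Step 3: a root in \(I\).} Choose rationals \(x_1<x_2\) in the interior of \(I\) with \(p(x_1),p(x_2)\ne0\); this is possible because \(p\) has finitely many roots. Choose \(t\) as follows.
\begin{itemize}
\item If \(p(x_1)\) and \(p(x_2)\) have opposite signs, take \(t=\pm1\), with the sign chosen so that \(p(x_1)t(x_1)>0\).
\item Otherwise, take a linear integer polynomial \(t=\pm(dx-e)\) with \(x_1<e/d<x_2\), with the sign chosen so that \(p(x_1)t(x_1)>0\).
\end{itemize}
In either case \(p(x_1)t(x_1)>0>p(x_2)t(x_2)\). Now fix \(N\) large enough that \(N\deg q_0>\deg p+1\). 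With \(N\) fixed, take \(M\) large enough that \(|M\,p(x_j)t(x_j)|>|q_0(x_j)^N|\) for \(j=1,2\). Then \(q(x_1)>0>q(x_2)\), so by the intermediate value theorem \(q\) has a real root in \((x_1,x_2)\subset I\). Moreover \(q\) is monic and non-constant.

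The only delicate point is keeping the degree of \(q\) equal to that of \(q_0^N\), so that the factor \(c^{\deg q}\) in the resultant is not disturbed when \(c\ne\pm1\). The order of choices handles this: \(N\) is fixed before \(M\), so taking \(M\) large never raises the degree.
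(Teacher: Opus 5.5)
Your proof is correct and follows essentially the same route as the paper. Both arguments take the polynomial from Corollary \ref{all-res-1} and raise it to a power, so that adding \(p\) times a constant or linear integer polynomial preserves both monicity and the resultant; both then split on whether \(p\) changes sign on the interval in order to force a sign change. The only cosmetic difference is that you place the root of the linear factor \(t\) inside \(I\) and then scale by a large \(M\), whereas the paper chooses the coefficients \(m,n\) of \(mx-n\) directly through explicit inequalities.
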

\begin{proof}
    By Corollary \ref{all-res-1} there is some non-constant monic integer polynomial \(q\) such that \(\res(p,q)=\pm 1\). If \(\deg q<\deg p+2\), we can raise \(q\) to some power to make its degree \(\ge\deg p+2\). Since \(\res(p,q)\) is determined by the values of \(q\) at the roots of \(p\),
    adding \((mx+n)p\) to \(q\) does not change \(\res(p,q)\), and because of its large degree it does not change the fact that it is monic either either. Notate \(a=\inf I\) and \(b=\sup I\). If \(p(a)=0\) or \(p(b)=0\) increase \(a\) a little or decrease \(b\) a little so it does not happen.
    
    If \(p(a)\) and \(p(b)\) have opposite signs, there is an integer \(n\) (large enough or small enough) such that \(q(a)+np(a)\) is negative but \(q(b)+np(b)\) is positive. Therefore \(q+np\) has a root in \(I\).

    Otherwise, they have the same sign. Choose large enough \(m\) such that \[m(b-a)>\frac{q(a)}{p(a)}-\frac{q(b)}{p(b)}+1,\] and an integer \(n\) such that \[\frac{q(a)}{p(a)}+ma<n<\frac{q(b)}{p(b)}+mb.\] This means that \[\frac{q(a)}{p(a)}+ma-n<0<\frac{q(b)}{p(b)}+mb-n,\] and in particular they have different signs. Since \(p(a)\) and \(p(b)\) have the same sign it follows that \(q(a)+(ma-n)p(a)\) and \(q(b)+(mb-n)p(b)\) have different signs, and therefore, \(q+(mx-n)p\) has a root in \(I\).
\end{proof}

\begin{proof}[Proof of Lemma \ref{alg-integer}]
    First, note that we can assume w.l.o.g that there is only a single polynomial, since if the lemma is true for \(P(x)=\prod_{i<n}p_i(x)\), i.e. there is some \(\alpha\in\bar{\mathbb Z}\cap I\) such that \(\frac{1}{P(\alpha)}\in\bar{\mathbb Z}\), then for each \(i<n\), \[\frac1{p_i(\alpha)}=\frac{P_i(\alpha)}{p_i(\alpha)}\frac1{P_i(\alpha)}\in\bar{\mathbb Z}\] as well. Second, note that we can assume w.l.o.g that this polynomial has rational coefficients, since otherwise take a finite degree extension \(K\) of \(\mathbb Q\) that includes all the coefficients of \(p\), let \(S\) be the set of equivalence classes of automorphisms of \(\bar{\mathbb Q}\) where two automorphisms are equivalent if they behave the same on \(K\) (if \(K\) is chosen to be Galois \(S\) corresponds with and can be replaced by Galois group of \(K\) over \(\mathbb Q\)), notate \(P(x)=\prod_{\sigma\in S}p^\sigma(x)\) and note that this polynomial is rational and monic and as before, if \(\alpha\in\bar{\mathbb Z}\cap I\) satisfies \(\frac1{P(\alpha)}\in\bar{\mathbb{Z}}\) then also \[\frac1{p(\alpha)}=\frac{P(\alpha)}{p(\alpha)}\frac1{P(\alpha)}\in\bar{\mathbb Z}.\]

    Now, let \(p'=np\) where \(n\) is the common denominator of the coefficients of \(p\) and note that \(p'\) is an integer polynomial whose coefficients are coprime. Therefore by Lemma \ref{res-1-in-segment} there is a monic integer polynomial \(q\) such that \(\res(p',q)=\pm1\) and \(q\) has a root \(\alpha\in I\), and since \(q\) is a monic integer polynomial, \(\alpha\in\bar{\mathbb Z}\). Let \(q'\) be the minimal polynomial of \(\alpha\) and note that \(q'|q\) which means that \(\mathcal N(p'(\alpha))=\res(q',p')~|~\res(q,p')=\pm\res(p',q)=\pm1\) and therefore \(p'(\alpha)\) is a unit, which means that \[\frac1{p(\alpha)}=\frac1{\frac 1np'(\alpha)}=\frac n{p'(\alpha)}\in\bar{\mathbb Z}.\]
\end{proof}

\section*{Acknowledgments}
Thanks to Prof. Itay Kaplan from HUJI and Dr. Yatir Halevi from the Technion for their review and guidance. Thanks also to Elad Sayag, Lior Schain and Adi Ostrov for helpful conversations about polynomials and algebraic numbers that lead to the proof of Lemma \ref{alg-integer}.

\section*{A note about AI use} There was an effort to minimize the use of AI while writing this paper but there were two occasions when AI was used, both regarding Lemma \ref{alg-integer}, and in both cases no new ideas of the AI model entered the paper. In the first occasion chatGPT was asked to prove or disprove the lemma, and in separate instances some special cases of it. It wrote almost exclusively bullshit and everything that went out of these interactions was discarded. The proof that is presented Section \ref{alg-integer-lemma-proof} was written months later, inspired by the output of a hand-written search program and by conversations with friends. In the second occasion chatGPT was input a statement similar to that of Theorem \ref{ap+bq} and was asked if it was a known theorem, while it was specifically instructed to not try to prove it. Its answer indeed only listed known theorems that were similar to the statement it was input and ways in which they differed from it. One of these theorems was M. Noether's AF+BG theorem, whose proof, which was read from \cite{Fulton}, gave the approach for proving the theorem which lead to proof written here.

\bibliographystyle{plain}
\bibliography{references}{}
\end{document}